\documentclass[12pt]{amsart}
\usepackage{latexsym}
\usepackage[T2A,T1]{fontenc}
\usepackage[utf8]{inputenc}
\usepackage[english]{babel}
\usepackage{a4}
\usepackage{amsmath}
\usepackage{amssymb}
\usepackage{amsthm}
\usepackage{mathtools}%
\usepackage{bm}%
\usepackage{xcolor}
\usepackage{tikz}
\usetikzlibrary{shapes,calc,patterns}
\usepackage{url}
\usepackage{enumerate}
\usepackage{mdwlist}%
\usepackage{braket}
\usepackage{mleftright}%
\usepackage{array}
\usepackage{tabularx}%
\usepackage{booktabs}%
\usepackage[bookmarks]{hyperref}

\theoremstyle{definition}
\newtheorem{definition}{Definition}[section]
\theoremstyle{plain}
\newtheorem{theorem}[definition]{Theorem}
\theoremstyle{plain}
\newtheorem{thm}[definition]{Theorem}
\theoremstyle{plain}
\newtheorem{proposition}[definition]{Proposition}
\theoremstyle{plain}
\newtheorem{lemma}[definition]{Lemma}
\theoremstyle{plain}
\newtheorem{lem}[definition]{Lemma}
\theoremstyle{plain}
\newtheorem{corollary}[definition]{Corollary}
\theoremstyle{plain}
\newtheorem{cor}[definition]{Corollary}
\theoremstyle{definition}

\theoremstyle{remark}

\theoremstyle{remark}

\theoremstyle{definition}

\theoremstyle{definition}

\numberwithin{equation}{section}

\DeclareMathOperator{\Hh}{\textsl{H}}%
\DeclareMathOperator{\Inv}{\mathsf{Inv}}%
\DeclareMathOperator{\POL}{Pol}%
\DeclareMathOperator{\Con}{Con}%
\newcommand{\GF}[1]{\mathsf{GF}(#1)}%
\newcommand{\ab}[1]{{\mathbf{#1}}}%
\newcommand{\type}[2]{\mathtt{typ}(#1,#2)}%
\newcommand{\Mtct}[3]{\mathtt{M}_{\ab{#1}}(#2,#3)}%
\newcommand{\quotienttct}[2]{\Braket{#1,#2}}
\newcommand{\interval}[2]{\mathtt{I}[#1,#2]}%
\newcommand{\Interval}[2]{\mathbf{I}[#1,#2]}
\newcommand{\bottom}[1]{0_{#1}}%
\newcommand{\uno}[1]{1_{#1}}%
\newcommand{\Cg}[2][\ab{A}]{\text{Cg}_{#1}(#2)}%
\newcommand{\Sym}[1]{\textsf{Sym}(#1)}
\newcommand{\numberset}{\mathbb}
\newcommand{\N}{\numberset{N}}

\newcommand{\Z}{\numberset{Z}}

\newcommand{\finset}[1]{\{1,\dots, #1\}}%
\DeclareMathAlphabet{\mathbfsl}{OT1}{cmr}{bx}{it}

\renewcommand{\vec}[1]{{\boldsymbol{#1}}}%
\let\tp=\vec%
\newcommand{\ari}[1]{_{#1}}%
\newcommand{\supi}[1]{^{(#1)}}%
\newcommand{\strset}[1]{\mathcal{#1}}%
\newcommand{\restrict}[1]{\rvert_{#1}}%
\DeclareMathAlphabet{\mathsc}{OT1}{cmr}{m}{sc}
\DeclareMathAlphabet{\mathbfsl}{OT1}{cmr}{bx}{it}
\newcommand{\vb}[1]{\mathbfsl{#1}} 
\newcommand{\sottospazio}[2]{\text{Span}_{#1}(#2)}
\newcommand{\submodule}[1]{\Braket{#1}}
\newcommand{\card}[1]{\mleft\lvert#1\mright\rvert}%

\newcommand{\algop}[2]{(#1,#2)}
\newcommand{\lcover}{\prec}
\DeclareMathOperator{\dom}{\texttt{dom}}
\newcommand{\matrixvec}[1]{\overline{\vec{#1}}}
\newcommand{\ringmod}[2]{\mbox{}_{#1} #2}
\newcolumntype{C}[1]{>{\centering}p{#1}} %
\title[Polynomial interpolation of partial functions]{Polynomial interpolation of partial functions in finite algebras with a Mal'cev term}
\author{Erhard Aichinger}
\address[Erhard Aichinger]{
Institut für Algebra,
Johannes Kepler Universität Linz, Altenberger Straße 69, 4040 Linz, Austria}
\email{erhard@algebra.uni-linz.ac.at}

\author{Mario Kapl}
\address[Mario Kapl]{Department of Engineering \& IT, Carinthia University of Applied Sciences, Europastraße 4, 9524 Villach, Austria}
\email{m.kapl@fh-kaernten.at}

\author{Bernardo Rossi}
\address[Bernardo Rossi]{Department of Algebra, Faculty of Mathematics and Physics, Charles University, Sokolovsk\'{a} 49/83, 186 75 Praha 8, Czechia.}
\email{bernardo.rossi96@gmail.com}

\subjclass[2010]{08A05, 08A40}
\keywords{Polynomial interpolation, Mal'cev algebras, Congruence identities}
\thanks{Supported by the Austrian Science Fund (FWF):~P33878, and
by Charles University under grant no. PRIMUS/24/SCI/008. }
\begin{document}
\date{\today}
\begin{abstract}
  We provide polynomial completeness results for finite algebras
  in congruence permutable varieties.
  In 2001, Idziak and S{\l}omczy{\'n}ska introduced the completeness
  concept of being \emph{polynomially rich}: a finite algebra is polynomially
  rich if every function preserving congruences and the Tame Congruence
  Theory labelling of prime quotients in the congruence lattice is
  a polynomial function of the algebra. We call a finite algebra
  \emph{strictly polynomially rich} if every partial congruence and type
  preserving function is a polynomial function, and we describe
  strictly polynomially rich algebras in congruence permutable varieties.
\end{abstract}  
\maketitle

\section{Introduction}

The guiding question of this paper is the characterization of those partial
functions on an algebraic structure that can be interpolated by polynomials.
We seek for a description of those algebras in which the interpolable functions
admit a concise description.
Here, we adopt the viewpoint of \emph{universal algebra} \cite{BurSan81, MMT:ALVV}
and consider an algebraic structure as a universe together
with a set of operations.
For instance, the symmetric group $\ab{S}_3 := (S_3, \circ)$ is an algebraic structure.
Its polynomial functions are the functions of the form
$(x_1, \ldots, x_n) \mapsto t(x_1,\ldots, x_n, s_1, \ldots, s_n)$, where $n \in \N$ and
$t(x_1,\ldots, x_n, s_1, \ldots, s_n)$ is a well formed expression using
the operations of the algebra and constants $s_1, \ldots, s_n$ taken
from the algebra. For example, $f(x_1, x_2) := x_1 \circ ((1\,3\,2) \circ x_2)$
is a binary polynomial function of $\ab{S}_3$.

Let $\ab{A}$ be any algebraic structure, let $k \in \N$, let
$T$ be a subset of $A^k$, and suppose that a function
$f : T \to A$ is given by a table of its values. We would like to determine
whether $f$ can be interpolated by a polynomial function of $\ab{A}$; in
other words, we ask whether there exists a $k$-ary polynomial function
$p$ of $\ab{A}$ such that the restriction $p|_T$ of $p$ to $T$ satisfies
$p|_T = f$. When the answer is ``yes'', an expression representing $p$
witnesses this answer. But how could we witness the answer ``no''?
Here, a tool comes from clone theory \cite{PosKal79}. Suppose that
$B$ is a subalgebra of $\ab{A}^n$ with $\{(a,\ldots,a) \mid a \in A\} \subseteq B$
(we will call such such subalgebras \emph{diagonal}). Then if
$\vb{b}\supi{1} = (b\supi{1}_1, \ldots, b\supi{1}_n) \in B$, \ldots,
$\vb{b}\supi{k} = (b\supi{k}_1, \ldots, b\supi{k}_n) \in B$,
$p$ is
a $k$-ary polynomial function of $\ab{A}$, the fact that $B$ is a diagonal
subalgebra of $\ab{A}^n$ implies that
$(p (b\supi{1}_1, \ldots, b\supi{k}_1), \ldots,
  p (b\supi{1}_n, \ldots, b\supi{k}_n)) \in B$.
With this tool, we can prove that there is no unary polynomial $f$ of
$\ab{A} := (\Z_4, +)$ with $f(0) = 0$ and $f(2) = 1$ (set
$B = \{ (x,y) \in A^2 \mid x - y = 2 \}$)
and no binary polynomial with $f(0,0)=f(1,0)=f(0,1)=0,f(1,1) = 1$
(set $B = \{ (x_1, \ldots, x_4) \mid x_1 - x_2 = x_3 - x_4 \}$
and $\vb{b}\supi{1} = (0,0,1,1)$, $\vb{b}\supi{2} = (0,1,0,1)$ and
observe that $(0,0,0,1) \not\in B$).
This leads to the question on which $B$'s we should test a partial function
when hoping to disprove its polynomiality. As a first task, we need
to locate diagonal subalgebras of $\ab{A}^n$. A source of diagonal subalgebras
of $\ab{A}^2$
are kernels of homomorphisms: if $\varphi$ is a homomorphism from $\ab{A}$ to
an algebra $\ab{C}$, then $B := \{(x_1, x_2) \mid \varphi(x_1) = \varphi (x_2)\}$ is
a diagonal subalgebra of $\ab{A}^2$; binary
relations arising in this way as kernels of homomorpisms are called
\emph{congruences} in universal algebra \cite[Ch.II,\S 5]{BurSan81}.
For some algebras, testing partial functions with these congruences provides
a criterion for polynomial interpolability.
Following \cite{We:PVKU}, we call an algebra $\ab{A}$ \emph{affine complete}
if for every $k \in \N$ every (total) function $f : A^k \to A$
that preserves all congruences of $\ab{A}$ is a polynomial function.
There has been research to characterize affine complete
groups \cite{KK:AfCG,Ec:ACOG,MM:ROCP}, but interestingly enough,
it is still not known whether the problem
\begin{quote}
\emph{Given:} a finite group $\ab{G}$. \emph{Asked:} Is $\ab{G}$ affine
complete?
\end{quote}
is algorithmically decidable.
The situation becomes more regular when we consider the stronger concept
of being \emph{strictly affine complete}: a finite algebra $\ab{A}$ is strictly
affine complete if for every $k \in \N$, every subset $T$ of $A^k$,
and every $k$-ary congruence preserving function $f:T \to A$, there is
a polynomial function $p$ interpolating $f$ on $T$. This property
is stronger than affine completeness, which only considers \emph{total}
congruence preserving functions. One reason is that an
algebra $\ab{A}$ may possess a \emph{partial}
congruence preserving
function $f$ defined on a subset $T$ of $A^k$ that cannot be extended to
a total congruence preserving
function on $A^k$: such an $f$ cannot be interpolated by a polynomial
function and therefore excludes that $\ab{A}$ is strictly affine complete.
Indeed, from \cite{KK:CFEP}, it follows that such nonextendible congruence preserving
functions exist on all algebras whose congruence lattice is not distributive.
J.\ Hagemann an C.\ Herrmann \cite{HH:ALEC} showed that a finite algebra is strictly affine complete
if it has a ternary Mal'cev polynomial (this is a polynomial satisfying
$p(a,b,b)=p(b,b,a)=a$ for all $a,b \in A)$ and it is
\emph{neutral}; for finite algebras with a Mal'cev polynomial,
neutrality is a concept that can best be expressed
using commutator theory \cite{FM:CTFC}, where an algebra is defined to be
neutral if $[\alpha, \alpha] = \alpha$ for all congruences $\alpha$,
or Tame Congruence Theory \cite{HM:TSOF}, in which we express neutrality
by claiming that all prime quotients in the congruence lattice are
of TCT-type $\mathbf{3}$.
An algebra $\ab{A}$ is called a \emph{Mal'cev algebra} if it has a ternary term operation $d$ satisfying $d(a,b,b) = d(b,b,a) = a$ for all $a,b \in A$;
by a fundamental result of A.I.Mal'cev \cite{Ma:OTGT} these are the
algebras that generate congruence permutable varieties.
In \cite[Theorem~5.1]{Ai:OHAH}, it is proved that
a finite algebra is strictly affine complete if
every $3$-ary partial congruence preserving
function is polynomial. Note that for $m \in \N$, the group $(\Z_2^m,+)$ has the property
that every \emph{unary} partial congruence preserving function can be interpolated
by a polynomial function \cite[Corollary~11.2]{AI:PIIE}, but $(\Z_2^m, +)$ is not strictly affine complete.

Another completeness property has been introduced in \cite{IS:PRA}.
It starts from the observation that on an algebra $\ab{A} = \algop{A}{F}$, a function
$f : A^k \to A$ is congruence preserving if the expanded algebra
$\ab{A} + f := \algop{A}{F \cup\{f\}}$ has the same congruence relations
as $\ab{A}$. Denoting the congruences of $\ab{A}$
by $\Con (\ab{A})$,  we can express that $\ab{A}$ is affine
complete by saying that every function $f$  with
$\Con (\ab{A} + f) = \Con (\ab{A})$ is 
polynomial. In the 1980's, D.\ Hobby and R.\ McKenzie provided
another invariant of polynomial functions: a pair
of congruences $(\alpha, \beta) \in \Con (\ab{A})$ with
$\alpha < \beta$ and no $\gamma \in \Con (\ab{A})$ with 
$\alpha < \gamma < \beta$ is called a \emph{prime section}
of $\Con(\ab{A})$, and we write $\alpha \lcover \beta$.
With each prime section, \cite{HM:TSOF}
associates a
\emph{type} which, sloppily put, describes how polynomial functions behave on
$\beta$-classes modulo $\alpha$. The \emph{labelled congruence lattice}
of $\ab{A}$ is denoted by $\Con^* (\ab{A})$: It is the digraph
with vertex set $V := \Con(\ab{A})$, edges $E := \{(\alpha, \beta) \in V^2 \mid
\alpha \lcover \beta\}$, together with a labelling function
$t$ with domain $E$ that associates  to each edge its type.
P.~Idziak and K.\ S\l omczy\'nska \cite{IS:PRA} addressed the following
question: which algebras have the property that every function
$f: A^k \to A$ with $\Con^* (\ab{A}) = \Con^* (\ab{A} + f)$ is
polynomial? They called such algebras \emph{polynomially rich}
and characterized those finite algebras in congruence permutable varieties
that have the property that all their homomorphic images are
polynomially rich \cite[Theorem~24]{IS:PRA}.

At first glance, it is not obvious how to define polynomial richness
for \emph{partial functions}. For a partial function
$f$, the structure $\ab{A} + f$ is an algebraic structure with
partial operations and hence notions from TCT are not available.
However, calling a function \emph{type-preserving} if $\Con^* (\ab{A}) = \Con^* (\ab{A} + f)$, there is a way of describing type preserving functions
that can be extended to partial functions.
First, we observe that if $\ab{A}$ is a finite algebra
with a Mal'cev polynomial $d$, then the type of
$\alpha \lcover \beta$ is $\mathbf{2}$ if
the commutator $[\beta, \beta] \le \alpha$,
and $\mathbf{3}$ otherwise. The commutator inequality
$[\beta, \beta] \le \alpha$ is equivalent to
the condition that $\rho (\alpha, \beta) =
  \{(x_1,x_2,x_3,x_4) \in A^4 \mid
    x_1 \, \beta \, x_2 \, \beta \, x_3,
    d(x_1,x_2,x_3) \, \alpha \, x_4 \}$
    is a subalgebra of $\ab{A}^4$
    (cf. \cite[Lemma~2.4]{AM:PCOG})\footnote{When $[\beta, \beta] \le \alpha$,
    then $\rho (\alpha, \beta)$ does not depend
    on the choice of the Mal'cev polynomial $d$.
    The reason is that $( (x_3/\beta)/(\alpha/\beta), d)$ is    an abelian Mal'cev algebra, whose Mal'cev term
    is unique.}.
Now a  total function $f : A^k \to A$ is
type-preserving if it preserves all congruences
of $\ab{A}$, and for each prime section $\alpha \lcover \beta$
with $[\beta, \beta] \le \alpha$,
$\rho (\alpha, \beta)$ is a subalgebra of
$(\ab{A} + f)^4$. Using the concept of \emph{preserving
a relation}, we can formulate type preservation for
partial functions.
For an algebra $\ab{A}$, $k,n \in \N$,
$B \subseteq A^n$, $T \subseteq A^k$ and $f : T \to A$, we
call $B$ a \emph{relation} on $A$ and 
say that $f$ \emph{preserves} $B$ if for all
$\vb{b}\supi{1} = (b\supi{1}_1, \ldots, b\supi{1}_n) \in B$, \ldots,
$\vb{b}\supi{k} = (b\supi{k}_1, \ldots, b\supi{k}_n) \in B$
with $(b\supi{1}_1, \ldots, b\supi{k}_1) \in T$, \ldots,
$(b\supi{1}_n, \ldots, b\supi{k}_n) \in T$, we have
 $(f (b\supi{1}_1, \ldots, b\supi{k}_1), \ldots,
   f (b\supi{1}_n, \ldots, b\supi{k}_n)) \in B$.
  We say that a partial function $f$
   on an algebra  $\ab{A}$ with a Mal'cev polynomial
   $d$ is \emph{type preserving} %
   if it preserves all congruences and, for each prime section
   $(\alpha, \beta)$ with $[\beta, \beta] \le \alpha$,
   it preserves $\rho(\alpha, \beta)$.
   We call an algebra \emph{strictly polynomially rich}
   if every partial function that preserves all congruences and
   all relations $\rho (\alpha, \beta)$, where
   $\alpha, \beta \in \Con (\ab{A})$ with $\alpha \lcover \beta$ and $[\beta, \beta] \le \alpha$,
   can be interpolated by a polynomial function.

   However, when dealing with partial functions,
   strict affine completeness and strict polynomial
   richness immediately imply that the algebra has
   a distributive congruence lattice: the reason
   is that the $5$-ary near-unanimity function $u_5$
   (defined only on its ``natural'' domain) preserves
   every $4$-ary relation of $\ab{A}$.
   But if $u_5$ can be interpolated by a polynomial function,
   then $\ab{A}$ has a near unanimity polynomial;
   one consequence of this \cite{Mi:NUIA}, \cite[Lemma~1.2.12]{KP:PCIA}
   is that $\ab{A}$ has a distributive congruence lattice.
   This results in the fact that finite strictly polynomially
   rich Mal'cev algebras are strictly affine complete (Corollary~\ref{cor:general}).

   Considering only functions of bounded arity, 
   an algebra $\ab{A}$ is called \emph{strictly $k$-affine complete}
   if for every finite subset $T$ of $\ab{A}^k$ and for every
   congruence preserving function $f:T \to A$, there is
   a polynomial function of $\ab{A}$ with $p|_T = f$.
   From~\cite[Proposition~5.2]{Ai:OHAH} it follows that every
   strictly $2$-affine complete Mal'cev algebra is strictly
   affine complete.
   We say that a finite algebra
   $\ab{A}$ is \emph{strictly $k$-polynomially rich}
   if for every finite subset $T$ of $\ab{A}^k$ and for every
   type preserving function $f:T \to A$, there is
   a polynomial function of $\ab{A}$ with $p|_T = f$.
   We do not have to dig very deep to find out 
   that every
   finite strictly $5$-polynomially rich Mal'cev algebra
   is strictly polynomially rich -- it is even strictly affine complete. 
   (Corollary~\ref{cor:general}).
   In 
   Theorem~\ref{teor:char_hered_scr_4_p_r}, we show that
   for a finite Mal'cev algebra, it is sufficient for strict polynomial richness
   to have all its homomorphic images strictly $4$-polynomially rich.
   This result is optimal
   since by a result of the second author, the group
   $\algop{\Z_2}{+}$ is strictly $3$-polynomially rich but
   not strictly $4$-polynomially rich (cf.
   Theorem~\ref{teor:result_Kapl_thesis}, \cite{Kap05}). 
   The bulk of the present paper is devoted to the characterization
   of strictly $k$-polynomially rich finite Mal'cev algebras for
   $k \in \{1,2,3\}$, and the main results are stated
   in Section~\ref{sec:content}. For $k \in \{2,3\}$ we characterize those
   algebras for which every homomorphic image is strictly $k$-polynomially rich
   (Theorem~\ref{teor:char_hered_scr_23_p_r}); for $k=1$ we obtain
   a characterization under the additional assumption that $\ab{A}$ is
   congruence regular (Theorem~\ref{teor:char_hered_scr_1_p_r}).

   We first need to study strictly polynomially rich modules over simple
   rings: this has been done in a thesis by the second author \cite{Kap05}
   and is presented in Sections~\ref{sec.preliminary_modules}
   and~\ref{sec:modules}. The description for arbitrary finite Mal'cev algebras
   is a part of the third author's thesis~\cite{Ro:UAGA} and is
   developed here in Sections~\ref{sec:preliminary_on_malcevalgebras}
   to~\ref{sec:proof_main_results}.
   
\section{Completeness and near unanimity functions}

Polynomial completeness properties referring to
partial functions are usually quite strong. One reason lies in the
following lemma about the near-unanimity operation. For a set $A$ and
$k \ge 3$, we let
   $U_k \subseteq A^{k}$ be defined by
   $U_k := \{ (x,\ldots,x) \mid x \in A \} \cup
   \{ (y,x,x\ldots,x) \mid x,y \in A \} \cup
   \{ (x,y,x\ldots,x) \mid x,y \in A \} \cup \dots \cup
   \{ (x,x,x\ldots,x,y) \mid x,y \in A \}$, and we define
    $u_k: U_k \to A$ to be the $k$-ary \emph{near-unanimity} function that maps
   every $\vb{z} \in U_k$ to the element appearing at least $k-1$ times
   in $\vb{z}$.
\begin{lem} \label{lem:nu}
  Let $A$ be a set, let $k \ge 3$, let $n \in \N$ be such that
  $n < k$, and let $B \subseteq A^n$. Then the partial function
  $u_k$ preserves the relation $B$.
\end{lem}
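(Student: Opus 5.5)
The plan is a pigeonhole argument on the columns of an $n \times k$ matrix, followed by a case split. Fix $\vb{b}\supi{1}, \ldots, \vb{b}\supi{k} \in B$, and suppose that for each row index $i \in \finset{n}$ the tuple $\vb{r}_i := (b\supi{1}_i, \ldots, b\supi{k}_i)$ lies in $U_k$. We must show that $(u_k(\vb{r}_1), \ldots, u_k(\vb{r}_n)) \in B$. Arrange the data as the $n \times k$ matrix whose $j$-th column is $\vb{b}\supi{j}$ and whose $i$-th row is $\vb{r}_i$. The key observation is that each row $\vb{r}_i \in U_k$ has at most one position $j$ where its entry differs from the majority value $u_k(\vb{r}_i)$.

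Call such a position the \emph{exceptional} column of row $i$; a constant row has none. There are $n$ rows, so at most $n$ columns are exceptional for some row. Since $n < k$, some column index $j_0 \in \finset{k}$ is exceptional for no row. For that column we have $b\supi{j_0}_i = u_k(\vb{r}_i)$ for every $i$.

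The one subtle point is the case where a row is a $k$-tuple with all entries but one equal to $x$ and the remaining one equal to $y$. The value $u_k$ is still well defined, because $k \ge 3$ makes $x$ the unique element occurring at least $k-1$ times. When $x=y$ the row is constant and has no exceptional position. So the exceptional position is unique whenever it exists, and $u_k$ is a well-defined function on $U_k$.

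Therefore $(u_k(\vb{r}_1), \ldots, u_k(\vb{r}_n)) = \vb{b}\supi{j_0} \in B$, which is exactly the preservation condition. I do not expect any real obstacle: the whole argument is the pigeonhole count of at most $n < k$ exceptional columns, together with the check that $u_k$ is well defined for $k \ge 3$.
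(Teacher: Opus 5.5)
Your proposal is correct and is essentially the paper's proof. The paper likewise assigns to each row $i$ the index $j_i$ of its dissenter (setting $j_i=1$ for constant rows, where you instead assign no index). It then uses $n<k$ to choose a column $r$ avoiding every $j_i$, and concludes that this column $\vb{b}\supi{r}\in B$ is exactly the tuple of values of $u_k$ on the rows.
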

\begin{proof}
 Suppose $(b\supi{1}_1, \ldots, b\supi{k}_1) \in U_k$, \ldots,
   $(b\supi{1}_n, \ldots, b\supi{k}_n) \in U_k$
   with 
$\vb{b}\supi{1} = (b\supi{1}_1, \ldots, b\supi{1}_n) \in B$, \ldots,
 $\vb{b}\supi{k} = (b\supi{k}_1, \ldots, b\supi{k}_n) \in B$.
   Then there are $x_1, \ldots, x_n, y_1, \ldots, y_n \in A$
   such that
   $(b\supi{1}_i, \ldots, b\supi{k}_i) \in
   \{(x_i, x_i, \ldots, x_i)\} \cup
   \{(y_i, x_i, \ldots, x_i)\} \cup \dots \cup
   \{(x_i, x_i, \ldots, y_i)\}$ for all $i \in \{1,\ldots,n\}$.
   Let $j_i \in \{1, 2, \ldots, k\}$ be equal to $1$
   if $(b\supi{1}_i, \ldots, b\supi{k}_i) = (x_i,\ldots, x_i)$;
   otherwise let $j_i$ the index of the
   dissenter $y_i$, i.e., let $j_i$ be such that
   $b\supi{j_i}_i = y_i$. In both cases, we have
      $b\supi{m}_i = x_i$ for $m \in \{1,\ldots,k\} \setminus \{j_i\}$.
   Since $n < k$, there is
   $r \in \{1,\ldots,k\} \setminus \{j_i \mid i \in \{1,\ldots,n\} \}$.
    Then $(b\supi{r}_1, \ldots, b\supi{r}_n) = (x_1, \ldots, x_n)$
   and thus $(x_1, \ldots, x_n) \in B$.
   Since $(x_1, \ldots, x_n)  =
          (u_k ( b\supi{1}_1, \ldots, b\supi{k}_1),\ldots, 
         u_k ( b\supi{1}_n, \ldots, b\supi{k}_n))$,
  we obtain $ (u_k ( b\supi{1}_1, \ldots, b\supi{k}_1),\ldots, 
  u_k ( b\supi{1}_n, \ldots, b\supi{k}_n)) \in B$,
  which concludes the proof that $u_k$ preserves $B$.
\end{proof}
A consequence is that many polynomial completeness concepts involving
partial functions boil down to strict affine completeness.
\begin{thm} \label{thm:general1}
  Let $\ab{A}$ be a finite algebra with a Mal'cev term,
  let $n \ge 2$ and
   let $\mathcal{R}$ be a set of  relations
   on $A$ of arity at most $n$, i.e., $\mathcal{R} \subseteq \bigcup_{m \in \{1,\ldots, n\}}
   \mathcal{P} (A^m)$.
   We assume that every partial $(n+1)$-ary $\mathcal{R}$-preserving
   function can be interpolated by a
   polynomial function of $\ab{A}$.
   Then $\ab{A}$ is strictly affine complete.
\end{thm}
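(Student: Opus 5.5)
Apply Lemma~\ref{lem:nu} with $k := n+1$. Every relation in $\mathcal{R}$ has arity at most $n < n+1$, so the partial near-unanimity function $u_{n+1} : U_{n+1} \to A$ preserves every member of $\mathcal{R}$. By hypothesis it is interpolated by a polynomial $p$ of $\ab{A}$. Since $n+1 \ge 3$, $p$ is then an $(n+1)$-ary near-unanimity polynomial of $\ab{A}$. By the result cited in the introduction \cite{Mi:NUIA}, \cite[Lemma~1.2.12]{KP:PCIA}, $\ab{A}$ is congruence distributive. Note that this only needs the polynomial clone: congruences of $\ab{A}$ coincide with those of the expansion by all constants, and a near-unanimity term of that expansion yields distributivity by the usual J\'onsson-type argument.

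Next, upgrade congruence distributivity to neutrality. In a Mal'cev algebra with distributive congruence lattice, commutator theory \cite{FM:CTFC} gives $[\alpha,\beta] = \alpha \wedge \beta$: congruence modularity holds, and distributivity forces the commutator to be the meet. In particular $[\alpha,\alpha] = \alpha$ for every $\alpha$, so $\ab{A}$ is neutral; equivalently, every prime quotient has type $\mathbf{3}$. Now $\ab{A}$ is finite, has a Mal'cev term (hence a Mal'cev polynomial), and is neutral, so the theorem of Hagemann and Herrmann \cite{HH:ALEC} quoted in the introduction shows that $\ab{A}$ is strictly affine complete.

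The step I expect to need the most care is the passage from a near-unanimity \emph{polynomial} to congruence distributivity. Mitschke's theorem is usually stated for terms, so one must check that it applies to polynomials. It does, because $\Con(\ab{A})$ is unchanged by adding constants, and the argument for congruence distributivity uses only operations compatible with congruences. Everything else is a direct chaining of Lemma~\ref{lem:nu} with results already cited.
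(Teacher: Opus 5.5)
Your first step matches the paper's: Lemma~\ref{lem:nu} with $k=n+1$ shows that $u_{n+1}$ preserves $\mathcal{R}$, so $u_{n+1}$ is interpolated by a near-unanimity polynomial $p$. The argument breaks at the upgrade to neutrality. The claim that in a Mal'cev algebra with distributive congruence lattice $[\alpha,\beta]=\alpha\wedge\beta$ holds is false. The group $(\Z_p,+)$ is a Mal'cev algebra whose congruence lattice is a two-element chain, hence distributive, yet $[1,1]=0$. Distributivity of the single lattice $\Con(\ab{A})$ says nothing about the commutator; ``commutator equals meet'' is a theorem about algebras in a congruence distributive \emph{variety}. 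You deliberately reduced the output of Mitschke's theorem to a statement about $\Con(\ab{A})$ alone (``the argument only uses operations compatible with congruences''). That throws away exactly the information the next step needs.

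The repair is to keep the conclusion at the level of the variety, as the paper does. The polynomial $p$ is a near-unanimity term of the expansion $\ab{A}^*$ of $\ab{A}$ by all constants, so the variety generated by $\ab{A}^*$ is congruence distributive \cite{Mi:NUIA}. Hence $[\alpha,\beta]=\alpha\wedge\beta$ for congruences of $\ab{A}^*$. The term-condition commutator is determined by the polynomial clone, which $\ab{A}$ and $\ab{A}^*$ share, so $[\alpha,\alpha]=\alpha$ in $\ab{A}$; that is, $\ab{A}$ is neutral.

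Alternatively, argue directly. Suppose $\alpha\prec\beta$ with $[\beta,\beta]\le\alpha$. On a nontrivial block $(o/\alpha)/(\beta/\alpha)$, $p$ induces an affine operation $\sum_i r_i x_i + c$ of the associated module. The identities $p(x,\dots,y,\dots,x)=x$ force every $r_i$ to act as $0$, which contradicts $p(x,\dots,x)=x$.

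Once neutrality is established, your appeal to Hagemann--Herrmann correctly finishes the proof. The paper ends differently: it notes that the variety of $\ab{A}^*$ is arithmetical (congruence distributive plus Mal'cev) and applies \cite[Theorem~5.1, (2)$\Rightarrow$(4)]{Ai:OHAH}. Both endings work, provided congruence distributivity is kept at the level of the variety.
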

\begin{proof}
   By Lemma~\ref{lem:nu}, $u_{n+1}$ preserves
   every relation $R \in \mathcal{R}$.
    Hence from the assumptions, we have a polynomial
    function $p:A^{n+1} \to A$ with $p|_T = u_{n+1}$. Let $\ab{A}^*$
    be the algebra that we obtain by adjoining to $\ab{A}$ all its
    elements as nullary operations.
   Then $p$ provides a near-unanimity term for $\ab{A}^*$, and thus
   the variety generated by $\ab{A}^*$ is congruence distributive
   \cite{Mi:NUIA}.
  Since $\ab{A}$ has Mal'cev term, this variety is then arithmetical.
  Using the implication (2)$\Rightarrow$(4) of
  \cite[Theorem~5.1]{Ai:OHAH}, we obtain that $\ab{A}$ is strictly
  affine complete.
\end{proof}

\begin{cor} \label{cor:general}
  Let $\ab{A}$ be a finite strictly $5$-polynomially rich
  Mal'cev algebra. Then $\ab{A}$ is strictly affine complete.
\end{cor}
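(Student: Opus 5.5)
We derive the corollary directly from Theorem~\ref{thm:general1} with $n = 4$. The relations involved in type preservation have arity at most $4$. The congruences of $\ab{A}$ are binary relations. The relations $\rho(\alpha,\beta)$, for prime sections $\alpha \lcover \beta$ with $[\beta,\beta] \le \alpha$, are $4$-ary. So we take
\[
\mathcal{R} := \Con(\ab{A}) \cup \{\rho(\alpha,\beta) \mid \alpha,\beta \in \Con(\ab{A}),\ \alpha \lcover \beta,\ [\beta,\beta] \le \alpha\}.
\]
Then $\mathcal{R} \subseteq \mathcal{P}(A^2) \cup \mathcal{P}(A^4) \subseteq \bigcup_{m \in \{1,\ldots,4\}} \mathcal{P}(A^m)$, as required in Theorem~\ref{thm:general1} with $n=4$.

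By definition, a partial function $f : T \to A$ with $T \subseteq A^5$ is type preserving exactly when it preserves every relation in $\mathcal{R}$. Since $A$ is finite, every such $T$ is finite. Strict $5$-polynomial richness therefore says precisely that every partial $5$-ary $\mathcal{R}$-preserving function is interpolated by a polynomial function of $\ab{A}$.

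This is the hypothesis of Theorem~\ref{thm:general1} for $n = 4$, since $n+1 = 5$ and $n \ge 2$. Since $\ab{A}$ has a Mal'cev term, the theorem yields that $\ab{A}$ is strictly affine complete.

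There is no real obstacle. The only points to check are that the arity bound is $4$ and not $5$, and that the definition of ``type preserving'' matches ``$\mathcal{R}$-preserving'' exactly. In particular, Lemma~\ref{lem:nu} with $k=5$ needs the relation arities to be strictly less than $5$, and the $4$-ary $\rho(\alpha,\beta)$ satisfy this.
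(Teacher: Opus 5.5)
Your proposal is correct and is essentially the paper's own proof. The paper also notes that type preservation amounts to preserving relations of arity at most $4$ (the binary congruences and the $4$-ary relations $\rho(\alpha,\beta)$) and then applies Theorem~\ref{thm:general1} with $n=4$; you simply spell out the choice of $\mathcal{R}$ and the arity check explicitly.
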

\begin{proof}
  Being type preserving is expressed by the preservation
  of relations of arity at most~$4$.
  Now the claim follows from Theorem~\ref{thm:general1}.
\end{proof}

\section{Characterization of strictly polynomially rich Mal'cev algebras}\label{sec:content}
Our description of strictly polynomially rich algebras will involve
certain concepts from \emph{universal algebra} \cite{BurSan81}.
In particular, we will use congruence relations and their
(binary) commutator as introduced, e.g., in \cite{MMT:ALVV}.
Several useful properties of polynomial functions with respect to commutators
are listed in \cite[Section~2]{Aic06}. In a Mal'cev algebra,
the largest congruence $\gamma$ with $[\gamma, \beta] \le \alpha$ is
called the \emph{centralizer} of $\beta$ modulo $\alpha$ and denoted
by $(\alpha:\beta)$. In \cite{IS:PRA}, the following condition
has been used to describe polynomially rich algebras.
\begin{definition}[{cf.~\cite{IS:PRA, AI:PIIE, AM:TOPC}}]\label{def:SC1}
A Mal'cev algebra $\ab{A}$ satisfies the condition (SC1) 
if for every strictly meet irreducible congruence $\mu$ of $\ab{A}$
we have $(\mu:\mu^+)\leq \mu^+$. 
\end{definition}
We also need the following condition on prime quotients; this condition
is a generalization of the condition (AB2) used in \cite{AI:PIIE}.

\begin{definition}[{cf.~\cite[Definition~1.2]{Ros24}}]\label{def:definition_ABp}
Let $\ab{A}$ be a Mal'cev algebra, let
$p$ be a prime number, and let $\gamma\in \Con\ab{A}$.
We say that $(\ab{A}, \gamma)$ 
has property (AB$p$) if for all $\alpha, \beta\in \interval{\bottom{A}}{\gamma}$
with $\alpha\prec\beta$ and $[\beta, \beta]\leq \alpha$, and 
for each $a\in A$ we have 
that $\card{(a/\alpha)/(\beta/\alpha)}\in\{1, p\} $.
In other words, for each abelian prime quotient 
$\quotienttct{\alpha}{\beta}$
in the interval $\interval{\bottom{A}}{\gamma}$,
each $\beta$-class is the union of either $1$ or $p$ distinct
$\alpha$-classes. 
We say that $\ab{A}$ has property (AB$p$) if $(\ab{A}, \uno{A})$
has property (AB$p$).  
\end{definition}
The notion of type-preserving function was
first introduced in \cite{IS:PRA}. The premise is that polynomial functions 
preserve the labelling of prime quotients with the 
TCT-types $\{\tp{1},\dots \tp{5}\}$: 
Let $\ab{A}$ be a finite algebra and let $f\colon A^k\to A$ be 
a polynomial function. Then the algebra $\ab{A}+f$ 
and the algebra $\ab{A}$ have the same congruence lattice 
and for each prime quotient $\quotienttct{\eta}{\theta}$ the type of $\quotienttct{\eta}{\theta}$ 
is the same in $\ab{A}$ and $\ab{A}+f$. 
Hence for finite algebras, we could define
that $f$ preserves the types of $\ab{A}$
if 
for each prime quotient $\quotienttct{\eta}{\theta}$ 
the type of $\quotienttct{\eta}{\theta}$ 
is the same in $\ab{A}$ and $\ab{A}+f$. 
In the case of Mal'cev algebras, 
the types of prime quotients are determined by commutators, and commutators
can be expressed by the preservation of certain $4$-ary relations
\cite[Lemma~2.4]{AM:PCOG}.
Thus we rather use the following definition
(cf. \cite[Section~5]{AM:TOPC}):
\begin{definition}\label{def:partial_type_preserving}
Let $\ab{A}$ be an algebra with a Mal'cev polynomial $d$.
For each prime quotient $\quotienttct{\alpha}{\beta}$ of 
type $\tp{2}$ we define $\rho(\alpha, \beta)$
as 
	\[
	\rho(\alpha, \beta):=
      \{a_1, a_2, a_3, a_4) 
\in
A^4\mid a_1\mathrel{\beta}a_2\mathrel{\beta}a_3 \text{ and } d(a_1, a_2, a_3)\mathrel{ \alpha } a_4 \}.
	\]
Let $f\colon T\subseteq A^n \to A$. 
We say that $f$ is \emph{type-preserving} if
$f$ is congruence-preserving and $f$ preserves $\rho(\alpha, \beta)$
for each prime quotient $\quotienttct{\alpha}{\beta}$ in $\Con\ab{A}$ of type $\tp{2}$.
We say that $\ab{A}$ is 
\emph{strictly $k$-polynomially
rich} if each $k$-ary partial type-preserving function
defined on $A$ can be interpolated on its domain by
a polynomial function of $\ab{A}$. 
\end{definition}
Our description will use notions from tame congruence theory:
the \emph{subtype} of a tame quotient has been defined  
in \cite{Kea93}; for a prime quotient of type $\mathbf{2}$,
the subtype of $\quotienttct{\alpha}{\beta}$ is the number of $\alpha$-classes
in an $\quotienttct{\alpha}{\beta}$-trace; a different description of
subtypes in Mal'cev-algebras is given in Theorem~\ref{teor:from_coordinatization_to_TCT}
below. The \emph{extended type} has been
defined in \cite{IS:PRA} and consists of the type and subtype together.
Using these subtypes,
we can divide the class of tame quotients into four classes: those that
have completeness type $1$, $2$ or $3$, and those that are
of none of these types. In Theorem~\ref{teor:main_result_partial_type_preserving},
we will show that a finite Mal'cev algebra with (SC1) and
certain quotients of completeness type $k$ will be strictly $k$-polynomially
rich. The dividing line between
the completeness types comes from
the description of strictly $k$-polynomially rich
modules in Theorem~\ref{teor:result_Kapl_thesis}.
\begin{definition}\label{def:ex_main_theorem}
Let $\ab{A}$ be a finite algebra with a Mal'cev polynomial, let
$\alpha, \beta\in \Con\ab{A}$ such that $\quotienttct{\alpha}{\beta}$
is tame, and let $m$ be the height of the interval $\interval{\alpha}{\beta}
= \{ \gamma \in \Con \ab{A} \mid \alpha \le \gamma \le \beta \}$.
We introduce the notion of \emph{completeness type} (short CT):
\begin{enumerate}
\item $\quotienttct{\alpha}{\beta}$ is of \emph{completeness type $1$} (CT1) if 
\begin{enumerate}
			\item $\type{\alpha}{\beta}=\tp{3}$; or \label{item:main_theorem_poly_rich_item_neutral}
			\item $\type{\alpha}{\beta}=\tp{2}$, the subtype of $\quotienttct{\alpha}{\beta}$ is
			$2$, $m=1$, and for each $a/\alpha\in (A/\alpha)/(\beta/\alpha)$ 
			we have $\card{(a/\alpha)/(\beta/\alpha)}\in\{1,2,4,8\}$; or\label{item:main_theorem_poly_rich_item_subtype2}
			\item $\type{\alpha}{\beta}=\tp{2}$, the subtype of $\quotienttct{\alpha}{\beta}$ is
			$3$, $m=1$, and for each $a/\alpha\in (A/\alpha)/(\beta/\alpha)$ 
			we have $\card{(a/\alpha)/(\beta/\alpha)}\in\{1,3,9\}$; or\label{item:main_theorem_poly_rich_item_subtype3}
			\item $\type{\alpha}{\beta}=\tp{2}$, the subtype of $\quotienttct{\alpha}{\beta}$ is
			$p$ with $p\in \{2,3,5\}$, $m\geq 1$, and $(\ab{A}/\alpha,\beta/\alpha)$  
			satisfies (AB$p$);\label{item:main_theorem_poly_rich_item_ABp}
		\end{enumerate}
		\item $\quotienttct{\alpha}{\beta}$ is of \emph{completeness type $2$} (CT2) if 
		\begin{enumerate}
			\item $\type{\alpha}{\beta}=\tp{3}$; or \label{item:main_theorem_poly_rich_item_neutral_2}
			\item $\type{\alpha}{\beta}=\tp{2}$, the subtype of $\quotienttct{\alpha}{\beta}$ is
			$p\in\{2,3\}$, $m=1$, and $(\ab{A}/\alpha,\beta/\alpha)$  
			satisfies (AB$p$); \label{item:main_theorem_poly_rich_item_ABp_2}
		\end{enumerate}
		\item $\quotienttct{\alpha}{\beta}$ is of \emph{completeness type $3$} (CT3) if
		\begin{enumerate}
			\item $\type{\alpha}{\beta}=\tp{3}$; or \label{item:main_theorem_poly_rich_item_neutral_3}
			\item $\type{\alpha}{\beta}=\tp{2}$, the subtype of $\quotienttct{\alpha}{\beta}$ is
			$2$, $m=1$, and $(\ab{A}/\alpha, \beta/\alpha)$
			satisfies (AB$2$).\label{item:main_theorem_poly_rich_item_ABp_3}
		\end{enumerate}
	\end{enumerate}
\end{definition}
Note that for each tame quotient $\quotienttct{\alpha}{\beta}$ of $\Con\ab{A}$ we have that
\[
\type{\alpha}{\beta}=\tp{3}\Longrightarrow \quotienttct{\alpha}{\beta} \text{ is (CT$3$) }\Longrightarrow
\quotienttct{\alpha}{\beta} \text{ is (CT$2$) }\Longrightarrow
\quotienttct{\alpha}{\beta} \text{ is (CT$1$)}.
\]
In \cite{IS:PRA}, it was found that the congruence lattice of algebras
with (SC1) contains certain elements that were called \emph{homogeneous}
in \cite{AI:PIIE}. Properties of these homogeneous elements were studied in
\cite[Section~7]{AM:TOPC}; for instance, every homogeneous element of
an algebraic modular lattice is distributive. 
\begin{definition}[{\cite[Definition~7.2]{AM:TOPC}}]\label{def:homogeneous}
Let $\ab{L}$ be a bounded lattice with smallest element $0$. An element $\mu$ of $L$ is 
	\emph{homogeneous} if 
	\begin{enumerate}
		\item $\mu>0$,
		\item for all $\alpha, \beta, \gamma, \delta\in L$ 
		with $\alpha\prec \beta \leq \mu$ and $\gamma\prec \delta \leq \mu$, 
		we have 
		$\interval{\alpha}{\beta}\leftrightsquigarrow\interval{\gamma}{\delta}$, and
		\item there are no $\alpha, \beta, \gamma,\delta\in L$ such that 
		$\alpha\prec\beta\leq\mu\leq\gamma\prec\delta$, and
		$\interval{\alpha}{\beta}\leftrightsquigarrow\interval{\gamma}{\delta}$. 
	\end{enumerate}
\end{definition}
In a finite Mal'cev algebra with (SC1), we can find a series of
distributive elements in its congruence lattice
\cite[Propositions~8.2 and~8.8]{AM:TOPC}; the core of this fact can be
traced back to the assertions (20) and (21) of \cite{IS:PRA}.
\begin{definition}[{\cite[Definition~7.4]{AM:TOPC}}]\label{def:hom_series}
Let $n\in\N$, let $\ab{L}$ be a finite modular lattice
with smallest element $0$, largest element $1$ and
with $\card{L}>1$.  A sequence $(\alpha_0, \alpha_1, \dots, 
	\alpha_{n})$ is a \emph{homogeneous series} 
	if $\alpha_0=0$, $\alpha_{n}=1$ and 
	for each $i\in \{1,\dots n\}$, $\alpha_i$ is a homogeneous 
		element of the lattice $\Interval{\alpha_{i-1}}{1}$.
\end{definition}
In Section~\ref{sec:homegeneous_seq} we will prove that 
each quotient $\quotienttct{\alpha_{i-1}}{\alpha_i}$
of a homogeneous series in a finite Mal'cev algebra with (SC1) is tame
(Proposition~\ref{prop:homogenuityandcolpementation}\eqref{item:simplecomplmodlatticebelowhom} and Lemma~\ref{lemma:projectivity_andTCT}).
For $k\in\{1,2,3\}$ we say that 
a homogeneous series is (CT$k$) if
for each $i\in\finset{n}$ the quotient $\quotienttct{\alpha_{i-1}}{\alpha_i}$
is (CT$k$). In Section~\ref{sec:proof_main_results}
we will prove the following theorems:
\begin{theorem}\label{teor:main_result_partial_type_preserving}
	Let $\ab{A}$ be a finite algebra with a Mal'cev polynomial that satisfies (SC1),
	let  $k\in\{1,2,3\}$,
	and let $(\mu_0, \dots, \mu_{n})$ be a homogeneous series
	of $\Con\ab{A}$.
	If $(\mu_0, \dots, \mu_{n})$
	is (CT$k$),
	then $\ab{A}$ is strictly $k$-polynomially rich.
\end{theorem}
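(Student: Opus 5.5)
We argue by induction on the length $n$ of the homogeneous series, interpolating a partial type-preserving function $f\colon T\to A$ with $T\subseteq A^k$ step by step along the chain $\mu_0<\mu_1<\dots<\mu_n$. The invariant we maintain is: for each $i$ there is a polynomial $p_i$ with $p_i(\vb{t}) \mathrel{\mu_{n-i}} f(\vb{t})$ for all $\vb{t}\in T$.

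The first step is the quotient $\ab{A}/\mu_{n-1}$. Since $\mu_{n-1}$ is obtained from homogeneity in $\Interval{\mu_{n-2}}{1}$, the top quotient is homogeneous with all prime quotients projective. We use Proposition~\ref{prop:homogenuityandcolpementation} and Lemma~\ref{lemma:projectivity_andTCT} to show that $\ab{A}/\mu_{n-1}$ is, polynomially, either a direct power of a simple neutral algebra or, in the abelian case, polynomially equivalent to a module over a simple ring $M_m(\GF{q})$. The function $f/\mu_{n-1}$ is type preserving on this quotient. In the neutral case strict affine completeness (Hagemann--Herrmann) gives an interpolating polynomial. In the abelian case the (CT$k$) hypothesis (subtype $p$, height $m$, sizes of classes) is exactly the hypothesis of Theorem~\ref{teor:result_Kapl_thesis}, which yields a polynomial $p_1$ with $p_1\restrict{T}\equiv f \pmod{\mu_{n-1}}$.

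For the inductive step, suppose $p_i$ agrees with $f$ modulo $\mu_{n-i}$. We seek a correction polynomial $q$ such that $q(\vb{t}) \mathrel{\mu_{n-i}} p_i(\vb{t})$ and $q(\vb{t}) \mathrel{\mu_{n-i-1}} f(\vb{t})$ for all $\vb{t}\in T$. With the Mal'cev polynomial $d$, this reduces to interpolating the partial function $g(\vb{t}) := d(f(\vb{t}),p_i(\vb{t}),\cdot)$, which lives inside $\mu_{n-i}$-classes, modulo $\mu_{n-i-1}$. Concretely, on each block we must interpolate a function whose values modulo $\mu_{n-i-1}$ lie in $\mu_{n-i}/\mu_{n-i-1}$. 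Condition (SC1) and homogeneity give that $\mu_{n-i}$ is distributive and that the section $\Interval{\mu_{n-i-1}}{\mu_{n-i}}$ is separated from everything above it. Hence no prime quotient above is projective to one inside, and we can use the idea of \cite{IS:PRA} and \cite[Section~8]{AM:TOPC}. For each strictly meet irreducible $\mu$ with $\mu^+$ in the section, (SC1) gives that $(\mu:\mu^+)\le\mu^+$, which lets us separate points by polynomials collapsing into $\mu^+/\mu$-classes. We thus find, for each $\vb{t}$, unary ``localising'' polynomials that are constant mod $\mu_{n-i-1}$ outside a neighbourhood and act as a module isomorphism on the relevant trace. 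Summing via $d$ over $\vb{t}\in T$ assembles $q$. The type-preservation of $f$, that is, preservation of $\rho(\alpha,\beta)$, guarantees that the needed local behaviour is affine and matches what the module result of Theorem~\ref{teor:result_Kapl_thesis} can produce for arity $k$.

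The main obstacle is this inductive step in the abelian case. There we must show that the ``error'' function is, after restriction, a type-preserving $k$-ary partial function on a module over the simple ring coordinatising $\Interval{\mu_{n-i-1}}{\mu_{n-i}}$, with the coordinates varying with the $\mu_{n-i}$-block. The point to verify is that the module structure and the scalars can be chosen uniformly across different blocks. I would first try to use the (AB$p$) condition together with the subtype computation of Theorem~\ref{teor:from_coordinatization_to_TCT} to obtain this uniformity. Centraliser arguments with $[\mu_{n-i},\mu_{n-i}]\le\mu_{n-i-1}$ would then show that polynomials of $\ab{A}$ induce all module-polynomials needed on each block simultaneously. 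After this, the module result finishes the step, and the induction terminates at $\mu_0=\bottom{A}$, giving $p_n\restrict{T}=f$.
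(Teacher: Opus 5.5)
\textbf{Verdict: there is a genuine gap.} The step that carries the whole theorem is never proved, and the mechanism you propose for it fails in the (CT1)(b),(c) cases.

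\textbf{What matches the paper.} Your top-down architecture is the paper's. Its induction on $\card{A}$ through $\ab{A}/\mu_1$ unwinds to refining an interpolation modulo $\mu_j$ to one modulo $\mu_{j-1}$ inside $\ab{A}/\mu_{j-1}$, whose bottom element $\nu:=\mu_j/\mu_{j-1}$ is homogeneous. The passage to functions with image in one $\nu$-class is Lemma~\ref{lemma:fromcosetstotheinfinirtyandbeyond}. That passage must be done point by point, as in Kaarli--Mayr. Recovering $f$ as $d(g(\vb{t}),o,p(\vb{t}))$ from $g(\vb{t})=d(f(\vb{t}),p(\vb{t}),o)$ fails: for $a\mathrel{\nu}b$ with $a\neq b$ and $\card{o/\nu}=1$ one gets $d(d(a,b,o),o,b)=b$.

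\textbf{The missing step.} You need that every $k$-ary type-preserving partial function with image in one $\nu$-class is interpolable. You sketch ``localising polynomials'' and then call the decisive point something you ``would first try''. Three ingredients are missing.
\begin{enumerate}
\item Localisation can only separate points modulo the centraliser $(\Phi(\nu):\nu)=\nu\vee\nu^*$; this is Proposition~\ref{prop:interpolating_on_cosets_abelian_case}. A single $(\nu\vee\nu^*)^k$-class of the domain can meet several $\nu^k$-classes. The missing idea is Proposition~\ref{prop:laproposizione_dell_f_uno}: since $\nu\wedge\nu^*=0$ and $\interval{0}{\nu}\nearrow\interval{\nu^*}{\nu\vee\nu^*}$, the function factors through a function $f_1$ on a single box $v_1/\nu\times\dots\times v_k/\nu$. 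By Lemma~\ref{lemma:projecting_belowo_mu_star}, $f_1$ stays type-preserving.
\item Even then, the coordinate classes $v_i/\nu$ and the target $o/\nu$ are different classes. Under (AB$p$), every nontrivial $\nu$-class is polynomially isomorphic to $o/\nu$ \cite[Theorem~6.13]{Ros24}. This turns $f_1$ into an $m'$-ary function on $o/\nu$ with $m'\le k$, and only then does Theorem~\ref{teor:result_Kapl_thesis} apply (Proposition~\ref{prop:interpolazione_type_preserving_arity_k_caso_ABP}).
\item You say nothing about a non-abelian $\nu$ in an intermediate step. Hagemann--Herrmann is unavailable there because $\ab{A}/\mu_{j-1}$ need not be neutral; the paper uses Proposition~\ref{prop:interpolation_in_a_class_nonAbelia_case}.
\end{enumerate}

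\textbf{Why (AB$p$) cannot supply the uniformity.} (AB$p$) is not assumed in (CT1)(b),(c), and there uniformity genuinely fails. In these cases $\nu$ is an atom of subtype $q\in\{2,3\}$, but distinct $\nu$-classes may have sizes $2,4,8$ (resp.\ $3,9$) simultaneously. They are then modules over different rings $\ab{M}_{n_o}(\GF{q})$. Theorem~\ref{teor:result_Kapl_thesis} concerns functions from one module to itself, so it does not even apply to $f_1\colon v/\nu\to o/\nu$. Your remark that (CT$k$) is ``exactly'' the hypothesis of that theorem holds only for the topmost quotient, which has a single class.

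The paper handles this case with a separate unary argument (Propositions~\ref{prop:interpolation_in_the_simple_type_two_case} and~\ref{prop:interpolazione_coset_centralizzatore_caso_type_preserving_simple}):
\begin{enumerate}
\item Normalise so that $v\mapsto o$. Lemma~\ref{lemma:technical_lemma_kapl} and preservation of $\rho(0,\nu)$ then show that $f_1$ is the restriction of a $\GF{q}$-linear map $\GF{q}^{n_v}\to\GF{q}^{n_o}$. This is where the bounds $\card{v/\nu}\le 8$, resp.\ $\le 9$, enter.
\item By Freese's coordinatization and Theorem~\ref{teor:from_coordinatization_to_TCT}, the maps induced by $\{p\in\POL\ari{1}\ab{A}\mid p(v)=o\}$ form a dense set of linear maps between these spaces. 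Hence one of them matches $f_1$.
\end{enumerate}
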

For proving converses of this result, we had to assume that not only $\ab{A}$,
but even each of its homomorphic images is strictly $k$-polynomially rich;
the class of homomorphic images of $\ab{A}$ is abbreviated by $\Hh\ab{A}$.
In addition, at one place we assume that $\ab{A}$ is \emph{congruence regular},
which means that for all congruences $\alpha, \beta$ of $\ab{A}$ and all
$a \in A$ with $a/\alpha = a/\beta$, we have $\alpha = \beta$.
\begin{theorem}\label{teor:char_hered_scr_1_p_r}
	Let $\ab{A}$ be a finite congruence regular
	 algebra with a Mal'cev polynomial. 
	Then each algebra in $\Hh\ab{A}$ is strictly $1$-polynomially rich
	if and only if 
	$\ab{A}$ satisfies (SC1) and 
	each homogeneous series of $\Con\ab{A}$ is (CT1). 
\end{theorem}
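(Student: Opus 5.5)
We prove the two directions separately. The ``if'' direction is short given Theorem~\ref{teor:main_result_partial_type_preserving}; the ``only if'' direction needs several counterexample constructions and is where the work lies.

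\emph{The ``if'' direction.} Assume $\ab{A}$ satisfies (SC1) and every homogeneous series of $\Con\ab{A}$ is (CT1). Let $\ab{A}/\theta\in\Hh\ab{A}$.
\begin{enumerate}
\item I first check that $\ab{A}/\theta$ again satisfies (SC1). Strictly meet irreducible congruences of $\ab{A}/\theta$ correspond to strictly meet irreducible $\mu\ge\theta$ of $\ab{A}$. Centralizers pass to quotients: $(\mu/\theta:\mu^+/\theta)=(\mu:\mu^+)/\theta$, since commutators in Mal'cev algebras are computed modulo $\theta$.
\item Next I take a homogeneous series of $\Con(\ab{A}/\theta)\cong\Interval{\theta}{1}$. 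By the results of Section~\ref{sec:homegeneous_seq} (homogeneous elements in finite modular lattices arising from (SC1) algebras, \cite[Prop.~8.2, 8.8]{AM:TOPC}), I would extend it downward through $\Interval{0}{\theta}$ to a homogeneous series of $\Con\ab{A}$.
\item The quotients of the extended series lying above $\theta$ keep their type, subtype, interval height and block sizes under passing to $\ab{A}/\theta$. Hence (CT1) transfers to the series in $\ab{A}/\theta$.
\end{enumerate}
Theorem~\ref{teor:main_result_partial_type_preserving} with $k=1$ then gives that $\ab{A}/\theta$ is strictly $1$-polynomially rich.

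\emph{The ``only if'' direction.} Assume every member of $\Hh\ab{A}$ is strictly $1$-polynomially rich.

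First I derive (SC1), by contraposition. Suppose $\mu$ is strictly meet irreducible with $(\mu:\mu^+)\not\le\mu^+$. Pass to $\ab{A}/\mu$, which is subdirectly irreducible with monolith $\mu^+/\mu$ and has centralizer of the monolith strictly above the monolith. Following the argument of \cite{IS:PRA} and \cite{AI:PIIE}, I build a unary partial function that is type-preserving but not polynomial.
\begin{itemize}
\item The construction lives on a pair of monolith classes that are related by the centralizer but not by the monolith.
\item It exploits that polynomials must act ``affinely compatibly'' across centralizing classes, while the relations $\rho(\alpha,\beta)$ and the congruences do not force this.
\end{itemize}

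Second, with (SC1) established, I fix a homogeneous series and a quotient $\quotienttct{\mu_{i-1}}{\mu_i}$, and pass to $\ab{A}/\mu_{i-1}$.
\begin{enumerate}
\item By Proposition~\ref{prop:homogenuityandcolpementation} and Lemma~\ref{lemma:projectivity_andTCT}, the quotient is tame.
\item If its type is $\tp{3}$, it is (CT1) and there is nothing to show.
\item If its type is $\tp{2}$, I coordinatize the abelian part (Theorem~\ref{teor:from_coordinatization_to_TCT}). The $\beta$-classes modulo $\alpha$ then become modules over a simple ring, namely matrices over $\GF{q}$ with $q$ the subtype, and unary type-preserving functions restrict to $1$-ary type-preserving functions of such modules.
\item Theorem~\ref{teor:result_Kapl_thesis} classifies strictly $1$-polynomially rich modules. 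Comparing with it should force the alternatives in Definition~\ref{def:ex_main_theorem}(1): subtype $p\in\{2,3,5\}$ with (AB$p$), or height $m=1$ with block sizes in $\{1,2,4,8\}$ or $\{1,3,9\}$.
\end{enumerate}

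I expect the main obstacle to be this reduction to modules, and in particular the claim about block sizes. Two things must be verified.
\begin{itemize}
\item A failure of the block-size condition at a single class $a/\alpha$ must yield a non-interpolable partial function on the whole algebra. Here congruence regularity is needed: it ensures that every $\beta$-class carries the same interval structure, so the module counterexample can be lifted uniformly.
\item The lifted function must be extended by the identity, or by a constant, elsewhere so that it remains congruence- and $\rho$-preserving.
\end{itemize}
I would first try to take the counterexample on one $\beta$-class and extend it by the identity on all other points. I would then check preservation of $\rho(\alpha',\beta')$ for the other type~$\tp{2}$ quotients using the homogeneity (non-projectivity) conditions of Definition~\ref{def:homogeneous}.
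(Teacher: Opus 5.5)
\textbf{``If'' direction.} Your outline has the same structure as the paper: transfer (SC1) and (CT1) to $\ab{A}/\theta$, then apply Theorem~\ref{teor:main_result_partial_type_preserving}. But your step~2 is false: a homogeneous series of $\Con(\ab{A}/\theta)$ need not extend downward to a homogeneous series of $\Con\ab{A}$.

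Take $\ab{A}=(\Z_2^2,+)$. Then $\Con\ab{A}\cong M_3$, and all prime quotients of $M_3$ are projective. For an atom $\theta$ this gives $\interval{\bottom{A}}{\theta}\leftrightsquigarrow\interval{\theta}{\uno{A}}$, so no atom is homogeneous. The only homogeneous series of $\Con\ab{A}$ is therefore $(\bottom{A},\uno{A})$, which has height $2$ and is (CT1) via (AB$2$). The series of $\ab{A}/\theta$ coming from $(\theta,\uno{A})$ has height $1$ and lies on no homogeneous series of $\Con\ab{A}$. So your step~3 has nothing to act on.

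The missing ingredient is Theorem~\ref{teor:main_result_on_homogeneous_sequences_on_quotients}:
\begin{itemize}
\item Take a maximal chain through $\theta$ and the $\nu_j$, and one through the $\mu_i$.
\item By Dedekind--Birkhoff and homogeneity, the bottom prime quotient $\interval{\nu_{j-1}}{\gamma}$ of each new layer is projective to a bottom prime quotient $\interval{\mu_{i-1}}{\delta}$ of some old layer.
\item Corollary~\ref{cor:projectivity_preserves_ext_types_and_dimension} then transfers type, subtype and local dimensions, hence block sizes and (AB$p$).
\item Using homogeneity and \cite[Proposition~7.6]{AM:TOPC}, every prime quotient of $\interval{\nu_{j-1}}{\nu_j}$ is matched into that same layer, so $h_j\le h_i$.
\end{itemize}
This height bound is exactly what keeps the (CT1) clauses requiring $m=1$ valid in the quotient.

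\textbf{``Only if'' direction.} Your derivation of (SC1) is only a pointer, and the pointers do not give a unary function. The function of \cite{IS:PRA} is binary, which is why Theorem~\ref{teor:sci_necessary}(1) needs $k\ge 2$. The setting of \cite{AI:PIIE} is expanded groups, which are automatically congruence regular.

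This is precisely where congruence regularity is needed, and your sketch never uses it here. The paper argues as follows.
\begin{itemize}
\item By Lemma~\ref{lemma:prop3.3AicIdz04implicazione_da_due_a_uno}, a failure of (SC1) gives join irreducible $\alpha,\beta$ with $[\alpha,\beta]\le\alpha^-\prec\alpha\le\beta^-\prec\beta$.
\item Proposition~\ref{prop:condition_on_sc1_failurs} shows that a strictly $1$-polynomially rich algebra has no $o,a,b$ with $\Cg{\{(o,a)\}}=\alpha$ and $\Cg{\{(o,b)\}}=\beta$. The partial map $a,o,b\mapsto o$, $d(a,o,b)\mapsto a$ is type-preserving, yet any interpolating polynomial forces $(a,o)\in[\alpha,\beta]\le\alpha^-$.
\item Regularity, which passes to quotients, supplies such a triple: write $\alpha=\Cg{\{(o,a)\}}$ and pick $b\in o/\beta\setminus o/\beta^-$. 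Then $\Cg{\{(o,b)\}}=\beta$ by join irreducibility.
\end{itemize}
Passing to the subdirectly irreducible quotient $\ab{A}/\mu$ does not by itself produce this configuration.

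The module step that you flag as the main obstacle needs neither regularity nor any extension. A type-preserving $f$ with $\dom f\subseteq o/\mu$ and values in $o/\mu$ is already a partial function on $\ab{A}$. If $p$ interpolates $f$, so does $e^o_\mu\circ p$. Hence, by Lemma~\ref{lemma:restrizione_classe_k_poly_rich_sse:modulo}, each class module is strictly $1$-polynomially rich. Reading Theorem~\ref{teor:result_Kapl_thesis} class by class then gives (CT1); this is Lemma~\ref{lemma:if_first_interval_not_complianta_then_no_bueno}.

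Extending $f$ by the identity would actually break compatibility. Suppose a congruence $\theta$ with $\theta\wedge\mu=\bottom{A}$ links a point $x$ with $f(x)\neq x$ to a point outside $o/\mu$. Then compatibility would force $(f(x),x)\in\theta\wedge\mu=\bottom{A}$, a contradiction.
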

\begin{theorem}\label{teor:char_hered_scr_23_p_r}
	Let $\ab{A}$ be a finite algebra with a Mal'cev polynomial, and let $k\in\{2, 3\}$. 
	Then the 
	following are equivalent:
	\begin{enumerate}
		\item Each algebra in $\Hh\ab{A}$ is strictly $k$-polynomially rich;\label{item:HA_in_char_hered_scr_23_p_r}
		\item $\ab{A}$ satisfies (SC1) and \label{item:SC1_condizioni_mu_in_char_hered_scr_23_p_r}
		each homogeneous series $(\mu_0\dots, \mu_{n})$ of $\Con\ab{A}$
		is (CT$k$). 
	\end{enumerate}
\end{theorem}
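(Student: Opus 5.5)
The implication (2)$\Rightarrow$(1) should follow from Theorem~\ref{teor:main_result_partial_type_preserving} once we check that conditions (SC1) and (CT$k$) pass to homomorphic images. Let $\ab{B}\cong\ab{A}/\theta$. Then $\Con\ab{B}$ is the interval $\Interval{\theta}{\uno{A}}$. Commutators and centralizers in $\ab{A}/\theta$ are computed from those in $\ab{A}$ above $\theta$, and strictly meet irreducible congruences of $\ab{B}$ are exactly the images of the strictly meet irreducible $\mu\ge\theta$ of $\ab{A}$. Hence (SC1) is inherited. For (CT$k$), I would take a homogeneous series $(\nu_0,\dots,\nu_r)$ of $\Interval{\theta}{\uno{A}}$ and relate it to a homogeneous series of $\Con\ab{A}$. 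The plan is to use the structure results of Section~\ref{sec:homegeneous_seq}, in the spirit of \cite[Propositions~8.2 and~8.8]{AM:TOPC}: homogeneous elements are distributive, and the intervals $\interval{\mu_{i-1}}{\mu_i}$ are simple complemented modular lattices.

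Concretely, I would show that the homogeneous series of $\Con\ab{A}$ is essentially unique. Then $\nu_j$ should arise as $\theta\vee\mu_{i}$ for suitable $i$, and each quotient $\quotienttct{\nu_{j-1}}{\nu_j}$ is a join-image of $\quotienttct{\mu_{i-1}}{\mu_i}$. Its prime quotients are therefore perspective to prime quotients of $\interval{\mu_{i-1}}{\mu_i}$. Since projectivity preserves type and subtype (Lemma~\ref{lemma:projectivity_andTCT}), the height can only drop, and the class sizes $\card{(a/\alpha)/(\beta/\alpha)}$ of abelian prime quotients are preserved along perspectivities in a Mal'cev algebra. This gives (CT$k$) for the image series. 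Theorem~\ref{teor:main_result_partial_type_preserving} then yields (1).

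For (1)$\Rightarrow$(2), I would argue by contraposition, separately for each failing condition, and produce a homomorphic image that is not strictly $k$-polynomially rich.

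If (SC1) fails at $\mu$, pass to $\ab{A}/\mu$, which is subdirectly irreducible with monolith $\mu^+/\mu$ and centralizer strictly above the monolith. On a suitable pair of classes, build a unary partial type-preserving function that is constant modulo the monolith on one class but nonconstant on another centralizing class. Commutator theory forbids any polynomial from doing this, as in the argument of \cite{IS:PRA} and \cite{AI:PIIE}. Arity $1$ suffices, so this covers $k=2,3$.

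If some quotient $\quotienttct{\mu_{i-1}}{\mu_i}$ is not (CT$k$), pass to $\ab{A}/\mu_{i-1}$. There the interval below $\mu_i/\mu_{i-1}$ behaves, on a $\mu_i$-class, like a module over the simple ring of Theorem~\ref{teor:from_coordinatization_to_TCT} (the coordinatization). The module's dimension is governed by $m$, its field is governed by the subtype, and the number of $\alpha$-classes in a $\beta$-class is governed by (AB$p$). Theorem~\ref{teor:result_Kapl_thesis} supplies a $k$-ary partial function on that module which preserves all its submodules and $\rho$-relations but is not a module polynomial. I would transport it to a partial function on $A$ supported inside one $\mu_i$-class. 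It must then be shown to be type-preserving for all of $\Con(\ab{A}/\mu_{i-1})$, and its non-interpolability must be pulled back, using that polynomials of $\ab{A}$ restricted to the class induce module polynomials.

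The main obstacle is this transfer step. Two things are needed: relations $\rho(\alpha,\beta)$ for prime quotients outside the interval must impose no extra constraint, and congruences not comparable with $\mu_i$ must be preserved. I would first try to use homogeneity, namely that no prime quotient above $\mu_i$ is projective to one below it. Together with (SC1), which has already been established, this should make such relations trivial on a single $\mu_i$-class.
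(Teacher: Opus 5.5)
The gap is in the (SC1) step of (1)$\Rightarrow$(2). Your claim that ``arity $1$ suffices'' does not follow from the argument you cite. The function in the proof of \cite[Lemma~12]{IS:PRA} is \emph{binary}. The unary construction available here is Proposition~\ref{prop:condition_on_sc1_failurs}, the Mal'cev version of the expanded-group argument. It needs a failure $(\alpha,\beta)$ of (SC1) together with a \emph{common base point} $o$ and elements $a,b$ with $\mathrm{Cg}(o,a)=\alpha$ and $\mathrm{Cg}(o,b)=\beta$. Only then is the map $a,o,b\mapsto o$, $d(a,o,b)\mapsto a$ congruence preserving. Only then does $a=p(d(a,o,b))\mathrel{[\alpha,\beta]}d(p(a),p(o),p(b))=o$ give the contradiction $\alpha\le[\alpha,\beta]\le\alpha^-$. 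In expanded groups the base point $0$ comes for free. In general it is guaranteed by congruence regularity, which is exactly why Theorem~\ref{teor:sci_necessary}\eqref{item:regular_in_SC1_nce} and Theorem~\ref{teor:char_hered_scr_1_p_r} carry that hypothesis. Passing to the subdirectly irreducible quotient does not supply it. With monolith $M$ and a join irreducible $\beta\le(0:M)$ above $M$, you need one $o$ with $\card{o/M}>1$ and $o/\beta\neq o/\beta^-$. Nothing forces these to occur in a common class of a non-regular Mal'cev algebra. Since $k\ge 2$, the repair is immediate: strict $k$-polynomial richness implies strict $2$-polynomial richness (add dummy arguments), and the binary function of \cite[Lemma~12]{IS:PRA} then yields (SC1). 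This is Theorem~\ref{teor:sci_necessary}\eqref{item:k_in_sc1_necessary}, which is what the paper uses.

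There are also two smaller points. First, in (2)$\Rightarrow$(1), ``the homogeneous series is essentially unique'' is false as stated. For $(\Z_6,+)$, both atoms of the four-element Boolean congruence lattice are homogeneous, so there are two different series. You also give no argument that an \emph{arbitrary} homogeneous series of $\Interval{\theta}{\uno{A}}$ consists of joins $\theta\vee\mu_i$. You do not need this, because Theorem~\ref{teor:main_result_partial_type_preserving} only requires \emph{one} (CT$k$) homogeneous series of $\ab{A}/\theta$. The sequence $(\theta\vee\mu_i)_i$ with repetitions deleted is one:
\begin{itemize}
\item $\interval{\theta\vee\mu_{i-1}}{\theta\vee\mu_i}$ is isomorphic to an upper interval of $\interval{\mu_{i-1}}{\mu_i}$, so it is again simple complemented modular;
\item each of its prime quotients $\interval{\gamma}{\delta}$ satisfies $\interval{\gamma\wedge\mu_i}{\delta\wedge\mu_i}\nearrow\interval{\gamma}{\delta}$;
\item a projectivity inside $\Interval{\theta\vee\mu_{i-1}}{\uno{A}}$ is also one inside $\Interval{\mu_{i-1}}{\uno{A}}$, so homogeneity transfers.
\end{itemize}
Your type/subtype/class-size/height comparison then finishes the argument. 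The paper proves the stronger statement that every homogeneous series of $\ab{A}/\theta$ is (CT$k$), in Theorem~\ref{teor:main_result_on_homogeneous_sequences_on_quotients}, via Dedekind--Birkhoff and \cite[Proposition~7.6]{AM:TOPC}.

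Second, the transfer step you flag as the main obstacle is routine and needs neither homogeneity nor (SC1). On a single $\mu$-class, a congruence $\gamma$ acts as $\gamma\wedge\mu$. Restricted to the class, $\rho(\alpha,\beta)$ is either a pure congruence condition (when $\beta\wedge\mu\le\alpha$) or coincides with $\rho(\alpha\wedge\mu,\beta\wedge\mu)$, where $\interval{\alpha\wedge\mu}{\beta\wedge\mu}\nearrow\interval{\alpha}{\beta}$. This is the content of Lemma~\ref{lemma:restrizione_classe_k_poly_rich_sse:modulo}, which together with Theorem~\ref{teor:result_Kapl_thesis} gives Lemma~\ref{lemma:if_first_interval_not_complianta_then_no_bueno}.
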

\begin{theorem}\label{teor:char_hered_scr_4_p_r}
	Let $\ab{A}$ be a finite algebra with a Mal'cev polynomial, 
	and 
	let $k\in\N\setminus\{1,2,3\}$.
	Then the following are equivalent
	\begin{enumerate}
		\item $\ab{A}$ is strictly $2$-affine complete;\label{item:2-affine_in_char_hered_scr_4_p_r}
		\item $\ab{A}$ is strictly $l$-affine complete for each $l\in\N\setminus\{1\}$;\label{item:l-affine_in_char_hered_scr_4_p_r}
		\item each algebra in $\Hh\ab{A}$ is strictly $k$-polynomially rich;\label{item:HAk_in_char_hered_scr_4_p_r}
		\item each algebra in $\Hh\ab{A}$ is strictly 4-polynomially rich;\label{item:HA_in_char_hered_scr_4_p_r}
		\item $\ab{A}$ is congruence neutral. \label{item:neutral_in_char_hered_scr_4_p_r}
	\end{enumerate}
\end{theorem}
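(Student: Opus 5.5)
I would prove Theorem~\ref{teor:char_hered_scr_4_p_r} by the cycle of implications (\ref{item:neutral_in_char_hered_scr_4_p_r})$\Rightarrow$(\ref{item:l-affine_in_char_hered_scr_4_p_r})$\Rightarrow$(\ref{item:2-affine_in_char_hered_scr_4_p_r})$\Rightarrow$(\ref{item:neutral_in_char_hered_scr_4_p_r}), together with (\ref{item:neutral_in_char_hered_scr_4_p_r})$\Rightarrow$(\ref{item:HAk_in_char_hered_scr_4_p_r})$\Rightarrow$(\ref{item:HA_in_char_hered_scr_4_p_r})$\Rightarrow$(\ref{item:neutral_in_char_hered_scr_4_p_r}).

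\emph{Neutrality gives everything.} For (\ref{item:neutral_in_char_hered_scr_4_p_r})$\Rightarrow$(\ref{item:l-affine_in_char_hered_scr_4_p_r}), use that a finite congruence neutral Mal'cev algebra is strictly affine complete by the result of Hagemann and Herrmann \cite{HH:ALEC}; in particular it is strictly $l$-affine complete for every $l$. For (\ref{item:neutral_in_char_hered_scr_4_p_r})$\Rightarrow$(\ref{item:HAk_in_char_hered_scr_4_p_r}), note that neutrality passes to homomorphic images, since the congruences of a quotient form an interval and commutators in quotients are computed modulo the kernel. A neutral algebra has no prime quotient of type $\tp{2}$, so type preservation is just congruence preservation, and strict affine completeness yields strict $k$-polynomial richness. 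The implications (\ref{item:l-affine_in_char_hered_scr_4_p_r})$\Rightarrow$(\ref{item:2-affine_in_char_hered_scr_4_p_r}) and (\ref{item:HAk_in_char_hered_scr_4_p_r})$\Rightarrow$(\ref{item:HA_in_char_hered_scr_4_p_r}) are trivial. For the latter, a $4$-ary partial type preserving function can be viewed as a $k$-ary one depending only on its first four arguments, with domain $T\times A^{k-4}$; it is still type preserving, and an interpolating $k$-ary polynomial yields a $4$-ary one by fixing constants.

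\emph{Strict $2$-affine completeness gives neutrality.} By \cite[Proposition~5.2]{Ai:OHAH}, (\ref{item:2-affine_in_char_hered_scr_4_p_r}) implies strict affine completeness. Interpolating the partial near-unanimity function $u_3$, which preserves all binary relations by Lemma~\ref{lem:nu}, gives a near-unanimity polynomial, so the algebra has a distributive congruence lattice. Suppose some prime quotient $\alpha\prec\beta$ has $[\beta,\beta]\le\alpha$. I would build a partial congruence preserving function that is not polynomial, modelled on the $\Z_4$ and AND examples of the introduction. Choose $a\,\beta\, b$ with $(a,b)\notin\alpha$, and define $f$ on $\{a,b\}^2$ by sending $(b,b)$ to an element $c$ with $c\,\beta\, a$ and $(a,c)\notin\alpha$, and the other three points to $a$. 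Congruence preservation should be checkable from distributivity: the principal congruences generated by the relevant pairs are comparable or can be controlled, so one can arrange that whenever two inputs are $\theta$-related, the outputs are too. Any polynomial, however, is affine modulo $\alpha$ on a $\beta$-class, by the abelianness $[\beta,\beta]\le\alpha$. This forces $p(b,b)\,\alpha\, d(p(b,a),p(a,a),p(a,b))=a$, a contradiction. This step is where I expect the most care: choosing $c$ and checking congruence preservation for all congruences, not just $\alpha$ and $\beta$. It may be easier to apply \cite[Theorem~5.1]{Ai:OHAH} and its known consequence that strictly affine complete Mal'cev algebras are neutral, if that is citable.

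\emph{Richness of all images gives neutrality.} For (\ref{item:HA_in_char_hered_scr_4_p_r})$\Rightarrow$(\ref{item:neutral_in_char_hered_scr_4_p_r}), suppose a type $\tp{2}$ prime quotient $\alpha\prec\beta$ exists. Passing to $\ab{A}/\alpha$ and then to the quotient by a maximal congruence $\mu\ge\alpha$ with $\mu\not\ge\beta$, we may assume that $\ab{A}$ is subdirectly irreducible with abelian monolith $\beta$ and that $\ab{A}$ is strictly $4$-polynomially rich. I would then mimic the failure of $\algop{\Z_2}{+}$ to be strictly $4$-polynomially rich (Theorem~\ref{teor:result_Kapl_thesis}). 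On a $4$-dimensional cube inside one monolith class, viewed as a vector space over a field via the coordinatization of the monolith (Theorem~\ref{teor:from_coordinatization_to_TCT}), take a function given by a quartic monomial that preserves all relations of arity at most $4$, in particular all congruences and the $\rho$'s. Since the monolith is abelian, polynomials act affinely there, so this function cannot be interpolated. The delicate part is lifting the module counterexample into $\ab{A}$ while keeping preservation of the relations $\rho(\gamma,\delta)$ for prime quotients outside the monolith; this is the main obstacle. A simpler route is this: by the argument of Theorem~\ref{thm:general1}, $u_5$ preserves all relations of arity at most $4$, so strict $5$-richness would give distributivity. Here we have only strict $4$-richness, so the module argument seems necessary.
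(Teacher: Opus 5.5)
You handle everything except (\ref{item:HA_in_char_hered_scr_4_p_r})$\Rightarrow$(\ref{item:neutral_in_char_hered_scr_4_p_r}) correctly.

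\begin{itemize}
\item The implications (\ref{item:neutral_in_char_hered_scr_4_p_r})$\Rightarrow$(\ref{item:l-affine_in_char_hered_scr_4_p_r})$\Rightarrow$(\ref{item:2-affine_in_char_hered_scr_4_p_r}) and (\ref{item:neutral_in_char_hered_scr_4_p_r})$\Rightarrow$(\ref{item:HAk_in_char_hered_scr_4_p_r})$\Rightarrow$(\ref{item:HA_in_char_hered_scr_4_p_r}) are correct.
\item For (\ref{item:2-affine_in_char_hered_scr_4_p_r})$\Rightarrow$(\ref{item:neutral_in_char_hered_scr_4_p_r}) the paper also just cites \cite[Proposition~5.2]{Ai:OHAH}.
\item Your own construction there works at once if you take $c:=b$. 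Every congruence either contains $(a,b)$ or separates the points of $\{a,b\}^2$, so $f$ is congruence preserving. Then $[\beta,\beta]\le\alpha$ together with $p(a,a)=p(a,b)$ forces $p(b,b)\mathrel{\alpha}p(b,a)=a$, so $p(b,b)\neq b$. No majority polynomial or distributivity is needed.
\end{itemize}

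\textbf{The gap.} Your reduction for (\ref{item:HA_in_char_hered_scr_4_p_r})$\Rightarrow$(\ref{item:neutral_in_char_hered_scr_4_p_r}) to a subdirectly irreducible quotient with abelian monolith is correct. You then stop at the decisive step, and the justification you offer is false: the Case~6-type ``quartic monomial'' does not preserve all relations of arity at most $4$.
\begin{itemize}
\item Let $f$ be a $4$-ary partial function and $t_1,\dots,t_4\in\operatorname{dom} f$. Preserving $B:=\{(t_1(j),\dots,t_4(j))\mid j\le 4\}\subseteq A^4$ forces $(f(t_1),\dots,f(t_4))\in B$. So any such $f$ preserving all $4$-ary relations agrees with a coordinate projection on every four points of its domain.
\item The Case~6 function vanishes on the four points with a single nonzero coordinate, where no projection vanishes.
\end{itemize}
It preserves only the congruences and the $\rho$'s of the module. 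Transferring exactly that to $\ab{A}$ is what you call the main obstacle, and you do not resolve it.

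\textbf{How to close it.} Let $\beta$ be the monolith, which is an atom, and pick $o$ with $\card{o/\beta}>1$.
\begin{itemize}
\item Every congruence meets $(o/\beta)^2$ in either the equality or the total relation.
\item For every type $\tp{2}$ quotient $\gamma\prec\delta$, the set $\rho(\gamma,\delta)\cap(o/\beta)^4$ is one of three things: the diagonal, all of $(o/\beta)^4$, or $\{\vec{x}\mid d(x_1,x_2,x_3)=x_4\}$. The last case occurs when $\gamma\wedge\beta=0_A$ and $\beta\le\delta$.
\item Hence a partial $f\colon T\subseteq(o/\beta)^4\to o/\beta$ is type-preserving in $\ab{A}$ if and only if it is type-preserving in the simple module $(o/\beta;+_o)\cong\ab{D}^{(n\times 1)}$. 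This is Lemma~\ref{lemma:restrizione_classe_k_poly_rich_sse:modulo}, which needs neither (SC1) nor homogeneity.
\item Any polynomial interpolating $f$ on a nonempty $T$ maps $(o/\beta)^4$ into $o/\beta$, so it restricts to a module polynomial.
\item Theorem~\ref{teor:result_Kapl_thesis} says $\ab{D}^{(n\times 1)}$ is never strictly $4$-polynomially rich, which gives the contradiction.
\end{itemize}

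\textbf{Comparison with the paper.} Completed this way, your argument is a genuinely different route. The paper obtains (SC1) from Theorem~\ref{teor:sci_necessary} and takes a homogeneous series. It then uses Lemma~\ref{lemma:if_first_interval_not_complianta_then_no_bueno} and Proposition~\ref{prop:typesandhomogenuity} to show every step is a non-abelian atom, and concludes with the Dedekind--Birkhoff theorem. Your subdirectly irreducible reduction bypasses (SC1) and homogeneous series altogether.
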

The proofs will be given in
Section~\ref{sec:proof_main_results}.

\section{Preliminaries on modules over simple rings}\label{sec.preliminary_modules}
In this section we collect some 
preliminary results on modules over simple rings.
We start with the following consequence of Lemma~2.6 and Lemma~2.9
from \cite{AM:PCOG}:
\begin{lemma}\label{lemma:on_notions_of_type_preserving_and-commutatora_and_rho_for_modules}
	Let $\ab{V}$ be a module over a ring $\ab{R}$, 
	let $k\in \N$, let
	$T\subseteq V^k$, and let $f\colon T\to V$. 
	Then $f$ is type-preserving according to Definition~\ref{def:partial_type_preserving}
	if and only if $f$ preserves the congruences of $\ab{V}$ and for each 
	pair of  $\ab{R}$-submodules $A,B$ with $A\prec B$,
	the function $f$ preserves the relation
	\[
	\rho(A,B):=\{(x_1, x_2, x_3, x_4)\in V^4\mid x_1-x_2+x_3-x_4\in A,\,
	x_1-x_2,\, x_2-x_3\in B\}.
	\]
\end{lemma}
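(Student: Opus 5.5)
The plan is to reduce the statement to two facts about modules. First, in $\ab{V}$ the congruences are exactly the relations $\theta_A=\{(x,y)\mid x-y\in A\}$ with $A$ a submodule, and commutators are trivial. Second, the Mal'cev polynomial is essentially unique. Once these are in place, the relation $\rho(\alpha,\beta)$ of Definition~\ref{def:partial_type_preserving} coincides with $\rho(A,B)$ whenever $\alpha=\theta_A\prec\beta=\theta_B$. I would take \cite[Lemma~2.6 and Lemma~2.9]{AM:PCOG} as the source for the commutator facts and for the identification of $\rho$, and check the remaining details directly.

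\emph{Step 1.} Since $\ab{V}$ is abelian, $[\theta_B,\theta_B]=0\le\theta_A$ for every prime quotient $\theta_A\prec\theta_B$. Hence every prime quotient has type $\tp{2}$, and the type-$\tp{2}$ quotients are exactly the pairs $A\prec B$ of submodules. The lattice isomorphism $A\mapsto\theta_A$ ensures that $A\prec B$ as submodules if and only if $\theta_A\prec\theta_B$ as congruences.

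\emph{Step 2.} I would fix the Mal'cev polynomial $d$ used in Definition~\ref{def:partial_type_preserving}. On a module, any Mal'cev polynomial satisfies $d(x,y,z)=x-y+z$. To see this, recall that polynomials of a module are affine, so $d(x,y,z)=r x+s y+t z+c$ with $r,s,t\in R$ acting as ring elements (up to the module structure of polynomial operations). The Mal'cev identities then force $r=t=1$, $s=-1$ and $c=0$. Alternatively this is the uniqueness of the Mal'cev operation in abelian Mal'cev algebras, noted in the introduction's footnote.

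Substituting, the conditions $a_1\mathrel{\theta_B}a_2\mathrel{\theta_B}a_3$ and $d(a_1,a_2,a_3)\mathrel{\theta_A}a_4$ become $a_1-a_2,\ a_2-a_3\in B$ and $a_1-a_2+a_3-a_4\in A$. This gives exactly $\rho(\theta_A,\theta_B)=\rho(A,B)$.

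\emph{Step 3.} With these identifications, the definition of type-preserving reads: $f$ preserves all congruences, and it preserves $\rho(\theta_A,\theta_B)=\rho(A,B)$ for every $A\prec B$. This is precisely the condition in the lemma, and both directions follow at once.

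The only step needing care is Step 2, namely justifying that the Mal'cev polynomial is $x-y+z$ for a general (possibly non-unital) ring action. If the affine-form argument is awkward in that setting, I would instead argue abstractly: on each $\theta_B$-class the operation $d$ is a Mal'cev operation of an abelian algebra, so it coincides with $x-y+z$ on that class. That is all the definition of $\rho$ uses, since $a_1,a_2,a_3$ are required to be $\theta_B$-related.
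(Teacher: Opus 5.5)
Your argument is correct and is essentially the paper's route. The paper gives no proof beyond citing \cite[Lemma~2.6 and Lemma~2.9]{AM:PCOG}, and your three identifications are exactly what that citation supplies: congruences with submodules, type $\tp{2}$ prime quotients with all (abelian) prime quotients $\theta_A\prec\theta_B$, and the Mal'cev polynomial $d$ with $x-y+z$ on $\theta_B$-related triples. One caveat: when $\ab{V}$ is infinite, read ``type $\tp{2}$'' via the commutator criterion $[\beta,\beta]\le\alpha$ from the introduction, since TCT types are only defined for finite algebras.
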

We write $\GF{q}$ for the finite field with $q$ elements.
For a finite field $\ab{D}$ and 
$n,m\in\N$, we let $\ab{M}_n(\ab{D})$ be the 
ring of $n\times n$-matrices with entries in $D$,
and we let $\ab{D}^{(n\times m)}$ 
be the $\ab{M}_n(\ab{D})$-module of 
$n\times m$-matrices with entries in $D$.
For a set $T\subseteq D^{(n\times m)}$
we let $\sottospazio{}{T}$ 
be the $\ab{D}$-subspace of $D^{(n\times m)}$
generated by $T$, and we let
$\submodule{T}$ be the $\ab{M}_n(\ab{D})$-submodule
of $\ab{D}^{(n\times m)}$ generated 
by $T$. 
For $i\in\finset{n}$, $j\in \finset{m}$,
$\vec{a}\in D^n$, we let:
$\vec{a}^T$ be the $1\times n$-matrix whose columns 
are the components of $\vec{a}$; 
$\vec{e}_i^n$ be the element of $D^n$ that is $1$ in the 
$i$-th component and zero elsewhere;
$\vec{1}_n$ be the vector of $D^n$ whose entries are all $1$;
$\vec{0}_n$ be the vector of $D^n$ whose entries are all $0$;
$\vec{0}_{n\times m}$ be the matrix in $D^{(n\times m)}$ whose entries are all zero;
$\vec{0}$ be the zero of the ring $\ab{M}_n(\ab{D})$;
$E_{i,j}$ be the matrix in $D^{(n\times m)}$ whose entry $(i,j)$ is $1$
and whose all other entries are $0$;
$M_{i,j}$ be the matrix in $\ab{M}_n(\ab{D})$ whose entry $(i,j)$ is $1$
and whose all other entries are $0$;
$E_{\vec{a}}^j$ be the matrix from $D^{(n\times m)}$ 
whose $j$-th column is the vector $\vec{a}$ and whose all other entries are $0$;
$M_{\vec{a}}^j$ be the matrix from $\ab{M}_n(\ab{D})$ 
whose $j$-th column is the vector $\vec{a}$ and whose all other entries are $0$.
Finally, for $k\in\N$, $l\in \finset{k}$, and 
$\matrixvec{x}\in (D^{(n\times m)})^{k}$, we let $\matrixvec{x}(l)$ be 
the $l$-th component of $\matrixvec{x}$.  

Next we state and prove a series of lemmata originally proved in~\cite{Kap05}.
The first lemma tells that we can restrict ourselves 
to functions that preserve $0$. 
\begin{lemma}\label{lemma:reducing_to_zero_preserving}
	Let $\ab{D}$ be a finite field, let $n,m, k\in \N$, and let us
	assume that each partial $k$-ary type-preserving function $f$
	defined on $(D^{(n\times m)})^k$ whose domain 
	contains the $k$-tuple
	$\matrixvec{0}_{n\times m}=(\vec{0}_{n\times m}, \dots, 
	\vec{0}_{n\times m})$ and that satisfies 
	$f(\matrixvec{0}_{n\times m})=\vec{0}_{n\times m}$
	can be interpolated by a polynomial function of 
	$\ab{D}^{(n\times m)}$.
	Then $\ab{D}^{n\times m}$ is strictly $k$-polynomially rich.
\end{lemma}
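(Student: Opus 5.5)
Given an arbitrary $k$-ary type-preserving partial function $f\colon T\to D^{(n\times m)}$ with $T\subseteq (D^{(n\times m)})^k$, we reduce to the zero-preserving case by translation. If $T=\emptyset$ there is nothing to prove, so fix some $\matrixvec{t}=(t_1,\dots,t_k)\in T$ and put $c:=f(\matrixvec{t})$. Define $T':=\{\matrixvec{x}-\matrixvec{t}\mid \matrixvec{x}\in T\}$ and
\[
g\colon T'\to D^{(n\times m)},\qquad g(\matrixvec{y}):=f(\matrixvec{y}+\matrixvec{t})-c .
\]
Then $\matrixvec{0}_{n\times m}\in T'$ and $g(\matrixvec{0}_{n\times m})=f(\matrixvec{t})-c=\vec{0}_{n\times m}$.

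The main step is to check that $g$ is again type-preserving; after that everything is routine. By Lemma~\ref{lemma:on_notions_of_type_preserving_and-commutatora_and_rho_for_modules}, it suffices to show that $g$ preserves every congruence and every relation $\rho(A,B)$ for submodules $A\prec B$. Each of these relations $R$ is invariant under simultaneous translation of its coordinates: congruences of a module are cosets of submodules, and the defining conditions of $\rho(A,B)$ involve only the differences $x_1-x_2$, $x_2-x_3$ and the alternating sum $x_1-x_2+x_3-x_4$. Each such expression is unchanged when the same vector $v$ is added to every coordinate, since the coefficients sum to $0$. Hence $(r_1,\dots,r_s)\in R$ implies $(r_1+v,\dots,r_s+v)\in R$ for all $v$.

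Now suppose we have columns $\vb{b}\supi{1},\dots,\vb{b}\supi{k}\in R$ whose rows lie in $T'$. Shifting $\vb{b}\supi{l}$ by $t_l$ in all coordinates keeps it in $R$ and moves its rows into $T$. Applying $f$ row-wise yields a tuple in $R$. Subtracting $c$ from every coordinate keeps the tuple in $R$, and the result is exactly $g$ applied row-wise. So $g$ preserves $R$, and $g$ is type-preserving.

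By hypothesis there is a polynomial $q$ of $\ab{D}^{(n\times m)}$ with $q|_{T'}=g$. Set
\[
p(x_1,\dots,x_k):=q(x_1-t_1,\dots,x_k-t_k)+c .
\]
This is a polynomial function, because the polynomials of a module are closed under composition with the translations $x\mapsto x-t_l$ and $x\mapsto x+c$, which are themselves polynomials. For $\matrixvec{x}\in T$ we get $p(\matrixvec{x})=g(\matrixvec{x}-\matrixvec{t})+c=f(\matrixvec{x})$. Hence $f$ is interpolated by $p$, and $\ab{D}^{(n\times m)}$ is strictly $k$-polynomially rich. The only point needing care was the translation invariance of the relations, which follows directly from their linear form.
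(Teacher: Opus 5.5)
Your proof is correct and follows the same route as the paper: translate the domain by a point $\matrixvec{t}\in T$ and the values by $f(\matrixvec{t})$ to reduce to the zero-preserving case, apply the hypothesis, and translate back via $p(\matrixvec{x})=q(\matrixvec{x}-\matrixvec{t})+f(\matrixvec{t})$. The differences are cosmetic: you do both translations in one step where the paper splits into cases, and you verify type-preservation of the translated function explicitly through translation invariance of the congruences and of $\rho(A,B)$, where the paper only remarks that it is a composition of type-preserving functions.
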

\begin{proof}
	Let $T\subseteq (D^{(n\times m)})^k$, and let
	$f\colon T\to D^{(n\times m)}$ be a type-preserving function.
	If $\matrixvec{0}_{n\times m}\in T$ and 
	$f(\matrixvec{0}_{n\times m})=\vec{0}_{n\times m}$
	then by assumption, $f$ can be interpolated by a polynomial function.
	Let us now consider the case that 
	$\matrixvec{0}_{n\times m}\in T$ and 
	$f(\matrixvec{0}_{n\times m})\neq \vec{0}_{n\times m}$.
	Let $M=f(\matrixvec{0}_{n\times m})$. 
	Then the map $\matrixvec{x}\mapsto f(\matrixvec{x})-M$
	is type-preserving, and zero-preserving, 
	hence there exists $p\in\POL\ari{k}\ab{D}^{(n\times m)}$
	such that for all $\matrixvec{x}\in T$ we have
	$p(\matrixvec{x})=f(\matrixvec{x})-M$.
	Thus, $f$ can be interpolated by the polynomial 
	function $\matrixvec{x}\mapsto p(\matrixvec{x}) + M$. 
	Let us now consider the case that
	$\matrixvec{0}_{n\times m}\notin T$.
	Let $\matrixvec{m}\in T$, and let us define 
	$T'=\{(\matrixvec{x}(1)-\matrixvec{m}(1), \dots, 
	\matrixvec{x}(k)-\matrixvec{m}(k))\mid \matrixvec{x}\in T\}$.
	Then $\matrixvec{0}_{n\times m}\in T'$.
	Let us define $g\colon T'\to D^{(n\times m)}$ by
	$g(\matrixvec{y})=f(\matrixvec{y}+\matrixvec{m})$,
	where the $+$ is now defined component-wise.
	Since $g$ is a composition of type-preserving functions
	it is a type-preserving function and
	furthermore, its domain contains $\matrixvec{0}_{n\times m}$.
	Thus, we can resort to the already discussed case
	and construct a $k$-ary polynomial function $p$
	of $\ab{D}^{(n\times m)}$ that interpolates 
	$g$ on $T'$. 
	Hence setting $q$ to be the polynomial function
	$\matrixvec{x}\mapsto p(\matrixvec{x}-\matrixvec{m})$, 
	for all $\matrixvec{x}\in T$ we have that  
	$q(\matrixvec{x})=p(\matrixvec{x}-\matrixvec{m})=
	g(\matrixvec{x}-\matrixvec{m})=f(\matrixvec{x}-\matrixvec{m}
	+\matrixvec{m})=f(\matrixvec{x})$.
\end{proof}

\begin{lemma}\label{lemma:technical_lemma_kapl}
	Let $n, q\in \N$ be such that 
	\[(n\in\{1,2,3\}\text{ and }q=2) \text{ or } (n\in\{1,2\}\text{ and } q=3)\]
	Then for all $T\subseteq (\GF{q})^n$, there exists $S\subseteq T$ that consists of linearly independent
	vectors such that 
	\[\forall t\in T\, \exists x_1, x_2, x_3\in (S\cup\{\vec{0}_n\})^3
	\colon t=x_1-x_2+x_3.\]
\end{lemma}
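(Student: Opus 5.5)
The plan is to take $S$ to be a maximal linearly independent subset of $T$ and to show the stronger statement that every element of $\sottospazio{}{S}$ has the form $x_1-x_2+x_3$ with $x_1,x_2,x_3\in S\cup\{\vec{0}_n\}$. This suffices: by maximality every $t\in T$ is a linear combination of $S$, so $T\subseteq\sottospazio{}{S}$, and $\card{S}\le n$ because $S$ is linearly independent in $(\GF{q})^n$. If $T=\emptyset$ or $T=\{\vec{0}_n\}$, take $S=\emptyset$; the only element to represent is $\vec{0}_n=\vec{0}_n-\vec{0}_n+\vec{0}_n$. Write $S=\{s_1,\dots,s_r\}$, where $r\le n$.

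\emph{Case $q=2$, $r\le 3$.} Here $-x=x$, so $x_1-x_2+x_3=x_1+x_2+x_3$. Every element of $\sottospazio{}{S}$ is $\sum_{i\in I}s_i$ for some $I\subseteq\{1,\dots,r\}$, and $\card{I}\le r\le 3$. We place the (at most three) summands into the slots $x_1,x_2,x_3$ and fill the remaining slots with $\vec{0}_n$.

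\emph{Case $q=3$, $r\le 2$.} If $r=1$, the elements of $\sottospazio{}{S}$ are $\vec{0}_n$, $s_1$ and $-s_1=\vec{0}_n-s_1+\vec{0}_n$, and $s_1=s_1-\vec{0}_n+\vec{0}_n$. If $r=2$, we check all nine combinations $\lambda_1s_1+\lambda_2s_2$ with $\lambda_i\in\{0,1,-1\}$, using $2s_i=-s_i$ in characteristic $3$:
\begin{itemize}
\item $\vec{0}_n$, $\pm s_i$ and $s_1+s_2$ are obtained as in the case $r=1$, or as $s_1-\vec{0}_n+s_2$;
\item $s_1-s_2$ and $s_2-s_1$ are immediate;
\item $-s_1-s_2=2s_1-s_2=s_1-s_2+s_1$.
\end{itemize}
The last item is the only non-obvious one. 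It is also exactly where the hypothesis $q\le 3$ enters: only one slot carries a minus sign, so a combination with two negative coefficients has to be recovered through the relation $2=-1$.

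The main point to get right is the bookkeeping in the $q=3$, $r=2$ enumeration, in particular not missing the combination $-s_1-s_2$. I also note why the bounds on $n$ are needed. For $q=2$ with four basis vectors, $s_1+s_2+s_3+s_4$ cannot be written with three slots. For $q=3$ with three basis vectors, $-s_1-s_2-s_3$ fails, since at most two slots contribute with a positive sign. This explains why the lemma stops at $n=3$ and $n=2$ respectively, but these facts are not needed for the proof.
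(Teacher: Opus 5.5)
Your proof is correct and follows essentially the same route as the paper. Both take $S\subseteq T$ to be a basis of the span of $T$ and use the dimension bound ($\le 3$ for $q=2$, $\le 2$ for $q=3$); in characteristic $3$ the only nontrivial combination is handled by your $-s_1-s_2=s_1-s_2+s_1$, which is the paper's identity $2x+2y=x-y+x$.
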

\begin{proof}
	In the case that $q=2$ we have that $x_1-x_2+x_3=x_1+ x_2+x_3$, 
	and the 
	result follows from the fact that the dimension of the
	space generated by $T$ is at most $3$.
	If $q=3$, the result follows from the
	fact that for all $x,y\in  (\GF{3})^n$ we have $2x=x-\vec{0}_n+x$,
	$2x+2y=x-y+x$, and $2x+y=y-x+\vec{0}_n$,
	and from the fact that the dimension of the space 
	generated by $T$ is at most $2$.
\end{proof}
The next lemma states that the module
$\{ E_\vec{a}^1  \mid \vec{a} \in \ab{D}^n\}$ is a primitive
$\ab{M}_n(\ab{D})$-submodule of $\ab{D}^{n \times m}$.
\begin{lemma}\label{lemmasm}
	Let $\ab{D}$ be a field, let $n,m\in\N$, let $N$ be a $\ab{M}_n(\ab{D})$-submodule
	of the module $\ab{D}^{(n\times m)}$, and let $\vec{a}\in D^n\setminus\{0\}$.
	If $E_\vec{a}^1 \in N$, then for each $\vec{b}\in D^n$ we have $E_\vec{b}^1\in N$.
\end{lemma}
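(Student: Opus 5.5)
The plan is to exploit the fact that $N$ is closed under left multiplication by matrices in $\ab{M}_n(\ab{D})$. The key identity is the following: for any matrix $M\in M_n(D)$ and any $\vec{c}\in D^n$, the product $M\cdot E_{\vec{c}}^1$ is the $n\times m$ matrix whose first column is $M\vec{c}$ and whose other columns are zero, so $M\cdot E_{\vec{c}}^1 = E_{M\vec{c}}^1$. This holds because left multiplication acts column by column, and the remaining columns of $E_{\vec{c}}^1$ are zero.

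It therefore suffices, given $\vec{a}\neq \vec{0}_n$ and an arbitrary $\vec{b}\in D^n$, to find $M\in M_n(D)$ with $M\vec{a}=\vec{b}$. The choice is explicit. Since $\vec{a}\neq\vec{0}_n$, pick an index $j$ with $a_j\neq 0$, and set $M := a_j^{-1} M_{\vec{b}}^j$, the matrix whose $j$-th column is $a_j^{-1}\vec{b}$ and whose other entries are $0$. Then $M\vec{a} = a_j\cdot a_j^{-1}\vec{b} = \vec{b}$.

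Since $N$ is a submodule and $E_{\vec{a}}^1\in N$, we get $E_{\vec{b}}^1 = M\cdot E_{\vec{a}}^1\in N$. This covers $\vec{b}=\vec{0}_n$ as well, although in that case the conclusion holds trivially because $\vec{0}_{n\times m}\in N$.

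There is no real obstacle in this argument. The only point needing care is checking the column-wise identity $M E_{\vec{c}}^1=E_{M\vec{c}}^1$, which follows directly from the definition of matrix multiplication.
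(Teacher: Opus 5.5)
Your proposal is correct and is essentially the paper's own proof: the paper picks $i$ with $a_i\neq 0$ and multiplies $E_{\vec{a}}^1$ on the left by $M_{\vec{a}'}^i$ with $\vec{a}'=a_i^{-1}\vec{b}$, which is exactly your matrix $a_j^{-1}M_{\vec{b}}^j$. Your column-wise identity $M\cdot E_{\vec{c}}^1=E_{M\vec{c}}^1$ is the step the paper leaves implicit.
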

\begin{proof}
	Let $\vec{b}\in D^n$, let $i\in \finset{n}$ be such that
	$a_i\neq 0$, and let
	$\vec{a}':=(b_1\cdot a_i^{-1}, \dots, b_n\cdot a_i^{-1})$. 
	Then $M_{\vec{a}'}^i\in \ab{M}_n(\ab{D})$, and therefore, 
	$E_{\vec{b}}^1=M_{\vec{a}'}^i\cdot E_{\vec{a}}^1\in N$. 
\end{proof} 
\begin{lemma}\label{lemmaeq}
	Let $\ab{D}$ be a field, let $n,m\in\N$, 
	let $A$ be a $\ab{M}_n(\ab{D})$-submodule
	of the module $\ab{D}^{(n\times m)}$,
	let $b\in D\setminus\{0\}$, let $\vec{b}:=b\cdot \vec{1}_n$, and let
	$T=\{\vec{0}_{n\times m}\}\cup \{E_{i,1}\mid i\in \finset{n}\}\cup \{E_\vec{b}^1\}$.
	If there exists a pair of distinct elements $M_1, M_2$ of $T$ 
	with $M_1\equiv M_2\bmod A$, then $E_{1,1}\in A$.
\end{lemma}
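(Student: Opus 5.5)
The plan is a case split over which two elements of $T$ coincide modulo $A$. In every case the difference $M_1-M_2$ lies in $A$ and is a nonzero matrix whose only nonzero entries are in the first column. So $M_1-M_2=E_{\vec{c}}^1$ for some $\vec{c}\in D^n\setminus\{\vec{0}_n\}$. Lemma~\ref{lemmasm}, applied with $N:=A$ and $\vec{a}:=\vec{c}$, then gives $E_{\vec{b}'}^1\in A$ for every $\vec{b}'\in D^n$. Taking $\vec{b}':=\vec{e}_1^n$ yields $E_{1,1}=E_{\vec{e}_1^n}^1\in A$.

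It remains to check that $M_1-M_2$ really has the form $E_{\vec{c}}^1$ with $\vec{c}\neq\vec{0}_n$. Every element of $T$ is of the form $E_{\vec{v}}^1$, with
\[
\vec{v}\in\{\vec{0}_n\}\cup\{\vec{e}_i^n\mid i\in\finset{n}\}\cup\{\vec{b}\}.
\]
Hence $M_1-M_2=E_{\vec{v}_1-\vec{v}_2}^1$, where $\vec{v}_1,\vec{v}_2$ are the vectors belonging to $M_1,M_2$. The assumption $M_1\neq M_2$ gives $\vec{v}_1\neq\vec{v}_2$, so $\vec{c}:=\vec{v}_1-\vec{v}_2\neq\vec{0}_n$. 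This holds without listing the cases separately, for instance $E_{i,1}-E_{j,1}$ with $i\neq j$, $E_{\vec{b}}^1-E_{i,1}$, or $E_{\vec{b}}^1-\vec{0}_{n\times m}$.

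Two small points need attention. First, $E_{\vec{b}}^1$ may coincide with some $E_{i,1}$, namely when $n=1$ and $b=1$. This is harmless, because the pair $M_1,M_2$ is assumed to consist of distinct elements, so only distinctness is used. Second, Lemma~\ref{lemmasm} requires $A$ to be an $\ab{M}_n(\ab{D})$-submodule, which is exactly our hypothesis. Closure of $A$ under subtraction gives $M_1-M_2\in A$ from $M_1\equiv M_2\bmod A$.

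There is no real obstacle in this argument; the whole content is the reduction to Lemma~\ref{lemmasm}.
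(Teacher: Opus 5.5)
Your proof is correct and follows the paper's argument. As in the paper, you observe that $M_1-M_2\in A$ is a nonzero matrix supported in the first column, and then apply Lemma~\ref{lemmasm} to obtain $E_{1,1}\in A$. Your explicit description of the elements of $T$ as $E_{\vec{v}}^1$ simply spells out the paper's phrase ``by its definition''.
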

\begin{proof}
	Let us suppose that 
	there are $M_1, M_2\in T$ such that $M_1\neq M_2$ 
	and $M_1\equiv M_2\bmod A$. 
	Then $M_1-M_2\in A$, and by its definition, there exists 
	$i\in\finset{n}$ such that the $(i,1)$-th component of 
	$M_1-M_2$ is not zero. Thus, Lemma~\ref{lemmasm} 
	implies that for all $\vec{c}\in D^n$ the matrix
	$E_\vec{c}^1\in A$, and therefore,
	$E_{1,1}\in A$. 
\end{proof}
The proofs of the next five lemmata share
a common structure and are extremely elementary. 
We first state the five lemmata and then provide a
proof scheme that works for each of them. 
\begin{lemma}\label{lemmaeqq2}
	Let $\ab{D}=\GF{2}$, let $n\in\N\setminus\{1,2,3\}$, let $m\in \N$,
	let $T=\{\vec{0}_{n\times m}\}\cup \{E_{i,1}\mid 
	i\in \finset{n}\}\cup \{E_{\vec{1}_n}^1\}$,
	let $A$ be a $\ab{M}_n(\ab{D})$-submodule of $D^{(n\times m)}$.
	If there exist pairwise distinct $M_1, M_2, M_3\in T\setminus\{E_{\vec{1}_n}^1\}$
	with $M_1-M_2+M_3 \equiv E_{\vec{1}_n}^1\bmod A$
	then $E_{\vec{1}_n}^1\in A$.
\end{lemma}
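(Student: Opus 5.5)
The plan is to compute the matrix $M_1-M_2+M_3-E_{\vec{1}_n}^1$ explicitly and show that it lies in $A$ and is nonzero only in the first column. Then Lemma~\ref{lemmasm} (or Lemma~\ref{lemmaeq}) forces every first-column matrix, in particular $E_{\vec{1}_n}^1$, into $A$. All elements of $T$ have only the first column possibly nonzero. So $N:=M_1-M_2+M_3-E_{\vec{1}_n}^1$ is of the form $E_{\vec{c}}^1$ for some $\vec{c}\in D^n$, and by hypothesis $N\in A$. By Lemma~\ref{lemmasm}, it therefore suffices to show $\vec{c}\neq\vec{0}_n$: then every $E_{\vec{b}}^1$ lies in $A$, in particular $E_{\vec{1}_n}^1\in A$.

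To see $\vec{c}\neq \vec{0}_n$, we use that over $\GF{2}$ we have $M_1-M_2+M_3=M_1+M_2+M_3$. The $M_i$ are pairwise distinct elements of $\{\vec{0}_{n\times m}\}\cup\{E_{i,1}\mid i\in\finset{n}\}$, so their sum is either $E_{i,1}+E_{j,1}$ (if one of them is $\vec{0}_{n\times m}$) or $E_{i,1}+E_{j,1}+E_{l,1}$ with $i,j,l$ distinct. In either case the first column of $M_1+M_2+M_3$ has at most $3$ entries equal to $1$. The first column of $E_{\vec{1}_n}^1$ has $n\ge 4$ entries equal to $1$. Hence the two first columns differ, i.e.\ $\vec{c}\neq\vec{0}_n$. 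Concretely, $\vec{c}$ has a $1$ in at least $n-3\ge 1$ coordinates.

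The only step requiring care is this counting argument. It is exactly where $n\notin\{1,2,3\}$ is used: for $n\le 3$, $E_{\vec{1}_n}^1$ would itself be a sum of three distinct elements of $T\setminus\{E_{\vec{1}_n}^1\}$, and the conclusion could fail. Everything else is a direct application of Lemma~\ref{lemmasm}.
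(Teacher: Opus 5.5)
Your proof is correct and follows the paper's proof scheme: write $M_1-M_2+M_3-E_{\vec{1}_n}^1$ as some $E_{\vec{c}}^1\in A$, show $\vec{c}\neq\vec{0}_n$, and conclude with Lemma~\ref{lemmasm}. Your counting step (at most three entries equal to $1$ versus $n\ge 4$) is exactly the ``elementary'' nonvanishing check that the paper leaves to the reader.
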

\begin{lemma} \label{lemmaeqq3}
Let $\ab{D}=\GF{3}$, let $n\in\N\setminus\{1,2\}$, let $m\in \N$,
let $T=\{\vec{0}_{n\times m}\}\cup \{E_{i,1}\mid 
i\in \finset{n}\}\cup \{2\cdot E_{\vec{1}_n}^1\}$,
and
let $A$ be a $\ab{M}_n(\ab{D})$-submodule of $D^{(n\times m)}$.
If there exists pairwise distinct $M_1, M_2, M_3\in T\setminus\{2\cdot E_{\vec{1}_n}^1\}$
with $M_1-M_2+M_3 \equiv 2\cdot E_{\vec{1}_n}^1\bmod A$, or
$M_1+M_2\equiv E_{\vec{1}_n}^1\bmod A$, or
$2\cdot M_1-M_2\equiv 2\cdot E_{\vec{1}_n}^1\bmod A$, then $E_{1,1}\in A$.
\end{lemma}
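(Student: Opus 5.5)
The plan is to reduce everything to column~$1$ and then use Lemma~\ref{lemmasm}, exactly as in the proof of Lemma~\ref{lemmaeq}. Every element of $T$ is of the form $E_{\vec{c}}^1$ for some $\vec{c}\in D^n$: we have $\vec{0}_{n\times m}=E_{\vec{0}_n}^1$, $E_{i,1}=E_{\vec{e}_i^n}^1$ and $2\cdot E_{\vec{1}_n}^1=E_{2\cdot\vec{1}_n}^1$. Since $\vec{c}\mapsto E_{\vec{c}}^1$ is additive, each of the three hypothesised congruences has the form $E_{\vec{u}}^1\equiv E_{\vec{v}}^1 \bmod A$, i.e. $E_{\vec{u}-\vec{v}}^1\in A$. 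Here $\vec{u}$ is the corresponding combination of vectors from $\{\vec{0}_n,\vec{e}_1^n,\dots,\vec{e}_n^n\}$, and $\vec{v}\in\{2\cdot\vec{1}_n,\vec{1}_n\}$. If we know $\vec{u}\neq\vec{v}$, then Lemma~\ref{lemmasm} (applied with the nonzero vector $\vec{u}-\vec{v}$) gives $E_{\vec{b}}^1\in A$ for every $\vec{b}\in D^n$. In particular $E_{1,1}=E_{\vec{e}_1^n}^1\in A$.

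So the whole proof is to show $\vec{u}\neq\vec{v}$ in each of the three cases. The tool is to count the coordinates of $\vec{u}$ that equal the value required by $\vec{v}$, and compare with $n\geq 3$.

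In the first case, $\vec{u}=\vec{x}_1-\vec{x}_2+\vec{x}_3$ with pairwise distinct $\vec{x}_j\in\{\vec{0}_n,\vec{e}_i^n\}$. Since the $\vec{x}_j$ are distinct, each coordinate receives a contribution from at most one $\vec{x}_j$. A coordinate of $\vec{u}$ can equal $2=-1$ only through $-\vec{x}_2$, so $\vec{u}$ has at most one coordinate equal to $2$. But $2\cdot\vec{1}_n$ has $n\geq 3$ such coordinates, so $\vec{u}\neq\vec{v}$.

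In the second case, $\vec{u}=\vec{x}_1+\vec{x}_2$ with $\vec{x}_1\neq\vec{x}_2$ in the same set. Then $\vec{u}$ has at most two nonzero coordinates, while $\vec{1}_n$ has $n\geq3$, so $\vec{u}\neq\vec{v}$.

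In the third case, $\vec{u}=2\vec{x}_1-\vec{x}_2=2\vec{x}_1+2\vec{x}_2$ with $\vec{x}_1\neq\vec{x}_2$. Again $\vec{u}$ has at most two nonzero coordinates, while $2\cdot\vec{1}_n$ has $n\geq 3$, so $\vec{u}\neq\vec{v}$.

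No step is a real obstacle. The only point needing care is the first case: there the restriction to at most three summands alone would not suffice when $n=3$. The argument must instead use pairwise distinctness to see that the coefficient $-1$ can produce the value $2$ in only one coordinate. For example, for $n=3$ one gets $\vec{e}_1^3-\vec{e}_2^3+\vec{e}_3^3=(1,2,1)\neq 2\cdot\vec{1}_3$.
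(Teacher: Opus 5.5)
Your proof is correct and follows the paper's scheme: write each difference as $E_{\vec{g}}^1$, check that $\vec{g}\neq\vec{0}_n$, and conclude $E_{1,1}\in A$ by Lemma~\ref{lemmasm}. Your coordinate counts supply the ``elementary'' verification of $\vec{g}\neq\vec{0}_n$ that the paper leaves to the reader. Your closing remark slightly overstates the role of distinctness: even without it, $\vec{x}_1-\vec{x}_2+\vec{x}_3$ has at most two coordinates equal to $2$, which already suffices for $n\geq 3$.
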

\begin{lemma} \label{lemmaeqq5}
Let $\ab{D}=\GF{5}$, let $n\in\N\setminus\{1\}$, let $m\in \N$,
let $T=\{\vec{0}_{n\times m}\}\cup \{E_{i,1}\mid 
i\in \finset{n}\}\cup \{4\cdot E_{\vec{1}_n}^1\}$, 
and
let $A$ be a $\ab{M}_n(\ab{D})$-submodule of $D^{(n\times m)}$.  
If there exist pairwise distinct $M_1, M_2, M_3\in T\setminus\{4\cdot E_{\vec{1}_n}^1\}$
	with $M_1-M_2+M_3 \equiv 4\cdot E_{\vec{1}_n}^1\bmod A$, or
	$M_1+M_2\equiv 3\cdot E_{\vec{1}_n}^1\bmod A$, or
	$2\cdot M_1-M_2\equiv 4\cdot E_{\vec{1}_n}^1\bmod A$, then $E_{1,1}\in A$.
\end{lemma}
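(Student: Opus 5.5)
The plan follows the scheme used for Lemmas~\ref{lemmaeqq2} and~\ref{lemmaeqq3}. Every element of $T$ is of the form $E^1_{\vec{c}}$ for some $\vec{c}\in D^n$. Consequently every linear combination $M$ of elements of $T$ with integer coefficients (such as $M_1-M_2+M_3$, $M_1+M_2$, or $2M_1-M_2$) is again of this form. So is $M-c\cdot E^1_{\vec{1}_n}$ for $c\in\{3,4\}$. We will write this difference as $E^1_{\vec{d}}$. If $\vec{d}\neq\vec{0}_n$, then $E^1_{\vec{d}}\in A$, and Lemma~\ref{lemmasm} yields $E^1_{\vec{b}}\in A$ for every $\vec{b}\in D^n$, in particular $E_{1,1}\in A$. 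The whole proof therefore reduces to showing that, in each of the three cases, the first column of $M$ is not equal to $c\cdot\vec{1}_n$ (with $c=4$, $3$, $4$ respectively).

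To do this, we read off the first column of $M$. Every $M_i\in T\setminus\{4E^1_{\vec{1}_n}\}$ is either $\vec{0}_{n\times m}$ or some $E_{j,1}$, so its first column has at most one nonzero entry, and that entry equals $1$. It follows that the first column of $M_1-M_2+M_3$ has at most three nonzero positions. In $\GF{5}$, each of its entries lies in $\{0,1,-1=4,2\}$. Here $2$ can only occur if $M_1=M_3$, which distinctness excludes; $4$ can occur only at the position of $M_2$. Since $n\ge2$, the vector $4\cdot\vec{1}_n$ needs at least two entries equal to $4$, while $M_1-M_2+M_3$ has at most one entry equal to $4$. So this case is impossible.

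For $M_1+M_2$ with $M_1\neq M_2$, the entries lie in $\{0,1\}$, because $2$ would require $M_1=M_2$. They can therefore never equal $3$. For $2M_1-M_2$, the entries lie in $\{0,2,4,1\}$, and the entry $4$ occurs only at the single position of $M_2$. Since $n\geq 2$, this cannot equal $4\cdot\vec{1}_n$.

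In each case, then, $\vec{d}\neq\vec{0}_n$, and the argument of the first paragraph finishes the proof. The only point requiring care is this entry bookkeeping, and in particular the use of $n\ge 2$ to guarantee that two coordinates must be hit. Everything else is a direct application of Lemma~\ref{lemmasm}.
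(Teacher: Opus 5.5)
Your proof is correct and takes essentially the paper's approach: you write the difference as $E^1_{\vec d}$, check entry by entry that $\vec d\neq\vec 0_n$ (using $n\ge 2$ for the first and third conditions), and conclude with Lemma~\ref{lemmasm}. You also carry out the elementary verification of $\vec d\neq\vec 0_n$, which the paper's common proof scheme only asserts.
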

\begin{lemma} \label{lemmaeqq7}
	Let $p$ be a prime greater than or equal to $7$, 
	let $\ab{D}=\GF{p}$,
	let $n,m\in\N$, let $b=p-2$, let 
$T=\{\vec{0}_{n\times m}\}\cup \{E_{i,1}\mid i\in \finset{n}\}\cup \{E_{b\cdot \vec{1}_n}^1\}$,
	and let $A$ be a $\ab{M}_n(\ab{D})$-submodule 
	of $D^{(n\times m)}$.
If there exist pairwise distinct $M_1, M_2, M_3\in T\setminus\{E_{b\cdot \vec{1}_n}^1\}$
	with $M_1-M_2+M_3 \equiv E_{b\cdot \vec{1}_n}^1\bmod A$, or
	$M_1+M_2\equiv 2\cdot E_{b\cdot \vec{1}_n}^1\bmod A$, or
	$2\cdot M_1-M_2\equiv E_{b\cdot \vec{1}_n}^1\bmod A$, then $E_{1,1}\in A$.
\end{lemma}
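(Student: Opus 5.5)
The plan is to reduce every hypothesis of the lemma to one observation: some nonzero matrix supported in the first column lies in $A$. Then Lemma~\ref{lemmasm} gives $E_{\vec{c}}^1\in A$ for every $\vec{c}\in D^n$, and in particular $E_{1,1}\in A$. Every element of $T$ is of the form $E_{\vec{v}}^1$ for some $\vec{v}\in D^n$. Hence any $\ab{D}$-linear combination of elements of $T$ is again of the form $E_{\vec{w}}^1$. So each of the three hypothesised congruences $X\equiv Y \bmod A$ says that $X-Y=E_{\vec{w}}^1\in A$ for an explicitly computable $\vec{w}$. It therefore suffices to show $\vec{w}\neq\vec{0}_n$ in every case, and this only needs one coordinate of $\vec{w}$.

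The key steps, in order, are as follows. First, I record that the first columns of the elements of $T\setminus\{E_{b\cdot\vec{1}_n}^1\}$ are $\vec{0}_n$ and the unit vectors $\vec{e}_i^n$. Then I list the possible coordinates of each combination; below I write $a$ for an arbitrary coordinate of the first column of the left-hand side, and all computations are in $\GF{p}$.

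In the first case, $M_1-M_2+M_3$ is formed from pairwise distinct elements. Each coordinate of its first column is a signed sum of at most one contribution from each of three distinct vectors among $\vec{0}_n,\vec{e}_1^n,\dots,\vec{e}_n^n$. Since distinct unit vectors have disjoint supports, every coordinate lies in $\{0,1,-1\}$. Every coordinate of $b\cdot\vec{1}_n$ is $b=p-2\equiv -2$. Thus a coordinate of $\vec{w}$ vanishes only if $-2\in\{0,1,-1\}$ in $\GF{p}$, which would force $p\mid 2$ or $p\mid 3$ or $p\mid 1$.

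In the second case, $M_1+M_2$ with $M_1\neq M_2$ has coordinates in $\{0,1\}$, while $2b\equiv -4$. Equality would require $p\mid 4$ or $p\mid 5$.

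In the third case, $2M_1-M_2$ has coordinates in $\{0,2,-1,1\}$; the value $1$ only arises if $M_1=M_2=E_{i,1}$, which is harmless. The target value is $b\equiv-2$, and equality would force $p\mid 2$, $p\mid 4$, $p\mid 1$ or $p\mid 3$.

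Since $p\ge 7$, none of these divisibilities holds. So in each case some coordinate of $\vec{w}$ is nonzero, and since $n\ge 1$ there is at least one coordinate to inspect. Hence $E_{\vec{w}}^1\in A$ with $\vec{w}\neq\vec{0}_n$, and Lemma~\ref{lemmasm} finishes the proof.

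The same scheme handles Lemmas~\ref{lemmaeqq2}, \ref{lemmaeqq3} and~\ref{lemmaeqq5}. There the field is small, and the arithmetic coincidences (for instance $-4\equiv 1$ in $\GF{5}$) are avoided instead by the lower bound on $n$. With $n$ large, the all-ones column cannot be matched, because a combination of at most three unit vectors has at most three nonzero coordinates. The only real obstacle is the bookkeeping: making sure that every admissible choice of $M_1,M_2,M_3$ yields coordinates in the small sets listed above. For $p\ge 7$ this is immediate from the disjoint supports of the $\vec{e}_i^n$.
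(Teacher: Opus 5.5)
Your proposal is correct and is essentially the paper's argument. The paper's common proof scheme for Lemmata~\ref{lemmaeqq2}--\ref{lemmaeqqp} also writes each difference as $E_{\vec{g}}^1\in A$, asserts that $\vec{g}\neq\vec{0}_n$ is an elementary check, and concludes with Lemma~\ref{lemmasm}. Your coordinate bookkeeping (first-column entries in $\{0,\pm 1\}$, $\{0,1\}$, $\{0,2,-1\}$, compared against $-2$, $-4$, $-2$ in $\GF{p}$) is exactly the check the paper leaves to the reader, and it is valid for $p\ge 7$.
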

\begin{lemma} \label{lemmaeqqp}
	Let $p$ be a prime, let $\alpha\in\N\setminus\{1\}$,
	let $\ab{D}=\GF{p^\alpha}$,
	let $n,m\in\N$, let $b\notin\{0, \dots ,p-1\}$ let 
	$T=\{\vec{0}_{n\times m}\}\cup \{E_{i,1}\mid i\in 
	\finset{n}\}\cup \{E_{b\cdot \vec{1}_n}^1\}$,
	and let $A$ be a $\ab{M}_n(\ab{D})$-submodule 
	of $D^{(n\times m)}$. %
If there exist pairwise distinct $M_1, M_2, M_3\in T\setminus\{E_{b\cdot \vec{1}_n}^1\}$
	with $M_1-M_2+M_3 \equiv E_{b\cdot \vec{1}_n}^1\bmod A$, then 
	$E_{\vec{1}_n}^1\in A$.
	Furthermore, if $p\neq 2$, and there exist distinct
	$M_1, M_2\in T\setminus\{E_{b\cdot \vec{1}_n}^1\}$ with
	$M_1+M_2\equiv 2\cdot E_{b\cdot \vec{1}_n}^1\bmod A$, or
	$2\cdot M_1-M_2\equiv E_{b\cdot \vec{1}_n}^1\bmod A$, then $E_{1,1}\in A$.
\end{lemma}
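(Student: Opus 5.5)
The statement to prove is Lemma~\ref{lemmaeqqp}, which belongs to the group of five lemmata sharing a common proof scheme. The plan is to reduce everything to the first column, then analyse each possible congruence by hand.

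\emph{Reduction to the first column.} Every element of $T$ is of the form $E_{\vec{c}}^1$, so every $\ab{D}$-combination of elements of $T$ is again of the form $E_{\vec{d}}^1$. Suppose one of the hypothesised congruences holds. Moving everything to one side gives a matrix $E_{\vec{d}}^1\in A$, where $\vec{d}$ is an explicit vector depending on the choice of $M_1,M_2,M_3$. If $\vec{d}\neq\vec{0}_n$, Lemma~\ref{lemmasm} yields $E_{\vec{c}}^1\in A$ for every $\vec{c}\in D^n$. In particular $E_{\vec{1}_n}^1\in A$ and $E_{1,1}\in A$. So in every case it suffices to check that $\vec{d}\neq\vec{0}_n$.

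\emph{The three-term case.} Here $M_1,M_2,M_3$ are pairwise distinct elements of $\{\vec{0}_{n\times m}\}\cup\{E_{i,1}\}$. Then $M_1-M_2+M_3=E_{\vec{c}}^1$, where $\vec{c}$ has at most three nonzero entries. All entries of $\vec{c}$ lie in $\{0,1,-1\}$, which is contained in the prime field $\{0,\dots,p-1\}$. We have $\vec{d}=b\vec{1}_n-\vec{c}$. Every entry of $b\vec{1}_n$ equals $b$, and $b$ is not in the prime field. Hence every entry of $\vec{d}$ is $b$ minus a prime-field element, which is nonzero. Therefore $\vec{d}\neq\vec{0}_n$, and Lemma~\ref{lemmasm} gives $E_{\vec{1}_n}^1\in A$.

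\emph{The two-term cases, for $p\neq 2$.} For $M_1+M_2\equiv 2E_{b\vec{1}_n}^1$, the entries of $M_1+M_2$ lie in $\{0,1\}$ (the elements are distinct, so no entry doubles up). For $2M_1-M_2\equiv E_{b\vec{1}_n}^1$, the entries of $2M_1-M_2$ lie in $\{0,2,-1\}$. In both cases every entry of the left side is a prime-field element. On the right side, $2b\notin\{0,\dots,p-1\}$, since otherwise $b=2^{-1}(2b)$ would lie in the prime field, using that $2$ is invertible because $p\neq2$. Likewise $b$ itself is not in the prime field. So, exactly as before, the difference $\vec{d}$ has all entries nonzero, and Lemma~\ref{lemmasm} gives $E_{1,1}\in A$.

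\emph{The shared scheme for the other four lemmata.} The same outline applies: compute $\vec{d}$, show $\vec{d}\neq\vec{0}_n$, and apply Lemma~\ref{lemmasm}. The only real obstacle there is arithmetic in small prime fields, where $b\vec{1}_n$ does lie in the prime field.
\begin{itemize*}
\item For Lemma~\ref{lemmaeqq2}, $\vec{c}$ has at most three nonzero entries while $n\ge4$. So some entry of $\vec{d}$ equals $1$, and again $\vec{d}\neq\vec{0}_n$.
\item For Lemmas~\ref{lemmaeqq3}, \ref{lemmaeqq5} and~\ref{lemmaeqq7}, one must check case by case that the combinations, whose entries lie in $\{0,\pm1,2\}$, cannot equal the constant vector $2\vec{1}_n$, $4\vec{1}_n$, $3\vec{1}_n$ or $(p-2)\vec{1}_n$, respectively.
\end{itemize*}
The restrictions on $n$ and $p$ are exactly what make these checks go through.
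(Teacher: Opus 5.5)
Your proof is correct and follows the paper's scheme: you write the difference of the two sides as a first-column matrix $E_{\vec{d}}^1\in A$, show that $\vec{d}\neq\vec{0}_n$, and invoke Lemma~\ref{lemmasm}. Your argument for $\vec{d}\neq\vec{0}_n$ is the elementary verification the paper leaves to the reader: every entry of a combination of elements of $T\setminus\{E_{b\cdot\vec{1}_n}^1\}$ lies in the prime field, while $b$ does not, and neither does $2b$ when $p\neq 2$.
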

\begin{proof}[Proof of Lemmata~\ref{lemmaeqq2}-\ref{lemmaeqqp}]
We describe
the proof scheme of the above reported five lemmata.
To this end, we observe that in each case the set 
$T$ consists of $n+2$ elements and 
that the set
$T':=\{\vec{0}_{n\times m}\}\cup \{E_{i,1}\mid i\in \finset{n}\}$ 
is always a proper subset of $T$ of cardinality $n+1$. 
Let us denote by $t_{n+2}$ the unique element of $T\setminus T'$. 

We suppose that there exist 
pairwise distinct $M_1, M_2,M_3\in T'$ such that 
$M_1-M_2+M_3-t_{n+2}\in A$. 
Then there exists $\vec{g}\in D^n\setminus\{\vec{0}_n\}$ 
such that $M_1-M_2+M_3-t_{n+2}=E_{\vec{g}}^1$.
The proof of the fact that $\vec{g}$ is not 
zero depends on the assumptions of each lemma, 
and it is in each case elementary.
Hence Lemma~\ref{lemmasm} implies that 
$E_{1,1}^1\in A$.
	
For proving the second part of 
Lemmata~\ref{lemmaeqq3}-\ref{lemmaeqqp},
let $|\ab{D}|=q^{\beta}$ for a prime number $q$ 
and $\beta\in \N$.
Then we have to distinguish two cases: 
If $q=2$, then trivially $2\cdot t_{n+2} \equiv 0 \bmod A$.
Let us now consider the case that $2<q$. 
To this end, let us observe that
the claim of each lemma can be phrased as follows:
There exist distinct $M_1, M_2\in T'$ with $M_1+M_2\equiv 2\cdot t_{n+2}\bmod A$
or $2\cdot M_1-M_2\equiv t_{n+2}\bmod A$.
Then we can argue as above and prove that 
there exists $\vec{f}\in D^n\setminus \{\vec{0}_n\}$ such that
$E_{\vec{f}}^1=M_1+M_2-2\cdot t_{n+2}\in A$, 
or there exists  $\vec{h}\in D^n\setminus \{\vec{0}_n\}$ such that
$E_{\vec{h}}^1=2\cdot M_1-M_2- t_{n+2}\in A$,
and then infer from Lemma~\ref{lemmasm} that
$E_{1,1}^1\in A$.  
\end{proof}
\begin{lemma} \label{lemmaeqtwo}
Let $q\in\{2,3\}$, let $n=2$, let $\ab{D}=\GF{q}$, $m \in \N\setminus\{1\}$, 
let $T= \{\vec{0}_{2\times m}, E_{1,1}, E_{2,2}, E_{\vec{1}_2}^1+E_{\vec{1}_2}^2\}$, 
and let $T':=\{E_{1,1}, E_{2,2},E_{\vec{1}_2}^1+E_{\vec{1}_2}^2\}$.
Let $A$ be a $\ab{M}_2(\ab{D})$-submodule of $\ab{D}^{(2 \times m)}$, and let us assume 
that $A\cap T'=\emptyset$. 
Then we have
\begin{enumerate}
\item $a\cdot E_{1,1} \not\equiv b \cdot E_{2,2} \bmod A$ 
for all $(a,b)\in D^2\setminus\{(0,0)\}$.\label{item_item1oflemmaeqtwo}
\item $a \cdot E_{1,1}+b \cdot E_{2,2} \not\equiv E_{\vec{1}_2}^1+E_{\vec{1}_2}^2 \bmod A$ 
for all $(a,b) \in D^2$.\label{item_item2oflemmaeqtwo}
\end{enumerate}
\end{lemma}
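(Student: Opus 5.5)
Both items follow from the submodule structure of $\ab{D}^{(2\times m)}$ together with Lemma~\ref{lemmasm}. Since left multiplication by $\ab{M}_2(\ab{D})$ acts column by column, a matrix $X$ belongs to the $\ab{M}_2(\ab{D})$-submodule generated by $Y$ exactly when every column of $X$ is a fixed linear combination of the columns of $Y$. Put differently, $A$ corresponds to a subspace $W\subseteq D^m$ of row vectors: $A=\{X \mid \text{each row of } X\in W\}$. Lemma~\ref{lemmasm}, applied after permuting columns, is one instance of this fact. I will use this description of $A$ throughout. The hypothesis $A\cap T'=\emptyset$ then says three things:
\begin{itemize*}
\item $e_1=(1,0,\dots,0)\notin W$, since $E_{1,1}\notin A$;
\item $e_2\notin W$, since $E_{2,2}\notin A$;
\item $e_1+e_2\notin W$, since both rows of $E_{\vec{1}_2}^1+E_{\vec{1}_2}^2$ equal $e_1+e_2$.
\end{itemize*}

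For item~\eqref{item_item1oflemmaeqtwo}, suppose $aE_{1,1}-bE_{2,2}\in A$. Its first row is $a e_1$ and its second row is $-b e_2$, and both must lie in $W$. If $a\neq0$, then $e_1\in W$, a contradiction. If $b\neq 0$, then $e_2\in W$, again a contradiction. Hence $(a,b)=(0,0)$. This case needs no restriction on $q$.

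For item~\eqref{item_item2oflemmaeqtwo}, suppose $aE_{1,1}+bE_{2,2}-(E_{\vec{1}_2}^1+E_{\vec{1}_2}^2)\in A$. Its rows are $(a-1)e_1-e_2$ and $-e_1+(b-1)e_2$, and both must lie in $W$. I will rule this out by a case analysis over $q\in\{2,3\}$, which is where the main obstacle lies: showing that $W$, which avoids $e_1$, $e_2$ and $e_1+e_2$, cannot contain both rows.
\begin{itemize*}
\item If $a=1$, the first row is $-e_2$, so $e_2\in W$, a contradiction. Symmetrically, $b=1$ gives $e_1\in W$.
\item If $q=2$, then $a,b\in\{0,1\}$, so it remains to treat $a=b=0$. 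The rows are $e_1+e_2$ and $e_1+e_2$ (signs disappear in characteristic $2$), so $e_1+e_2\in W$, a contradiction.
\item If $q=3$ and $a=0$, the first row is $-(e_1+e_2)$, contradicting $e_1+e_2\notin W$. Symmetrically, $b=0$ gives a contradiction from the second row.
\item If $q=3$ and $a=b=2$, the rows are $e_1-e_2$ and $-e_1+e_2$. These are dependent, so this case needs more care. Here I use the full span $\langle W\rangle=W$ together with the remaining freedom: the difference $(E_{\vec{1}_2}^1+E_{\vec{1}_2}^2)-(2E_{1,1}+2E_{2,2})$ has rows $-e_1+e_2$ and $e_1-e_2$, which lie in $W$ only if $e_1-e_2\in W$. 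Membership of $e_1-e_2$ is not excluded by the hypotheses. So at this point I would recheck the statement for $q=3$, $a=b=2$.
\end{itemize*}

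In the case $q=3$, $a=b=2$, the expression $2E_{1,1}+2E_{2,2}-(E^1_{\vec 1_2}+E^2_{\vec 1_2})$ equals $E_{1,1}+E_{2,2}-E_{1,2}-E_{2,1}$, whose rows are $e_1-e_2$ and $e_2-e_1$. Both rows lie in $W$ exactly when $e_1-e_2\in W$. If this cannot be excluded, the lemma as stated fails for $q=3$; that would suggest that only $a,b\in\{0,1\}$ are intended, or that $T'$ should also contain an element like $E_{1,1}+E_{2,2}$. I would therefore expect this subcase to be the main obstacle. My first step would be to check whether some further implicit use of the hypothesis, for example via the module generated by $T'$, excludes $e_1-e_2\in W$. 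Otherwise I would restrict the claim accordingly.
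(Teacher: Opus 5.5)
Your proof of item~(1), and of item~(2) for $q=2$, is correct and is essentially the paper's argument. The paper isolates rows by multiplying on the left with $M_{1,1}$ and $M_{2,2}$, which amounts to your description $A=\{X\mid \text{every row of }X\text{ lies in }W\}$. One slip: the submodule generated by $Y$ consists of the $X$ whose \emph{rows} are linear combinations of the rows of $Y$, not columns of columns. The row-space description you actually use is the right one.

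The subcase you could not close is not a gap in your reasoning. It is a genuine counterexample, so state it as one instead of looking for a hidden use of the hypothesis: item~(2) is false for $q=3$. Take $m=2$ (for larger $m$ append zero columns, i.e.\ $W=\langle e_1-e_2\rangle$) and
\[
A=\Bigl\{\begin{pmatrix} s & -s\\ t & -t\end{pmatrix} : s,t\in\GF{3}\Bigr\}.
\]
This is an $\ab{M}_2(\GF{3})$-submodule: its elements are the products $\vec{u}\,(1,\,-1)$ with $\vec{u}\in\GF{3}^2$ a column, and $R\,\vec{u}\,(1,\,-1)=(R\vec{u})\,(1,\,-1)$. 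It contains none of $E_{1,1}$, $E_{2,2}$, $E^1_{\vec{1}_2}+E^2_{\vec{1}_2}$; the last is excluded because $1\neq -1$ in $\GF{3}$. Nevertheless
\[
2E_{1,1}+2E_{2,2}-\bigl(E^1_{\vec{1}_2}+E^2_{\vec{1}_2}\bigr)=\begin{pmatrix}1&-1\\-1&1\end{pmatrix}\in A .
\]
The paper's proof passes over exactly this case with the claim that ``it is easy to verify that this implies $E^1_{\vec{1}_2}+E^2_{\vec{1}_2}\in A$''. For $a=b=2$ the matrix $E^1_{(a-1,-1)}+E^2_{(-1,b-1)}$ has rank one and generates precisely the submodule $A$ above.

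So the lemma holds as stated only for $q=2$. For $q=3$ it holds exactly when $(a,b)\neq(2,2)$, or under the extra hypothesis $E^1_{\vec{1}_2}-E^2_{\vec{1}_2}\notin A$, i.e.\ $e_1-e_2\notin W$. The extra element $E_{1,1}+E_{2,2}$ you suggest would not help, since it lies in $A$ only when $e_1,e_2\in W$.

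The failure also propagates to Case~2 of the ``only if'' part of Theorem~\ref{teor:result_Kapl_thesis} for $q=3$, which relies on item~(2). Let $F:=E^1_{\vec{1}_2}+E^2_{\vec{1}_2}$, let $A$ be as above, and let $B$ be the submodule of matrices whose rows lie in $\langle e_1,e_2\rangle$. Then $A\prec B$, and $(F,E_{1,1},F,E_{2,2})\in\rho(A,B)$ because $2F-E_{1,1}-E_{2,2}\in A$. But $c$ sends this tuple to $(F,\vec{0}_{2\times m},F,\vec{0}_{2\times m})$, and $2F\notin A$. So the function $c$ used there is not type-preserving when $q=3$, and that case needs a different witness.
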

\begin{proof}
We first prove \eqref{item_item1oflemmaeqtwo}.
Seeking a contradiction, let $a,b\in D$ not both zero, and let us
assume that $a\cdot E_{1,1}\equiv b\cdot E_{2,2}\bmod A$.
	Hence the matrix $a\cdot E_{1,1}-b\cdot E_{2,2}\in A$.
	If $a\neq 0$, then 
	$E_{1,1}=a^{-1}\cdot M_{1,1}\cdot (a\cdot E_{1,1}-b\cdot E_{2,2})\in A$
	in contradiction with the 
	assumptions. 
	If $b\neq 0$, then 
	$E_{2,2}=(-b)^{-1}\cdot M_{2,2}\cdot (a\cdot E_{1,1}-b\cdot E_{2,2})\in A$
	in contradiction with the 
	assumptions.

Next, we show \eqref{item_item2oflemmaeqtwo}.
	Seeking a contradiction, let $a,b\in D$ 
	and let us suppose that 
	$a\cdot E_{1,1}+b\cdot E_{2,2}\equiv E_{\vec{1}_2}^1+E_{\vec{1}_2}^2 \bmod A$.
	Hence we have that
	\[E_{(a-1, -1)}^1 +E_{(-1, b-1)}^2=
	a\cdot E_{1,1}+b\cdot E_{2,2}-E_{\vec{1}_2}^1-E_{\vec{1}_2}^2\in A.\]
	It is easy to verify that this implies that
$E_{\vec{1}_2}^1+E_{\vec{1}_2}^2\in A$, which is a contradiction to the assumptions.
\end{proof}
\begin{lemma} \label{lemmaeqthree}
	Let $q\in\{2,3\}$, let $\ab{D}=\GF{q}$, $m \in \N\setminus\{1\}$, 
	and let 
	\[T= \{\vec{0}_{3\times m}, E_{1,1}, E_{2,1}, E_{3,2},E_{\vec{1}_3}^1+E_{\vec{1}_3}^2\}.\]
	Let $T'=T\setminus \{\vec{0}_{3\times m}\}$ and let 
	$A$ be a $\ab{M}_3(\ab{D})$-submodule of $\ab{D}^{(3 \times m)}$ such 
	that $T'\cap A=\emptyset$. 
	Then we have
	\begin{enumerate}
		\item \label{lemmaeq3punto1} For all $M_1, M_2$ distinct elements of $T'$ and for all
		$(a,b)\in D^2\setminus\{(0,0)\}$ we have $a\cdot M_1\not \equiv b\cdot M_2\bmod A$;
		\item \label{lemmaeq3punto2} for all $a,b,c \in D$, we have $a\cdot E_{1,1}+b\cdot E_{2,1}+ c\cdot E_{2,3}
		\not \equiv E_{\vec{1}_3}^1+E_{\vec{1}_3}^2\bmod A$.
	\end{enumerate}
\end{lemma}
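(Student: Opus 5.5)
The plan is to replace the submodule $A$ by a subspace of row vectors, after which both items become statements about individual rows. Let $W\subseteq D^{1\times m}$ be the set of all rows of all matrices in $A$. Left multiplication by the matrices $M_{i,j}\in\ab{M}_3(\ab{D})$, as in the proof of Lemma~\ref{lemmasm}, does two things. It moves any row of a matrix in $A$ to any position and kills the other rows. Since $A$ is closed under addition, it follows that $A=\{X\in D^{(3\times m)}\mid \text{every row of } X \text{ lies in } W\}$, and that $W$ is a $\ab{D}$-subspace. Write $\vec{e}_j$ for the $j$-th standard row vector of length $m$, and $\vec{u}:=\vec{e}_1+\vec{e}_2$. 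Then the hypothesis $T'\cap A=\emptyset$ says exactly that
\[
\vec{e}_1\notin W,\qquad \vec{e}_2\notin W,\qquad \vec{u}\notin W .
\]
The first condition comes from $E_{1,1}$ and $E_{2,1}$, the second from $E_{3,2}$, and the third from $E_{\vec{1}_3}^1+E_{\vec{1}_3}^2$. Since $W$ is a subspace, no nonzero scalar multiple of these three vectors lies in $W$ either.

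For item~\eqref{lemmaeq3punto1}, take distinct $M_1,M_2\in T'$ and $(a,b)\neq(0,0)$. Suppose $a M_1-bM_2\in A$; then every row of $aM_1-bM_2$ lies in $W$. I would go through the six unordered pairs. In each pair I look for a row of $aM_1-bM_2$ that equals a nonzero multiple of $\vec{e}_1$, $\vec{e}_2$ or $\vec{u}$, splitting into the cases $a\neq 0$ and $b\neq 0$ where needed.
\begin{itemize}
\item For $\{E_{1,1},E_{2,1}\}$, rows $1$ and $2$ are $a\vec{e}_1$ and $-b\vec{e}_1$.
\item For $\{E_{1,1},E_{3,2}\}$ and $\{E_{2,1},E_{3,2}\}$, the rows are $a\vec{e}_1$ and $-b\vec{e}_2$ in different positions.
\item For a pair $\{M,E_{\vec{1}_3}^1+E_{\vec{1}_3}^2\}$ with $M\in\{E_{1,1},E_{2,1},E_{3,2}\}$, some row not touched by $M$ equals $-b\vec{u}$. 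If $b=0$, then $a\neq0$ and the row of $M$ gives $a\vec{e}_1$ or $a\vec{e}_2$.
\end{itemize}
Either way some nonzero multiple of $\vec{e}_1$, $\vec{e}_2$ or $\vec{u}$ lies in $W$, a contradiction. Writing this for ordered pairs is symmetric.

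For item~\eqref{lemmaeq3punto2}, I read $E_{2,3}$ literally, which needs $m\ge 3$. The matrices $E_{1,1}$, $E_{2,1}$ and $E_{2,3}$ all have zero third row. Hence the third row of $aE_{1,1}+bE_{2,1}+cE_{2,3}-(E_{\vec{1}_3}^1+E_{\vec{1}_3}^2)$ is $-\vec{u}$. If the congruence held, this matrix would lie in $A$, so $-\vec{u}\in W$ and then $\vec{u}\in W$, a contradiction. This works for all $a,b,c$ and needs nothing special about $q\in\{2,3\}$.

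The step I expect to be delicate is deciding how to read the third matrix in item~\eqref{lemmaeq3punto2}. If it were meant as $E_{3,2}$, the argument above breaks down. For $q=3$, take $a=b=c=2$: every row of the difference is a multiple of $(1,2,0,\dots,0)$. Choosing $W=\mathrm{Span}\{(1,2,0,\dots,0)\}$ then seems to give a counterexample. So I would state the row-space description as the main tool and prove item~\eqref{lemmaeq3punto2} for $E_{2,3}$ as written.
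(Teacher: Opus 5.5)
Your proof is correct for the statement as written. For item~\eqref{lemmaeq3punto1} it follows the paper's idea: isolate one row of an element of $A$ by left multiplication with a matrix unit. Your row space $W$ makes this uniform, and it also covers the three pairs involving $E_{\vec{1}_3}^1+E_{\vec{1}_3}^2$. The paper's proof of \eqref{lemmaeq3punto1} only treats pairs from $\{E_{1,1},E_{2,1},E_{3,2}\}$, so your version is more complete.

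For item~\eqref{lemmaeq3punto2} the two routes genuinely differ. The paper's own computation $E_{(a-1,b-1,-1)}^1+E_{(-1,-1,c-1)}^2$ shows that it means $aE_{1,1}+bE_{2,1}+cE_{3,2}$. Your counterexample to that reading is valid, so you can drop the ``seems''. For $q=3$ and $a=b=c=2$, the rows of the difference are $(1,2,0,\dots,0)$, $(1,2,0,\dots,0)$ and $(2,1,0,\dots,0)$. All three lie in $W=\mathrm{Span}\{(1,2,0,\dots,0)\}$, and none of its nonzero vectors is $\vec{e}_1$, $\vec{e}_2$ or $\vec{u}$. Hence the paper's ``easy to verify'' step is false for $q=3$. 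The same $W$ with $a=b=2$ also refutes Lemma~\ref{lemmaeqtwo}\eqref{item_item2oflemmaeqtwo} for $q=3$.

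The literal $E_{2,3}$ version you prove holds for a trivial reason and is not what the paper uses later. Case~3 of the proof of Theorem~\ref{teor:result_Kapl_thesis} needs the $E_{3,2}$ version for $q=2$, and you should add it. Your tool gives it in one line:
\begin{itemize}
\item all rows of $aE_{1,1}+bE_{2,1}+cE_{3,2}-E_{\vec{1}_3}^1-E_{\vec{1}_3}^2$ lie in $\mathrm{Span}\{\vec{e}_1,\vec{e}_2\}=\{\vec{0},\vec{e}_1,\vec{e}_2,\vec{u}\}$;
\item this set meets $W$ only in $\vec{0}$;
\item the first row $(a-1,-1,0,\dots,0)$ is nonzero, so the difference cannot lie in $A$.
\end{itemize}
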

\begin{proof}
We first prove \eqref{lemmaeq3punto1}.
Seeking a contradiction, let $a,b\in D$ not both zero, and let us
assume that there exist distinct pairs $(i,j),(k,l)\in \{(1,1),(2,1),(3,2)\}$
such that $a\cdot E_{i,j}+b\cdot E_{k,l}\in A$. 
If $a\neq 0$, then 
$E_{i,j}=a^{-1}\cdot M_{i,j}\cdot (a\cdot E_{i,j}+b\cdot E_{k,l})\in A$
in contradiction with the 
assumption that $T'\cap A=\emptyset$. 
If $b\neq 0$, then 
$E_{k,l}=(-b)^{-1}\cdot M_{k,l}\cdot (a\cdot E_{i,j}+b\cdot E_{k,l})\in A$
in contradiction with the 
assumption that $T'\cap A=\emptyset$.

Next, we show \eqref{lemmaeq3punto2}.
Seeking a contradiction, let $a,b,c\in D$ 
and let us suppose that 
$a\cdot E_{1,1}+b\cdot E_{1,2}+c\cdot E_{2,3}\equiv E_{\vec{1}_3}^1+E_{\vec{1}_3}^2 \bmod A$.
Hence we have that
\[E_{(a-1, b-1, -1)}^1 +E_{(-1, -1, c-1)}^2=
a\cdot E_{1,1}+b\cdot E_{1,2}+c\cdot E_{2,3}- E_{\vec{1}_3}^1-E_{\vec{1}_3}^2\in A.\]
It is easy to verify that this implies that
$E_{\vec{1}_3}^1+E_{\vec{1}_3}^2\in A$, which is a contradiction to the assumption 
that $T'\cap A=\emptyset$.
\end{proof}
\begin{lemma} \label{lemmar}
	Let $q\in \{3,5\}$, $\ab{M}= \Z ^{(1 \times m)}_{q}$,
	$\ab{R}=\Z_{q}$, let $r_{1},r_{2} \in R$,
	and let $T \subseteq M$ with $\vec{0}_m \in T$. 
	Let  $a,b \in T\setminus\{\vec{0}_m\}$,
	and let $c:T \to M$ be a type-preserving function that satisfies 
	$c(\vec{0}_m)=\vec{0}_m$, $c(a)=r_{1}\cdot a$ and $c(b)=r_{2}\cdot b$.
	Then $r_1=r_2$. 
\end{lemma}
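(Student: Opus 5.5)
The plan is to use the two kinds of relations a type-preserving function must preserve: congruences, which here are the cosets of subspaces, and the relations $\rho(A,B)$ of Lemma~\ref{lemma:on_notions_of_type_preserving_and-commutatora_and_rho_for_modules}. Since $\ab{R}=\Z_q$, the $\ab{R}$-submodules of $\ab{M}$ are exactly the $\Z_q$-subspaces of $\Z_q^{(1\times m)}$. We only ever evaluate $c$ at the three points $\vec{0}_m,a,b\in T$. We split into cases according to whether $a$ and $b$ are linearly independent. If $a=b$, then $r_1a=r_2a$ with $a\neq\vec{0}_m$ gives $r_1=r_2$ at once, since $\Z_q$ is a field.

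\emph{Independent case.} Let $A:=\sottospazio{}{\{a-b\}}$ and consider the congruence $x\equiv y \bmod A$. Since $c$ preserves congruences and $a\equiv b\bmod A$, we get $r_1a-r_2b\in\sottospazio{}{\{a-b\}}$. So $r_1a-r_2b=\lambda(a-b)$ for some $\lambda\in\Z_q$. Comparing coefficients of the independent vectors $a,b$ gives $r_1=\lambda=r_2$.

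\emph{Dependent case.} Write $a=s\,b$ with $s\in\Z_q\setminus\{0,1\}$. Take $B:=\sottospazio{}{\{b\}}$ and $A:=\{\vec{0}_m\}$, so $A\prec B$. Then $\rho(A,B)$ consists of the quadruples $(x_1,x_2,x_3,x_4)$ with $x_1-x_2+x_3=x_4$ and all $x_i$ in one coset of $B$; for elements of $B$ the coset condition is automatic. We look for such a quadruple with entries in $\{\vec{0}_m,a,b\}\subseteq B$ that involves both $a$ and $b$. Applying $c$ coordinatewise then yields a linear identity in $r_1,r_2$, which forces $r_1=r_2$. The required quadruples come from identities in $\Z_q$:
\begin{itemize}
\item For $q=3$ (so $s=2$, since $2a=4b=b$): use $(a,\vec{0}_m,a,b)$. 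It gives $2r_1a=r_2b$, that is $r_1b=r_2b$.
\item For $q=5$, $s=2$: use $(b,\vec{0}_m,b,a)$. It gives $2r_2b=r_1a=2r_1b$.
\item For $q=5$, $s=3$: use $(a,\vec{0}_m,a,b)$, valid since $6b=b$. It gives $2r_1a=r_2b$, that is $6r_1b=r_1b=r_2b$.
\item For $q=5$, $s=4$: use $(\vec{0}_m,b,\vec{0}_m,a)$, valid since $-b=a$. It gives $-r_2b=r_1a=-r_1b$.
\end{itemize}
In every case, cancelling $b\neq\vec{0}_m$ gives $r_1=r_2$.

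The main point to get right is this dependent case. Only three domain points are available, so one must check that for each $q\in\{3,5\}$ and each $s$ some quadruple in $\rho(\{\vec{0}_m\},B)$ uses just these points and still links $r_1$ to $r_2$. The finite check above does this. It also explains the restriction $q\in\{3,5\}$: for larger $q$ such a quadruple need not exist, while for $q=2$ the dependent case reduces to $a=b$.
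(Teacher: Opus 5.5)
Your proof is correct and follows the paper's argument: linear independence is handled by congruence preservation modulo the span of $a-b$ and comparing coefficients, and linear dependence by preservation of $\rho(\{\vec{0}_m\},\langle b\rangle)$ on quadruples built from $\vec{0}_m,a,b$. The only difference is cosmetic. You parametrize by $a=sb$ rather than the paper's $b=ta$, so in two subcases you use the mirror-image quadruple, and all four of your quadruple checks are valid.
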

\begin{proof}
	We split the proof into several cases.\\
	\textbf{Case 1}: \emph{$a=b$}: This case is obvious.\\
\textbf{Case 2}: \emph{$a \in \submodule{b}$ and $a \not= b$ and $q=3$}:
Let $A$ be the trivial submodule $\{\vec{0}_m\}$
and $B$ be the submodule generated by $b$, that is $B=\{\vec{0}_m,b,2b\}$.
The case assumption implies that
$b=2\cdot a$ which is equivalent to $(a,\vec{0}_m,a,b) \in \rho(A,B)$.
Since $c$ preserves types, $c$ preserves $\rho(A,B)$ and therefore,
\[(r_1\cdot a,\vec{0}_m,r_1\cdot a,r_2\cdot b)=
(c(a),c(\vec{0}_m),c(a),c(b)) \in \rho(A,B).\]
Thus, $c(b) = 2\cdot c(a)$. 
Hence $r_{2}\cdot b = 2 r_{1}\cdot a$,
and since $b=2\cdot a$, we have that $ 2 r_{2}\cdot a = 2 r_{1}\cdot a $,
and therefore, $ r_{1}=r_{2}$.\\
	\textbf{Case 3}: \emph{$a \in \submodule{b}$ and $a \not= b$ and $q=5$}:
	Let $A=\{ \vec{0}_m\}$ and $B= \{\vec{0}_m,b,2\cdot b,3\cdot b,4\cdot b \}$.
	We split the proof into three subcases:\\
	\textbf{Case 3.1}: \emph{$b=2 a$}: Thus,
	$(a,\vec{0}_m,a,b) \in \rho(A,B)$, and since $c$ preserves types 
	\begin{equation*}
		(c(a),\vec{0}_m,c(a),c(b)) \in \rho(A,B) 
		\Longleftrightarrow c(b)=2c(a).
	\end{equation*}
	\textbf{Case 3.2}: \emph{$b=3a$}: Thus,
	$(b,a,b,\vec{0}_m) \in \rho(A,B)$, and since $c$ preserves types 
	\begin{equation*}
		(c(b),c(a),c(b),\vec{0}_m)\in \rho(A,B) 
		\Longleftrightarrow c(b)=3c(a).
	\end{equation*}
	\textbf{Case 3.3}: \emph{$b=4a$}: Thus, 
	$(\vec{0}_m,a,\vec{0}_m,b)\in \rho(A,B)$, and since $c$ preserves types 
	\begin{equation*}
		(\vec{0}_m,c(a),\vec{0}_m,c(b)) \in \rho(A,B) \Longleftrightarrow c(b)=4c(a).
	\end{equation*}
Thus, there exists $t\in \{2,3,4 \}$ such that $c(b)=tc(a)$ and $b=ta$.
Hence $r_{2}b = t r_{1}a$, and therefore, 
$r_{2} t a = t r_{1}a$, which
implies $r_{1}=r_{2}$.\\
\textbf{Case 4}: \emph{$a \notin\submodule{b}$}:
The case assumption implies that $a$ and $b$
are linearly independent. 
Therefore, 
\[
r_1 a- r_2 b\in \{ s(a-b) \mid s\in \GF{q}\}.
\]
Hence there exists $s\in \GF{q}$ such that
$r_1a- r_2 b=sa -sb$. 
Thus, since $a$ and $b$ are linearly independent 
we have that $r_1=s=r_2$. 
\end{proof}
\section{Strictly polynomially rich modules over simple rings}\label{sec:modules}
In this section we prove the following 
theorem:
\begin{theorem}[{\cite[Theorem~3.31]{Kap05}}]\label{teor:result_Kapl_thesis}
	Let $m,n\in\N$, let $\ab{D}$ be a finite field, and let 
	$\ab{M}_n(\ab{D})$ be the ring of $n\times n$-matrices 
	over $\ab{D}$, let $\ab{D}^{(n\times m)}$ denote the
	$\ab{M}_n(\ab{D})$-module
	of all $n\times m$-matrices with entries from $\ab{D}$.
	Let $k\in\N$ and $q=\card{D}$. 
	Then $\ab{D}^{(n\times m)}$ is strictly $k$-polynomially rich
	if and only if 
	\begin{enumerate}
		\item $m\in\N$, $q\in\{2,3,5\}$, $n=1$, and $k=1$; or\label{item:kapl_m23511}
		\item $m=1$, $q=2$, $n\in\{2,3\}$, and $k=1$; or\label{item:kapl_12231}
		\item $m=1$, $q=3$, $n=2$, and $k=1$; or\label{item:kapl_1321}
		\item $m=1$, $q=2$, $n=1$, and $k\in\{2,3\}$, or\label{item:kapl_12123}
		\item $m=1$, $q=3$, $n=1$, and $k=2$. \label{item:kapl_1312}
	\end{enumerate}
\end{theorem}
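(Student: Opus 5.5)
Both directions reduce, via Lemma~\ref{lemma:reducing_to_zero_preserving}, to zero-preserving partial type-preserving functions $f\colon T\to D^{(n\times m)}$ with $\matrixvec{0}_{n\times m}\in T$. By Lemma~\ref{lemma:on_notions_of_type_preserving_and-commutatora_and_rho_for_modules}, type preservation means preserving every submodule congruence and every $\rho(A,B)$ with $A\prec B$. The polynomial functions of the module are the affine maps $\matrixvec{x}\mapsto \sum_l r_l\matrixvec{x}(l)+c$ with $r_l\in M_n(\ab{D})$.

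\emph{Sufficiency.} For each of the cases \eqref{item:kapl_m23511}--\eqref{item:kapl_1312} I will construct matrices $r_l$ with $f(\matrixvec{x})=\sum_l r_l\matrixvec{x}(l)$ on $T$.
\begin{itemize*}
\item \emph{Case $n=1$, $k=1$, $q\in\{2,3,5\}$.} Here $M=\ab{D}^{1\times m}$ is a vector space and the submodules are the subspaces. Preserving the congruence modulo $\langle t\rangle$ forces $f(t)\in\langle t\rangle$, so $f(t)=r_t t$ for each $t\in T$. For $q=2$ this makes $f(t)\in\{0,t\}$, and the $\rho$-relations should force a common scalar. For $q\in\{3,5\}$, Lemma~\ref{lemmar} gives $r_a=r_b$ for all $a,b\in T\setminus\{0\}$, so $f$ is a scalar multiple of the identity on $T$. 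For $q=2$ I will argue directly that $f(a)=0\neq b=f(b)$ is impossible, using the congruence modulo $\langle a-b\rangle$ together with $\rho$.
\item \emph{Cases with $m=1$ and $n>1$, $k=1$.} Here the module is simple, so $\Con$ is just $\{0,1\}$, and the only relevant $\rho$ is $\rho(0,V)$, which encodes $x_1-x_2+x_3=x_4$. Thus $f$ preserves every relation $x_1-x_2+x_3=x_4$ holding inside $T$. By Lemma~\ref{lemma:technical_lemma_kapl}, there is a linearly independent $S\subseteq T$ such that every $t\in T$ equals $x_1-x_2+x_3$ with $x_i\in S\cup\{0\}$. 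I then define a matrix $r$ sending each $s\in S$ to $f(s)$; this is possible by linear independence. Preservation of $\rho$ gives $f(t)=f(x_1)-f(x_2)+f(x_3)=rt$.
\item \emph{Cases $n=m=1$, $k\in\{2,3\}$ over $\GF{2}$, or $k=2$ over $\GF{3}$.} The same argument works in $\ab{D}^k$: the $\rho$-relation is preserved coordinatewise, and Lemma~\ref{lemma:technical_lemma_kapl} with dimension $k$ supplies the basis $S$. A linear functional $D^k\to D$ then interpolates $f$.
\end{itemize*}

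\emph{Necessity.} For every parameter tuple outside the list I will exhibit a type-preserving partial function that is not affine. The test sets are those of Lemmata~\ref{lemmaeq}--\ref{lemmaeqthree}.
\begin{itemize*}
\item \emph{Unary, with $n$ or $q$ too large.} Take $T=\{0\}\cup\{E_{i,1}\}\cup\{E^1_{b\vec{1}_n}\}$, with $b$ chosen as in Lemmata~\ref{lemmaeqq2}--\ref{lemmaeqqp}. Define $f$ to be $0$ at $0$, to fix each $E_{i,1}$, and to send $E^1_{b\vec{1}_n}$ to something inconsistent with linearity, for instance $0$. Lemma~\ref{lemmaeq} and those lemmata show that any submodule $A$ identifying elements of $T$, or witnessing a nontrivial $\rho(A,B)$-tuple inside $T$, already contains the whole first column. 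Such an $A$ also identifies the images, so both the congruences and the $\rho$-relations are preserved. Meanwhile an affine $p$ with $p(0)=0$ and $p(E_{i,1})=E_{i,1}$ must send $E^1_{b\vec{1}_n}$ to itself.
\item \emph{$m\ge2$ with $n\in\{2,3\}$.} Lemmata~\ref{lemmaeqtwo} and~\ref{lemmaeqthree} play the same role.
\item \emph{Higher arity.} For $k\ge2$ with $(n,m)\neq(1,1)$ I will embed the unary counterexample. For $\GF{3}$ with $k=3$, for $\GF{2}$ with $k=4$, and for $q\ge4$ with $k=2$, I will use a coordinatewise construction in the spirit of the introduction's $\Z_4$ example, mimicking the near-unanimity-type obstruction.
\end{itemize*}

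The main obstacle is the exhaustive bookkeeping in the necessity direction. One must check that every $\rho(A,B)$ is preserved by the chosen partial function, which requires listing all $4$-tuples from $T$ that satisfy the relation for some pair $A\prec B$. The lemmata cover exactly this, so the work is verifying that they cover every such tuple.
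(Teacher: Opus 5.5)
Your sufficiency argument and the unary part of the necessity argument are sound and essentially follow the paper. Your unary counterexample is $\mathrm{id}-c$ for the paper's function $c$, which changes nothing, since all the relations involved are subgroups of $V^2$ or $V^4$.

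The gap is in the higher-arity necessity. You plan to handle $k\ge 2$ with $(n,m)\neq(1,1)$ by embedding the unary counterexample. But for $\ab{D}^{(1\times m)}$ with $m\ge 2$ and $q\in\{2,3,5\}$, and for $\ab{D}^{(n\times 1)}$ with $q=2$, $n\in\{2,3\}$ or $q=3$, $n=2$, no unary counterexample exists: your own sufficiency part shows these modules are strictly $1$-polynomially rich. Yet none of them is strictly $2$-polynomially rich. Your coordinatewise construction is reserved for $(n,m)=(1,1)$, so these cases are simply not covered, and they need genuinely $k$-ary obstructions.

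The paper supplies these separately:
\begin{itemize}
\item For $m=1$, it takes the domain consisting of $\matrixvec{0}_n$, $\matrixvec{1}_n$ and all $k$-tuples with one entry $\vec{e}_i^n$ and all others $\vec{0}_n$. The function $c$ is zero except $c(\matrixvec{1}_n)=\vec{1}_n$. The point is that $\matrixvec{1}_n$ has $nk\ge 4$ nonzero scalar entries, while every other point has at most one.
\item For $n=1$, $m\ge 2$, it uses the domain $\{(t_0,t_0),(t_1,t_0),(t_2,t_1)\}$, with $t_1,t_2$ the first two unit rows and values $t_0,t_0,t_2$. Then $r_1t_1=t_0$ forces $r_1=0$, after which $r_2t_1=t_2$ is impossible.
\item For $q=5$, the near-unanimity-type function must be built in $\ab{D}^{(1\times m)}$ for every $m$, not only $m=1$.
\end{itemize}

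A uniform way to make your embedding idea work is to stack the $k$ arguments: $(\ab{D}^{(n\times m)})^k\cong\ab{D}^{(nk\times m)}$.
\begin{itemize}
\item In both modules the submodules are exactly the sets of matrices whose rows all lie in a fixed subspace of $D^m$. So the congruences, the prime quotients and the relations $\rho(A,B)$ correspond and are checked row by row.
\item The $k$-ary polynomials of $\ab{D}^{(n\times m)}$ are exactly the $n$-row blocks of the unary polynomials $X\mapsto RX+C$ of $\ab{D}^{(nk\times m)}$.
\item Hence every block of a unary type-preserving function on $\ab{D}^{(nk\times m)}$ is a $k$-ary type-preserving function on $\ab{D}^{(n\times m)}$. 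If the unary function is not interpolable, then some block is not.
\item Conversely, padding a $k$-ary function with zero blocks goes the other way.
\end{itemize}
So $\ab{D}^{(n\times m)}$ is strictly $k$-polynomially rich if and only if $\ab{D}^{(nk\times m)}$ is strictly $1$-polynomially rich. All higher-arity cases, in both directions and including $(n,m)=(1,1)$, then follow from your unary analysis.
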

  Stated differently, Theorem~\ref{teor:result_Kapl_thesis} tells
  that a nontrivial module over a finite simple ring is
  never strictly $4$-polynomially rich; $\ringmod{\Z_2}{\Z_2}$ is strictly $3$-polynomially
  rich, $\ringmod{\Z_3}{\Z_3}$ is $2$-polynomially rich, but not $3$-polynomially rich,
  and the following is a complete list of finite modules over finite
  simple rings that are strictly $1$-polynomially rich, but not
  strictly $2$-polynomially rich: $\ringmod{\Z_2}{(\Z_2^m)}$ and
  $\ringmod{\Z_3}{(\Z_3^m)}$ for $m \ge 2$, $\ringmod{\Z_5}{(\Z_5^m)}$
  for $m \ge 1$,   and the three modules $\ringmod{\ab{M}_2 (\Z_2)}{(\Z_2^2)}$,
  $\ringmod{\ab{M}_3 (\Z_2)}{(\Z_2^3)}$ and $\ringmod{\ab{M}_2 (\Z_3)}{(\Z_3^2)}$.
\begin{proof}
We first prove the ``if''-direction.
We split the proof into four cases.
	
\textbf{Case 1}: \emph{$m \geq 1, n=1, q=2, k=1$}: 
In this case $\ab{D}^{(n\times m)}$ is strictly $1$-affine complete 
by~\cite[Theorem~1.3]{AI:PIIE}.
Therefore, $\ab{D}^{(n\times m)}$ is also strictly $1$-polynomially rich.
	
\textbf{Case 2}: \emph{$m\geq 1,n=1,q\in\{3,5\},k=1$}:
Let $T \subseteq D^{(n\times m)}$ and $f:T\rightarrow D^{(n\times m)}$ be a 
type-preserving function.
We assume $\vec{0}_{1\times m}\in T$ and that
$f$ is zero-preserving. 
It is well known (cf., e.g., \cite[page~233]{KP:PCIA}) 
that every unary zero preserving congruence-preserving function $c$
on the $\ab{M}_n(\ab{D}) $-module $\ab{D}^{(n\times m)}$ is of the form
$m\mapsto r(m)\cdot m$ with $r(m) \in D^{(n\times n)} $. 
Lemma~\ref{lemmar} implies that if $f$ is 
also type-preserving, then $r(m)$ is constant, and hence 
$f$ can be interpolated on its domain by a polynomial.
Thus, Lemma~\ref{lemma:reducing_to_zero_preserving}
implies that $\ab{D}^{(n\times m)}$ is strictly $1$-polynomially rich.
	
\textbf{Case 3}: \emph{$(m=1,q=2, n\in\{2,3\},k=1)$ or $(m=1,q=3,n=2,k=1)$}:
Let $T \subseteq D^{(n\times 1)}$, and 
$c:T \rightarrow D^{(n\times 1)}$ a type-preserving function.
The case assumptions implies 
that $\ab{D}^{(n\times m)}=\ab{D}^{(n\times 1)}$ is simple,
hence setting 
$A=\{ \vec{0}_{n\times 1}\}$ and 
$B=D^{(n\times 1)}$, we have that $A \prec B$. 
We assume $\vec{0}_{n\times 1} \in T$ 
and $c(\vec{0}_{n\times 1})=\vec{0}_{n\times 1}$.
Let $S$ be the basis of $\sottospazio{}{T}$ with $S\subseteq T$ constructed in 
Lemma~\ref{lemma:technical_lemma_kapl}. Then 
\begin{equation}\label{eq:conseguenzaLemmatennicoKaplincase3}
\forall t \in T \,\exists x,y,z \in S \cup \{\vec{0}_{n\times 1} \}\colon t=x-y+z.
\end{equation}
Since each zero-preserving unary polynomial 
function of $\ab{D}^{(n\times m)}$ is of the form
$p(x)=R \cdot x \text{ for }R\in D^{(n\times n)} $,
we have to show there exists $R \in D^{(n\times n)}$ 
such that $c(x)=R\cdot x$ for all $x \in T$.
By solving a linear system of equations 
one can compute $R\in D^{(n\times n)}$ such that 
$c(x)=R\cdot x$ for all $x\in S\cup 
\{\vec{0}_{n\times 1}\}$.
Note that the system has a solution since we are assuming that 
$S$ consists of linearly independent vectors.
Next we show that $R\cdot x$ interpolates $c$ on $T$. 
To this end, let us fix $t\in T$.
By \eqref{eq:conseguenzaLemmatennicoKaplincase3}
there exist $x,y,z \in S \cup \{\vec{0}_{n\times 1}\}$ 
such that
$(x,y,z,t) \in \rho(A,B)$.
Since $c$ is type-preserving, we have
$(c(x),c(y),c(z),c(t))\in \rho(A,B)$,
which is equivalent to
$c(t)=c(x)-c(y)+c(z)$.
Hence $c(t)=c(x)-c(y)+c(z)=Rx-Ry+Rz=R(x-y+z)=Rt.$ 
Thus, 
by Lemma~\ref{lemma:reducing_to_zero_preserving},
$\ab{D}^{(n\times m)}$ is strictly $1$-polynomially rich.

\textbf{Case 4}: \emph{$(m=1,q=2,n=1, k\in\{1,2,3\})$ 
or $(m=1,q=3,n=1,k=2)$}:	
In this case $\ab{D}^{(n\times m)}=\ab{D}$ is simple 
and $A:=\{ 0 \}\prec B:=D^{(n\times m)}=D$. 
Let $T \subseteq D^{k}$, and let 
$c:T \rightarrow D$ be a type-preserving function.
We assume $\vec{0}_k \in T$ and $c(\vec{0}_k)=0$. 
Let $S$ be the basis of $\sottospazio{}{T}$ with $S \subseteq T$
constructed in  
Lemma~\ref{lemma:technical_lemma_kapl}. Then we have
\begin{equation}\label{eq:equzione_che_vien_dal_lemma_tennico_kapl}
		\forall t \in T\, \exists x,y,z \in S \cup \{\vec{0}_k\}\colon
		t=x-y+z.
	\end{equation}
	Next, we show that there are $r_{1},\ldots,r_{k} \in D $ such that
	\[c(x_{1},\ldots,x_{k})=r_{1}x_{1}+\cdots +r_{k}x_{k} \text{  for
		all } (x_{1},\ldots,x_{k})\in T.\]
	Solving a linear system, one computes $r_{1},\ldots,r_{k}\in D $
	such that
	\begin{equation*}
		c(x_{1},\ldots,x_{k})=r_{1}x_{1}+\cdots+r_{k}x_{k}, \,
		\text{ for all } (x_1, \dots, x_k) \in S \cup \{ \vec{0}_k\}.
	\end{equation*} 
	Note that the system has a solution because the elements of $S$ are 
	linearly independent. 
	Next, we prove that the map $p$ defined by 
	$(x_1, \dots, x_k)\mapsto r_{1}x_{1}+\cdots+r_{k}x_{k}$
	for all $x_1, \dots, x_k\in T$ interpolates $c$ on $T$. 
	To this end, let $t\in T$.
	By \eqref{eq:equzione_che_vien_dal_lemma_tennico_kapl}
	there are $(x_1, \dots x_k), (y_1, \dots, y_k),
	(z_1, \dots, z_k)\in S\cup\{\vec{0}_k\}$
	such that for each $i\in\finset{k}$ 
	we have 
	$(x_i,y_i, z_i, t_i)\in \rho(A,B)$. 
	Since $c$ is type-preserving, we have
	\begin{equation*}
		(c(x_{1},\ldots,x_{k}),c(y_{1},\ldots,y_{k}),c(z_{1},\ldots,z_{k}),
		c(t_{1},\ldots,t_{k})) \in \rho(A,B),
	\end{equation*}
	which is equivalent to
	\begin{equation*}
		c(t_{1},\ldots,t_{k})=c(x_{1},\ldots,x_{k})-c(y_{1},\ldots,y_{k})+
		c(z_{1},\ldots,z_{k}).
	\end{equation*}
	Hence 
	\[
	\begin{split}
		c(t_{1},\ldots,t_{k})=&c(x_{1},\ldots,x_{k})-
		c(y_{1},\ldots,y_{k})+c(z_{1},\ldots,z_{k})=\\
		&r_{1}x_{1}+\cdots+r_{k}x_{k}-
		r_{1}y_{1}-\cdots-r_{k}y_{k}+r_{1}z_{1}+\cdots+r_{k}z_{k}=\\
		&r_{1}(x_{1}-y_{1}+z_{1})+\cdots+r_{k}(x_{1}-y_{k}+z_{k})=\\
		&r_{1}t_{1}+\cdots+
		r_{k}t_{k}.
	\end{split}
	\]
	Therefore $c$ is the restriction of a $k$-ary
	polynomial function and 
	by Lemma~\ref{lemma:reducing_to_zero_preserving},
	$\ab{D}^{(n\times m)}$ is strictly $k$-polynomially
	rich.
	
	Next, we prove the ``only if''-direction. We split the
	proof into eight cases. First, 
	we need to prove that for each $q\notin \{2,3,5\}$ and for all $n,m\in \N$, 
	the $\mathbf{M}_n(\GF{q})$-module $\GF{q}^{n\times m}$ is 
	not strictly $1$-polynomially rich, and hence not even strictly $k$-polynomially 
	rich for any $k\in \N$. This is shown in Case 1.
 Let $q\in \{2,3,5\}$. 
Since there are some combinations of $k,n,m$ for which 
the $\mathbf{M}_n(\GF{q})$-module
$\GF{q}^{n\times m}$ is strictly $k$-polynomially rich, the case distinction must be finer. 
	In order to help the reader understand that these eight cases suffice, we summarise 
	the combinations for which  the $\mathbf{M}_n(\GF{q})$-module
	$\GF{q}^{n\times m}$ is not strictly $k$-polynomially rich 
	in Tables \ref{tabella:quguale5}, \ref{tabella:quguale3}, and \ref{tabella:quguale2}.
	\begin{table}
	\begin{tabular}{C{15mm}C{25mm}C{25mm}}
		\toprule
		 & $n=1$ & $n\geq 2$ 
		\tabularnewline
		\midrule
		$m\geq 1$ &$k\geq 2$, Case 8 &  $k\geq 1$, Case 1
		\tabularnewline
		\bottomrule
	\end{tabular}
	\caption{The combinations of $k,n,m$ for which
	the $\ab{M}_n(\GF{5})$-module $\GF{5}^{n\times m}$ is NOT strictly 
	$k$-polynomially rich.}\label{tabella:quguale5}
\end{table}
\begin{table}
	\begin{tabular}{C{15mm}C{25mm}C{25mm}C{25mm}}
		\toprule
		 & $n=1$ & $n=2$ & $n\geq 3$ 
		\tabularnewline
		\midrule
		$m=1$ &$k\geq 3$, Case 7 & $k\geq 2$, Case 4 & $k\geq 1$, Case 1
		\tabularnewline
		$m\geq 2$ &$k\geq 2$, Case 5 & $k\geq 1$, Case 2 & $k\geq 1$, Case 1
		\tabularnewline
		\bottomrule
	\end{tabular}
	\caption{The combinations of $k,n,m$ for which
	the $\ab{M}_n(\GF{3})$-module
	$\GF{3}^{n\times m}$ 
	is NOT strictly $k$-polynomially rich.}\label{tabella:quguale3}
\end{table}
\begin{table}
	\begin{tabular}{C{15mm}C{25mm}C{25mm}C{25mm}C{25mm}}
		\toprule
		 & $n=1$ & $n=2$ & $n=3$ & $n\geq 4$ 
		\tabularnewline
		\midrule
		$m=1$ &$k\geq 4$, Case 6 & $k\geq 2$, Case 4 & $k\geq 2$, Case 4 & $k\geq 1$, Case 1
		\tabularnewline
		$m\geq 2$ &$k\geq 2$, Case 5 & $k\geq 1$, Case 2 & $k\geq 1$, Case 3 & $k\geq 1$, Case 1
		\tabularnewline
		\bottomrule
	\end{tabular}
	\caption{The combinations of $k,n,m$ for which
	the $\ab{M}_n(\GF{2})$-module $\GF{2}^{n\times m}$ is NOT strictly 
	$k$-polynomially rich.}\label{tabella:quguale2}
\end{table}
	
	\textbf{Case 1}: \emph{$(m \geq 1,k=1,n \geq 4, q = 2)$ or 
		$(m \geq 1,k=1,n \geq 3, q = 3)$ or $(m \geq 1, k=1, n\geq 2, q = 5)$
		or $(m \geq 1,k=1, n\geq 1, q\notin\{2,3,5\}$)}:
	Clearly, there are a prime $p$ and a natural number $\alpha$
	such that $q=p^\alpha$. We show that $\ab{D}^{(n\times m)}$ is 
	not strictly $1$-polynomially rich. To this end,
	we define a partial function $c$,
	depending on the different subcases. Let $b \in D$ with
	\begin{align}
		(m \geq 1,k=1,n \geq 4, q = 2)&\Rightarrow b=1\label{subcase:b=1}\\
		(m \geq 1,k=1,n \geq 3,q=3)&\Rightarrow b=2\label{subcase:b=2}\\
		(m \geq 1,k=1,n \geq 2, q=5)&\Rightarrow b=4\label{subcase:b=4}\\
		(m \geq 1,k=1,n \geq 1,p >5,\alpha=1)&\Rightarrow b=p-2\label{subcase:b=p-2}\\
		(m \geq 1,k=1,n \geq 1,\alpha > 1)&\Rightarrow b \notin \{0,1,\ldots,p-1\}.\label{subcase:bless_p}
	\end{align}
Let $T=\{\vec{0}_{n\times m}\}\cup \{E_{i,1}\mid i\in\finset{n}\}\cup
\{E_{b\cdot \vec{1}_n}^1\}$, and let $c\colon T\to \{\vec{0}_{n\times m},
E_{b\cdot \vec{1}_n}^1\}$
be the identity on $\{\vec{0}_{n\times m},
E_{b\cdot \vec{1}_n}^1\}$
and the constant function with value 
$\vec{0}_{n\times m}$ on
$\{ E_{i,1}\mid i\in\finset{n}\}$.
Lemma~\ref{lemmasm} implies that $c$ is a congruence
preserving function.
Next, let $A,B$ be two submodules of 
$\ab{D}^{(n \times m)}$ with $A \prec B$;
we show that 
	for all $x_{1},x_{2},x_{3},x_{4} \in T$,
	\begin{equation} \label{eqrho}
		(x_{1},x_{2},x_{3},x_{4})\in \rho(A,B) \Rightarrow
		(c(x_{1}),c(x_{2}),c(x_{3}),c(x_{4})) \in \rho(A,B).
	\end{equation}
	Let $x_{1},x_{2},x_{3},x_{4} \in T$.
	If $x_{1}=x_{2}=x_{3}=x_{4}=\vec{0}_{n\times m}$, 
	or $E_{b\cdot \vec{1}_n}^1\notin \{x_{1},x_{2},x_{3},x_{4}\}$,
	or $x_{1}=x_{2}=x_{3}=x_{4}=E_{b\cdot \vec{1}_n}^1$,
	then $(c(x_1), \dots, c(x_4))$ is a diagonal tuple 
	and therefore, it belongs to $\rho(A,B)$.
	Let us now assume that at least one but not all of
	$x_{1},x_{2},x_{3},x_{4}$ is equal to $E_{b\cdot \vec{1}_n}^1$.
	If $E_{1,1}\in A$, then Lemma~\ref{lemmasm}
	implies that $T\subseteq A$ and therefore,
	\eqref{eqrho} follows from the fact that $c(T)\subseteq A$.
	Let us now assume that $E_{1,1}\notin A$.
	Lemma~\ref{lemmaeq} implies that for each 
	pair of distinct elements $x, y$ from $T$ we have 
	$ x\not\equiv y\bmod A$. We show that this cannot 
	occur under each of the assumptions 
	\eqref{subcase:b=1}-\eqref{subcase:bless_p}.
	Since $x_1-x_2+x_3-x_4\in A$, and $x_1, x_2, x_3, x_4\in T$,
	one of the following conditions must be satisfied according to how many 
	distinct elements are there in $\set{x_1, x_2, x_3, x_4}$ and in which 
	order they appear in $(x_1, x_2, x_3, x_4)$:
	\begin{align}
		&\exists \text{ distinct }M_1, M_2, M_3\in 
		T\setminus\{E_{b\cdot \vec{1}_n}^1\}\colon 
		M_1-M_2+M_3\equiv E_{b\cdot \vec{1}_n}^1 \bmod A\label{eq:ex1pagina13}\\ 
		&\exists \text{ distinct }M_1, M_2\in T\setminus\{E_{b\cdot \vec{1}_n}^1\}
		\colon 2\cdot M_1-M_2\equiv   E_{b\cdot \vec{1}_n}^1 \bmod A\label{eq_nuova_strana_col_2}\\
		&\exists \text{ distinct }M_1, M_2\in T\setminus\{E_{b\cdot \vec{1}_n}^1\}
		\colon M_1+M_2 \equiv 2 \cdot E_{b\cdot \vec{1}_n}^1 \bmod A\label{eq:ex2Bpagina13}\\
		&\exists \text{ distinct }M_1, M_2\in T\setminus\{E_{b\cdot \vec{1}_n}^1\}
		\colon M_1-M_2\equiv \vec{0}_{n\times m} \bmod A.\label{eq:ex3pagina13}\\
		&\exists M_1 \in T\setminus\{E_{b\cdot \vec{1}_n}^1\}
		\colon M_1\equiv E_{b\cdot \vec{1}_n}^1 \bmod A.\label{eq:singoletto}
	\end{align}
	In fact if $\card{\set{x_1, x_2, x_3, x_4}}=4$, 
	then we are in case~\eqref{eq:ex1pagina13}; 
	if $\card{\set{x_1, x_2, x_3, x_4}}=3$ and 
	$E_{b\cdot \vec{1}_n}^1$ appears once in $(x_1, x_2, x_3, x_4)$
	then we are in case~\eqref{eq_nuova_strana_col_2}; 
	if $E_{b\cdot \vec{1}_n}^1$ appears twice in $(x_1, x_2, x_3, x_4)$
	both times in an odd or even position, then we are in case~\eqref{eq:ex2Bpagina13};
	if $E_{b\cdot \vec{1}_n}^1$ appears twice in $(x_1, x_2, x_3, x_4)$ 
	once in an odd position and once in an even position, then 
	we are in case~\eqref{eq:ex3pagina13}; 
	and if $\card{\set{x_1, x_2, x_3, x_4}}=2$, then we are in case~\eqref{eq:singoletto}.
	
Conditions~\eqref{eq:ex3pagina13} and~\eqref{eq:singoletto} are clearly in 
contradiction with the already proved fact that
for each pair $(x,y)$ of distinct elements from $T$ 
we have $x\not\equiv y\mod A$. 
In order to derive a contradiction 
from \eqref{eq:ex1pagina13}-\eqref{eq:ex2Bpagina13} we distinguish 
cases according to which among the 
left-hand side of \eqref{subcase:b=1}-\eqref{subcase:bless_p}
is satisfied.
Under \eqref{subcase:b=1}, \eqref{eq:ex1pagina13}
contradicts Lemma~\ref{lemmaeqq2}, and since
$2\cdot E_{b\cdot\vec{1}_n}^1=\vec{0}_{n\times m}$,
\eqref{eq_nuova_strana_col_2}
and \eqref{eq:ex2Bpagina13} are equivalent to~\eqref{eq:ex3pagina13} 
and~\eqref{eq:singoletto}.
Under \eqref{subcase:b=2}, each of \eqref{eq:ex1pagina13}, 
\eqref{eq_nuova_strana_col_2} and \eqref{eq:ex2Bpagina13}
contradict Lemma~\ref{lemmaeqq3}.
Under \eqref{subcase:b=4}, each of \eqref{eq:ex1pagina13}, 
\eqref{eq_nuova_strana_col_2} and \eqref{eq:ex2Bpagina13}
contradict Lemma~\ref{lemmaeqq5}.
Under \eqref{subcase:b=p-2}, each of \eqref{eq:ex1pagina13}, 
\eqref{eq_nuova_strana_col_2} and \eqref{eq:ex2Bpagina13}
contradict Lemma~\ref{lemmaeqq7}.
Let us now consider the case that \eqref{subcase:bless_p} holds:
For $q=p^{\alpha}$ with $p$ prime and $p > 2$ and 
$\alpha > 1$, each of 
\eqref{eq:ex1pagina13},  
\eqref{eq_nuova_strana_col_2} and \eqref{eq:ex2Bpagina13}
contradicts Lemma~\ref{lemmaeqqp}.
For $q=2^{\alpha}$ with $\alpha > 1$,
\eqref{eq:ex1pagina13} contradicts Lemma~\ref{lemmaeqqp}.
Moreover, in this case
since $2\cdot E_{b\cdot\vec{1}_n}^1=\vec{0}_{n\times m}$,
\eqref{eq_nuova_strana_col_2} and \eqref{eq:ex2Bpagina13}
are equivalent to 
\eqref{eq:ex3pagina13} ans~\eqref{eq:singoletto} and yield the already discussed 
contradiction. 
	Thus, we can conclude that 
	$c$ preserves $\rho(A,B)$,
	and therefore, $c$ is type-preserving. 
	
	Seeking a contradiction, let us 
	suppose that there exists $R\in D^{(n\times n)} $ such that
	for all $x\in T$ we have $c(x)=R\cdot x$.
	Then since $c(E_{i,1})=\vec{0}_{n\times m}$ 
	for each $i\in\finset{n}$, we infer that 
	$R=\vec{0}$. Since $c$ is not the constant zero function
	we derive a contradiction with the
	fact that every unary zero-preserving polynomial of the
	$\ab{M}_n(\ab{D})$-module $\ab{D}^{(n\times m)}$
	is of the form $x\mapsto M\cdot x$ for some 
	$M\in \ab{M}_n(\ab{D})$.
	
	\textbf{Case 2}: \emph{$(m>1,q \in\{2,3 \},n=2,k=1)$}:
	Let $T=\{\vec{0}_{n\times m}, E_{1,1}, E_{2,2}, 
	E_{\vec{1}_n}^1+ E_{\vec{1}_n}^2\}$
	and let $c\colon T\to D^{(n\times m)}$ be the constant 
	function with constant value 
	$\vec{0}_{n\times m}$ on $T'=\{\vec{0}_{n\times m}, 
	E_{1,1}, E_{2,2}\}$
	and the identity on $\{E_{\vec{1}_n}^1+ E_{\vec{1}_n}^2\}$.
	Next, we show that $c$ is congruence preserving. 
	To this end, we observe that it suffices to check 
	for each $x\in T'$
	whether $c(E_{\vec{1}_n}^1+ E_{\vec{1}_n}^2)-c(x)$
	belongs to the $\ab{M}_n(\ab{D}) $-submodule generated by 
	$E_{\vec{1}_n}^1+ E_{\vec{1}_n}^2-x$.
	Since $c(E_{\vec{1}_n}^1+ E_{\vec{1}_n}^2)-c(x)=
	E_{\vec{1}_n}^1+ E_{\vec{1}_n}^2$
	for all $x\in T'$, we need only prove that
	for all $x\in T'$ we have that
	$E_{\vec{1}_n}^1+ E_{\vec{1}_n}^2$ belongs to
	the $\ab{M}_n(\ab{D}) $-submodule generated by 
	$E_{\vec{1}_n}^1+ E_{\vec{1}_n}^2-x$. 
	The case $x=\vec{0}_{n\times m}$ is trivial. 
	For the case $x=E_{1,1}$, we observe that 
	$M_{\vec{1}_n}^2\in D^{(n\times n)} $, 
	and the claim follows from
	\[E_{\vec{1}_n}^1+ E_{\vec{1}_n}^2=
	M_{\vec{1}_n}^2\cdot (E_{\vec{1}_n}^1+ E_{\vec{1}_n}^2-E_{1,1}).\]
	For the case $x=E_{2,2}$, we observe that 
	$M_{\vec{1}_n}^1\in D^{(n\times n)} $, and the claim follows from
	\[E_{\vec{1}_n}^1+ E_{\vec{1}_n}^2=
	M_{\vec{1}_n}^1\cdot (E_{\vec{1}_n}^1+ E_{\vec{1}_n}^2-E_{2,2}).\]
	Next, we show that $c$ preserves the relations 
	of the form $\rho(A,B)$ for each pair 
	of $\ab{M}_n(\ab{D}) $-submodules $A,B$ 
	of $\ab{D}^{(n\times m)}$ with $A\prec B$.
	Let $A,B$ be two $\ab{M}_n(\ab{D}) $-submodules
	of $\ab{D}^{(n\times m)}$ with $A\prec B$, and let 
	$x_{1},x_{2}, x_{3},x_{4} \in T$; we show that
	\begin{equation} \label{impl}
		(x_{1},x_{2},x_{3},x_{4}) \in \rho(A,B) \Rightarrow
		(c(x_{1}),c(x_{2}),c(x_{3}),c(x_{4}))\in \rho (A,B).
	\end{equation}
	Clearly, if $x_{1},x_{2},x_{3},x_{4} \in T'$, then
	$c$ is the constant $\vec{0}_{n\times m}$ and 
	\eqref{impl} is a consequence of the fact that
	\[(\vec{0}_{n\times m},\vec{0}_{n\times m},
	\vec{0}_{n\times m},\vec{0}_{n\times m})\in \rho(A,B).\]
	Next, we consider the case that at least one of 
	$x_{1},x_{2}, x_{3},x_{4}$ is equal to 
	$E_{\vec{1}_n}^1+ E_{\vec{1}_n}^2$.
	The subcase in 
	which $E_{\vec{1}_n}^1+ E_{\vec{1}_n}^2\in A$
	is trivial, since \eqref{impl} follows
	from the fact that
	$c(E_{\vec{1}_n}^1+ E_{\vec{1}_n}^2)=
	E_{\vec{1}_n}^1+ E_{\vec{1}_n}^2\in A$. 
	Let us now consider the
	subcase in which 
	$E_{\vec{1}_n}^1+ E_{\vec{1}_n}^2\notin A$.
	Lemma~\ref{lemmaeqtwo}\eqref{item_item2oflemmaeqtwo}
	implies that
	if at least one but not all of
	$x_{1},x_{2},x_{3},x_{4}$ are equal to $E_{\vec{1}_n}^1+ E_{\vec{1}_n}^2$, 
	then $(x_{1},x_{2},x_{3},x_{4})\notin \rho(A,B)$. 
	On the other hand, if all of $x_{1},x_{2},x_{3},x_{4}$ are equal to
	$E_{\vec{1}_n}^1+ E_{\vec{1}_n}^2$, 
	then \eqref{impl} follows form the fact that
	\[(E_{\vec{1}_n}^1+ E_{\vec{1}_n}^2, E_{\vec{1}_n}^1+ 
	E_{\vec{1}_n}^2, E_{\vec{1}_n}^1+ E_{\vec{1}_n}^2,E_{\vec{1}_n}^1+ 
	E_{\vec{1}_n}^2)\in \rho(A,B).\] 
	Therefore, $c$ is type-preserving.
	Seeking a contradiction, 
	let us suppose that $c$ is the 
	restriction of a polynomial function.
	Since every unary zero-preserving 
	polynomial function is of the form
	$p(x)=R \cdot x $ for some $R\in D^{(n\times n)} $,
	there exists $M\in D^{(n\times n)} $ such that
	$M\cdot E_{1,1}=\vec{0}_{n\times m}= M\cdot E_{2,2}$.
	Thus $M=\vec{0}$ and since $c$ is not constant,
	we get a contradiction with the assumption that
	$x\mapsto M\cdot x$ interpolates $c$ on $T$.  
	Therefore, $\ab{D}^{(n\times m)}$ is not strictly $1$-polynomially rich.
	
	\textbf{Case 3}: \emph{$(m>1,q=2,n=3,k=1)$}:
	The proof is similar to the one of 
	Case 2, only setting
	$T= \{\vec{0}_{3\times m}, E_{1,1}, E_{2,1}, E_{3,2},E_{\vec{1}_3}^1+E_{\vec{1}_3}^2\}$
	and using
	Lemma~\ref{lemmaeqthree} instead of Lemma~\ref{lemmaeqtwo}.
	
	\textbf{Case 4}: \emph{$(m = 1,q=2,2\leq n \leq 3,k>1)$ or $(m = 1,q=3,n=2,k>1)$}:
	Let 
	\[
	\begin{split}
		T''=\{\matrixvec{x}\in (D^{n})^k\mid 
		&\exists j\in\finset{k},\, i\in\finset{n}\,
		\forall l\in\finset{k}\setminus\{j\}\colon\\
		&\matrixvec{x}(j)=\vec{e}_i^n, \, 
		\matrixvec{x}(l)=\vec{0}_n\},
	\end{split}
	\]
	let $\matrixvec{0}_n$ and 
	$\matrixvec{1}_n$ be the
	elements of $(\ab{D}^n)^k$
	whose entries are all $\vec{0}_n$ 
	and $\vec{1}_n$ respectively,
	let $T=T''\cup\{\matrixvec{0}_n, \matrixvec{1}_n\}$,
	and let $c\colon T\to D^{(n\times m)}$ be the 
	constant function with constant
	values $\vec{0}_n$ on $T'=T\setminus\{\matrixvec{1}_n\}$
	and be the first projection on $\{\matrixvec{1}_n\}$. 
	Lemma~\ref{lemmasm} implies that 
	$c$ is congruence preserving. 
	Next, we show that 
	$c$ preserves $\rho(A,B)$ for
	$A=\{ \vec{0}_{n}\}$ and $B=D^{(n\times 1)}$. 
	Since $\ab{D}^{(n\times 1)}$ is simple this implies that 
	$c$ is type-preserving.  
	Let $\matrixvec{x}, \matrixvec{y}, \matrixvec{z},
	\matrixvec{t}\in T$.
	If none of $\matrixvec{x}, \matrixvec{y}, \matrixvec{z},
	\matrixvec{t}$ are equal to $\matrixvec{1}_n$
	then 
	$(c(\matrixvec{x}),c(\matrixvec{y}),c(\matrixvec{z}),c(\matrixvec{t}))=
	(\vec{0}_n,\vec{0}_n, \vec{0}_n,\vec{0}_n)\in \rho(A,B)$.
	If at least one but not all of $\matrixvec{x}, \matrixvec{y}, 
	\matrixvec{z},\matrixvec{t}$ are equal to
	$\matrixvec{1}_n$, then 
	$\matrixvec{t}\neq \matrixvec{x}-\matrixvec{y}+\matrixvec{z}$. 
	Thus $c$ preserves $\rho(A,B)$.
	Seeking a contradiction,
	let us suppose that $c$ can be interpolated by
	a polynomial function.
	Since every $k$-ary zero-preserving 
	polynomial function $p$ of $\ab{D}^{(n\times m)}$ is of the form
	$p(x_{1},\dots , x_k)=R_{1}x_{1}+\cdots +R_{k}x_{k}$ for some 
	$R_{1},\dots, R_{k} \in D^{(n\times n)}$, since 
	$c(T')=\vec{0}_n$, we have
	$R_{j}=\vec{0}$ for all $j\in\finset{k}$. 
	This contradicts the fact that $c$ is not constant. 
	Therefore, $c$ is not the restriction of a polynomial
	of $\ab{D}^{(n\times m)}$, and $\ab{D}^{(n\times m)}$ 
	is not strictly $k$-polynomially rich.
	
	\textbf{Case 5}: \emph{$(m>1,q \in \{2,3\},n=1,k=2)$}:
	Let $t_{0}=\vec{0}_m^T$, let
	$t_{1}=(\vec{e}_m^1)^T$, and let 
	$t_{2}=(\vec{e}_m^2)^T$. 
	Let $c\colon T=\{(t_{0},t_{0}),(t_{1},t_{0}),(t_{2},t_{1})  \}\to D^{(n\times m)}$ 
	be defined by
	\[
	c(t_{0},t_{0})=c(t_{1},t_{0})=t_{0}\quad c(t_{2},t_{1})=t_{2}.
	\]
	First, we show that $c$ is a congruence preserving function.	
	We have that 
	$c(t_2,t_1)-c(t_0, t_0)=t_2\in \submodule{t_2, t_1}$.
	Moreover, 
	$c(t_2, t_1)-c(t_1, t_0)=t_2\in\submodule{t_2-t_1, t_1}$. 
	Thus, $c$ is congruence preserving (note that all the other 
	cases are trivial). 
	Next, we show that $c$ preserves $\rho(A,B)$ for all submodules 
	$A,B$ of $\ab{D}^{(n\times m)}$ with $A \prec B$.
	Let $A,B$ submodules of $\ab{D}^{(n\times m)}$ with $A
	\prec B$. If $t_{2} \in A$, then $c(t_{2},t_{1}) \equiv t_{0} \bmod A$;
	if $t_{2} \notin A$, then $a(t_{i},t_{0}) \not\equiv (t_{2},t_{1}) \bmod A$
	for $i \in \{0,1\}$ and for all $a \in \{1,2 \}$.
	If $t_{2} \in A$, it is obvious that $c$ preserves $\rho(A,B)$. If $t_{2}
	\notin A$ we have to show that for all $x=(x_{1},x_{2}),y=(y_{1},y_{2}),
	z=(z_{1},z_{2}),u=(u_{1},u_{2}) \in T$ we have
	\begin{equation} \label{eql2}
		\begin{split}
			(x_{1},y_{1},z_{1},u_{1}) \in \rho(A,B) \text{ and }
			(x_{2},y_{2},z_{2},u_{2}) \in \rho(A,B)
			\Rightarrow \\
			(c(x_{1},x_{2}),c(y_{1},y_{2}),c(z_{1},z_{2}),c(u_{1},u_{2})) 
			\in \rho(A,B).
		\end{split}
	\end{equation}			
	If none or all of $x,y,z,tu$ are equal to $(t_{2},t_{1})$, then the		
	implication holds trivially. But if at least one but not all of		
	$x,y,z,u$ are equal to $(t_{2},t_{1})$, then		
	the left hand-side of~\eqref{eql2} is not true, and 
	\eqref{eql2} holds. 
	Hence $c$ is type-preserving.
	Seeking a contradiction,
	let us suppose that 
	$c$ is the restriction of a polynomial function
	$p$. Then there exist $R_1, R_2\in D^{(n\times n)}$
	such that for all $(x_1, x_2)\in T$ we have 
	$c(x_1, x_2)=R_{1}\cdot x_{1}+R_{2}\cdot x_{2}$.
	Since $c(t_{1},t_{0})=t_{0}$ we infer that $R_{1}=0$.		
	Moreover, $c(t_{2},t_{1})=t_{2}$ implies that $R_{1}=1.$ 
	Contradiction. 
	Hence $c$ cannot be interpolated by a polynomial
	and $\ab{D}^{(n\times m)}$ is not strictly $2$-polynomially rich.	
	
	\textbf{Case 6}: \emph{$(m \geq 1, q=2,n=1,k>3)$}: 
	Let
	\[
	\begin{split}
		U'':=\{\matrixvec{x}\in (D^{(1\times m)})^k
		\mid & \exists i\in \finset{k} \, 
		\forall j\in \finset{k}\setminus\{i\}\colon\\
		&\matrixvec{x}(i)=(\vec{e}_m^1)^T,\, 
		\matrixvec{x}(j)=(\vec{0}_m)^T\},
	\end{split}
	\]
	let $\matrixvec{0}_m$ the element of
	$(D^{(1\times m)})^k$ whose all entries are $(\vec{0}_m)^T$, 
	let $\matrixvec{e}_m^1$ 
	the element of
	$(D^{(1\times m)})^k$ whose all entries are 
	$(\vec{e}_m^1)^T$, 
	let $U':=U''\cup \{\matrixvec{0}_m\}$, 
	let $U:= U'\cup \{\matrixvec{e}_m^1\}$, and let
	$c\colon U\to D^{(1\times m)}$ be the constant 
	function with constant value $(\vec{0}_m)^T$ on $U'$
	and the constant function with constant 
	value $(\vec{e}_m^1)^T$ on $U\setminus U'$. 
	Lemma~\ref{lemmasm} implies that $c$ is a
	congruence preserving function.
	Next, we show that $c$ is type-preserving.
	Let $A,B$ be submodules of
	$\ab{D}^{(1\times m)}$ with $A \prec B$. 
	Next, we show that for all
	$\matrixvec{x}, \matrixvec{y}, \matrixvec{z}, 
	\matrixvec{t}\in U$
	\begin{equation}\label{eq:caso10_objective}
		\begin{split}
			&\forall i\in\finset{k}\colon\\
			&(\matrixvec{x}(i), \matrixvec{y}(i), \matrixvec{z}(i), 
			\matrixvec{t}(i))\in \rho(A,B)	
			\Rightarrow	
			(c(\vec{x}),c(\vec{y}),c(\vec{z}), c(\vec{t}))\in\rho(A,B).
		\end{split}
	\end{equation}
	If $(\vec{e}_m^1)^T\in A$, then $c(\matrixvec{e}_m^1)
	\equiv (\vec{0}_m)^T \bmod A$
	and \eqref{eq:caso10_objective} follows. 	
	If $(\vec{e}_m^1)^T\notin A$, then
	all the elements of $U$ are pairwise different modulo $A$. 
	If none or all of $\matrixvec{x}, \matrixvec{y}, \matrixvec{z}, 
	\matrixvec{t}$ are equal 
	to $\matrixvec{e}_m^1$, then \eqref{eq:caso10_objective} holds since
	$\rho(A,B)$ contains all diagonal tuples. 
	Otherwise, the
	left-hand side of 
	\eqref{eq:caso10_objective} can never be true, 
	and therefore, $c$ preserves $\rho(A,B)$.
	Hence $c$ is a function which preserves types.
	Every $k$-ary zero-preserving polynomial function 
	$p$ of $\ab{D}^{(1\times m)}$ 
	is of the form
	\begin{equation*}
		p(x_{1},\dots ,x_{k})=r_{1}x_{1}+\dots +r_{k}x_{k}
		\text{  for some }r_{1},\dots ,r_{k}\in D .
	\end{equation*}
	Since $c(U')= (\vec{0}_m)^T$, we infer that 
	$ r_{1}=\dots=r_{k}=0$.
	Since $c$ is not the zero-function, $c$ is not the
	restriction of a polynomial function of 
	$\ab{D}^{(n\times m)}$,
	and therefore, $\ab{D}^{(n\times m)}$ is not
	strictly $k$-polynomially rich.
	
	The remaining two cases are similar to 
	Case 6. For the sake of brevity we only 
	provide the function
	that shows that $\ab{D}^{(n\times m)}$ is not
	strictly $k$-polynomially rich. 
	
	\textbf{Case 7}: \emph{$(m \geq 1,q=3,n=1,k>2)$}:
	Let $U''$ and $U'$ be defined as in Case 6
	and let $U=U'\cup \{2 \cdot \matrixvec{e}_m^1\}$,
	where $\matrixvec{e}_m^1$ is defined as in Case 6.
	Let $c$ be the constant function with constant 
	value $(\vec{0}_m)^T$ on $U'$ and with value 
	$2\cdot (\vec{e}_m^1)^T$ on $U\setminus U'$. 
	One can prove as in Case 6 that 
	$c$ cannot be interpolated by a 
	polynomial function of $\ab{D}^{(n\times m)}$.
	
	\textbf{Case 8}: \emph{$(m \geq 1,q=5,n=1,k >1)$}:
	Let $U''$ and $U'$ be defined as in Case 6
	and let $U=U'\cup \{4 \cdot \matrixvec{e}_m^1\}$,
	where $\matrixvec{e}_m^1$ is defined as in Case 6.
	Let $c$ be the constant function with constant 
	value $(\vec{0}_m)^T$ on $U'$ and with value 
	$4\cdot (\vec{e}_m^1)^T$ on $U\setminus U'$.
	One can prove as in Case 6 that 
	$c$ cannot be interpolated by a 
	polynomial function of $\ab{D}^{(n\times m)}$.
\end{proof}
\section{Preliminaries on Mal'cev algebras}\label{sec:preliminary_on_malcevalgebras}
In this section we recall some basic facts on the congruence lattice of a Mal'cev algebra. 
The fact that every abelian algebra in a congruence modular 
variety is affine was %
proved in \cite{Her79}. 
For more details on abelian algebras in congruence 
modular varieties we refer the reader to \cite[Chapter~5]{FM:CTFC}. 
The coordinatization of abelian congruences that we report can also 
be found in \cite{Fre83, Aic18, Ros24}.
We start with a preliminary result about commutators and 
centralizers in Mal'cev algebras. 
\begin{proposition}[{cf.~\cite[Proposition~4.3]{FM:CTFC}}]\label{prop:commutator_lattice}
Let $\ab{A}$ be a Mal'cev algebra, and let $[\cdot, \cdot  ]$ 
be the term condition commutator operation on $\Con \ab{A}$ 
as defined in \cite[Definition~4.150]{MMT:ALVV}. 
Then $(\Con \ab{A}; \wedge, \vee, [\cdot, \cdot])$ is 
a commutator lattice as defined in \cite[Definition~3.1]{Aic18}.
Moreover, the centralizer operation $(\cdot : \cdot)$,
as defined in \cite[Definition~4.150]{MMT:ALVV}, 
coincides with the residuation defined in \cite[Section~3]{Aic18}.
\end{proposition}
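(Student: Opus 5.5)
The plan is to check the axioms of a commutator lattice from \cite[Definition~3.1]{Aic18} one by one, using the standard facts about the term condition commutator in congruence modular varieties from \cite[Chapter~4]{FM:CTFC}. Since $\ab{A}$ is a Mal'cev algebra, it lies in a congruence permutable, and hence congruence modular, variety, so all of that theory applies.

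First, I will recall that $\Con\ab{A}$ is a complete (indeed algebraic) lattice. Next, I will verify the properties of the commutator operation. For all $\alpha,\beta,\beta'\in\Con\ab{A}$ and every family $(\beta_i)_{i\in I}$ in $\Con\ab{A}$ I need:
\begin{enumerate}
\item $[\alpha,\beta]=[\beta,\alpha]$;
\item $[\alpha,\beta]\le\alpha\wedge\beta$;
\item $\beta\le\beta'$ implies $[\alpha,\beta]\le[\alpha,\beta']$;
\item $[\alpha,\bigvee_{i\in I}\beta_i]=\bigvee_{i\in I}[\alpha,\beta_i]$.
\end{enumerate}
In a congruence modular variety, the term condition commutator coincides with the modular commutator of Freese and McKenzie. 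All four properties are then exactly the content of \cite[Proposition~4.3]{FM:CTFC}: symmetry, the meet bound, monotonicity, and complete join-distributivity. So this step is essentially a translation of notation. If \cite[Definition~3.1]{Aic18} lists additional axioms, they will be among these consequences.

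For the second claim, I first note that complete join-distributivity in one argument makes the set $\{\gamma\in\Con\ab{A}\mid[\gamma,\beta]\le\alpha\}$ closed under arbitrary joins. Hence this set has a largest element, and that element is the residuation $(\alpha:\beta)$ of \cite[Section~3]{Aic18}. The centralizer in \cite[Definition~4.150]{MMT:ALVV} is the largest $\gamma$ such that $\gamma$ centralizes $\beta$ modulo $\alpha$, that is, such that the term condition $C(\gamma,\beta;\alpha)$ holds. The two notions therefore agree once I show that, for all congruences $\alpha,\beta,\gamma$,
\[
C(\gamma,\beta;\alpha) \iff [\gamma,\beta]\le\alpha .
\]

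This equivalence is the only genuinely delicate point. By definition, $[\gamma,\beta]$ is the least $\delta$ with $C(\gamma,\beta;\delta)$, so the direction from left to right is immediate. The reverse direction needs the term condition relation to be upward closed in its third argument. This fails in general algebras when $\alpha$ is not comparable to the commutator, but it holds in congruence modular varieties, as stated in \cite[Proposition~4.2]{FM:CTFC}. I will cite that result. Alternatively, I can derive it via the Mal'cev term, using the characterization of centrality through the congruence $\Delta_{\gamma,\beta}$ on $\gamma$ viewed as a subalgebra of $\ab{A}^2$. Combining these steps gives the proposition.
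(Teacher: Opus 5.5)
Your proposal is correct and takes the same route as the paper, which gives no separate argument and simply appeals to the standard properties of the commutator in congruence modular varieties from \cite[Proposition~4.3]{FM:CTFC}. Your explicit identification of the centralizer with the residuation, via $C(\gamma,\beta;\alpha)\iff[\gamma,\beta]\le\alpha$ (upward closure of the term condition in its third argument, which holds in modular varieties), is exactly the fact implicit in that citation; only the aside about $\alpha$ being ``not comparable to the commutator'' is muddled and unnecessary, since just the case $\alpha\ge[\gamma,\beta]$ matters.
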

Let $\ab{A}$ be a Mal'cev algebra with Mal'cev 
polynomial $d$, and let $o\in A$.
Then we define two binary operations on $A$ as follows:
For all $x_1, x_2\in A$ we let
\begin{equation}\label{eq:equazione_che_definisce_il_gruppo_abeliano_del_termine_di_Malcev}
\begin{split}
x_1+_ox_2&:=d(x_1, o, x_2)\text{ and }\\
x_1 -_o x_2&:=d(x_1, x_2, o).
\end{split}
\end{equation}
For $x\in A$, we let $-_o x:=o-_ox$.
The following lemma goes back to \cite{Her79,Fre83}; a proof that relies only
on the definition of the commutator operation can be found in \cite[Section~3]{Aic19a}. 
\begin{lemma}\label{lemma:piu_emeno_fanno_gruppoo_abeliano_nella_classe_congruenza}
Let $\ab{A}$ be a Mal'cev algebra, let $\alpha\in \Con\ab{A}$ with
$[\alpha, \alpha]=\bottom{A}$, let $o\in A$, and let $+_o$ and $-_o$
be defined as in \eqref{eq:equazione_che_definisce_il_gruppo_abeliano_del_termine_di_Malcev}.
Then $(o/\alpha; +_o, -_o, o)$ is an abelian group.  
\end{lemma}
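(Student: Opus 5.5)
The approach is to verify the abelian group axioms for $(o/\alpha; +_o, -_o, o)$ directly from the Mal'cev identities, using the term condition for $[\alpha,\alpha]=\bottom{A}$. Closure comes first: for $x_1,x_2\in o/\alpha$, compatibility of $d$ with $\alpha$ gives $d(x_1,o,x_2)\mathrel{\alpha} d(o,o,o)=o$ and $d(x_1,x_2,o)\mathrel{\alpha} o$. The Mal'cev identities give the neutral element immediately, since $x+_o o=d(x,o,o)=x$ and $o+_o x=d(o,o,x)=x$. They also give $x-_o x=d(x,x,o)=o$.

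The central tool is the standard consequence of abelianness: every polynomial $p$ is ``affine'' on $\alpha$-related tuples. Concretely, the term condition $C(\alpha,\alpha;\bottom{A})$ yields, for a Mal'cev algebra, that for all $a_i\mathrel{\alpha} b_i\mathrel{\alpha} c_i$,
\[
p(d(a_1,b_1,c_1),\dots,d(a_n,b_n,c_n))=d(p(\vec a),p(\vec b),p(\vec c)).
\]
I plan to derive this in the usual way. First I show that $d$ itself commutes with itself on $\alpha$-blocks, via the term condition applied to the polynomial $(x,y)\mapsto d(d(x_1,y_1,x_2),\dots)$. The key intermediate fact is that for $x\mathrel{\alpha}y\mathrel{\alpha}z$, the operation $d(x,y,z)$ is the unique element $w$ with $(x,y)$ and $(w,z)$ related in the ``$\Delta_{\alpha,\alpha}$'' sense. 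Along the way I derive the identity $d(x,y,y')$-type cancellations, $d(x,x',y)\mathrel{\alpha}\dots$, and in particular $d(y,y,x)=x$ and $d(x,y,y)=x$ used in tandem with the term condition, to get
\[
d(d(a,b,c),d(a',b',c'),d(a'',b'',c''))=d(d(a,a',a''),d(b,b',b''),d(c,c',c'')).
\]
This interchange law on $o/\alpha$ is the main obstacle. I would first attempt it by applying the term condition to the polynomial $t(x,y)=d(d(x_1,y_1,\cdot),\dots)$, with the rows chosen so that one side collapses via the Mal'cev identities.

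Once the interchange law is available, the group axioms follow by routine substitutions. For commutativity, $x+_oy=d(x,o,y)=d(d(o,o,x),d(o,o,o),d(y,o,o))$; applying interchange with the columns transposed gives $d(d(o,o,y),d(o,o,o),d(x,o,o))$, which is $d(y,o,x)=y+_ox$. Associativity is handled the same way: expand $(x+_oy)+_oz$ and $x+_o(y+_oz)$ as nested $d$'s, pad with $o$ using the Mal'cev identities, and apply interchange. For inverses, $x+_o(-_ox)=d(x,o,d(o,x,o))$ equals $d(d(x,o,o),d(x,x,o),d(x,o,o))$ after padding, and interchange followed by the Mal'cev identities reduces this to $o$. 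Finally, consistency of $-_o$ follows from $x-_oy=d(x,y,o)=x+_o(-_oy)$, again by interchange. Everything therefore reduces to the single interchange lemma.
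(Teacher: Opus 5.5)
Your architecture is the standard one: show that $d$ commutes with itself on $\alpha$-blocks, then obtain the group axioms by transposing $3\times 3$ arrays of $d$'s. The paper gives no argument of its own here and only refers to the literature, where the proof is essentially this. Your substitutions for commutativity, associativity and $x-_oy=x+_o(-_oy)$ are correct. Two things need repair.

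First, the interchange law carries the entire proof, and you only announce an attempt at it. The polynomial $d(d(x_1,y_1,\cdot),\dots)$ you mention does not determine an argument. The law follows from two uses of the term condition. For a polynomial $p$ and $\vec a\mathrel{\alpha}\vec b\mathrel{\alpha}\vec c$, let $t(\vec x,\vec y):=d(p(d(\vec x,\vec b,\vec y)),p(\vec y),p(\vec b))$. Then $t(\vec b,\vec b)=p(\vec b)=t(\vec b,\vec c)$, so $[\alpha,\alpha]=\bottom{A}$ gives $t(\vec a,\vec b)=t(\vec a,\vec c)$, that is,
\[
d\bigl(p(d(\vec a,\vec b,\vec c)),p(\vec c),p(\vec b)\bigr)=p(\vec a).
\]
With $p=\mathrm{id}$, this says that $x\mapsto d(x,c,b)$ and $x\mapsto d(x,b,c)$ are mutually inverse on the $\alpha$-class of $b$ and $c$. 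Applying $x\mapsto d(x,p(\vec b),p(\vec c))$ to the display gives $p(d(\vec a,\vec b,\vec c))=d(p(\vec a),p(\vec b),p(\vec c))$, and taking $p=d$ yields exactly your interchange law. Your $\Delta_{\alpha,\alpha}$-uniqueness route also works. However, you would then still have to show that the diagonal is a union of $\Delta_{\alpha,\alpha}$-classes, and in a Mal'cev algebra this again reduces to the term condition.

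Second, the inverse step is wrong as written. The array $d(d(x,o,o),d(x,x,o),d(x,o,o))$ equals $d(x,o,x)$, not $x+_o(-_ox)$. Interchange turns it into $d(x,d(o,x,o),o)$, not $o$. The third row must be $(o,x,o)$:
\[
x+_o(-_ox)=d\bigl(d(x,o,o),d(x,x,o),d(o,x,o)\bigr)=d\bigl(d(x,x,o),d(o,x,x),d(o,o,o)\bigr)=o.
\]
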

Let $\ab{A}$ be a Mal'cev algebra, let $\alpha\in \Con\ab{A}$ with
$[\alpha, \alpha]=\bottom{A}$, let $o\in A$, let $+_o$ and $-_o$
be defined as in \eqref{eq:equazione_che_definisce_il_gruppo_abeliano_del_termine_di_Malcev}.
We define
\begin{equation}\label{eq:equazione_che_definisce_l'universo_dell'anello_dei_polinomi_ristretti}
R_o=\{p\restrict{o/\alpha}\mid p\in\POL\ari{1}\ab{A}\text{ and }p(o)=o\}.
\end{equation}
Furthermore, on $R_o$ we define two operations as follows:
For all $p,q\in R_o$ we let 
\begin{equation}\label{eq:equazione_che_definisce_le_operazioni_dell'anello_dei_polinomi_ristretti}
\begin{split}
(p+q)(x)&:=p(x)+_o q(x)=d(p(x),o,q(x))\text{ and } \\
(p\circ q)(x)&:=p(q(x)), \text{ for all } x\in A.
\end{split}
\end{equation} 
We define $\ab{R}_o:=(R_o; +, \circ)$,
and its action on
$o/\alpha$ as follows: For all $r\in R_o$ and $x\in o/\alpha$ we let 
$r\cdot x:=r(x)$.   
\begin{lemma}\label{lemma:piu_meno_fanno_modulo_su-anello_polinomi_ristretto_nella_classe_congruenza}
Let $\ab{A}$ be a Mal'cev algebra, let $\alpha\in \Con\ab{A}$ with
$[\alpha, \alpha]=\bottom{A}$, let $o\in A$, let $+_o$ and $-_o$
be defined as in \eqref{eq:equazione_che_definisce_il_gruppo_abeliano_del_termine_di_Malcev},
and let $\ab{R}_o$ be defined as in
\eqref{eq:equazione_che_definisce_l'universo_dell'anello_dei_polinomi_ristretti}
and \eqref{eq:equazione_che_definisce_le_operazioni_dell'anello_dei_polinomi_ristretti}. Then 
$\ab{R}_o$ is a ring with unity and $(o/\alpha; +_o)$ is a module over $\ab{R}_o$. 
Moreover, the algebra $\ab{A}\restrict{o/\alpha}$ is polynomially equivalent to 
the $\ab{R}_o$-module $(o/\alpha; +_o)$. 
\end{lemma}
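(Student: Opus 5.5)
The plan is to verify the ring axioms for $\ab{R}_o$ and the module axioms for $(o/\alpha;+_o)$ directly, using Lemma~\ref{lemma:piu_emeno_fanno_gruppoo_abeliano_nella_classe_congruenza} together with one basic commutator fact. That fact is this: if $[\alpha,\alpha]=\bottom{A}$, then every unary polynomial $p$ with $p(o)=o$ restricts to an endomorphism of the group $(o/\alpha;+_o)$. The polynomial equivalence will then be read off from a description of all polynomials restricted to $o/\alpha$.

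First I check that the objects are well defined. Every polynomial preserves $\alpha$, so for $p\in\POL\ari{1}\ab{A}$ with $p(o)=o$ we get $p(o/\alpha)\subseteq o/\alpha$. Hence $p\restrict{o/\alpha}$ is a self-map of $o/\alpha$. Sums and composites of such restrictions are again restrictions of $o$-preserving polynomials, namely $d(p(x),o,q(x))$ and $p(q(x))$. So $R_o$ is closed under $+$ and $\circ$.

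The key step is additivity: for $x,y\in o/\alpha$ we want $p(d(x,o,y))=d(p(x),o,p(y))$. I would derive it from the term condition, i.e.\ $\alpha$ centralizes $\alpha$. Consider the binary polynomial $t(u,v):=d(p(d(u,o,v)),\,p(v),\,o)$ on $o/\alpha$. We have $t(o,v)=d(p(v),p(v),o)=o$, which does not depend on $v$. Since $[\alpha,\alpha]=\bottom{A}$, the term condition lets us move from $v=o$ to arbitrary $v$, so $t(u,v)=t(u,o)=d(p(u),o,o)$ only after the correct bookkeeping. Unwinding this gives $p(u+_ov)-_op(v)=p(u)$, which is additivity. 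The same term-condition argument shows that any $n$-ary polynomial $q$, restricted to $(o/\alpha)^n$, is affine: $q(x_1,\dots,x_n)=q(o,\dots,o)+_o\sum_i r_i(x_i)$ with $r_i\in R_o$, where $r_i(x)=q(o,\dots,x,\dots,o)-_oq(o,\dots,o)$. I expect this step, the careful term-condition manipulation, to be the main obstacle. Alternatively I could cite \cite{Her79} or \cite[Section~3]{Aic19a}.

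Granting additivity, the ring axioms follow directly. $(R_o;+)$ is an abelian group because it is pointwise the group of Lemma~\ref{lemma:piu_emeno_fanno_gruppoo_abeliano_nella_classe_congruenza}; its zero is the constant $o$ and $-p$ is $d(o,p(x),o)$, a polynomial fixing $o$. Composition is associative and has unity $\mathrm{id}$. Right distributivity $(p+q)\circ r=p\circ r+q\circ r$ is pointwise, and left distributivity $r\circ(p+q)=r\circ p+r\circ q$ is exactly additivity of $r$. The module axioms follow in the same way: $r\cdot(x+_oy)=r\cdot x+_or\cdot y$ is additivity, and the remaining axioms are pointwise or come from the definition of composition.

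For the polynomial equivalence there are two inclusions. Every module polynomial $x\mapsto c+_o\sum r_i(x_i)$ with $c\in o/\alpha$ is the restriction of a polynomial of $\ab{A}$, obtained by building it from $d$, constants and the $r_i$. Conversely, by the affine description above, every polynomial of $\ab{A}$ restricted to $o/\alpha$ is of this form. So the two clones of polynomial functions on $o/\alpha$ coincide.
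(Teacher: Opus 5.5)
Your proof is correct. The paper itself gives no proof of this lemma. It states it as part of the coordinatization of abelian congruences, which it attributes to \cite{Her79,Fre83,Aic18,Ros24}. Your argument is the standard one from that literature: you use the term condition to show that $o$-preserving unary polynomials are additive on $o/\alpha$, and that polynomials mapping $(o/\alpha)^n$ into $o/\alpha$ are affine.

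Two small remarks.

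First, your term-condition step needs no extra bookkeeping. Take $(o,u),(o,v)\in\alpha$. Since $t(o,o)=o=t(o,v)$, the condition $C(\alpha,\alpha;\bottom{A})$ gives $t(u,o)=t(u,v)$, that is, $p(u)=p(u+_ov)-_op(v)$. The identity $(x-_oy)+_oy=x$ in the group of Lemma~\ref{lemma:piu_emeno_fanno_gruppoo_abeliano_nella_classe_congruenza} then finishes the argument. With \cite[Proposition~2.6]{Aic06}, which the paper uses repeatedly elsewhere, additivity is even a one-liner: $p(d(x,o,y))\equiv_{[\alpha,\alpha]}d(p(x),p(o),p(y))=p(x)+_op(y)$.

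Second, assert the affine formula $q(x_1,\dots,x_n)=q(o,\dots,o)+_o\sum_i r_i(x_i)$ only for those $q$ with $q(o,\dots,o)\in o/\alpha$. These are exactly the polynomials whose restrictions are operations of $\ab{A}\restrict{o/\alpha}$, so this costs nothing for the polynomial equivalence. Your inductive argument in fact needs this hypothesis to undo $-_o$. For other $q$ the formula can fail. For example, in the group $\ab{S}_3$ with $d(x,y,z)=xy^{-1}z$, take $\alpha$ whose classes are the cosets of $A_3$, $o$ the identity, $c$ a transposition and $q(x)=cx$; the right-hand side evaluates to $xc$, not $cx$.
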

In the following we list some properties of projective intervals 
in Mal'cev algebras. 
Given a lattice $\ab{L}$ and two intervals 
$\interval{\alpha}{\beta}$ and $\interval{\gamma}{\delta}$
we say that $\interval{\alpha}{\beta}$ 
\emph{transposes up to} $\interval{\gamma}{\delta}$,
and we write $\interval{\alpha}{\beta}\nearrow\interval{\gamma}{\delta}$,
if $\beta\vee \gamma= \delta$ and $\beta\wedge \gamma=\alpha$. 
Analogously, we say that $\interval{\alpha}{\beta}$ 
\emph{transposes down to} $\interval{\gamma}{\delta}$,
and we write $\interval{\alpha}{\beta}\searrow\interval{\gamma}{\delta}$,
if $\beta=\alpha\vee \delta$ and $\gamma=\alpha\wedge \delta$. 
We say that $\interval{\alpha}{\beta}$
and $\interval{\gamma}{\delta}$ are 
\emph{projective}, and we write 
$\interval{\alpha}{\beta}\leftrightsquigarrow\interval{\gamma}{\delta}$,
if there are $n \in \N_0$ and a finite sequence 
\[\interval{\alpha}{\beta}=\interval{\eta_0}{\theta_0},
\interval{\eta_1}{\theta_1},\dots, 
\interval{\eta_n}{\theta_n}=\interval{\gamma}{\delta}
\]
such that $\interval{\eta_i}{\theta_i}$ transposes up or down 
to $\interval{\eta_{i+1}}{\theta_{i+1}}$ for each $i<n$. 
For 
further results
about projective intervals in modular lattices we refer the reader 
to \cite[Chapter~2]{MMT:ALVV}.
\begin{lemma}\label{lemma:exAic18Lemma3.4}
Let $\ab{A}$ be a Mal'cev algebra, let $\alpha, \beta, \gamma,\delta\in \Con\ab{A}$ 
with $\alpha\leq\beta$, $\gamma\leq\delta$, 
and $\interval{\alpha}{\beta}\leftrightsquigarrow\interval{\gamma}{\delta}$. 
Then we have:
\begin{enumerate}
\item $(\alpha:\beta)=(\gamma:\delta)$;\label{item:centralizerinprojectiveintervals}
\item $[\beta, \beta]\leq \alpha$ if and only if $[\delta, \delta]\leq \gamma$. \label{item:typesinprojactiveintervals}
\end{enumerate}
\end{lemma}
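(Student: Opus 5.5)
By induction on the length $n$ of the chain of transpositions
\[
\interval{\alpha}{\beta}=\interval{\eta_0}{\theta_0},\dots,\interval{\eta_n}{\theta_n}=\interval{\gamma}{\delta},
\]
it suffices to treat a single transposition. Since projectivity is symmetric, and item~\eqref{item:typesinprojactiveintervals} is phrased as an equivalence, we may moreover assume $\interval{\alpha}{\beta}\nearrow\interval{\gamma}{\delta}$, that is, $\beta\vee\gamma=\delta$ and $\beta\wedge\gamma=\alpha$. The down-transposition case is the same statement read from the other side.

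For item~\eqref{item:centralizerinprojectiveintervals}, we work in the commutator lattice $(\Con\ab{A};\wedge,\vee,[\cdot,\cdot])$ provided by Proposition~\ref{prop:commutator_lattice}, where $(\cdot:\cdot)$ is the residuation. We prove both inclusions.
\begin{itemize}
\item Let $\varepsilon\le(\alpha:\beta)$, so $[\varepsilon,\beta]\le\alpha\le\gamma$. Since $[\varepsilon,\gamma]\le\gamma$ always holds, additivity of the commutator gives $[\varepsilon,\delta]=[\varepsilon,\beta\vee\gamma]=[\varepsilon,\beta]\vee[\varepsilon,\gamma]\le\gamma$. Hence $\varepsilon\le(\gamma:\delta)$.
\item Conversely, let $[\varepsilon,\delta]\le\gamma$. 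Then $[\varepsilon,\beta]\le[\varepsilon,\delta]\le\gamma$ by monotonicity, and also $[\varepsilon,\beta]\le\beta$. Therefore $[\varepsilon,\beta]\le\beta\wedge\gamma=\alpha$, so $\varepsilon\le(\alpha:\beta)$.
\end{itemize}
Taking $\varepsilon$ to be each centralizer in turn yields $(\alpha:\beta)=(\gamma:\delta)$. This is essentially \cite[Lemma~3.4]{Aic18}, and I would cite it after verifying that its hypotheses are exactly the commutator-lattice axioms of Proposition~\ref{prop:commutator_lattice}.

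For item~\eqref{item:typesinprojactiveintervals}, note that $[\beta,\beta]\le\alpha$ is equivalent to $\beta\le(\alpha:\beta)$, and likewise $[\delta,\delta]\le\gamma$ is equivalent to $\delta\le(\gamma:\delta)$. By item~\eqref{item:centralizerinprojectiveintervals} both centralizers equal a common congruence $\kappa$, so it remains to show that $\beta\le\kappa$ if and only if $\delta\le\kappa$.
\begin{itemize}
\item If $\delta\le\kappa$, then $\beta\le\delta\le\kappa$.
\item Conversely, suppose $\beta\le\kappa$. It always holds that $\alpha\le(\alpha:\beta)$ (since $[\alpha,\beta]\le\alpha$) and $\gamma\le(\gamma:\delta)$, so $\gamma\le\kappa$. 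Hence $\delta=\beta\vee\gamma\le\kappa$.
\end{itemize}
This finishes a single up-transposition, and the induction over the chain completes the proof.

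The main point needing care is the use of the commutator properties: symmetry, monotonicity, $[\varepsilon,\gamma]\le\gamma$, and join-additivity $[\varepsilon,\beta\vee\gamma]=[\varepsilon,\beta]\vee[\varepsilon,\gamma]$. These hold in Mal'cev (congruence modular) algebras and are part of the commutator-lattice axioms, so no genuine obstacle is expected.
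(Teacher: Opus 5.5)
Your proof is correct and takes essentially the paper's approach. The paper uses Proposition~\ref{prop:commutator_lattice} to place $\Con\ab{A}$ in the commutator-lattice setting and then cites \cite[Lemma~3.4]{Aic18}; you do the same, and you also write out the content of that lemma (reduction to a single up-transposition, join-additivity, $[x,y]\leq x\wedge y$, and the residuation adjunction $\varepsilon\leq(\alpha:\beta)\Leftrightarrow[\varepsilon,\beta]\leq\alpha$).
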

\begin{proof}
Proposition~\ref{prop:commutator_lattice} implies that the assumptions 
of \cite[Lemma~3.4]{Aic18} are satisfied, and the statement follows. 
\end{proof}
The following
lemmata from \cite[Section~3]{Ros24} 
can be viewed as a partial generalization 
of the fact that in a group $\ab{G}$, given two normal 
subgroups $A$ and $B$, we have $A/(A\cap B)\cong (A+B)/B$. 
\begin{lemma}[{\cite[Lemma~3.2]{Ros24}}]\label{lemma:conseguenze_permutabilita_intervalli_proiettivi_esistenza,d(b,o,c)}
Let $\ab{A}$ be a Mal'cev algebra, let $\alpha,\beta,\gamma,\delta\in \Con\ab{A}$
with $\alpha\leq\beta$, $\gamma\leq\delta$ and 
$\interval{\alpha}{\beta}\nearrow\interval{\gamma}{\delta}$. 
Then for all $o\in A$ and for each $x\in o/\delta$ there exist 
$b\in o/\beta$ and $c\in o/\gamma$
such that $x\mathrel{\alpha} d(b,o,c)$. 
\end{lemma}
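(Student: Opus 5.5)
We need to prove: given $\interval{\alpha}{\beta}\nearrow\interval{\gamma}{\delta}$, so $\beta\vee\gamma=\delta$ and $\beta\wedge\gamma=\alpha$. For $o\in A$ and $x\in o/\delta$ we want $b\in o/\beta$ and $c\in o/\gamma$ with $x\mathrel{\alpha} d(b,o,c)$.

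The plan rests on congruence permutability. In a Mal'cev algebra all congruences permute, so $\delta=\beta\vee\gamma=\beta\circ\gamma=\gamma\circ\beta$. Since $(o,x)\in\delta=\beta\circ\gamma$, there is an element $b\in A$ with $o\mathrel{\beta} b\mathrel{\gamma} x$. This already places $b\in o/\beta$.

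Next I put $c:=d(o,b,x)$ and check $c\in o/\gamma$. Because $b\mathrel{\gamma}x$, we get $d(o,b,x)\mathrel{\gamma} d(o,b,b)=o$. So $c\mathrel{\gamma} o$.

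It remains to show $x\mathrel{\alpha} d(b,o,c)=d(b,o,d(o,b,x))$. Write $y:=d(b,o,d(o,b,x))$; I show $(x,y)$ lies in both $\beta$ and $\gamma$, hence in $\beta\wedge\gamma=\alpha$.
\begin{itemize}
\item For $\beta$: from $b\mathrel{\beta}o$ we get $d(o,b,x)\mathrel{\beta} d(o,o,x)=x$. Then $y\mathrel{\beta} d(o,o,x)=x$, replacing $b$ by $o$ in the first argument and $d(o,b,x)$ by $x$ in the third.
\item For $\gamma$: from $x\mathrel{\gamma}b$ we get $d(o,b,x)\mathrel{\gamma}o$. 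Then $y\mathrel{\gamma} d(b,o,o)=b\mathrel{\gamma}x$.
\end{itemize}
Therefore $(x,y)\in\beta\wedge\gamma=\alpha$, which is the claim.

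The only step needing care is choosing $c$ so that the two computations close up. Otherwise the argument uses only permutability and the Mal'cev identities, with no commutator theory.
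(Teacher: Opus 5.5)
Your proof is correct. Permutability gives $b$ with $o\mathrel{\beta}b\mathrel{\gamma}x$, and the choice $c:=d(o,b,x)$ puts $c$ in $o/\gamma$ and makes $d(b,o,c)$ congruent to $x$ modulo $\beta$ and modulo $\gamma$, hence modulo $\beta\wedge\gamma=\alpha$.

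The paper gives no proof of this lemma; it imports it from \cite[Lemma~3.2]{Ros24}. Your argument is therefore a complete, self-contained proof for an arbitrary Mal'cev algebra, using only permutability and the Mal'cev identities.
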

\begin{lemma}[{\cite[Lemma~3.4]{Ros24}}]\label{lemma:isomorphisms_polynomial_restrictions_projective_intervals}
Let $\ab{A}$ be a Mal'cev algebra, let $n\in\N$,
let $o\in A$, let $\alpha,\beta,\gamma,\delta\in \Con\ab{A}$
with $\alpha\leq\beta$, $\gamma\leq\delta$ and 
$\interval{\alpha}{\beta}\nearrow\interval{\gamma}{\delta}$.

For each $p\in\POL\ari{n}\ab{A}$ with $p(o,\dots, o)=o$ we define 
\[
\begin{split}
f\colon ((o/\alpha)/(\beta/\alpha))^n\to (o/\alpha)/(\beta/\alpha) &\text{ by } 
(x_1/\alpha, \dots, x_n/\alpha)\mapsto p(\vec{x})/\alpha\\
g\colon ((o/\gamma)/(\delta/\gamma))^n\to (o/\gamma)/(\delta/\gamma) &\text{ by } 
(x_1/\gamma, \dots, x_n/\gamma)\mapsto p(\vec{x})/\gamma.
\end{split}
\]
Moreover, we let 
\[
	h_o:=\{(x/\gamma,b/\alpha)\in (o/\gamma)/(\delta/\gamma)\times
	(o/\alpha)/(\beta/\alpha)\mid \exists c\in o/\gamma\colon x\mathrel{\alpha}d(b,o,c)\}.
	\] 
Then $h_o$ is a function and an
isomorphism between the algebras $((o/\alpha)/(\beta/\alpha); f)$ 
and $((o/\gamma)/(\delta/\gamma); g)$. 
\end{lemma}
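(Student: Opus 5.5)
The plan is to replace $h_o$ by a simpler relation that is visibly a well-defined bijection, and then show the two relations coincide. Put
\[
h' := \{(x/\gamma, b/\alpha) \mid x \in o/\delta,\ b \in o/\beta,\ x \mathrel{\gamma} b\}.
\]
The transposition $\interval{\alpha}{\beta}\nearrow\interval{\gamma}{\delta}$ gives $\beta\wedge\gamma=\alpha$ and $\beta\vee\gamma=\delta$, and these two equations drive every step below. First I note that $f$ and $g$ are well defined, because polynomials preserve congruences and fix $o$.

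\emph{Step 1: $h_o\subseteq h'$.} Let $(x/\gamma,b/\alpha)\in h_o$ with witness $c\in o/\gamma$, so $x\mathrel{\alpha}d(b,o,c)$. Since $c\mathrel{\gamma}o$, we get $d(b,o,c)\mathrel{\gamma}d(b,o,o)=b$. Using $\alpha\le\gamma$, this gives $x\mathrel{\gamma}b$.

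\emph{Step 2: $h'$ is a well-defined injective function.} Suppose $x\mathrel{\gamma}x'$, $x\mathrel{\gamma}b$ and $x'\mathrel{\gamma}b'$ with $b,b'\in o/\beta$. Then $b\mathrel{\gamma}b'$ and $b\mathrel{\beta}b'$, so $b\mathrel{(\beta\wedge\gamma)}b'$, i.e.\ $b\mathrel{\alpha}b'$. Hence $h'$ is a function. Injectivity uses the same chain read backwards: if $b\mathrel{\alpha}b'$, then $x\mathrel{\gamma}b\mathrel{\alpha}b'\mathrel{\gamma}x'$, so $x\mathrel{\gamma}x'$.

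\emph{Step 3: surjectivity and $h_o=h'$.} Lemma~\ref{lemma:conseguenze_permutabilita_intervalli_proiettivi_esistenza,d(b,o,c)} shows that $h_o$ is defined on all of $(o/\gamma)/(\delta/\gamma)$. Since $h_o\subseteq h'$ and $h'$ is a function, $h_o=h'$; in particular $h_o$ is a function. For surjectivity, take $b\in o/\beta$. Then $b\in o/\delta$ because $\beta\le\delta$, and the witness $c=o$ gives $b\mathrel{\alpha}d(b,o,o)$, so $(b/\gamma,b/\alpha)\in h_o$. Thus $h_o$ is a bijection.

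\emph{Step 4: homomorphism.} Take $x_1,\dots,x_n\in o/\delta$ and put $b_i$ with $h_o(x_i/\gamma)=b_i/\alpha$, so $b_i\in o/\beta$ and $x_i\mathrel{\gamma}b_i$. Then $p(\vec{x})\mathrel{\gamma}p(\vec{b})$ and $p(\vec{b})\mathrel{\beta}p(o,\dots,o)=o$. By the description of $h'$ this gives
\[
h_o(g(x_1/\gamma,\dots,x_n/\gamma)) = p(\vec{b})/\alpha = f(b_1/\alpha,\dots,b_n/\alpha).
\]
So $h_o$ is an isomorphism.

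The only delicate point is well-definedness of $h_o$. Working directly with $d(b,o,c)$ would seem to need some abelianness, which we do not have. Passing to $h'$ and using $\beta\wedge\gamma=\alpha$ avoids this entirely.
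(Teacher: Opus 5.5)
The paper gives no proof of this lemma; it is quoted from \cite[Lemma~3.4]{Ros24}, so there is no in-text argument to compare with. Your proof is correct and complete.

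What you have done is identify $h_o$ as the inverse of the second-isomorphism-theorem map $b/\alpha\mapsto b/\gamma$ for $b\in o/\beta$. Single-valuedness and injectivity both come from $\beta\wedge\gamma=\alpha$. The homomorphism property is immediate, because membership in $h'$ only requires $x\mathrel{\gamma}b$. The one non-formal input is totality, which you take from Lemma~\ref{lemma:conseguenze_permutabilita_intervalli_proiettivi_esistenza,d(b,o,c)}; this is exactly where $\delta=\gamma\circ\beta$, i.e.\ congruence permutability, enters.

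If you want the argument independent of that cited lemma, you can show $h'\subseteq h_o$ directly. For $x\in o/\delta$ and $b\in o/\beta$ with $x\mathrel{\gamma}b$, put $c:=d(o,b,x)$. Then $c\mathrel{\gamma}d(o,b,b)=o$. Moreover $d(b,o,c)\mathrel{\beta}d(o,o,c)=c\mathrel{\beta}d(o,o,x)=x$, and also $d(b,o,c)\mathrel{\gamma}d(b,o,o)=b\mathrel{\gamma}x$. Hence $d(b,o,c)\mathrel{\alpha}x$, and totality of $h'$ is then just congruence permutability.

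Incidentally, your closing worry about abelianness is unfounded even without passing to $h'$. Comparing two witnesses $d(b,o,c)$ and $d(b',o,c')$ directly uses only $d(b,o,c)\mathrel{\gamma}b$, which is your Step~1, together with $\beta\wedge\gamma=\alpha$.
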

The following lemma is a consequence of the theory developed in 
\cite[Chapters~1-2]{HM:TSOF}:
\begin{lemma}\label{lemma:projectivity_andTCT}
Let $\ab{A}$ be a finite algebra, 
let $\quotienttct{\alpha}{\beta}$
and $\quotienttct{\gamma}{\delta}$ be two prime quotients
in $\Con\ab{A}$
with $\interval{\alpha}{\beta}
\leftrightsquigarrow \interval{\gamma}{\delta}$.
Then $\Mtct{\ab{A}}{\alpha}{\beta}=\Mtct{\ab{A}}{\gamma}{\delta}$. 
Furthermore, if $\mu\in\Con\ab{A}$ and 
$\interval{\bottom{A}}{\mu}$ is a simple 
complemented modular lattice,
then $\quotienttct{\bottom{A}}{\mu}$ is
tame and for each atom $\alpha$ of $\interval{\bottom{A}}{\mu}$
we have 
$\Mtct{\ab{A}}{\bottom{A}}{\mu}=\Mtct{\ab{A}}{\bottom{A}}{\alpha}$. 
\end{lemma}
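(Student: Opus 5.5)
The plan is to reduce both assertions to one observation: whether a unary polynomial \emph{collapses} a quotient is preserved under transposition. For congruences $\theta \le \psi$ of $\ab{A}$, a $\quotienttct{\theta}{\psi}$-minimal set is by definition a minimal member of $\{f(A) \mid f\in\POL\ari{1}\ab{A},\ f(\psi)\not\subseteq\theta\}$. So it suffices to show that projective prime quotients determine the same family of polynomials $f$ with $f(\psi)\not\subseteq\theta$.

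First I establish the following. If $\interval{\alpha}{\beta}\nearrow\interval{\gamma}{\delta}$ and $f\in\POL\ari{1}\ab{A}$, then $f(\beta)\subseteq\alpha$ if and only if $f(\delta)\subseteq\gamma$.
\begin{itemize}
\item For a fixed $f$, let $(\gamma:f)$ be the set of pairs $(x,y)$ such that $f(g(x))\mathrel{\gamma} f(g(y))$ for all $g\in\POL\ari{1}\ab{A}$. This is an equivalence relation closed under all unary polynomials, hence a congruence. It is the largest congruence $\theta$ with $f(\theta)\subseteq\gamma$: any such $\theta$ is closed under the $g$'s, so $\theta\subseteq(\gamma:f)$.
\item If $f(\beta)\subseteq\alpha\le\gamma$, then $\beta\le(\gamma:f)$. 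Also $\gamma\le(\gamma:f)$, because every polynomial preserves $\gamma$. Hence $\delta=\beta\vee\gamma\le(\gamma:f)$, that is, $f(\delta)\subseteq\gamma$.
\item Conversely, suppose $f(\delta)\subseteq\gamma$ and $x\mathrel{\beta}y$. Then $f(x)\mathrel{\gamma}f(y)$ since $\beta\le\delta$, and $f(x)\mathrel{\beta}f(y)$ since $f$ preserves $\beta$. So $f(x)\mathrel{(\beta\wedge\gamma)}f(y)$, and $\beta\wedge\gamma=\alpha$.
\end{itemize}
Consequently the families of polynomial images defining $\Mtct{\ab{A}}{\alpha}{\beta}$ and $\Mtct{\ab{A}}{\gamma}{\delta}$ coincide, so the minimal sets coincide. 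Down-transpositions are the same statement read backwards. Induction along the chain of transpositions witnessing $\interval{\alpha}{\beta}\leftrightsquigarrow\interval{\gamma}{\delta}$ gives the first claim.

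For the second claim, let $\interval{\bottom{A}}{\mu}$ be a simple complemented modular lattice. In a finite complemented modular lattice $\mu$ is the join of the atoms $\alpha_1,\dots,\alpha_r$ below it. I claim that for every $f\in\POL\ari{1}\ab{A}$ and every atom $\alpha$, $f(\mu)\not\subseteq\bottom{A}$ if and only if $f(\alpha)\not\subseteq\bottom{A}$.
\begin{itemize}
\item If $f(\alpha_i)\subseteq\bottom{A}$ for all $i$, then all $\alpha_i\le(\bottom{A}:f)$, so $\mu\le(\bottom{A}:f)$ and $f(\mu)\subseteq\bottom{A}$. Thus $f(\mu)\not\subseteq\bottom{A}$ forces $f(\alpha_i)\not\subseteq\bottom{A}$ for some atom $\alpha_i$.
\item Simplicity makes all prime quotients $\quotienttct{\bottom{A}}{\alpha_i}$ of the interval projective to one another. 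In height $\ge 2$, two atoms have a common complement in $\interval{\bottom{A}}{\mu}$, which gives perspectivities through it; height $1$ is trivial. By the first part, the condition $f(\alpha_i)\not\subseteq\bottom{A}$ is then independent of the atom chosen.
\item The reverse implication is immediate from $\alpha\le\mu$.
\end{itemize}
Hence $\Mtct{\ab{A}}{\bottom{A}}{\mu}=\Mtct{\ab{A}}{\bottom{A}}{\alpha}$ for every atom $\alpha$.

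The main obstacle is tameness of $\quotienttct{\bottom{A}}{\mu}$ when $\mu$ is not an atom. Equality of minimal sets with those of the tame prime quotient $\quotienttct{\bottom{A}}{\alpha}$ does not by itself give the separation condition in the definition of tameness with respect to $\mu$. Here I would invoke the Hobby--McKenzie characterization of tame quotients in \cite[Chapter~2]{HM:TSOF}: a quotient whose interval is a simple complemented modular lattice is tame (their Theorem~2.11 and surrounding lemmas). I would check that its hypotheses are exactly what we have, using the minimal-set equality above to supply the idempotent $e$ with $e(A)=U$ for $U\in\Mtct{\ab{A}}{\bottom{A}}{\mu}$. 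If a direct argument is needed instead, I would show that $\mu|_U$ is the only congruence of the induced algebra that is not contained in $\bottom{A}|_U$ among the restrictions of $\interval{\bottom{A}}{\mu}$, using the atom-independence established above.
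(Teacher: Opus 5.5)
Your proof of the equality of minimal sets is correct, and it takes a different route from the paper. The paper cites \cite[Exercises~2.19(3)]{HM:TSOF} for the first claim. For the second it proceeds in three steps:
\begin{enumerate}
\item it proves tameness first;
\item it takes a complement $\beta$ of the atom $\alpha$ in $\interval{\bottom{A}}{\mu}$ and gets $\Mtct{A}{\bottom{A}}{\alpha}=\Mtct{A}{\beta}{\mu}$ from $\interval{\bottom{A}}{\alpha}\nearrow\interval{\beta}{\mu}$;
\item with tameness in hand, it uses \cite[Exercises~2.19(2)]{HM:TSOF} to identify $\Mtct{A}{\beta}{\mu}$ with $\Mtct{A}{\bottom{A}}{\mu}$.
\end{enumerate}
You instead prove transposition invariance directly, via the largest congruence that $f$ maps into $\gamma$. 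You then show that $f(\mu)\not\subseteq\bottom{A}$ holds if and only if $f$ separates some atom (because $\mu$ is the join of the atoms), if and only if it separates every atom (because the atoms are pairwise perspective). This argument is self-contained. It proves the stronger fact that the two families of polynomials coincide, and it makes the minimal-set equality independent of tameness. The paper's version is shorter only because it delegates both steps to Hobby--McKenzie.

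For tameness you end at the same theorem as the paper, but you leave open the step that makes it applicable. The paper observes that a simple complemented modular lattice is \emph{tight} \cite[Exercises~1.14(5)]{HM:TSOF} and then applies \cite[Theorem~2.11]{HM:TSOF}. That hypothesis is purely lattice-theoretic, so idempotents play no role in invoking it.

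Your fallback argument is misstated. $\mu|_U$ is not the only restriction outside $\bottom{A}|_U$, since $\alpha'|_U\neq\bottom{A}|_U$ for every atom $\alpha'$. The separation you need is at both ends, and your own tools give it directly:
\begin{enumerate}
\item Take $U\in\Mtct{A}{\bottom{A}}{\mu}=\Mtct{A}{\bottom{A}}{\alpha}$. By \cite[Theorem~2.8]{HM:TSOF} applied to the prime quotient $\quotienttct{\bottom{A}}{\alpha}$, there is an idempotent $e\in\POL\ari{1}\ab{A}$ with $e(A)=U$.
\item Since $U=f(A)$ for some $f$ with $f(\mu)\not\subseteq\bottom{A}$, and $e$ fixes $U$ pointwise, we get $e(\mu)\not\subseteq\bottom{A}$. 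Your atom-independence then gives $e(\alpha')\not\subseteq\bottom{A}$ for every atom $\alpha'$. Hence $\gamma|_U\neq\bottom{A}|_U$ whenever $\bottom{A}<\gamma\leq\mu$.
\item If $\gamma<\mu$, pick a coatom $\delta\geq\gamma$ of $\interval{\bottom{A}}{\mu}$ and an atom $\alpha'\not\leq\delta$. Then $\interval{\bottom{A}}{\alpha'}\nearrow\interval{\delta}{\mu}$, so your first claim gives $e(\mu)\not\subseteq\delta$. Therefore $\gamma|_U\subseteq\delta|_U\subsetneq\mu|_U$.
\end{enumerate}
This is exactly the two-sided separation you were after.
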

\begin{proof}
The first statement is an
easy consequence of 
\cite[Exercises~2.19(3)]{HM:TSOF}.
Furthermore, it easy to see that 
if $\interval{\bottom{A}}{\mu}$
is a simple complemented modular lattice, 
then it is tight \cite[Exercises~1.14(5)]{HM:TSOF},
and therefore, the  quotient $\quotienttct{\bottom{A}}{\mu}$
is tame \cite[Theorem~2.11]{HM:TSOF}. 
Let $\alpha$ be an atom of $\interval{\bottom{A}}{\mu}$
and let $\beta$ be its complement in $\interval{\bottom{A}}{\mu}$.
Then $\interval{\bottom{A}}{\alpha}\nearrow
\interval{\beta}{\mu}$, and by the first statement of this 
lemma, we have that $\Mtct{\ab{A}}{\bottom{A}}{\alpha}=
\Mtct{\ab{A}}{\beta}{\mu}$. Thus, 
\cite[Exercises~2.19(2)]{HM:TSOF} imply that
$\Mtct{\ab{A}}{\beta}{\mu}=
\Mtct{\ab{A}}{\bottom{A}}{\mu}$, and therefore,
$\Mtct{\ab{A}}{\bottom{A}}{\alpha}=\Mtct{\ab{A}}{\bottom{A}}{\mu}$.
\end{proof}
In the following we investigate the properties of 
the algebra $\ab{A}\restrict{o/\mu}$ for $o\in A$
and $\mu\in \Con\ab{A}$ when 
$\ab{A}$ is a finite Mal'cev algebra and $\mu$ is an abelian
congruence of $\ab{A}$ such that
$\interval{\bottom{A}}{\mu}$ is a simple 
complemented modular lattice. 
For the theory of simple complemented modular lattices 
(or projective geometries) we refer the reader to
\cite[Section~4.8]{MMT:ALVV}.
We start by listing some results from \cite[Section~4]{Ros24}
that that will be often referred to in this paper.
\begin{proposition}[{\cite[Proposition~4.3]{Ros24}}]\label{prop:identification_with_matrices}
Let $\ab{A}$ be a finite Mal'cev algebra, let $\mu$ be an abelian
element of $\Con{\ab{A}}$ such that $\interval{\bottom{A}}{\mu}$ 
is a
simple complemented modular lattice of height $h$, let $o\in A$
with $\card{o/\mu}>1$, and
let $\ab{R}_o$ be defined as in 
\eqref{eq:equazione_che_definisce_l'universo_dell'anello_dei_polinomi_ristretti}
and \eqref{eq:equazione_che_definisce_le_operazioni_dell'anello_dei_polinomi_ristretti}.
Then for each atom $\alpha$ of $\interval{\bottom{A}}{\mu}$
the ring of $\ab{R}_o$-endomorphisms of the $\ab{R}_o$-module 
$(o/\alpha;+_o)$ is a field $\ab{D}$, and $(o/\alpha;+_o)$
is a $\ab{D}$-vector space. Let $n$ be the
dimension of $(o/\alpha;+_o)$ over $\ab{D}$.
Then there exist a ring isomorphism 
$\epsilon_{\ab{R}_o}\colon \ab{R}_o\to \ab{M}_n(\ab{D})$,
and
a group isomorphism $\epsilon_\mu^o\colon 
(o/\mu; +_o)\to\ab{D}^{(n\times h)}$ such
that for all $r\in R_o$ and $v\in o/\mu$ we have 
$\epsilon_\mu^o (r(v))=\epsilon_{\ab{R}_o}(r)\cdot \epsilon_\mu^o(v).$
\end{proposition}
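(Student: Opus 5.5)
The plan is to transfer everything to the atom $\alpha$ first and then assemble $o/\mu$ from copies of it. Step one: since $\mu$ is abelian, Lemma~\ref{lemma:piu_meno_fanno_modulo_su-anello_polinomi_ristretto_nella_classe_congruenza} makes $(o/\mu;+_o)$ an $\ab{R}_o$-module, and $\ab{A}\restrict{o/\mu}$ is polynomially equivalent to it. Under this equivalence the congruences of $\ab{A}$ below $\mu$, restricted to $o/\mu$, become submodules of $(o/\mu;+_o)$. The map $\gamma\mapsto o/\gamma$ is injective on $\interval{\bottom{A}}{\mu}$; this uses the Mal'cev term together with the fact that a congruence below an abelian $\mu$ is determined by one of its classes on which it is nontrivial. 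I would derive this fact from Lemma~\ref{lemma:conseguenze_permutabilita_intervalli_proiettivi_esistenza,d(b,o,c)} and the assumption $\card{o/\mu}>1$. Consequently the submodule lattice contains a copy of the simple complemented modular lattice $\interval{\bottom{A}}{\mu}$ of height $h$.

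Step two concerns the atom $\alpha$. I would show that $(o/\alpha;+_o)$ is a simple $\ab{R}_o$-module: a proper nonzero submodule $N$ would give, via the Mal'cev polynomial and polynomial equivalence, a congruence strictly between $\bottom{A}$ and $\alpha$. The route is to take the congruence generated by the pairs $(o,n)$ with $n\in N$ and check that its $o$-class is $N$, using that unary polynomials act as affine maps on $\mu$-classes. Schur's lemma then gives that $\ab{D}=\mathrm{End}_{\ab{R}_o}(o/\alpha)$ is a division ring, hence a field by finiteness (Wedderburn). Let $n=\dim_{\ab{D}}(o/\alpha)$.

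Step three: since $\interval{\bottom{A}}{\mu}$ is complemented of height $h$, $\mu$ is a join of independent atoms $\alpha=\alpha_1,\dots,\alpha_h$, so $o/\mu=\bigoplus_i o/\alpha_i$ as $\ab{R}_o$-modules. All the intervals $\interval{\bottom{A}}{\alpha_i}$ are projective because the lattice is simple; concretely, $\alpha_i$ and $\alpha_j$ have a common complement inside $\alpha_i\vee\alpha_j$. Lemma~\ref{lemma:isomorphisms_polynomial_restrictions_projective_intervals} applied to these transpositions yields isomorphisms $o/\alpha_i\cong o/\alpha$ that commute with every $p\in R_o$, i.e.\ $\ab{R}_o$-module isomorphisms. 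Hence $o/\mu\cong (o/\alpha)^h\cong \ab{D}^{(n\times h)}$. Next, $\ab{R}_o$ acts faithfully on $o/\mu$ by definition, so $\ab{R}_o$ embeds in $\mathrm{End}_{\ab{D}}(o/\alpha)=\ab{M}_n(\ab{D})$. By the Jacobson density theorem for the simple module $o/\alpha$ over a finite ring (alternatively, Artin--Wedderburn, since a faithful semisimple isotypic module forces $\ab{R}_o$ to be simple), this embedding is onto. Define $\epsilon_{\ab{R}_o}$ as this isomorphism and $\epsilon^o_\mu$ as the composite of the direct sum decomposition with the chosen coordinatizations; the compatibility $\epsilon_\mu^o(r(v))=\epsilon_{\ab{R}_o}(r)\epsilon_\mu^o(v)$ holds by construction.

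The main obstacle is step two together with faithfulness: restricting polynomials to a single class $o/\alpha$ may lose information, and the action on $o/\alpha$ need not be faithful a priori. I would handle this through the projectivity isomorphisms: if $r$ kills $o/\alpha$, then it kills each $o/\alpha_i$, hence all of $o/\mu$, so $r$ is the identity-zero element of $R_o$. This gives faithfulness on $o/\alpha$, and density then yields surjectivity onto $\ab{M}_n(\ab{D})$.
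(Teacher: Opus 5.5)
Most of your argument is sound. This includes Schur plus Wedderburn for $\ab{D}$, and the $\ab{R}_o$-isomorphisms $o/\alpha_i\cong o/\alpha$ obtained from Lemma~\ref{lemma:isomorphisms_polynomial_restrictions_projective_intervals} along perspectivities through a third atom of $\alpha_i\vee\alpha_j$. The transfer of faithfulness from $o/\mu$ to $o/\alpha$ and the density argument are also fine. The paper itself gives no proof of this proposition; it imports it from Rossi's work, so there is no in-paper proof to compare against.

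The flaw is in your step one. The general fact you invoke is false: a congruence below an abelian $\mu$ is \emph{not} determined by one of its nontrivial classes, because Mal'cev algebras need not be congruence regular. Here is a counterexample. On $\{0,1\}^2$, let $d$ have first coordinate $a_x+a_y+a_z$. Its second coordinate is the sum mod $2$ of the second coordinates of those arguments whose first coordinate equals $a_x+a_y+a_z$. This is a Mal'cev operation, and the kernel $\mu$ of the first projection is abelian, since every polynomial is affine on each product of $\mu$-classes. The partition $\gamma$ that collapses $\{(0,0),(0,1)\}$ but separates $(1,0)$ from $(1,1)$ is a congruence. Then $(0,0)/\gamma=(0,0)/\mu$ has two elements, yet $\gamma\neq\mu$. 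Here $\interval{\bottom{A}}{\mu}$ is the four-element Boolean lattice, which is exactly what simplicity excludes. So no argument from Lemma~\ref{lemma:conseguenze_permutabilita_intervalli_proiettivi_esistenza,d(b,o,c)} and $\card{o/\mu}>1$ alone can give injectivity; the simplicity of $\interval{\bottom{A}}{\mu}$ must be used.

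More importantly, injectivity is not what your step three needs. To pass from ``$\mu$ is the join of the independent atoms $\alpha_1,\dots,\alpha_h$'' to $o/\mu=\bigoplus_i o/\alpha_i$, you need $\Psi\colon\gamma\mapsto o/\gamma$ to preserve finite meets and joins, and you never verify this. The repair is the computation in the proof of Lemma~\ref{lemma:restrizione_classe_k_poly_rich_sse:modulo}; that part of the proof does not depend on the proposition.
\begin{itemize}
\item Meets are preserved trivially: $o/(\gamma\wedge\delta)=o/\gamma\cap o/\delta$.
\item Joins are preserved, $o/(\gamma\vee\delta)=o/\gamma+_o o/\delta$, by congruence permutability together with $(x-_oy)+_oy=x$ on $o/\mu$. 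That identity holds because $\mu$ is abelian.
\end{itemize}
Independence of the atoms then yields the internal direct sum. If you still want injectivity, it follows because $\Psi$ is a nonconstant lattice homomorphism on a simple lattice. With step one replaced this way, the rest of your proof goes through.

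Step two can also be simplified. Take $c\in N\setminus\{o\}$ and $\eta=\Cg{\{(o,c)\}}$; then $\bottom{A}<\eta\leq\alpha$. By Mal'cev's description of principal congruences, $o/\eta=\{r(c)\mid r\in R_o\}\subseteq N\subsetneq o/\alpha$. This contradicts $\alpha$ being an atom, with no need to decompose $k$-ary polynomials.
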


\begin{lemma}\label{lemma:on_cardinality_of-classes_intervals_simple_com_mod_latt}
Let $\ab{A}$ be a finite Mal'cev 
algebra, let $\mu$ be an abelian congruence of
$\ab{A}$ such that $\interval{\bottom{A}}{\mu}$ 
is a simple complemented modular lattice, and let 
$o\in A$. Then for all prime quotients
$\quotienttct{\gamma}{\delta}$ and 
$\quotienttct{\eta}{\theta}$ in $\interval{\bottom{A}}{\mu}$
we have 
$\card{o/\gamma/(\delta/\gamma)}=\card{o/\eta/(\theta/\eta)}$.
Furthermore, $\card{o/\mu}=1$ if and only if 
for each atom $\alpha$ of $\interval{\bottom{A}}{\mu}$
we have that
$\card{o/\alpha}=1$. 
\end{lemma}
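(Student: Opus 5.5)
The plan is to prove the first statement by transporting along projectivity chains, and the second by reducing to atoms of the interval. Since $\interval{\gamma}{\delta}\leftrightsquigarrow\interval{\eta}{\theta}$ is witnessed by a chain of transpositions, it suffices to treat a single transposition $\interval{\gamma}{\delta}\nearrow\interval{\gamma'}{\delta'}$ inside $\interval{\bottom{A}}{\mu}$. For such a transposition I would invoke Lemma~\ref{lemma:isomorphisms_polynomial_restrictions_projective_intervals} with $n=1$ and, say, the identity polynomial. That lemma says that $h_o$ is a bijection between $(o/\gamma')/(\delta'/\gamma')$ and $(o/\gamma)/(\delta/\gamma)$, which gives equality of the two cardinalities.

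First I have to check that all prime quotients of $\interval{\bottom{A}}{\mu}$ are projective. Every prime quotient of a simple complemented modular lattice lies in a single projectivity class: in a modular lattice each prime quotient is projective to one of the form $\interval{\bottom{A}}{\alpha}$ with $\alpha$ an atom, by a Jordan--Hölder/transposition argument. Moreover, in a simple complemented modular lattice any two atoms $\alpha,\alpha'$ are perspective. If $\alpha\ne\alpha'$ and the height is at least $2$, simplicity (irreducibility of the projective geometry) gives a third atom $\alpha''\le\alpha\vee\alpha'$. Then $\interval{\bottom{A}}{\alpha}\nearrow\interval{\alpha''}{\alpha\vee\alpha'}$ and $\interval{\alpha''}{\alpha\vee\alpha'}\searrow\interval{\bottom{A}}{\alpha'}$. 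I expect this lattice-theoretic step to be the main point to verify carefully; I would cite the standard facts in \cite[Section~4.8]{MMT:ALVV}. Chaining these projectivities with the bijections from Lemma~\ref{lemma:isomorphisms_polynomial_restrictions_projective_intervals} yields the first claim.

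For the second claim, the direction ``$\card{o/\mu}=1$ implies $\card{o/\alpha}=1$'' is immediate from $\alpha\le\mu$. For the converse, take a maximal chain $\bottom{A}=\gamma_0\prec\gamma_1\prec\dots\prec\gamma_h=\mu$ with $\gamma_1=\alpha$ an atom. The class $o/\mu$ is partitioned into $\gamma_{h-1}$-classes, and so on. Since the algebra is Mal'cev and $\mu$ is abelian (so each class in $o/\mu$ carries the group $+_o$ of Lemma~\ref{lemma:piu_emeno_fanno_gruppoo_abeliano_nella_classe_congruenza}), translations show that all $\gamma_{i-1}$-classes inside $o/\gamma_i$ meet the same number of $\gamma_{i-1}$-classes. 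Hence
\[
\card{o/\mu}=\prod_{i=1}^{h}\card{(o/\gamma_{i-1})/(\gamma_i/\gamma_{i-1})}.
\]
By the first part, each factor equals $\card{(o/\bottom{A})/(\alpha/\bottom{A})}=\card{o/\alpha}$. So $\card{o/\mu}=\card{o/\alpha}^h$, and $\card{o/\alpha}=1$ forces $\card{o/\mu}=1$.

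The counting identity can also be avoided. By the first part, each factor $\card{(o/\gamma_{i-1})/(\gamma_i/\gamma_{i-1})}$ equals $\card{o/\alpha}=1$. This gives $o/\gamma_{i-1}=o/\gamma_i$ for every $i$, and induction up the chain gives $o/\mu=o/\bottom{A}=\{o\}$. This version avoids the translation argument entirely, so it is the one I would write.
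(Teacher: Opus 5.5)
Your proof is correct. For the first claim you follow the paper: all prime quotients of $\interval{\bottom{A}}{\mu}$ are projective, and Lemma~\ref{lemma:isomorphisms_polynomial_restrictions_projective_intervals}, applied with the identity polynomial, transports the class count along each transposition.

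One sentence in your lattice-theoretic justification needs repair. It is not true in an arbitrary modular lattice that every prime quotient is projective to some $\interval{\bottom{A}}{\alpha}$ with $\alpha$ an atom; a three-element chain is a counterexample. What you need here is complementedness. If $\gamma\prec\delta$ and $\gamma'$ is a relative complement of $\gamma$ in $\interval{\bottom{A}}{\delta}$, then $\interval{\bottom{A}}{\gamma'}\nearrow\interval{\gamma}{\delta}$, and $\gamma'$ is an atom by modularity. Your argument that two distinct atoms are perspective through a third atom below their join is fine.

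For the converse of the second claim you take a different route from the paper. The paper writes $\mu$ as the join of the atoms $\alpha_1,\dots,\alpha_h$ of $\interval{\bottom{A}}{\mu}$. By congruence permutability $\mu=\alpha_1\circ\dots\circ\alpha_h$, so $o/\alpha_i=\{o\}$ for every $i$ forces $o/\mu=\{o\}$. That argument uses the hypothesis for all atoms and is independent of the first claim.

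You instead feed the first claim into a maximal chain $\bottom{A}=\gamma_0\prec\dots\prec\gamma_h=\mu$. Each factor $\card{(o/\gamma_{i-1})/(\gamma_i/\gamma_{i-1})}$ equals $\card{o/\alpha}=1$, so $o/\gamma_i=o/\gamma_{i-1}$ for every $i$, and hence $o/\mu=\{o\}$. This avoids the join-of-atoms fact and shows slightly more: a single atom with $\card{o/\alpha}=1$ already suffices. You were right to drop the product-counting identity, since it would need an extra translation argument inside $o/\mu$.
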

\begin{proof}
Since $\interval{\bottom{A}}{\mu}$ is 
a simple complemented modular lattice, 
all prime quotients are projective to each other, and so the 
first part of the statement 
follows from 
Lemma~\ref{lemma:isomorphisms_polynomial_restrictions_projective_intervals}.
Clearly, if $\card{o/\mu}=1$, then $\card{o/\beta}=1$ for each $\beta\leq\mu$.
The converse implication follows from the fact that 
$\ab{A}$ is congruence permutable and $\mu$ is the join of the
atoms of $\interval{\bottom{A}}{\mu}$
(cf.~\cite[Lemma~4.83]{MMT:ALVV}).  
\end{proof}
\begin{definition}
Let $\ab{A}$ be a Mal'cev algebra and let 
$B\subseteq A$. We say that $B$ is \emph{strictly 
$k$-polynomially rich with respect to $\ab{A}$}
if for all $T\subseteq B^k$ and for each 
type-preserving $f\colon T\to A$ with image contained in $B$,
there exists $p\in\POL\ari{k}\ab{A}$  that preserves $B$ and
interpolates $f$ on $T$. 
\end{definition}
Obviously, isomorphic copies of type-preserving functions are type-preserving:
\begin{lemma}\label{lemma:basic_fact}
Let $\ab{B}$ and $\ab{C}$ be two algebras 
and let $\phi$ be an isomorphism from $\ab{B}$ to $\ab{C}$. 
Let $l\in \N$, let $T\subseteq B^l$ and let $f\colon T\to C$.
Let $S$ be the image obtained by the component-wise action of $\phi$ on $T$,
and let $g\colon S\to C$ be defined for each $\vec{s}\in S$ by 
\[
\vec{s}\mapsto \phi(f(\phi^{-1}(\vec{s}(1)), \dots, \phi^{-1}(\vec{s}(l))).
\]
Then $f$ is type-preserving if and only if $g$ is type-preserving. 
\end{lemma}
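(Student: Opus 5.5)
The plan is to use that an isomorphism $\phi\colon\ab{B}\to\ab{C}$ induces a lattice isomorphism $\Con\ab{B}\to\Con\ab{C}$, namely $\theta\mapsto\phi(\theta)=\{(\phi(x),\phi(y))\mid (x,y)\in\theta\}$. This map respects covers, commutators and Mal'cev polynomials. The two defining conditions for being type-preserving (Definition~\ref{def:partial_type_preserving}) should then transfer along $\phi$, one at a time. By symmetry (replace $\phi$ by $\phi^{-1}$ and $f$ by $g$; note that $f$ is recovered from $g$ by the same recipe) it suffices to show that if $f$ is type-preserving, then so is $g$. I read the codomain of $f$ as $B$, since $\phi$ is applied to $f(\ldots)$.

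First I fix the lattice and commutator transfer. Because $\phi$ is a bijection commuting with all basic operations, $\theta\mapsto\phi(\theta)$ is an order isomorphism $\Con\ab{B}\to\Con\ab{C}$. The term condition is invariant under isomorphism, so $\phi([\alpha,\beta])=[\phi(\alpha),\phi(\beta)]$. Hence prime quotients $\alpha\prec\beta$ with $[\beta,\beta]\le\alpha$ (the type $\tp{2}$ quotients in the Mal'cev case) correspond exactly. If $d$ is a Mal'cev polynomial of $\ab{B}$ with constants $\vec{s}$, the same polynomial expression with constants $\phi(\vec{s})$ gives a Mal'cev polynomial $d'$ of $\ab{C}$ satisfying $d'(\phi x,\phi y,\phi z)=\phi(d(x,y,z))$. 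Therefore $\phi(\rho(\alpha,\beta))=\rho(\phi(\alpha),\phi(\beta))$, computed componentwise. By the footnote, for abelian $\beta/\alpha$ the relation $\rho$ does not depend on the choice of Mal'cev polynomial, so no ambiguity arises.

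The main step is a general observation: if $f$ preserves an $r$-ary relation $R$ on $B$, then $g$ preserves $\phi(R)$. Take tuples $\vb{c}\supi{1},\dots,\vb{c}\supi{l}\in\phi(R)$ whose columns $(c\supi{1}_i,\dots,c\supi{l}_i)$ lie in $S$. Set $\vb{b}\supi{j}:=\phi^{-1}(\vb{c}\supi{j})\in R$. Their columns are $\phi^{-1}$ of the columns of the $\vb{c}\supi{j}$, so they lie in $T$. Since $f$ preserves $R$, the tuple $(f(\text{column }i))_{i\le r}$ lies in $R$. Applying $\phi$ gives $(g(\text{column }i \text{ of } \vb{c}))_{i\le r}\in\phi(R)$, which is what we need. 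Applying this with $R$ a congruence, and with $R=\rho(\alpha,\beta)$ for each abelian prime quotient, shows that $g$ is congruence-preserving and preserves every $\rho(\alpha',\beta')$ of $\ab{C}$. This works because every congruence and every such $\rho$ of $\ab{C}$ is the $\phi$-image of one of $\ab{B}$.

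I expect no real obstacle. The only point that needs care is the compatibility of $\rho$ with $\phi$: the Mal'cev polynomial has to be transported, and independence of the choice of $d$ has to be invoked, which the paper's footnote supplies.
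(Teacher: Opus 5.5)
Your proof is correct. The paper gives no argument at all; it introduces the lemma with ``Obviously''. Your transport-of-structure check is exactly the routine verification behind that claim: $\theta\mapsto\phi(\theta)$ is an isomorphism of congruence lattices preserving covers and commutators, and it preserves the relations $\rho(\alpha,\beta)$ via the transported Mal'cev polynomial and the independence of $\rho$ from the choice of $d$. Combined with your observation that $f$ preserving $R$ forces $g$ to preserve $\phi(R)$, this gives the lemma. You are also right that the codomain of $f$ should be $B$ rather than $C$.
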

The following Lemma allows us to consider partial functions locally
as functions on modules.
\begin{lemma}\label{lemma:restrizione_classe_k_poly_rich_sse:modulo}
	Let $\ab{A}$ be a finite algebra with a Mal'cev polynomial,
	let $\mu$ be an abelian congruence of $\ab{A}$ such that $\interval{\bottom{A}}{\mu}$
	is a simple complemented modular lattice of height $h$,
	let $o\in A$ with $\card{o/\mu}>1$, let $k\in \N$, 
	let $n$, $\ab{D}$, and $\epsilon_\mu^o$ be the natural number, 
	the field and the
	group homomorphism built in
	Proposition~\ref{prop:identification_with_matrices}. 
	Then the $\ab{M}_n(\ab{D})$-module $\ab{D}^{(n\times h)}$
	is strictly $k$-polynomially rich if and only if $o/\mu$
	is strictly $k$-polynomially rich with respect to $\ab{A}$. 
\end{lemma}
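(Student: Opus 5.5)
The idea is to transfer everything along the isomorphism from Proposition~\ref{prop:identification_with_matrices}, namely between the algebra $\ab{A}\restrict{o/\mu}$ and the $\ab{M}_n(\ab{D})$-module $\ab{D}^{(n\times h)}$. The transfer has two parts: polynomial functions and type-preserving partial functions.

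\textbf{Step 1: polynomials.} By Lemma~\ref{lemma:piu_meno_fanno_modulo_su-anello_polinomi_ristretto_nella_classe_congruenza}, $\ab{A}\restrict{o/\mu}$ is polynomially equivalent to the $\ab{R}_o$-module $(o/\mu;+_o)$. Here $\ab{R}_o$ is formed with respect to $\mu$; since $\mu$ is abelian, $\ab{R}_o$ is a ring of restrictions to $o/\mu$. By Proposition~\ref{prop:identification_with_matrices}, the pair $(\epsilon_{\ab{R}_o},\epsilon^o_\mu)$ is a semilinear isomorphism onto the $\ab{M}_n(\ab{D})$-module $\ab{D}^{(n\times h)}$. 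Hence a map $q\colon (o/\mu)^k\to o/\mu$ is the restriction of some $p\in\POL\ari{k}\ab{A}$ preserving $o/\mu$ if and only if $\epsilon^o_\mu\circ q\circ(\epsilon^o_\mu)^{-1}$ is a polynomial of the module, i.e.\ of the form $\vec{x}\mapsto R_1x_1+\dots+R_kx_k+C$.

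One point needs care. A polynomial of $\ab{A}$ preserving $o/\mu$ may be defined using constants outside $o/\mu$. Polynomial equivalence of $\ab{A}\restrict{o/\mu}$ with the module handles this, since $\ab{A}\restrict{o/\mu}$ is by definition the algebra of restrictions of polynomials of $\ab{A}$ that preserve $o/\mu$.

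\textbf{Step 2: type-preservation.} I must show that, for $T\subseteq(o/\mu)^k$ and $f\colon T\to o/\mu$, $f$ is type-preserving in $\ab{A}$ if and only if its transfer $g$ (as in Lemma~\ref{lemma:basic_fact}) is type-preserving in the module. By Lemma~\ref{lemma:on_notions_of_type_preserving_and-commutatora_and_rho_for_modules}, type-preservation in the module means preserving the congruences (submodules) and the relations $\rho(A,B)$ for $A\prec B$. On the $\ab{A}$ side, we need the restrictions to $o/\mu$ of congruences and of the relations $\rho(\alpha,\beta)$ for type~$\tp{2}$ prime quotients of $\Con\ab{A}$.

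I will argue that the restrictions to $o/\mu$ of congruences of $\ab{A}$ are exactly the congruences of $\ab{A}\restrict{o/\mu}$. Each restriction $\theta\cap (o/\mu)^2$ equals $(\theta\wedge\mu)\restrict{o/\mu}$. For the converse, the standard fact for Mal'cev algebras is that congruences of the induced algebra on a class extend; here I can use that $\interval{\bottom{A}}{\mu}$ corresponds to submodules via $\epsilon^o_\mu$. Then the relations $\rho(\alpha,\beta)$ restricted to $o/\mu$ become $\rho(\alpha\wedge\mu,\beta\wedge\mu)$ computed with $+_o$. They are trivial (all of $\beta$-related diagonal-ish tuples) when $\alpha\wedge\mu=\beta\wedge\mu$. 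Otherwise $\langle\alpha\wedge\mu,\beta\wedge\mu\rangle$ is a prime quotient in $\interval{\bottom{A}}{\mu}$, and it is abelian since $\mu$ is abelian.

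Conversely, every prime quotient $A\prec B$ of submodules arises from a prime quotient $\alpha\prec\beta$ below $\mu$, which has type $\tp{2}$. In abelian classes, $d(x,y,z)=x-_oy+_oz$, so $\rho$ on $o/\mu$ matches the module $\rho(A,B)$ of Lemma~\ref{lemma:on_notions_of_type_preserving_and-commutatora_and_rho_for_modules} after transfer by $\epsilon^o_\mu$. Preservation of restricted relations by $f$ is exactly preservation on $o/\mu$, since $f$ has domain and image inside $o/\mu$.

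\textbf{Step 3: conclusion.} Combining Steps~1 and~2, partial type-preserving maps into $o/\mu$ correspond to partial type-preserving maps of the module, and polynomial interpolants correspond to polynomial interpolants. This gives both implications.

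\textbf{Main obstacle.} I expect the main obstacle to be the congruence correspondence in Step~2. Specifically, I must show that every submodule of $\ab{D}^{(n\times h)}$ is the image of $(\theta\wedge\mu)\restrict{o/\mu}$ for some $\theta\in\interval{\bottom{A}}{\mu}$, and that distinct such $\theta$ with $\card{o/\mu}>1$ give distinct restrictions. My first attempt would be to use Lemma~\ref{lemma:on_cardinality_of-classes_intervals_simple_com_mod_latt} for injectivity, and the submodule lattice of $\ab{D}^{(n\times h)}$ having height $h$ for surjectivity, via a lattice embedding $\theta\mapsto$ its $o$-class.
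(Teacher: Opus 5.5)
Your architecture matches the paper's. You use the polynomial equivalence of $\ab{A}\restrict{o/\mu}$ with the $\ab{R}_o$-module and transfer along $(\epsilon_{\ab{R}_o},\epsilon_\mu^o)$ via Lemma~\ref{lemma:basic_fact}. You then identify $\interval{\bottom{A}}{\mu}$ with the submodule lattice $\ab{L}$ so that the restricted $\rho$-relations match. Your reduction of an arbitrary type $\tp{2}$ quotient $\alpha\prec\beta$ to $\alpha\wedge\mu\prec\beta\wedge\mu$ is correct, and more explicit than the paper.

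The gap is exactly the step you flag: surjectivity of $\Psi\colon\gamma\mapsto o/\gamma$. The argument you propose for it does not work, because an injective $0$-$1$ lattice homomorphism between lattices of equal height need not be onto. Already for $h=2$, $\interval{\bottom{A}}{\mu}$ is a lattice $M_k$ with $k\ge 3$ atoms, while $\ab{L}$ is $M_{\card{D}+1}$. Every $M_k$ with $k\le\card{D}+1$ embeds into $M_{\card{D}+1}$, and nothing about heights excludes $k<\card{D}+1$. Nor can you appeal to ``$\interval{\bottom{A}}{\mu}$ corresponds to submodules via $\epsilon_\mu^o$''. Proposition~\ref{prop:identification_with_matrices} only identifies the module, and this correspondence is what has to be proved.

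The missing ingredient is Mal'cev's description of generated congruences in algebras with a Mal'cev polynomial (\cite[Theorem~4.70(iii)]{MMT:ALVV}, which the paper uses). The paper applies it to principal congruences. For $x\in o/\mu$ it gives $o/\Cg{\{(o,x)\}}=\{p(x)\mid p\in\POL\ari{1}\ab{A},\ p(o)=o\}=R_o\cdot x$, so every cyclic submodule lies in the image of $\Psi$. To reach all submodules, the paper also proves join-preservation, $o/(\gamma\vee\delta)=o/\gamma+_o o/\delta$. This follows from permutability and the identity $(x,y)\in\gamma\Leftrightarrow(d(x,y,z),z)\in\gamma$ for $x,y,z$ in one $\mu$-class. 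Your phrase ``lattice embedding'' presupposes this but never checks it.

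Alternatively, the same lemma makes precise the ``standard fact'' you quote. Let $\psi$ be a congruence of $\ab{A}\restrict{o/\mu}$ and let $(a,b)\in\Cg{\psi}$ with $a,b\in o/\mu$. Write $a=p(\vec{c})$ and $b=p(\vec{d})$ with $(c_i,d_i)\in\psi$. Since all $c_i\in o/\mu$, this $p$ maps $(o/\mu)^m$ into $a/\mu=o/\mu$. Hence its restriction is an operation of $\ab{A}\restrict{o/\mu}$, and $(a,b)\in\psi$.

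This gives surjectivity of $\theta\mapsto\theta\cap(o/\mu)^2$ from $\interval{\bottom{A}}{\mu}$ without any join argument. Your injectivity idea via Lemma~\ref{lemma:on_cardinality_of-classes_intervals_simple_com_mod_latt} then finishes the lattice isomorphism: each cover in $\interval{\bottom{A}}{\mu}$ strictly enlarges the $o$-class, and meets are preserved. This is a valid alternative to the paper's appeal to simplicity of $\Interval{\bottom{A}}{\mu}$.
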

\begin{proof}
First, we show that for all 
$x,y,z\in A$ with $\card{\{x,y,z\}/\mu}=1$,
and for all $\gamma\in \interval{\bottom{A}}{\mu}$ we have 
\begin{equation}\label{eq:ex:Lemma4.5}
(x,y)\in \gamma \Leftrightarrow (d(x,y,z), z)\in \gamma.
\end{equation}
Clearly, if $(x,y)\in \gamma$, then 
$d(x,y,z)\mathrel{\gamma} d(x,x,z)=z$. 
For the opposite implication we assume that 
$(d(x,y,z), z)\in \gamma$, and show $(x,y)\in \gamma$. 
The abelianity of $\mu$ and \cite[Proposition~2.6]{Aic06} yield
\[
\begin{split}
x= &d(x,y,y)=d(d(x,y,y),d(y,y,y),d(z,z,y))=\\
&d(d(x,y,z),d(y,y,z),d(y,y,y))=d(d(x,y,z),z,y)\mathrel{\gamma}\\
&d(z,z,y)=y.
\end{split}\]
This concludes the proof of \eqref{eq:ex:Lemma4.5}.

By Lemma \ref{lemma:piu_meno_fanno_modulo_su-anello_polinomi_ristretto_nella_classe_congruenza},
$\ab{A}\restrict{o/\mu}$ and the $\ab{R}_o$-module
$(o/\mu;+_o)$ are polynomially equivalent.
Hence these two algebras have the same congruences,
and the same TCT-types of prime quotients. Hence 
$\ab{A}\restrict{o/\mu}$ is strictly $k$-polynomially rich if and only if 
the $\ab{R}_o$-module $(o/\mu;+_o)$ is 
is strictly $k$-polynomially rich.
Furthermore, by Proposition \ref{prop:identification_with_matrices}, the 
$\ab{M}_n(\ab{D})$-module $\ab{D}^{(n\times h)}$ is isomorphic 
to the $\ab{R}_o$-module
$(o/\mu;+_o)$. Thus, Lemma \ref{lemma:basic_fact} implies that the 
$\ab{M}_n(\ab{D})$-module $\ab{D}^{(n\times h)}$ is strictly $k$-polynomially rich if and only if
$\ab{A}\restrict{o/\mu}$ is strictly $k$-polynomially rich. 
In the following we will show that 
$\ab{A}\restrict{o/\mu}$ is strictly $k$-polynomially rich if 
and only if $o/\mu$ is strictly $k$-polynomially rich with respect to $\ab{A}$.

Let us denote the lattice of $\ab{R}_o$-submodules of $(o/\mu;+_o)$
by $\ab{L}$.
Lemma~\ref{lemma:piu_emeno_fanno_gruppoo_abeliano_nella_classe_congruenza} and
Lemma~\ref{lemma:piu_meno_fanno_modulo_su-anello_polinomi_ristretto_nella_classe_congruenza}
imply that for each $\gamma\in \interval{\bottom{A}}{\mu}$ the set
$o/\gamma$ is an  $\ab{R}_o$-submodule of $(o/\mu; +_o)$. 
Next, we show that 
the map 
$\Psi\colon \Interval{\bottom{A}}{\mu}\to \ab{L}$
defined by $\gamma \mapsto o/\gamma$
is a lattice 
isomorphism.
We first prove that $\Psi$ is a homomorphism.
Clearly, $\Psi$ is a $\wedge$-homomorphism since for all 
$\gamma, \delta\in \interval{\bottom{A}}{\mu}$ we have 
$o/(\gamma \wedge \delta)=o/\gamma \cap o/\delta$. 
Next, we prove that $\Psi$ is also a $\vee$-homomorphism.
To this end, let $\gamma, \delta\in \interval{\bottom{A}}{\mu}$; we show that 
$o/(\gamma\vee\delta)=o/\gamma +_o o/\delta$. 
For each $x\in o/\mu$ 
\begin{equation}\label{eq:join}
x\in o/(\gamma \vee \delta) \Longleftrightarrow \exists y\in o/\mu \colon x\mathrel{\gamma}
y\mathrel{\delta} o \Longleftrightarrow d(x,y,o)\mathrel{\gamma} o \text{ and } y\in o/\delta;
\end{equation}
where the first equivalence follows from the fact that $\ab{A}$ is congruence permutable and the
second equivalence follows from \eqref{eq:ex:Lemma4.5}. 
By \cite[Lemma~3.1(3)]{Aic18} we have that 
\[(x-_o y)+_o y=d(d(x,y,o),o,y)=x;
\]
and therefore \eqref{eq:join} implies that \[
x\in o/(\gamma \vee \delta)\Longrightarrow  x\in \{u+_o v\mid u\in o/\gamma \text{ and } v\in o/\delta\},
\] 
which shows $o/(\gamma\vee\delta)\subseteq o/\gamma +_o o/\delta$. 
To prove $o/(\gamma\vee\delta)\supseteq o/\gamma +_o o/\delta$, let 
$x\in o/\gamma +_o o/\delta$. Then there exists $u\in o/\gamma$ and $v\in o/\delta$
such that $x=d(u, o,v)$, and therefore, 
\[
x= d(u, o,v)\mathrel{\gamma} d(o,o,v) =v\mathrel{\delta} o. 
\]
Thus, $\Psi$ is also a $\vee$-homomorphism, and hence a lattice homomorphism. 

Since $\Interval{\bottom{A}}{\mu}$ is simple and $\Psi$ is not constant, $\Psi$ is injective. 
Next, we show that $\Psi$ is surjective. To this end, let 
$N$ be an atom of $\ab{L}$. By Proposition~\ref{prop:identification_with_matrices},
there exists $x\in o/\mu$ such that $N=R_o \cdot x$. 
Let $\eta:=\Cg[\ab{A}]{\{(o,x)\}}$. Next, we show that 
\begin{equation}\label{eq:etaequalsN}
o/\eta =N. 
\end{equation}
Let $v\in o/\eta$. By \cite[Theorem~4.70(iii)]{MMT:ALVV} there exists $p\in \POL_1\ab{A}$ with
$p(x)=v$ and $p(o)=o$. Thus, $p\restrict{o/\mu}\in R_o$ and $v=p\restrict{o/\mu}\cdot x\in N$. 
This proves that $o/\eta\subseteq N$. For the other inclusion of \eqref{eq:etaequalsN}, let
$n\in N$. Then there exists $r\in R_o$ with $n=r\cdot x$. Thus, there exists $p\in \POL_1\ab{A}$ such that 
$p\restrict{o/\mu}=r$, $p(o)=o$, and $p(x)=n$. Then by \cite[Theorem~4.70(iii)]{MMT:ALVV},
$(n, x)\in \eta$. This concludes the proof of~\eqref{eq:etaequalsN}. 

Since each element of $\ab{L}$ is join of atoms, that fact that $\Psi$ is a homomorphism 
and~\eqref{eq:etaequalsN} imply
that $\ab{L}$ is contained in the image of $\Psi$.  
This concludes the proof that $\Psi$ is a lattice isomorphism. 
 
The fact that $\Psi$ is a lattice isomorphism implies that the congruences of 
$\ab{A}\restrict{o/\mu}$ are exactly the sets of the form 
$\gamma \cap o/\mu^2$ with $\gamma\in\interval{\bottom{A}}{\mu}$. 
Moreover, it is easy to verify that for each prime quotient 
$\quotienttct{\alpha}{\beta}$ of 
$\interval{\bottom{A}}{\mu}$ we have
\[
\rho(\alpha\cap o/\mu^2, \beta\cap o/\mu^2)=
\rho(\alpha,\beta)\cap o/\mu^4.
\]
Therefore, for each $k\in \N$, for each $T\subseteq o/\mu^k$ and for each $f\colon T \to o/\mu$ 
we have that $f$ preserve the types of $\ab{A}$ if and only if $f$ preserves the types of $\ab{A}\restrict{o/\mu}$. 
This immediately implies that $\ab{A}\restrict{o/\mu}$ is strictly $k$ polynomially rich if 
and only if $o/\mu$ is strictly $k$-polynomially rich with respect to $\ab{A}$.   
\end{proof}

The following theorem relates subtypes of abelian tame quotients to the
modules associated to these quotients by commutator theory. 
\begin{theorem}\label{teor:from_coordinatization_to_TCT}
Let $\ab{A}$ be a finite Mal'cev algebra,
let $\mu$ be an abelian congruence such that
$\interval{\bottom{A}}{\mu}$ is 
a simple complemented modular lattice of height $h$. 
For each $o\in A$ with $\card{o/\mu}>1$
let $\ab{D}_o$ and $n_o$ be the 
field and the natural number built in 
Proposition~\ref{prop:identification_with_matrices},
let $\ab{R}_o$ be defined as in
\eqref{eq:equazione_che_definisce_l'universo_dell'anello_dei_polinomi_ristretti},
and let $\epsilon_\mu^o$, and $\epsilon_{\ab{R}_o}$
be the group and ring isomorphisms built in 
Proposition~\ref{prop:identification_with_matrices}. 
Let $\alpha$ be an atom of $\interval{\bottom{A}}{\mu}$.
Then $\quotienttct{\bottom{A}}{\mu}$ is tame, 
$\Mtct{\ab{A}}{\bottom{A}}{\mu}=\Mtct{\ab{A}}{\bottom{A}}{\alpha}$, 
and
$\card{\ab{D}_o}$ is the subtype of $\quotienttct{\bottom{A}}{\mu}$.
\end{theorem}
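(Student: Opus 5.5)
The first two claims follow at once from Lemma~\ref{lemma:projectivity_andTCT}. Since $\interval{\bottom{A}}{\mu}$ is a simple complemented modular lattice, that lemma gives that $\quotienttct{\bottom{A}}{\mu}$ is tame and that $\Mtct{\ab{A}}{\bottom{A}}{\mu}=\Mtct{\ab{A}}{\bottom{A}}{\alpha}$ for every atom $\alpha$. So the real content is the identification of the subtype. Because $\mu$ is abelian, the type is $\tp{2}$. By definition, the subtype is the number of $\bottom{A}$-classes, i.e.\ elements, in a $\quotienttct{\bottom{A}}{\mu}$-trace.

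Since the minimal sets of $\quotienttct{\bottom{A}}{\mu}$ and of $\quotienttct{\bottom{A}}{\alpha}$ coincide, it suffices to compute the size of a trace for the prime quotient $\quotienttct{\bottom{A}}{\alpha}$. The plan is the following.

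\textbf{Step 1: a trace is a class intersected with a minimal set.} Take $U\in\Mtct{\ab{A}}{\bottom{A}}{\alpha}$. Then $U=e(A)$ for some idempotent unary polynomial $e$ of minimal $\alpha$-range. A trace has the form $N=U\cap o/\alpha$ for some $o\in U$ with $\card{o/\alpha\cap U}>1$. Choose such an $o$; note that then $\card{o/\mu}>1$. Since $e(o)=o$, the restriction $r:=e\restrict{o/\mu}$ belongs to $R_o$. Moreover $r$ is idempotent, and $N=r(o/\alpha)$.

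\textbf{Step 2: pass to matrices.} By Proposition~\ref{prop:identification_with_matrices}, $(o/\alpha;+_o)$ is an $\ab{R}_o$-module. By Lemma~\ref{lemma:restrizione_classe_k_poly_rich_sse:modulo} (via the isomorphism $\Psi$), it corresponds to a simple $\ab{M}_{n_o}(\ab{D}_o)$-module, namely the column space $\ab{D}_o^{n_o}$. Under $\epsilon_{\ab{R}_o}$, $r$ becomes an idempotent matrix $E$. Minimality of $e$ means the following: among all unary polynomials $p$ with $p(\alpha)\not\subseteq\bottom{A}$, the range $e(A)$ is minimal. Restricted to $o/\alpha$, any nonzero matrix $M$ is realised by some $p\in R_o$, and then $p(\alpha)\not\subseteq \bottom{A}$. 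Conversely, $Ep$ has range inside $U$. Comparing cardinalities of images on $o/\alpha$, minimality forces $\operatorname{rank}E=1$. I would argue this by taking a rank-one idempotent $F\le E$, i.e.\ $FE=EF=F$. The polynomial $e\circ p_F\circ e$ has image contained in $U$ and separates $\alpha$. By minimality its image is all of $U$, and it collapses $o/\alpha\cap U$ onto $F(\ab{D}_o^{n_o})$ inside $E(\ab{D}_o^{n_o})$. Hence $E(\ab{D}_o^{n_o})$ has dimension $1$.

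\textbf{Step 3: count.} It follows that $\card{N}=\card{E(\ab{D}_o^{n_o})}=\card{\ab{D}_o}$.

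The main obstacle is Step 2. The minimality of $U$ is a statement about all of $A$, while the matrix picture lives only on $o/\mu$. One must transfer minimality carefully: show that if $\operatorname{rank} E\ge2$, then the polynomial $e\circ p\circ e$ (with $p$ realising a rank-one idempotent) has a strictly smaller range than $U$ and still does not collapse $\alpha$. The subtle point is the range on all of $A$, not just on the trace. I would handle it using the standard TCT fact that $e p e(U)\subseteq U$ and that $U$ minimal forces $epe\restrict{U}$ to be a permutation whenever $epe$ does not collapse $\alpha$. Applied to $o/\alpha\cap U$, this contradicts the rank drop. Finally, $\card{\ab{D}_o}$ is independent of $o$: by Lemma~\ref{lemma:on_cardinality_of-classes_intervals_simple_com_mod_latt} and the fact that traces all have the same size \cite{HM:TSOF}, all nontrivial $o$ give the same $\card{\ab{D}_o}$, namely the subtype.
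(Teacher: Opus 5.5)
Your Steps 1--3 are correct and are essentially the paper's own argument. The paper also writes a $\quotienttct{\bottom{A}}{\alpha}$-minimal set as $U=e_U(A)$, represents $e_U\restrict{o/\mu}$ by a matrix, and observes that $\epsilon_\mu^o(N)$ is a $\ab{D}_o$-subspace. It then rules out dimension $\ge 2$ with a polynomial that maps $N$ into itself and is neither constant nor injective on $N$, using $M\cdot L$ and \cite[Lemma~2.16(7)]{HM:TSOF}. Your $e\circ p_F\circ e$ with a rank-one idempotent $F\le E$, combined with minimality, does the same job.

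\textbf{Main gap: the reduction from $\quotienttct{\bottom{A}}{\mu}$ to $\quotienttct{\bottom{A}}{\alpha}$.} A $\quotienttct{\bottom{A}}{\mu}$-trace is $U\cap b/\mu$, not $U\cap b/\alpha$. Equal minimal sets do not give equal traces. Your own matrix picture gives $\epsilon_\mu^o(U\cap o/\mu)=E\cdot \ab{D}_o^{(n_o\times h)}$, and this set has $\card{\ab{D}_o}^{h}$ elements once $\operatorname{rank}E=1$. For example, take the group $\Z_2^2$ with $\mu=\uno{A}$ and $h=2$. The only minimal set is $A$, so the trace has $4$ elements, but $\card{\ab{D}_o}=2$.

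So under your definition (``subtype $=$ number of elements of a $\quotienttct{\bottom{A}}{\mu}$-trace'') the statement is false for $h\ge 2$. Your sentence ``it suffices to compute the size of a trace for $\quotienttct{\bottom{A}}{\alpha}$'' does not follow from coincidence of minimal sets. For a non-prime tame quotient, the subtype must be read as the order of the field over which its traces are vector spaces. This is how the paper uses it; compare subtype $p$ with module $\Z_p^m$.

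The bridge you need is that this field is also the one over which the smaller $\quotienttct{\bottom{A}}{\alpha}$-trace is one-dimensional. In other words, the subtype of $\quotienttct{\bottom{A}}{\mu}$ equals that of the prime quotient $\quotienttct{\bottom{A}}{\alpha}$. The paper's final step (``$N$ is a subspace of dimension $1$ with respect to both $\ab{D}_o$ and $\GF{q}$'') is exactly this, and it should replace your reduction.

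\textbf{Smaller gap: arbitrary $o$.} The claim concerns every $o$ with $\card{o/\mu}>1$, but you only handle points $o$ that already lie in a trace. Your closing remark does not show that every such $o$ does. Lemma~\ref{lemma:on_cardinality_of-classes_intervals_simple_com_mod_latt} only compares atoms at a fixed $o$, and equality of trace sizes says nothing about points outside traces.

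The paper starts from the given $o$ and picks $a\in o/\alpha\setminus\{o\}$, which exists by Lemma~\ref{lemma:on_cardinality_of-classes_intervals_simple_com_mod_latt}. It then invokes \cite[Exercise~8.8(1)]{HM:TSOF} to obtain a trace through $\{o,a\}$.

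Alternatively, you can argue directly:
\begin{enumerate}
\item Let $a_0\neq o_0$ lie in a trace $U\cap o_0/\alpha$. Then $(o,a)\in\alpha=\mathrm{Cg}(o_0,a_0)$.
\item Congruence permutability gives $p\in\POL\ari{1}\ab{A}$ with $p(o_0)=o$ and $p(a_0)=a$.
\item Since $p\circ e$ separates $\alpha$ and $\card{p(U)}\le\card{U}$, the set $p(U)$ is again minimal.
\item Hence $p(U)\cap o/\alpha$ is a trace through $o$, and your Steps 1--3 apply to it.
\end{enumerate}
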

 \begin{proof}
 Since $\interval{\bottom{A}}{\mu}$ is 
 a simple complemented modular lattice,
 Lemma~\ref{lemma:projectivity_andTCT} implies that
 $\quotienttct{\bottom{A}}{\mu}$ is tame
 and 
 $\Mtct{\ab{A}}{\bottom{A}}{\mu}=\Mtct{\ab{A}}{\bottom{A}}{\alpha}$. 
  Moreover, by 
 Proposition~\ref{prop:identification_with_matrices},
 $\ab{A}\restrict{o/\mu}$ is polynomially 
 equivalent to the $\ab{M}_{n_o}(\ab{D}_o)$-module
 $\ab{D}_o^{(n_o\times h)}$.
 Since $\card{o/\mu}>1$ and $\interval{\bottom{A}}{\mu}$
 is a simple complemented modular lattice, 
 Lemma~\ref{lemma:on_cardinality_of-classes_intervals_simple_com_mod_latt}
 implies that $\card{o/\alpha}>1$. Thus, there exists 
 $a\in A$ such that $(a,o)\in \alpha\setminus\bottom{A}$, 
 and \cite[Exercise~8.8(1)]{HM:TSOF} implies that
 there exist an $\quotienttct{\bottom{A}}{\alpha}$-trace $N$
 such that $\{o,a \}\subseteq N$ and  
 a $\quotienttct{\bottom{A}}{\alpha}$-minimal set $U$ with
 $U \cap a/\alpha = N$.
 Let $e_U$ be the unary idempotent
 polynomial function of $\ab{A}$ constructed in 
 \cite[Theorem~2.8]{HM:TSOF} that satisfies $e_U(A)=U$. 
 Since $o\in N$ we have that $e_U(o)=o$, and therefore,
 $e_U\restrict{o/\mu}\in R_o$. The functions in $R_o$ can be
 represented by matrices, and so
 by Proposition~\ref{prop:identification_with_matrices},
 there exist $M\in \ab{M}_{n_o}(\ab{D}_o)$ 
  such that for all $x\in o/\mu$, we have
 \[ \epsilon_\mu^o(e_U(x))=M\cdot \epsilon_\mu^o(x).\]
 In particular, since $N\subseteq o/\mu \cap U$ and $e_U$ acts as the identity map
 on $U$, we have that $\epsilon_\mu^o(x)=M\cdot \epsilon_\mu^o(x)$
 for each $x\in N$.
 Thus, \[
 N = o/\alpha \cap U = 
    \{x\in o/\alpha\mid  e_U (x) = x \} =
 \{x\in o/\alpha\mid M\cdot \epsilon_\mu^o(x)=\epsilon_\mu^o(x)\}.
 \]
 Let $N' := \epsilon_\mu^o(N)$. 
 Since 
 $N' = \{ v \in \ab{D}_o^{n_o \times h} \mid v \in 
 \epsilon_{\mu}^o (o/\alpha) \text{ and } M \cdot v = v \}$, $N'$ 
 is a vector space over $\ab{D}_o$.
 Next we prove that $\epsilon_\mu^o(N)$ has dimension one.
 Seeking a contradiction, let us assume that
 there exist $a,b\in N$ such that 
 $\epsilon_\mu^o(a)$ and $\epsilon_\mu^o(b)$ are linearly 
 independent. Let $L$ be the matrix associated to
 the linear function that maps $\epsilon_\mu^o(a)$ to
 $\epsilon_\mu^o(a)$ and $\epsilon_\mu^o(b)$ to $\vec{0}$.
 Then the restriction of $(\epsilon_{\ab{R}_o})^{-1}(M\cdot L)$
 to $N$ is neither constant nor injective, in contradiction with
 \cite[Lemma~2.16(7)]{HM:TSOF}.  Let $q$ be the subtype of 
 $\quotienttct{\bottom{A}}{\mu}$. 
 Since $N$ is a subspace of dimension~$1$ with respect to  both
 fields
 $\ab{D}_o$ and $\GF{q}$,
 we have $\card{\ab{D}_o}=q$. 
 \end{proof}
\begin{corollary}\label{cor:projectivity_preserves_ext_types_and_dimension}
Let $\ab{A}$ be a finite Mal'cev algebra 
and let $\mu_1, \mu_2,\nu_1, \nu_2,\alpha, \beta\in\Con\ab{A}$.
We assume that
$\interval{\mu_1}{\mu_2}$ and $\interval{\nu_1}{\nu_2}$
are simple complemented modular lattices,
that $\quotienttct{\mu_1}{\mu_2}$ and $\quotienttct{\nu_1}{\nu_2}$
are abelian, that $\alpha$ is an atom of 
$\interval{\mu_1}{\mu_2}$, that $\beta$ is an atom of
$\interval{\nu_1}{\nu_2}$, and that
$\interval{\mu_1}{\alpha}\leftrightsquigarrow
\interval{\nu_1}{\beta}$. 
Then $\quotienttct{\mu_1}{\mu_2}$ and $\quotienttct{\nu_1}{\nu_2}$
are both type $\tp{2}$ and have the same 
subtype. Let $q$ be the subtype of both 
$\quotienttct{\mu_1}{\mu_2}$ and $\quotienttct{\nu_1}{\nu_2}$.
Then for each $o\in A$ such that
$\card{(o/\mu_1)/(\mu_2/\mu_1)}>1$
we have that 
$((o/\mu_1)/(\alpha/\mu_1); +_{o/\mu_1})$
and 
$((o/\nu_1)/(\beta/\nu_1); +_{o/\nu_1})$
are vector spaces over $\GF{q}$
of the same dimension. 
\end{corollary}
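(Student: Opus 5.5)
The plan is to reduce everything to the quotient algebras $\ab{A}/\mu_1$ and $\ab{A}/\nu_1$, apply Theorem~\ref{teor:from_coordinatization_to_TCT} there, and then use the isomorphism of Lemma~\ref{lemma:isomorphisms_polynomial_restrictions_projective_intervals} along the chain of transpositions.

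\textbf{Types.} By Lemma~\ref{lemma:exAic18Lemma3.4}, abelianness transfers along projective intervals. Since $\quotienttct{\mu_1}{\mu_2}$ is abelian, so is its sub-quotient $\quotienttct{\mu_1}{\alpha}$, and hence so is $\quotienttct{\nu_1}{\beta}$. In a Mal'cev algebra, abelian prime quotients have type $\tp{2}$. Next pass to $\ab{B}:=\ab{A}/\mu_1$. Here $\mu_2/\mu_1$ is an abelian congruence and $\interval{\bottom{B}}{\mu_2/\mu_1}$ is a simple complemented modular lattice. Theorem~\ref{teor:from_coordinatization_to_TCT} then gives that $\quotienttct{\mu_1}{\mu_2}$ is tame and $\Mtct{\ab{A}}{\mu_1}{\mu_2}=\Mtct{\ab{A}}{\mu_1}{\alpha}$. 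Minimal sets and traces of $\ab{A}/\mu_1$ correspond to those of $\ab{A}$ over $\mu_1$, by standard TCT correspondence. So type and subtype of $\quotienttct{\mu_1}{\mu_2}$ coincide with those of $\quotienttct{\mu_1}{\alpha}$, and likewise for the $\nu$ side. By Lemma~\ref{lemma:projectivity_andTCT}, $\Mtct{\ab{A}}{\mu_1}{\alpha}=\Mtct{\ab{A}}{\nu_1}{\beta}$. Equal minimal sets give equal traces, hence equal type and equal subtype $q$.

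\textbf{Dimensions.} Fix $o$ with $\card{(o/\mu_1)/(\mu_2/\mu_1)}>1$. Working in $\ab{A}/\mu_1$, Lemma~\ref{lemma:on_cardinality_of-classes_intervals_simple_com_mod_latt} shows that the $\alpha/\mu_1$-class of $o/\mu_1$ is nontrivial. Theorem~\ref{teor:from_coordinatization_to_TCT} then says the field $\ab{D}_o$ of Proposition~\ref{prop:identification_with_matrices} has $q$ elements, so $V_1:=((o/\mu_1)/(\alpha/\mu_1);+_{o/\mu_1})$ is a $\GF{q}$-space with $\card{V_1}=q^{\dim V_1}$.

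It therefore suffices to show $\card{V_1}=\card{V_2}$, where $V_2$ is the analogous object for $\beta$. Write the projectivity as a chain of transpositions $\interval{\eta_i}{\theta_i}$. For each transposition $\interval{\eta_i}{\theta_i}\nearrow\interval{\eta_{i+1}}{\theta_{i+1}}$, Lemma~\ref{lemma:isomorphisms_polynomial_restrictions_projective_intervals} gives a bijection between $(o/\eta_i)/(\theta_i/\eta_i)$ and $(o/\eta_{i+1})/(\theta_{i+1}/\eta_{i+1})$. Taking $p$ to be the Mal'cev polynomial $d$, which fixes $o$, this bijection is compatible with $+_o$. Downward transpositions are handled by inverting. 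Composing along the chain yields a group isomorphism $V_1\cong V_2$, so the two spaces have equal cardinality. Since both are $\GF{q}$-spaces of the same size, their dimensions agree.

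\textbf{Main obstacle.} The delicate step is that the intermediate intervals in the chain are not of the form $\interval{\mu_1}{\cdot}$. The isomorphism of Lemma~\ref{lemma:isomorphisms_polynomial_restrictions_projective_intervals} nevertheless applies to arbitrary transposed intervals with the fixed base point $o$, so nontriviality and cardinality pass along the chain at each step. I also need $\card{(o/\nu_1)/(\nu_2/\nu_1)}>1$ for the $\nu$-side coordinatization; this follows from $\card{V_2}=\card{V_1}>1$.
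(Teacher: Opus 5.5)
Your type argument and your cardinality count for the dimensions are sound and follow the paper's route: the bijection of Lemma~\ref{lemma:isomorphisms_polynomial_restrictions_projective_intervals}, then Theorem~\ref{teor:from_coordinatization_to_TCT} and Proposition~\ref{prop:identification_with_matrices}. You are in fact more careful than the paper in composing the bijections along the chain of transpositions.

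\textbf{The gap: equality of subtypes.} The step ``equal minimal sets give equal traces'' is false. For $U\in\Mtct{A}{\mu_1}{\alpha}=\Mtct{A}{\nu_1}{\beta}$, the $\quotienttct{\mu_1}{\alpha}$-traces are $\alpha$-blocks of $U$, counted modulo $\mu_1$. The $\quotienttct{\nu_1}{\beta}$-traces are $\beta$-blocks of $U$, counted modulo $\nu_1$. For example, in $(\Z_2^2,+)$ let $\theta_1,\theta_2$ be the congruences whose classes are the cosets of $\Z_2\times\{0\}$ and of $\{0\}\times\Z_2$. Then $\interval{\bottom{A}}{\theta_1}\nearrow\interval{\theta_2}{\uno{A}}$, and both prime quotients have $A$ as their only minimal set. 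Yet the traces are a $2$-element coset and all of $A$, respectively. So knowing that the minimal sets agree gives you no way to compare the numbers of bottom-classes in the traces. Your final step (``both are $\GF{q}$-spaces of the same size'') relies on this subtype equality, so the dimension claim inherits the gap.

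\textbf{The repair uses a tool you already invoke.} The map $h_o$ of Lemma~\ref{lemma:isomorphisms_polynomial_restrictions_projective_intervals} does not depend on $p$. Hence your composed bijection $V_1\to V_2$ intertwines the induced action of every polynomial of $\ab{A}$ fixing $o$, not only $+_o$. Moreover, every unary polynomial of $\ab{A}/\mu_1$ fixing $o/\mu_1$ is induced by such a polynomial: replace $p$ by $x\mapsto d(p(x),p(o),o)$. The same holds for $\nu_1$. Therefore conjugation by this bijection carries the $\ab{R}_o$-endomorphism field of $V_1$ onto that of $V_2$. Now apply Theorem~\ref{teor:from_coordinatization_to_TCT} in $\ab{A}/\mu_1$ and in $\ab{A}/\nu_1$; this is legitimate on the $\nu$-side because $\card{V_2}=\card{V_1}>1$, as you note. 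It shows that both subtypes equal the size of this field. This is exactly how the paper obtains the subtype equality; after that, your cardinality argument finishes the dimension claim as in the paper.
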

\begin{proof}
Let $o\in A$ such that
$\card{(o/\mu_1)/(\mu_2/\mu_1)}>1$.
By Lemma~\ref{lemma:on_cardinality_of-classes_intervals_simple_com_mod_latt},
we have that
$\card{(o/\mu_1)/(\alpha/\mu_1)}=\card{(o/\nu_1)/(\beta/\nu_1)}>1$.
By Lemma~\ref{lemma:exAic18Lemma3.4}, 
$\quotienttct{\mu_1}{\mu_2}$ and $\quotienttct{\nu_1}{\nu_2}$
are both type $\tp{2}$. 
By Lemma~\ref{lemma:isomorphisms_polynomial_restrictions_projective_intervals}
$((o/\mu_1)/(\alpha/\mu_1); \{+_{o/\mu_1}\}\cup R_{o/\mu_1})\cong
((o/\nu_1)/(\beta/\nu_1); \{+_{o/\nu_1}\}\cup R_{o/\nu_1})$. 
Thus, Theorem~\ref{teor:from_coordinatization_to_TCT}
implies that $\quotienttct{\mu_1}{\mu_2}$ and $\quotienttct{\nu_1}{\nu_2}$
have the same subtype.
Proposition~\ref{prop:identification_with_matrices} 
implies that 
$((o/\mu_1)/(\alpha/\mu_1); +_{o/\mu_1})$
and 
$((o/\nu_1)/(\beta/\nu_1); +_{o/\nu_1})$
are vector spaces over $\GF{q}$, and 
since $\card{(o/\mu_1)/(\alpha/\mu_1)}=\card{(o/\nu_1)/(\beta/\nu_1)}$,
they have the same dimension.
\end{proof}
\section{Sufficient conditions for (SC1)}
In this section we prove that 
certain algebras satisfy property (SC1).
The following generalization of \cite[Proposition~3.3]{AI:PIIE}
from finite expanded groups to finite Mal'cev algebras
first appeared in \cite{Ros24}. 
\begin{lemma}[{\cite[Lemma~9.3]{Ros24}}]\label{lemma:prop3.3AicIdz04implicazione_da_due_a_uno}
Let $\ab{A}$ be a finite Mal'cev algebra.
Then the following are equivalent:
\begin{enumerate}
\item $\ab{A}$ does not satisfy (SC1);\label{item:A_doesn_satisfy_SC1}
\item there exist two join irreducible congruences $\alpha, \beta$ \label{item:the_failure_exists}
such that 
\begin{equation}\label{eq:failure_of_SC1}
[\alpha,\beta]\leq \alpha^-\prec \alpha\leq\beta^-\prec\beta.
\end{equation}
\end{enumerate}
\end{lemma}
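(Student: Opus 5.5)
We plan to prove the two implications separately. Throughout we use that $\Con\ab{A}$ is a finite modular lattice, that a congruence $\mu$ is strictly meet irreducible exactly when it has a unique upper cover $\mu^+$, and that a join irreducible $\alpha$ has a unique lower cover $\alpha^-$. We also use monotonicity of the commutator, the fact that $[\alpha,\beta]\le\alpha\wedge\beta$, and Lemma~\ref{lemma:exAic18Lemma3.4}\eqref{item:centralizerinprojectiveintervals}, which says that centralizers are invariant under projectivity of intervals.

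\emph{(1)$\Rightarrow$(2).} Let $\mu$ be strictly meet irreducible with $\gamma:=(\mu:\mu^+)\not\le\mu^+$.
\begin{enumerate}
\item Choose $\beta$ minimal among the congruences $\le\gamma$ with $\beta\not\le\mu^+$. By minimality $\beta$ is join irreducible and $\beta^-\le\mu^+$. Since $\beta\wedge\mu^+<\beta$, this gives $\beta^-=\beta\wedge\mu^+$.
\item We claim $\beta^-\not\le\mu$. Because $\beta\not\le\mu$, we have $\beta\vee\mu\ge\mu^+$. Modularity then gives $\mu^+=\mu\vee(\beta\wedge\mu^+)$. If $\beta\wedge\mu^+\le\mu$ held, this would force $\mu^+=\mu$, which is absurd.
\item Choose $\alpha$ minimal among the congruences $\le\beta^-$ with $\alpha\not\le\mu$. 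Then $\alpha$ is join irreducible, $\alpha^-\le\mu$, and $\alpha\le\beta^-\le\mu^+$. Hence $\alpha\wedge\mu=\alpha^-$.
\item Finally $[\alpha,\beta]\le[\mu^+,\gamma]\le\mu$ and $[\alpha,\beta]\le\alpha$. So $[\alpha,\beta]\le\alpha\wedge\mu=\alpha^-$, which is \eqref{eq:failure_of_SC1}.
\end{enumerate}

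\emph{(2)$\Rightarrow$(1).} Let $\alpha,\beta$ satisfy \eqref{eq:failure_of_SC1}. The key step, and the one we expect to be the main obstacle, is to find a strictly meet irreducible $\mu$ with three properties: $\alpha^-\le\mu$, $\alpha\not\le\mu$, and $\beta\not\le\mu\vee\alpha$.

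Suppose such a $\mu$ exists and is chosen maximal with $\alpha^-\le\mu$ and $\alpha\not\le\mu$. Then $\mu^+=\mu\vee\alpha$ and $\mu\wedge\alpha=\alpha^-$, so $\interval{\alpha^-}{\alpha}\nearrow\interval{\mu}{\mu^+}$. By Lemma~\ref{lemma:exAic18Lemma3.4}, $(\mu:\mu^+)=(\alpha^-:\alpha)$. This centralizer contains $\beta$ because $[\beta,\alpha]\le\alpha^-$. Since $\beta\not\le\mu^+$, we get $(\mu:\mu^+)\not\le\mu^+$, and (SC1) fails.

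To produce $\mu$, we would first try to work in the interval $\interval{\alpha^-}{\beta}$. The approach is to take $\nu$ maximal in $\Con\ab{A}$ with $\nu\wedge\beta=\alpha^-$ (we know $\nu\ge\alpha^-$). Then $\nu\vee\alpha\not\ge\beta$: otherwise modularity would give $\beta=\alpha\vee(\nu\wedge\beta)=\alpha$. Next we would enlarge $\nu$ to a $\mu$ that is maximal among congruences $\ge\nu$ with $\alpha\not\le\mu$, and check that $\mu\vee\alpha$ still does not contain $\beta$. We plan to check this using $\alpha\le\beta^-\prec\beta$: since $\beta$ is join irreducible, $\beta\le\mu\vee\alpha$ would put $\beta$ in $\interval{\mu}{\mu\vee\alpha}$, which is a prime quotient. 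Modularity should then yield $\beta\wedge\mu\ge\beta^-\ge\alpha$, contradicting $\alpha\not\le\mu$.

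If this last step fails, we would instead transpose $\interval{\beta^-}{\beta}$ upward to a strictly meet irreducible quotient. We would then compare the centralizers via the chain $\alpha\le\beta^-$ and monotonicity of $(\cdot:\cdot)$.
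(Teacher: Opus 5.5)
Your proof is correct. The paper gives no argument of its own for this lemma; it imports it from \cite[Lemma~9.3]{Ros24}, a Mal'cev-algebra version of \cite[Proposition~3.3]{AI:PIIE}, so there is no in-text proof to compare with. What you give is a complete, self-contained lattice-theoretic proof using only modularity, monotonicity and symmetry of the commutator, and Lemma~\ref{lemma:exAic18Lemma3.4}\eqref{item:centralizerinprojectiveintervals}.

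You use the symmetry $[\alpha,\beta]=[\beta,\alpha]$ twice without listing it among your tools:
\begin{itemize}
\item in step 4 of (1)$\Rightarrow$(2), to pass from $[\gamma,\mu^+]\le\mu$ to $[\mu^+,\gamma]\le\mu$;
\item in (2)$\Rightarrow$(1), to get $\beta\le(\alpha^-:\alpha)$ from $[\alpha,\beta]\le\alpha^-$.
\end{itemize}
Symmetry holds in Mal'cev algebras, and the paper quotes \cite[Lemma~2.5]{Aic06} for it. The direction (1)$\Rightarrow$(2) is otherwise fine as written.

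In (2)$\Rightarrow$(1), the step you flag as the main obstacle is automatic. You can drop both the detour through $\nu$ and the fallback plan.
\begin{itemize}
\item Take any $\mu$ maximal among the congruences that contain $\alpha^-$ but not $\alpha$. Then $\mu^+=\mu\vee\alpha$ and $\mu\wedge\alpha=\alpha^-$.
\item Suppose $\beta\le\mu^+$. It is $\beta\vee\mu$, not $\beta$ as you wrote, that lies in $\interval{\mu}{\mu^+}$.
\item Since $\beta\ge\alpha$ and $\alpha\not\le\mu$, we get $\beta\vee\mu=\mu^+$. So $\interval{\beta\wedge\mu}{\beta}\nearrow\interval{\mu}{\mu^+}$, and this interval is prime.
\item Because $\beta$ is join irreducible, this gives $\beta\wedge\mu=\beta^-\ge\alpha$, contradicting $\alpha\not\le\mu$.
\end{itemize}
Hence $\beta\not\le\mu^+$ for every such $\mu$, and your centralizer comparison $(\mu:\mu^+)=(\alpha^-:\alpha)\ge\beta$ finishes the proof.
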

\begin{definition}
Let $\ab{A}$ be a finite Mal'cev algebra.
A pair $(\alpha, \beta)$ of join irreducible 
elements of $\Con\ab{A}$ is called 
\emph{a failure of (SC1)} if $\alpha$ and $\beta$
satisfy \eqref{eq:failure_of_SC1}.
\end{definition}
\begin{proposition}\label{prop:condition_on_sc1_failurs}
Let $\ab{A}$ be a finite strictly $1$-polynomially rich
Mal'cev algebra and let $(\alpha, \beta)$ be 
failure of (SC1). Then
for all $(a_1, a_2, b)\in A^3$ we have that 
\begin{equation} \label{eq:fcons}
\Cg{\{(a_1, a_2)\}}=\alpha\Rightarrow 
\Cg{\{(b, a_1)\}}\neq \beta.
\end{equation}
\end{proposition}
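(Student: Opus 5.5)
Argue by contradiction. Suppose there are $a_1,a_2,b\in A$ with $\Cg{\{(a_1,a_2)\}}=\alpha$ and $\Cg{\{(b,a_1)\}}=\beta$. The aim is to build a unary partial function on the three points $T:=\{a_1,a_2,b\}$ that is type-preserving but cannot be interpolated by any unary polynomial. The natural candidate is
\[
f(a_1)=a_1,\qquad f(a_2)=a_2,\qquad f(b)=a_1.
\]
Note that $b\notin a_1/\alpha$: otherwise $\beta=\Cg{\{(b,a_1)\}}\le\alpha<\beta$. Hence $a_1,a_2,b$ are pairwise distinct, and $f$ is well defined.

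\textbf{Step 1: $f$ is congruence preserving.} Let $\gamma\in\Con\ab{A}$.
\begin{itemize}
\item If $(a_1,a_2)\in\gamma$, the images $(a_1,a_2)$ are in $\gamma$ as well.
\item If $(b,a_1)\in\gamma$, then $\beta\le\gamma$. The images are $(a_1,a_1)$, which is trivially in $\gamma$.
\item If $(b,a_2)\in\gamma$, then $\gamma\ge\Cg{\{(b,a_2)\}}$. We need $(a_1,a_2)\in\gamma$, i.e.\ $\alpha\le\gamma$. Since $(a_1,a_2)\in\alpha\le\beta$, we have $\Cg{\{(b,a_2)\}}\vee\alpha=\beta$. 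Join irreducibility of $\beta$ then forces $\Cg{\{(b,a_2)\}}=\beta\ge\alpha$.
\end{itemize}

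\textbf{Step 2: $f$ preserves every $\rho(\eta,\theta)$ with $\eta\prec\theta$ abelian.} Take a 4-tuple $(x_1,\dots,x_4)\in T^4\cap\rho(\eta,\theta)$; all its entries lie in one $\theta$-class.
\begin{itemize}
\item If $b$ does not occur in the tuple, $f$ acts as the identity on it and there is nothing to check.
\item If $b$ occurs together with some $a_i$, then $\beta\le\theta$ (using Step~1's reasoning for $(b,a_2)$). The image tuple has entries in $\{a_1,a_2\}$, and we need
\[
d(f(x_1),f(x_2),f(x_3))\mathrel{\eta}f(x_4).
\]
Split into two cases:
\begin{itemize}
\item If $\alpha\le\eta$, all image entries are $\eta$-related, and the condition holds trivially.
\item If $\alpha\not\le\eta$, then $\theta\ge\beta$ and $[\alpha,\beta]\le\alpha^-$, with $[\theta,\theta]\le\eta$. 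Work in the abelian group $(a_1/\theta)/\eta$. Here one would like to show that the $\rho$-condition on $T$, namely $x_1-x_2+x_3-x_4\in\eta$, forces the $b$-occurrences to cancel in pairs of opposite sign. Replacing $b$ by $a_1$ then preserves the relation.
\end{itemize}
\end{itemize}
Showing that $b$ cannot appear ``unbalanced'' needs $b\notin\Cg{\{a_1,a_2\}}\vee\eta$ classes in a suitable sense. I expect to get this from $\alpha\le\beta^-$ together with the fact that $\eta\vee\alpha\not\ge\beta$. The alternative, $\eta\vee\alpha\ge\beta$, has to be excluded: it would force $\theta=\eta\vee\beta$ and make $\interval{\eta}{\theta}$ transpose into $\interval{\beta^-}{\beta}$.

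\textbf{Step 3: $f$ is not polynomial.} Suppose a unary $p\in\POL\ari{1}\ab{A}$ interpolates $f$. Then $p(a_1)=a_1$, $p(a_2)=a_2$ and $p(b)=a_1$. Consider the binary polynomial
\[
q(x,y):=d(p(x),x,y).
\]
The term condition for $C(\beta,\alpha;\alpha^-)$ (centralizing, using $[\alpha,\beta]\le\alpha^-$) is applied to $q(b,a_1)=a_1$ and $q(a_1,a_1)=a_1$, compared with $q(b,a_2)$ and $q(a_1,a_2)=a_2$. It yields $q(b,a_2)\mathrel{\alpha^-}a_2$ from the equality in the first row. On the other hand, $q(b,a_2)=d(a_1,b,a_2)$. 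Hence the map $y\mapsto d(p(b),b,y)$, which lies in the translation group acting on $a_1/\alpha$ modulo $\alpha^-$, would have to carry $a_2\mapsto a_2$ while its agreement with $d(a_1,a_1,y)=y$ comes from $b$, not from the endpoints. The intended contradiction is $(a_1,a_2)\in\alpha^-$, i.e.\ $\alpha=\Cg{\{(a_1,a_2)\}}\le\alpha^-$, which is absurd.

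I expect this commutator-based bookkeeping, together with the $\rho$-preservation case analysis in Step~2, to be the main obstacle. If the naive $f$ fails Step~2, I would modify it to $f(b)=d(a_1,a_2,\cdot)$-type values. The other option is to use \cite[Proposition~3.3]{AI:PIIE}-style arguments adapted via Lemma~\ref{lemma:prop3.3AicIdz04implicazione_da_due_a_uno}.
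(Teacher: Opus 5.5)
\textbf{Verdict.} There is a genuine gap: Step~3 fails in general. Your Step~1 is correct.

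\textbf{Counterexample.} Your three-point function $f$ (identity on $a_1,a_2$, and $b\mapsto a_1$) can itself be a polynomial function. Then no contradiction can come from an interpolating $p$.

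Let $R$ be the ring of upper triangular $2\times 2$ matrices over $\GF{2}$, acting on $\ab{V}=\GF{2}^2$ (column vectors).
\begin{enumerate}
\item The submodules of $\ab{V}$ are $0<\langle e_1\rangle<V$, and all commutators vanish.
\item Let $\alpha$ be the congruence of $\langle e_1\rangle$ and $\beta=\uno{V}$. Then $(\alpha,\beta)$ is a failure of (SC1).
\item With $a_1=0$, $a_2=e_1$, $b=e_2$ we have $\Cg[\ab{V}]{\{(a_1,a_2)\}}=\alpha$ and $\Cg[\ab{V}]{\{(b,a_1)\}}=\beta$, since $Re_2=V$.
\item The unary polynomial $x\mapsto ex$ with $e=\left(\begin{smallmatrix}1&0\\0&0\end{smallmatrix}\right)\in R$ maps $0\mapsto 0$, $e_1\mapsto e_1$, $e_2\mapsto 0$. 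So it interpolates your $f$.
\end{enumerate}

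\textbf{Why it fails.} The inequality $[\alpha,\beta]\le\alpha^-$ constrains a polynomial only through relations such as $p(d(x,y,z))\mathrel{[\alpha,\beta]}d(p(x),p(y),p(z))$ for $x\mathrel{\alpha}y\mathrel{\beta}z$. Your domain $\{a_1,a_2,b\}$ contains no configuration $x,y,z,d(x,y,z)$ to which this applies. There is also a computational slip in Step~3: $q(b,a_1)=d(a_1,b,a_1)$, not $a_1$.

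\textbf{The missing idea.} Enlarge the domain by the point $c:=d(a_2,a_1,b)$ and set $f(a_1)=f(a_2)=f(b)=a_1$ and $f(c)=a_2$. Any interpolating $p$ then gives, by \cite[Proposition~2.6]{Aic06},
\[a_1=d(p(a_2),p(a_1),p(b))\mathrel{[\alpha,\beta]}p(c)=a_2.\]
Hence $\alpha\le[\alpha,\beta]\le\alpha^-$, which is absurd.

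The real work then becomes showing that this four-point $f$ is type-preserving, which is what the paper does.
\begin{enumerate}
\item Join irreducibility of $\alpha$ and $\beta$ gives $\Cg{\{(b,c)\}}=\alpha$ and $\Cg{\{(a_1,c)\}}=\Cg{\{(a_2,c)\}}=\beta$. This yields congruence preservation.
\item For a type-$\tp{2}$ quotient $\quotienttct{\gamma}{\delta}$, split into three cases: $\gamma\ge\alpha$; $\delta\not\ge\alpha$; and $\interval{\alpha^-}{\alpha}\nearrow\interval{\gamma}{\delta}$.
\item In the last case $\gamma\ge\alpha^-\ge[\alpha,\beta]$ and $\delta\not\ge\beta$. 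So the first three coordinates of a $\rho(\gamma,\delta)$-tuple lie in $\{a_1,a_2\}^3$ or $\{b,c\}^3$.
\item The remaining tuples are settled by explicit commutator computations.
\end{enumerate}
Your unfinished Step~2 would have to be redone for this function.
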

\begin{proof}
First, we remark that $[\beta, \alpha]=[\alpha, \beta]$ by
\cite[Lemma~2.5]{Aic06}. We will use this fact about commutators
implicitly throughout the proof.
Seeking a contradiction, let us suppose that
$(\alpha, \beta)$ is a failure of (SC1) and
there exist $(o,a, b)\in A^3$ such that
$\Cg{\{(o,a)\}}=\alpha$ and $\beta=\Cg{\{(o,b)\}}$. 
Then 
\cite[Proposition~2.6]{Aic06} implies that
\[
\begin{split}
o&=d(o,b,b)\equiv_{\Cg{\{(b, d(a,o,b)\}}} d(d(o,b,b),d(o,o,b),d(a,o,b))\\
&\equiv_{[\beta, \alpha]}d(d(o,o,a),d(b,o,o),d(b,b,b))\\
&=d(a,b,b)=a.
\end{split}
\]
Thus
\(
(0,a) \in \Cg{\{b,d(a,o,b)\}} \vee [\beta, \alpha].
\)
Moreover, we have $b=d(o,o,b)\mathrel{\alpha}d(a,o,b)$.
Therefore, we have 
\[
\alpha=\Cg{\{(o,a)\}}\leq \Cg{\{b, d(a,o,b)\}}\vee [\beta, \alpha]\leq \alpha.
\]
Since $\alpha$ is join irreducible we infer that 
\begin{equation}\label{eq:alpha=congruenzagenerata_d_b}
\alpha=\Cg{\{(b, d(a,o,b)\}}. 
\end{equation}
Moreover, we have 
\[
b=d(o,o,b)\mathrel{\alpha}d(a,o,b)\equiv_{\Cg{\{(o,d(a,o,b))\}}}o.
\]
Thus, since $o=d(a,a,o)\mathrel{\beta} d(a,o,b)$ we have 
\[
\beta=\Cg{\{(o,b)\}}\leq \alpha\vee \Cg{\{(o,d(a,o,b))\}}\leq \beta.
\]
Since $\beta$ is join irreducible we have  
\begin{equation}\label{eq:beta=congruenzagenerata_d_o}
\beta=\Cg{\{(o, d(a,o,b)\}}. 
\end{equation}
Moreover, we have 
\[
o\mathrel{\alpha} a\equiv_{\Cg{\{(a, d(a,o,b))\}}}d(a,o,b)\mathrel{\alpha} d(a,a,b)=b.
\]
Thus, since $a=d(a,o,o)\mathrel{\beta} d(a,o,b)$ we have 
\[
\beta=\Cg{\{(o,b)\}}\leq \alpha\vee \Cg{\{(a,d(a,o,b))\}}\leq \beta.
\]
Since $\beta$ is join irreducible we have  
\begin{equation}\label{eq:beta=congruenzagenerata_d_a}
\beta=\Cg{\{(a, d(a,o,b)\}}. 
\end{equation}
Putting together 
\eqref{eq:alpha=congruenzagenerata_d_b},
\eqref{eq:beta=congruenzagenerata_d_o},
\eqref{eq:beta=congruenzagenerata_d_a},
and \eqref{eq:failure_of_SC1} we infer that
\[
(o,a)\in\Cg{\{(a, d(a,o,b)\}}\cap \Cg{\{(o, d(a,o,b)\}}\cap \Cg{\{(b, d(a,o,b)\}}.
\]
Thus, the partial function $f\colon \{a,o,b,d(a,o,b)\}\to \{a,o\}$ defined by 
$\{a,o,b\}\mapsto o$ and $d(a,o,b)\mapsto a$ is 
congruence preserving. 

Next, we show that $f$ is type-preserving. 
We first prove the following identities:
\begin{align}
\beta&=\Cg{\{(d(o,a,o),d(a,o,b))\}} \label{eq:gen_beta_doao_daob}\\
\beta&=\Cg{\{(d(a,o,a), d(a,o,b))\}} \label{eq:gen_beta_daoa_daob}\\
\beta&=\Cg{\{(d(b, d(a,o,b),b),a)\}} \label{eq:gen_beta_bdaobb_a}\\
\beta&=\Cg{\{(d(b, d(a,o,b),b),o)\}} \label{eq:gen_beta_bdaobb_o}\\
\beta&=\Cg{\{(d(d(a,o,b),b,d(a,o,b)),a)\}} \label{eq:gen_beta_daobbdaob_a}\\
\beta&=\Cg{\{(d(d(a,o,b),b,d(a,o,b)),o)\}} \label{eq:gen_beta_daobbdaob_o}
\end{align} 
We have that
\[
o=d(o,o,o)\mathrel{\alpha}d(o,a,o)\equiv_{\Cg{\{(d(o,a,o),d(a,o,b))\}}} d(a,o,b)\mathrel{\alpha} d(o,o,b)=b.
\]
Thus, $\beta\leq \Cg{\{(d(o,a,o),d(a,o,b))\}} \vee \alpha\leq \beta$. 
Thus since $\beta$ is join irreducible and $\alpha < \beta$ we infer 
$\beta=\Cg{\{(d(o,a,o),d(a,o,b))\}}$.
This proves \eqref{eq:gen_beta_doao_daob}.
Similarly, we have that
\[
o=d(o,o,o)\mathrel{\alpha}d(a,o,a)\equiv_{\Cg{\{(d(a,o,a),d(a,o,b))\}}} d(a,o,b)\mathrel{\alpha} d(o,o,b)=b.
\]
Therefore, arguing as above, we infer that
$\beta=\Cg{\{(d(a,o,a), d(a,o,b))\}}$ and \eqref{eq:gen_beta_daoa_daob} follows. 
Moreover, \eqref{eq:alpha=congruenzagenerata_d_b} implies
\[
a\equiv_{\Cg{\{(d(b, d(a,o,b),b),a)\}}} d(b, d(a,o,b),b) \mathrel{\alpha} b.
\]
Thus, arguing as above, we infer that 
$\beta\leq\alpha\vee \Cg{\{(d(b, d(a,o,b),b),a)\}}\leq \beta$, and \eqref{eq:gen_beta_bdaobb_a}
follows from the fact that
$\alpha<\beta$ and $\beta$ is join irreducible.
Arguing as above, from \eqref{eq:alpha=congruenzagenerata_d_b} we infer that
\[
o\equiv_{\Cg{\{(d(b, d(a,o,b),b),o)\}}} d(b, d(a,o,b),b)\mathrel{\alpha} b.
\]
Thus, $\beta\leq \alpha \vee \Cg{\{(d(b, d(a,o,b),b),o)\}}\leq \beta$, and
\eqref{eq:gen_beta_bdaobb_o} follows from the fact that
$\alpha< \beta$ and $\beta$ is join irreducible. 
Moreover, \eqref{eq:alpha=congruenzagenerata_d_b} implies that
\[a\equiv_{ \Cg{\{(d(d(a,o,b),b,d(a,o,b)),a)\}}} d(d(a,o,b),b,d(a,o,b)) \mathrel{\alpha} b.\]
Thus, $\beta\leq \alpha \vee \Cg{\{(d(d(a,o,b),b,d(a,o,b)),a)\}}\leq \beta$, and
\eqref{eq:gen_beta_daobbdaob_a} follows from the fact that
$\alpha< \beta$ and $\beta$ is join irreducible.
Moreover,  \eqref{eq:alpha=congruenzagenerata_d_b} implies that
\[o\equiv_{\Cg{\{(d(d(a,o,b),b,d(a,o,b)),o)\}}}d(d(a,o,b),b,d(a,o,b))\mathrel{\alpha} b \]
Thus, $\beta\leq \alpha \vee \Cg{\{(d(d(a,o,b),b,d(a,o,b)),o)\}}\leq \beta$, and
\eqref{eq:gen_beta_daobbdaob_o} follows from the fact that
$\alpha< \beta$ and $\beta$ is join irreducible.

Let $\gamma\in\Con\ab{A}$ be such that $\gamma\geq \alpha^-$,
then we have that
\begin{align}
(d(b, d(a,o,b),b),b)&\in\gamma \Rightarrow (d(o,a,o),o)\in\gamma \label{eq:bdaobbimpliesdoaoo}\\
(d(b, d(a,o,b),b),d(a,o,b))&\in\gamma \Rightarrow (d(o,a,o),a)\in\gamma\label{eq:bdaobbdaobimpliesdoaoa}\\
(d(d(a,o,b),b, d(a,o,b)),b)&\in\gamma\Rightarrow (d(a,o,a),o)\in \gamma\label{eq:daobbdaobbimpliesdaoao}\\
(d(d(a,o,b),b,d(a,o,b)),d(a,o,b))&\in\gamma\Rightarrow (d(a,o,a),a)\in\gamma \label{eq:daobbdaobdaobimpliesdaoaa}
\end{align}
Note that $[\alpha,\beta]=[\beta, \alpha]\leq \gamma$ by assumption. 
Thus, \eqref{eq:bdaobbimpliesdoaoo} follows from
\[
\begin{split}
d(o,a,o)=&d(d(o,a,a),d(o,o,a),d(o,b,b))\equiv_{[\alpha,\beta]}d(o, d(a,o,b),b)\equiv_\gamma \\
&d(o, d(d(a,a,a),d(a,a,o),d(b,d(a,o,b),b)),b)\equiv_{[\alpha, \beta]}\\
&d(o, d(b, d(a,o,b),d(a,o,b)),b)=d(o,b,b)=o,
\end{split}
\]
where all the commutator identities are a consequence of
\cite[Proposition~2.6]{Aic06}. 
In a similar fashion we can derive \eqref{eq:bdaobbdaobimpliesdoaoa} from
\[
\begin{split}
a=&d(a,b,b)=d(d(a,o,o), d(d(a,o,b),d(a,o,b),b),d(b,b,b))\equiv_{[\beta,\alpha]}\\
& d(d(a, d(a,o,b),b),d(o,d(a,o,b),b),d(o,b,b)=\\
& d(d(a,d(a,o,b),b),d(o,d(a,o,b),b),d(b,b,o))\equiv_{[\alpha,\beta]}\\
& d(d(a,o,b),b,o)\equiv_\gamma d(d(b, d(a,o,b),b),b,o)=\\
& d(d(b, d(a,o,b),b),d(b,b,b),d(o,o,o))\equiv_{[\alpha, \beta]}\\
& d(o,d(d(a,o,b),b,o),o)\equiv_{[\alpha, \beta]} d(o,d(d(a,o,o),d(o,o,o),d(b,b,o)),o)=\\
&d(o,d(a,o,o),o)=d(o,a,o).
\end{split}
\]
Next, we show \eqref{eq:daobbdaobbimpliesdaoao}. To this end we first observe that
\begin{equation}\label{eq:prima_identita_prova_daobbdaobbimpliesdaoao}
\begin{split}
o=&d(o,b,b)=d(d(o,a,a),d(d(a,o,b),d(a,o,b),b),d(b,b,b))\equiv_{[\beta, \alpha]}\\
&d(d(o,d(a,o,b),b),d(a,d(a,o,b),b),a);
\end{split}
\end{equation}
and
\begin{equation}\label{eq:seconda_identita_prova_daobbdaobbimpliesdaoao}
d(a,d(a,o,b),b)=d(d(a,a,a),d(a,o,b),d(o,o,b))\equiv_{[\beta,\alpha]}d(o,a,a)=o.
\end{equation}
Moreover, under the assumption that 
$(d(d(a,o,b),b, d(a,o,b)),b)\in\gamma$ we have 
\begin{equation}\label{eq:terza_identita_prova_daobbdaobbimpliesdaoao}
\begin{split}
&d(o,d(a,o,b),b)\equiv_\gamma \\
& d(d(a,a,o),d(d(a,o,b),d(a,o,b),d(a,o,b)),d(d(a,o,b),b,d(a,o,b)))\equiv_{[\beta,\alpha]}\\
&d(a, d(a, d(a,o,b),b),o)=\\
&d(d(a,b,b),d(a,d(a,o,b),b),d(o,d(a,o,b),d(a,o,b)))\equiv_{[\alpha, \beta]}\\
&d(o,b,d(a,o,b))=d(d(a,a,o),d(a,a,b),d(a,o,b))\equiv_{[\beta, \alpha]}\\
&d(a,o,o)=a
\end{split}
\end{equation}
Then since $[\beta, \alpha]=[\alpha, \beta]\leq \gamma$,
\eqref{eq:daobbdaobbimpliesdaoao} follows from 
the fact that 
\[
o\equiv_{\gamma} d(d(o,d(a,o,b),b),d(a,d(a,o,b),b),a)\equiv_{\gamma} d(a,o,a),
\]
where the first equality follows form \eqref{eq:prima_identita_prova_daobbdaobbimpliesdaoao},
and the second equality follows from
a combination of \eqref{eq:seconda_identita_prova_daobbdaobbimpliesdaoao}
and \eqref{eq:terza_identita_prova_daobbdaobbimpliesdaoao}. 

Since $[\alpha, \beta]=[\beta, \alpha]\leq\gamma$, \eqref{eq:daobbdaobdaobimpliesdaoaa}
follows from 
\[
\begin{split}
a=&d(d(a,a,a), d(a,o,b),d(a,o,b))\equiv_{\Cg{\{(d(d(a,o,b),b,d(a,o,b)),d(a,o,b))\}}} \\
&d(d(a,a,a),d(a,o,b),d(d(a,o,b),d(o,o,b),d(a,o,b)))\equiv_{[\alpha, \beta]}\\
&d(d(a,a,a),d(a,o,b),d(d(a,o,a), o, b))\equiv{[\beta, \alpha]}\\
&d(d(a,a,d(a,o,a)),d(a,o,o),d(a,b,b))=d(a,o,a).
\end{split}
\]

We are now ready to prove that $f$ is type-preserving.
To this end, let $\quotienttct{\gamma}{\delta}$ be a 
prime quotient of TCT-type $\tp{2}$, and let
$x_1,x_2, x_3,x_4\in\dom f$ with
$\vec{x}\in\rho(\gamma,\delta)$.
We show that $(f(x_1),f(x_2), f(x_3), f(x_4))\in\rho(\gamma, \delta)$. 
We split the proof into three cases:

\textbf{Case 1}: \emph{$\gamma\geq \alpha$}: Since $f$ 
is constant modulo $\alpha$, $f$ is constant 
modulo $\gamma$, and therefore we have 
$d(f(x_1),f(x_2),f(x_3))\mathrel{\gamma} f(x_4)$. 

\textbf{Case 2}: \emph{$\delta\ngeq \alpha$}: 
Since each pair of distinct elements from $\dom f$
generates $\alpha$ or $\beta$
(cf.~\eqref{eq:alpha=congruenzagenerata_d_b}, 
\eqref{eq:beta=congruenzagenerata_d_o}, and
\eqref{eq:beta=congruenzagenerata_d_a}),
we infer that $x_1=x_2=x_3$, and therefore,
by the compatibility of $f$ stated after
\eqref{eq:beta=congruenzagenerata_d_a}, we have
\[d(f(x_1), f(x_2), f(x_3))=f(x_1)=f(d(x_1,x_2,x_3))\mathrel{\gamma} f(x_4).\]

\textbf{Case 3}: \emph{$\gamma\ngeq \alpha$ and $\delta\geq\alpha$}:
In this case $\interval{\alpha^-}{\alpha}\nearrow \interval{\gamma}{\delta}$.
In fact $\gamma\leq \gamma\vee \alpha\leq \delta$, and since $\gamma\prec\delta$
and $\gamma\ngeq \alpha$ we have $\gamma\vee\alpha=\delta$.
Moreover, $\gamma\wedge \alpha=\gamma\wedge \alpha^-$ since 
$\alpha$ is join irreducible, and since $\gamma\prec\delta$, we
also have $\gamma\vee \alpha^-=\delta$. 
Thus, by modularity, we infer that $\gamma\wedge \alpha=\alpha^-$. 
This also proves that
$[\alpha, \beta]\leq \gamma$.
Finally, $\delta\ngeq \beta$. In fact if 
$\delta\geq \beta$, then arguing as above
with $\beta$ in place of $\alpha$
(note that $\beta$ is also join irreducible),
we infer that $\gamma\geq \beta^-$. Since $\beta^-\geq \alpha$,
we have a contradiction with the case assumption.

In order to prove that 
$(f(x_1), f(x_2), f(x_3), f(x_4))\in\rho(\gamma, \delta)$
we need to prove that $d(f(x_1),f(x_2), f(x_3))\mathrel{\gamma} f(x_4)$.
Clearly, since $\delta\ngeq \beta$ and $\beta$
is join irreducible,
\eqref{eq:beta=congruenzagenerata_d_o}
and 
\eqref{eq:beta=congruenzagenerata_d_a},
imply that the triplet $(x_1,x_2, x_3)$ belongs to
$\{a,o\}^3\cup \{d(a,o,b),b\}^3$. 
Let us first consider the case that
$(x_1,x_2, x_3)\in \{a,o\}^3$.
Then since $d(x_1, x_2,x_3)\mathrel{\gamma}x_4$, 
we infer from \eqref{eq:beta=congruenzagenerata_d_o},
\eqref{eq:beta=congruenzagenerata_d_a},
\eqref{eq:gen_beta_doao_daob}, and
\eqref{eq:gen_beta_daoa_daob}
that either $\gamma\geq \alpha$, in contradiction with the
case assumption, 
or $f(x_4)=o$, and then the result follows from 
$(f(x_1), f(x_2), f(x_3), f(x_4))=(o,o,o,o)\in\rho(\gamma, \delta)$.
Let us next consider the case that 
$(x_1,x_2, x_3)\in \{d(a,o,b),b\}^3$.
If $d(x_1, x_2, x_3)\in \{b, d(a,o,b)\}$, then 
since $\gamma\ngeq \alpha$ we have that $x_4=d(x_1,x_2,x_3)$
and $ (f(x_1), f(x_2), f(x_3), f(x_4))\in\rho(\gamma, \delta)$.
This leaves us with eight possibilities that we list below.
\begin{description}
\item [$\vec{x}=(b, d(a,o,b),b, a)$] Then $d(b, d(a,o,b),b)\mathrel{\gamma} a$,
and by \eqref{eq:gen_beta_bdaobb_a}, $\gamma\geq \beta$ in contradiction with the
case assumption.
\item [$\vec{x}=(b, d(a,o,b),b, o)$] Then $d(b, d(a,o,b),b)\mathrel{\gamma} o$,
and by \eqref{eq:gen_beta_bdaobb_o}, $\gamma\geq \beta$ in contradiction with the
case assumption.
\item [$\vec{x}=(d(a,o,b),b,d(a,o,b),a) $] Then $d(d(a,o,b),b,d(a,o,b))\mathrel{\gamma} a$,
and by \eqref{eq:gen_beta_daobbdaob_a}, $\gamma\geq \beta$ 
in contradiction with the case assumption.
\item [$\vec{x}=(d(a,o,b),b,d(a,o,b),o)$] Then $d(d(a,o,b),b,d(a,o,b))\mathrel{\gamma} o$,
and by \eqref{eq:gen_beta_daobbdaob_o}, $\gamma\geq \beta$ 
in contradiction with the case assumption.
\item [$\vec{x}=( b, d(a,o,b),b, b)$] Then $(d(b, d(a,o,b),b),b)\in\gamma$,
and by \eqref{eq:bdaobbimpliesdoaoo}, we have
$d(f(x_1), f(x_2), f(x_3))=d(o,a,o)\mathrel{\gamma} o=f(x_4)$.
\item [$\vec{x}=( b, d(a,o,b),b, d(a,o,b))$] Then $d(b, d(a,o,b),b)\mathrel{\gamma} d(a,o,b)$,
and by \eqref{eq:bdaobbdaobimpliesdoaoa}, 
$d(f(x_1), f(x_2), f(x_3))=d(o,a,o)\mathrel{\gamma} a=f(x_4)$.
\item [$\vec{x}=( d(a,o,b),b, d(a,o,b),b)$] Then $d(d(a,o,b),b, d(a,o,b))\mathrel{\gamma} b$,
and by \eqref{eq:daobbdaobbimpliesdaoao}, 
$d(f(x_1), f(x_2), f(x_3))=d(a,o,a)\mathrel{\gamma} o=f(x_4)$.
\item [$\vec{x}=( d(a,o,b),b, d(a,o,b),d(a,o,b))$] 
Then \[(d(d(a,o,b),b, d(a,o,b)),d(a,o,b))\in\gamma,\]
and by \eqref{eq:daobbdaobdaobimpliesdaoaa},
$d(f(x_1), f(x_2), f(x_3))=d(a,o,a)\mathrel{\gamma} a=f(x_4)$.
\end{description}
This concludes the proof that $f$ is type preserving.

Since $\ab{A}$ is strictly 1-polynomially rich, there exists $p\in\POL\ari{1}\ab{A}$ 
such that for all $x\in\{a,o,b,d(a,o,b)\}$ we have $p(x)=f(x)$.
Thus, \cite[Proposition~2.6]{Aic06} yields 
\[
o=d(o,o,o)=d(p(a),p(o),p(b))\mathrel{[\alpha,\beta]} p(d(a,o,b))=a.
\]
Thus, $(a,o)\in[\alpha, \beta]$, and hence 
$\alpha=\Cg{\{(a,o)\}}\leq [\alpha,\beta]\leq\alpha^-$.
This contradicition $\alpha \le \alpha^-$ shows that~\eqref{eq:fcons}
holds.
\end{proof}

\begin{theorem}\label{teor:sci_necessary}
Let $\ab{A}$ be a finite Mal'cev algebra and let $k\geq 2$. 
Then the following statements are true:
\begin{enumerate}
\item If $\ab{A}$ is strictly $k$-polynomially
rich, then $\ab{A}$ satisfies (SC1).\label{item:k_in_sc1_necessary}
\item If $\ab{A}$ is strictly 1-polynomially
rich and congruence regular, then $\ab{A}$ satisfies (SC1). \label{item:regular_in_SC1_nce}
\end{enumerate}
\end{theorem}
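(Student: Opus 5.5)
The plan for both parts is to argue by contradiction. Suppose $\ab{A}$ does not satisfy (SC1). Then Lemma~\ref{lemma:prop3.3AicIdz04implicazione_da_due_a_uno} gives a failure $(\alpha,\beta)$ of (SC1): join irreducible congruences with $[\alpha,\beta]\le\alpha^-\prec\alpha\le\beta^-\prec\beta$. Since $\ab{A}$ is finite, each join irreducible congruence is principal. So we may write $\alpha=\Cg{\{(a_1,a_2)\}}$ and $\beta=\Cg{\{(c,e)\}}$.

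\emph{Part \eqref{item:regular_in_SC1_nce}.} Here we only need to realise the configuration excluded by Proposition~\ref{prop:condition_on_sc1_failurs} with a common base point. Fix $a_1\in A$. By congruence regularity, $a_1/\alpha\neq a_1/\alpha^-$, so we can choose $a_2\in a_1/\alpha\setminus a_1/\alpha^-$. Then $\Cg{\{(a_1,a_2)\}}\le\alpha$ and $\Cg{\{(a_1,a_2)\}}\not\le\alpha^-$. Because $\alpha^-$ is the unique lower cover of the join irreducible $\alpha$, this forces $\Cg{\{(a_1,a_2)\}}=\alpha$. In the same way, regularity gives $b\in a_1/\beta\setminus a_1/\beta^-$, and then $\Cg{\{(b,a_1)\}}=\beta$. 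This contradicts \eqref{eq:fcons} in Proposition~\ref{prop:condition_on_sc1_failurs}, so $\ab{A}$ satisfies (SC1).

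\emph{Part \eqref{item:k_in_sc1_necessary}.} Without regularity the pairs generating $\alpha$ and $\beta$ need not share a point. The idea is to use two variables to glue them together. First note that strict $k$-polynomial richness implies strict $2$-polynomial richness: a binary partial type-preserving function, extended by dummy variables, is again type-preserving. So we may take $k=2$. In $A^2$ we set
\[
o:=(a_1,c),\quad a:=(a_2,c),\quad b:=(a_1,e),\quad d(a,o,b)=(a_2,e).
\]
We define $f$ on these four points by $f(o)=f(a)=f(b)=a_1$ and $f(a_2,e)=a_2$; this mimics the unary function built in the proof of Proposition~\ref{prop:condition_on_sc1_failurs}. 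Congruence preservation is checked pairwise, using $\alpha\le\beta$. For example, the pair $a,(a_2,e)$ differs only by $(c,e)$ in the second coordinate, so it produces $\beta\ni(a_1,a_2)$. The pair $b,(a_2,e)$ differs by $(a_1,a_2)$ in the first coordinate, so it produces $\alpha$.

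Next we show that $f$ preserves $\rho(\gamma,\delta)$ for every prime quotient of type $\tp{2}$. We split into the same three cases as before: $\gamma\ge\alpha$; $\delta\not\ge\alpha$; and $\gamma\not\ge\alpha\le\delta$. In the last case $\interval{\alpha^-}{\alpha}\nearrow\interval{\gamma}{\delta}$, so $[\alpha,\beta]\le\gamma$ and $\delta\not\ge\beta$. The hypothesis now constrains both coordinates separately, which should make the case analysis shorter than in Proposition~\ref{prop:condition_on_sc1_failurs}. This verification is the main obstacle. I would handle it by noting that if $\delta\not\ge\beta$, then the second coordinates of $x_1,x_2,x_3$ are all equal. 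That reduces everything to first-coordinate computations modulo $\gamma\ge[\alpha,\beta]$, done with \cite[Proposition~2.6]{Aic06}.

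Finally, let $p\in\POL\ari{2}\ab{A}$ interpolate $f$. The term condition (\cite[Proposition~2.6]{Aic06}) gives
\[
a_1=d(p(a),p(o),p(b))\mathrel{[\alpha,\beta]}p(d(a,o,b))=a_2 .
\]
Hence $\alpha=\Cg{\{(a_1,a_2)\}}\le[\alpha,\beta]\le\alpha^-$, a contradiction. So $\ab{A}$ satisfies (SC1).
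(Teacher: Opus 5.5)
Your part (2) is the paper's argument: regularity gives generators of $\alpha$ and $\beta$ with a common base point $a_1$, which contradicts \eqref{eq:fcons}.

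For part (1) you take a genuinely different and correct route. The paper cites the total binary function from the proof of \cite[Lemma~12]{IS:PRA}, together with the fact that strict $k$-polynomial richness implies $k$-polynomial richness. You instead build a four-point partial binary function directly from a failure $(\alpha,\beta)$ supplied by Lemma~\ref{lemma:prop3.3AicIdz04implicazione_da_due_a_uno}.

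The step you flag as the main obstacle is actually easy, and needs no commutator computation:
\begin{enumerate}
\item If $\delta\geq\beta$, then either $\gamma\geq\beta$, or $\interval{\gamma\wedge\beta}{\beta}\nearrow\interval{\gamma}{\delta}$ forces $\gamma\wedge\beta=\beta^-$. In both cases $\gamma\geq\alpha$, so the image $\{a_1,a_2\}$ of $f$ lies in one $\gamma$-class.
\item If $\delta\not\geq\beta$, then $(c,e)\notin\delta$, so all four second coordinates coincide (the fourth because $\gamma\le\delta$). On the fibre $\{a_1,a_2\}\times\{c\}$ your $f$ is constant, and on $\{a_1,a_2\}\times\{e\}$ it is the first projection. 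Both trivially preserve $\rho(\gamma,\delta)$.
\end{enumerate}
So $f$ preserves $\rho(\gamma,\delta)$ for every prime quotient, abelian or not. The hypothesis $[\alpha,\beta]\le\alpha^-$ enters only in your final step, via the $n$-ary form of \cite[Proposition~2.6]{Aic06} applied to $(a_2,c)\mathrel{\alpha}(a_1,c)\mathrel{\beta}(a_1,e)$.

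What your approach buys: it is self-contained within the paper, and it avoids translating between the total notion of \cite{IS:PRA} and the relational definition used here. It also shows exactly why regularity is needed only in the unary case. A second variable lets the generators of $\alpha$ and $\beta$ sit in different coordinates, so no shared base point is required. As a result, the elaborate verification in Proposition~\ref{prop:condition_on_sc1_failurs} collapses to the two trivial cases above.

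What the paper's citation buys: brevity, and the stronger conclusion that (SC1) is already necessary for non-strict $2$-polynomial richness, witnessed by total functions.
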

\begin{proof}
For proving \eqref{item:k_in_sc1_necessary},
we observe that the function constructed in 
the proof of \cite[Lemma~12]{IS:PRA} is binary. 
Thus, (SC1) is a necessary condition for 
$k$-polynomially richness. Hence the result follows
from the fact that if $\ab{A}$ is strictly 
$k$-polynomially rich, then $\ab{A}$ is 
$k$-polynomially rich. 

Next, we show \eqref{item:regular_in_SC1_nce}.
Seeking a contradiction let us assume that
there exists a pair of congruences $(\alpha, \beta)$
that is a failure of (SC1).
Let $\alpha=\Cg{\{(a_1, a_2)\}}$. Then
by regularity, there exists $b\in a_1/\beta\setminus a_1/\beta^-$
such that the triple $(a_1, a_2, b)\in A^3$ satisfies 
$\alpha=\Cg{\{(a_1, a_2)\}}$ and $\beta=\Cg{\{(b,a_1)\}}$
in contradiction with Proposition~\ref{prop:condition_on_sc1_failurs}.
Thus, Lemma~\ref{lemma:prop3.3AicIdz04implicazione_da_due_a_uno} implies
that $\ab{A}$ satisfies (SC1). 
\end{proof}
\section{Homogeneous series in Mal'cev algebras with (SC1)} \label{sec:homegeneous_seq}
In this section we develop the notion of
homogeneous congruence (cf.~Definition~\ref{def:homogeneous}) and 
we describe the properties of homogeneous series
(cf.~Definition~\ref{def:hom_series}) in Mal'cev 
algebras with (SC1). 
For an  element $\mu$ of a complete lattice $\ab{L}$ we define 
\[
\begin{split}
\Phi(\mu)&:=\mu\wedge \bigwedge\{\alpha\in L\mid \alpha\prec \mu\};\\
\mu^*&:=\bigvee\{\alpha\in L\mid \alpha\wedge \mu=0\}.
\end{split}
\]
\begin{lemma}[cf. {\cite[Lemma~6.4]{Ros24}}]\label{lemma:whymumeetmustariszero}
Let $\ab{L}$ be a complete modular lattice of finite height,
let $0$ be the bottom element of $\ab{L}$, and let
$\mu$ be a homogeneous element of $\ab{L}$. Then $\mu\wedge\mu^*=0$.  
\end{lemma}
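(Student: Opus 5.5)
We argue by contradiction: assume $\nu:=\mu\wedge\mu^*>0$ and derive a violation of condition~(3) in Definition~\ref{def:homogeneous}. Since $\ab{L}$ has finite height, we can choose an atom $\alpha$ of $\ab{L}$ with $\alpha\le\nu$. Then $0\prec\alpha\le\mu$. We also have $\alpha\le\mu^*$, and the plan is to find a prime quotient above $\mu$ that is projective to $\interval{0}{\alpha}$; this would contradict~(3).

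The key structural step is to understand $\mu^*$. Let $\gamma_1,\dots,\gamma_r$ be the elements with $\gamma_i\wedge\mu=0$. By finite height, $\mu^*$ is a finite join of such elements; replacing a family by a maximal one, we may take $\mu^*=\gamma_1\vee\dots\vee\gamma_r$. Since $\alpha\le\mu^*$ but $\alpha\wedge\gamma_i\le\mu\wedge\gamma_i=0$ for each $i$, there is a least index $j$ with $\alpha\le\gamma_1\vee\dots\vee\gamma_j$. Put $\sigma:=\gamma_1\vee\dots\vee\gamma_{j-1}$ (with $\sigma=0$ if $j=1$). Then $\alpha\wedge\sigma=0$, so $\interval{0}{\alpha}\nearrow\interval{\sigma}{\sigma\vee\alpha}$ is a prime transposition.

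Next, in the interval $\interval{\sigma}{\sigma\vee\gamma_j}$, modularity shows that $\interval{\sigma}{\sigma\vee\alpha}$ transposes down to $\interval{\sigma\wedge\gamma_j'}{\cdot}$ inside $\gamma_j$. More concretely, $\interval{\sigma}{\sigma\vee\alpha}$ is a prime quotient below $\sigma\vee\gamma_j$, and $\interval{\sigma}{\sigma\vee\gamma_j}\searrow\interval{\sigma\wedge\gamma_j}{\gamma_j}$. Hence, by the standard Jordan--Hölder/projectivity argument in modular lattices (\cite[Chapter~2]{MMT:ALVV}), $\interval{0}{\alpha}$ is projective to some prime quotient $\interval{\epsilon}{\epsilon'}$ with $\epsilon'\le\gamma_j$.

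Now transpose this quotient up by $\mu$. Since $\gamma_j\wedge\mu=0$, we have $\epsilon'\wedge\mu=0$, so $\interval{\epsilon}{\epsilon'}\nearrow\interval{\epsilon\vee\mu}{\epsilon'\vee\mu}$. By modularity, $\epsilon'\wedge(\epsilon\vee\mu)=\epsilon\vee(\epsilon'\wedge\mu)=\epsilon$, which confirms this is a genuine prime transposition. We thus obtain $\mu\le\epsilon\vee\mu\prec\epsilon'\vee\mu$ together with
\[
\interval{0}{\alpha}\leftrightsquigarrow\interval{\epsilon\vee\mu}{\epsilon'\vee\mu},
\]
where $0\prec\alpha\le\mu$. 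This is exactly what condition~(3) of homogeneity forbids, so the contradiction shows $\mu\wedge\mu^*=0$.

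The main obstacle is the middle step: producing a prime quotient inside some $\gamma_j$ that is projective to $\interval{0}{\alpha}$. For this we would apply the modular-lattice fact that any prime quotient below $x\vee y$ is projective to a prime quotient below $x$ or below $y$. One can prove this by the Dedekind transposition $\interval{x}{x\vee y}\searrow\interval{x\wedge y}{y}$ together with Jordan--Hölder, and then induct over the $\gamma_i$. Condition~(2) of homogeneity is not needed here.
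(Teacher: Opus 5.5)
Your proof is correct. The paper gives no argument of its own here and only cites \cite[Lemma~6.4]{Ros24}, so there is nothing to compare against, and your proposal stands as a self-contained proof. Three small remarks follow.

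\textbf{The ``main obstacle'' is not an obstacle.} No Jordan--H\"older or Dedekind--Birkhoff argument is needed. Since $\sigma\le\sigma\vee\alpha\le\sigma\vee\gamma_j$, modularity gives
\[
\sigma\vee\bigl((\sigma\vee\alpha)\wedge\gamma_j\bigr)=(\sigma\vee\alpha)\wedge(\sigma\vee\gamma_j)=\sigma\vee\alpha
\]
and $\sigma\wedge(\sigma\vee\alpha)\wedge\gamma_j=\sigma\wedge\gamma_j$. Hence $\interval{\sigma}{\sigma\vee\alpha}\searrow\interval{\sigma\wedge\gamma_j}{(\sigma\vee\alpha)\wedge\gamma_j}$ directly. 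With $\epsilon=\sigma\wedge\gamma_j$ and $\epsilon'=(\sigma\vee\alpha)\wedge\gamma_j$, the projectivity you need is the explicit chain of three transpositions
\[
\interval{0}{\alpha}\nearrow\interval{\sigma}{\sigma\vee\alpha}\searrow\interval{\epsilon}{\epsilon'}\nearrow\interval{\epsilon\vee\mu}{\epsilon'\vee\mu}.
\]
All of these quotients are prime, because transposed intervals in a modular lattice are isomorphic.

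\textbf{Wording of the finite join.} The set $\{\gamma\mid\gamma\wedge\mu=0\}$ need not be finite, so you cannot list ``the'' elements as $\gamma_1,\dots,\gamma_r$. What finite height gives you is a finite subfamily with the same join $\mu^*$: build $s_1<s_1\vee s_2<\cdots$ until it stabilizes. That finite subfamily is all your argument uses.

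\textbf{The case $j=1$ cannot occur.} If $\alpha\le\gamma_1$, then $\alpha=\alpha\wedge\gamma_1\le\mu\wedge\gamma_1=0$. So $j\ge 2$ automatically, though your argument is formally valid in either case.
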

\begin{proposition}[cf. {\cite[Proposition~6.6]{Ros24}}]\label{prop:typesandhomogenuity}
Let $\ab{A}$ be a Mal'cev algebra whose congruence lattice has finite height, let $\mu$ be a homogeneous 
element of $ \Con\ab{A}$, and let $\alpha,\beta\in \Con\ab{A}$ 
with $\alpha\prec\beta\leq\mu$. Then we have:
\begin{enumerate}
\item If $[\beta,\beta]\nleq\alpha$, then 
$\alpha=\bottom{A}$, $\beta=\mu$, and $(\bottom{A}:\mu)=\mu^*$. 
\label{item:typethreecasehomogeneous}
\item If $[\beta,\beta]\leq\alpha$, then
$(\alpha :\beta)=(\Phi(\mu):\mu)\geq \mu\vee\mu^*$. \label{item:typetwocasehomogeneous}
\end{enumerate}
\end{proposition}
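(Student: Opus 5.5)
The proof is pure commutator-lattice theory: by Proposition~\ref{prop:commutator_lattice}, $\Con\ab{A}$ is a commutator lattice, so projectivity facts such as Lemma~\ref{lemma:exAic18Lemma3.4} apply. The tools are Definition~\ref{def:homogeneous}, Lemma~\ref{lemma:whymumeetmustariszero}, and the basic identities $[\beta,\beta]\le\beta$ and $[\gamma,\delta]\le\gamma\wedge\delta$.

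\emph{Part (1).} Assume $\alpha\prec\beta\le\mu$ and $[\beta,\beta]\nleq\alpha$, and suppose first that $\alpha>\bottom{A}$. Choose an atom $\gamma\le\alpha$. By condition (2) of homogeneity, $\interval{\bottom{A}}{\gamma}\leftrightsquigarrow\interval{\alpha}{\beta}$, so Lemma~\ref{lemma:exAic18Lemma3.4} gives $(\bottom{A}:\gamma)=(\alpha:\beta)$ and $[\gamma,\gamma]\nleq\bottom{A}$. Since $[\gamma,\gamma]\le\gamma$ and $\gamma$ is an atom, this gives $[\gamma,\gamma]=\gamma$. Hence $\gamma\nleq(\bottom{A}:\gamma)=(\alpha:\beta)$, i.e.\ $[\gamma,\beta]\nleq\alpha$. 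But $[\gamma,\beta]\le\gamma\le\alpha$, a contradiction. So $\alpha=\bottom{A}$.

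For $\beta=\mu$, suppose $\beta<\mu$ and pick $\beta\prec\beta'\le\mu$. Then $\interval{\beta}{\beta'}\leftrightsquigarrow\interval{\bottom{A}}{\beta}$, so $[\beta',\beta']\nleq\beta$ and $(\beta:\beta')=(\bottom{A}:\beta)$. Since $[\beta,\beta]=\beta$ is nonzero, $\beta\nleq(\bottom{A}:\beta)=(\beta:\beta')$, so $[\beta,\beta']\nleq\beta$. This contradicts $[\beta,\beta']\le\beta$, so $\beta=\mu$ and $\mu$ is an atom with $[\mu,\mu]=\mu$.

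It remains to show $(\bottom{A}:\mu)=\mu^*$. Since $\mu$ is an atom, $[\gamma,\mu]\in\{\bottom{A},\mu\}$ for every $\gamma$. If $\gamma\wedge\mu=\bottom{A}$, then $[\gamma,\mu]\le\gamma\wedge\mu=\bottom{A}$, so $\mu^*\le(\bottom{A}:\mu)$. Conversely, $(\bottom{A}:\mu)\wedge\mu$ is either $\bottom{A}$ or $\mu$. If it were $\mu$, then $\mu\le(\bottom{A}:\mu)$ would force $[\mu,\mu]=\bottom{A}$, which is false. Hence $(\bottom{A}:\mu)\wedge\mu=\bottom{A}$, so $(\bottom{A}:\mu)\le\mu^*$.

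\emph{Part (2).} Assume $[\beta,\beta]\le\alpha$. By part (1) applied to other prime quotients below $\mu$ (and by projectivity plus Lemma~\ref{lemma:exAic18Lemma3.4}(2)), every prime quotient below $\mu$ is abelian. Every prime quotient $\interval{\eta}{\mu}$ with $\eta\prec\mu$ is projective to $\interval{\alpha}{\beta}$, so Lemma~\ref{lemma:exAic18Lemma3.4}(1) gives $(\alpha:\beta)=(\eta:\mu)$ for all such $\eta$. The centralizer is meet-distributive in its first argument, $(\bigwedge\eta_i:\mu)=\bigwedge(\eta_i:\mu)$, since $\gamma\mapsto[\gamma,\mu]$ is monotone and $[\gamma,\mu]\le\eta_i$ for all $i$ iff $[\gamma,\mu]\le\bigwedge\eta_i$. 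Therefore $(\Phi(\mu):\mu)=(\alpha:\beta)$.

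The inequality $(\alpha:\beta)\ge\mu^*$ is immediate: $[\mu^*,\mu]\le\mu^*\wedge\mu=\bottom{A}$ by Lemma~\ref{lemma:whymumeetmustariszero}, and $(\bottom{A}:\mu)\le(\Phi(\mu):\mu)$.

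The main obstacle is showing $\mu\le(\alpha:\beta)$, i.e.\ $[\mu,\beta]\le\alpha$. I would argue by contradiction. If $[\mu,\beta]\nleq\alpha$, set $\gamma:=[\mu,\beta]\vee\alpha$. Then $\alpha<\gamma\le\beta$, so $\gamma=\beta$, which gives $\beta=[\mu,\beta]\vee\alpha$.

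The plan is to take a maximal chain from $\beta$ up to $\mu$ and push this identity upward, using the abelianness of all prime quotients below $\mu$ together with additivity of the commutator. The target is $[\mu,\mu]\vee\Phi(\mu)=\mu$. Combined with the fact that each quotient $\interval{\eta}{\mu}$ is abelian, this should yield a contradiction.

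An alternative route is through condition (3) of homogeneity. Show that a nonabelian commutator $[\mu,\beta]\nleq\alpha$ produces a prime quotient $\interval{\gamma}{\delta}$ with $\mu\le\gamma\prec\delta$ that is projective to one below $\mu$. The natural place to look is the transposition $\interval{\alpha}{\beta}\nearrow\interval{(\alpha:\beta)}{(\alpha:\beta)\vee\beta}$, using that $\mu\nleq(\alpha:\beta)$. I expect this step to need the most care, and I would try the condition (3) route first.
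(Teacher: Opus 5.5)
\textbf{Gap: the inequality $\mu\le(\alpha:\beta)$ in Part (2) is never proved.}

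Your Part (1) is correct. The first steps of Part (2) are also correct:
\begin{itemize}
\item every prime quotient below $\mu$ is abelian, by homogeneity (2) and Lemma~\ref{lemma:exAic18Lemma3.4}(2);
\item $(\alpha:\beta)=(\Phi(\mu):\mu)$, via $(\bigwedge_i\eta_i:\mu)=\bigwedge_i(\eta_i:\mu)$ over the lower covers of $\mu$, a set that is nonempty because $\mu>\bottom{A}$;
\item $\mu^*\le(\bottom{A}:\mu)\le(\Phi(\mu):\mu)$, by Lemma~\ref{lemma:whymumeetmustariszero}.
\end{itemize}
The gap is that you call $\mu\le(\alpha:\beta)$ the main obstacle and only sketch two strategies without carrying either out. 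As written, the proof of (2) is incomplete.

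The missing step is immediate from what you already have. Take any $\eta\prec\mu$. You showed that every prime quotient below $\mu$ is abelian; in particular $[\mu,\mu]\le\eta$. This says exactly $\mu\le(\eta:\mu)$. Since $\interval{\eta}{\mu}\leftrightsquigarrow\interval{\alpha}{\beta}$, you also have $(\eta:\mu)=(\alpha:\beta)$, hence $\mu\le(\alpha:\beta)$. Equivalently, $[\mu,\mu]\le\bigwedge_{\eta\prec\mu}\eta=\Phi(\mu)$. No contradiction argument is needed, and condition (3) of homogeneity enters only through Lemma~\ref{lemma:whymumeetmustariszero}.

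Your two sketched routes do not help:
\begin{itemize}
\item Route (a) would end at the same point. The target identity $[\mu,\mu]\vee\Phi(\mu)=\mu$ contradicts $[\mu,\mu]\le\Phi(\mu)$, which is precisely the abelianness you already proved. So pushing $\beta=[\mu,\beta]\vee\alpha$ up a chain is superfluous.
\item Route (b) fails as described. In Part (2) you have $[\beta,\beta]\le\alpha$, so $\beta\le(\alpha:\beta)$ and $\beta\wedge(\alpha:\beta)=\beta\neq\alpha$. Hence $\interval{\alpha}{\beta}$ does not transpose up to $\interval{(\alpha:\beta)}{(\alpha:\beta)\vee\beta}$; that interval is trivial.
\end{itemize}
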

Putting together Propositions~6.7, 6.8 and 6.9 from \cite{Ros24}
we infer the following results on 
homogeneous congruences in finite Mal'cev algebras
with (SC1):
\begin{proposition}\label{prop:existence_of_hom_congruences_under_SC1}
Let $\ab{A}$ be a finite Mal'cev algebra with (SC1). 
Then $\Con \ab{A}$ has a homogeneous element.
\end{proposition}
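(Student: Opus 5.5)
The natural candidate for a homogeneous element is an atom of $\Con\ab{A}$. Such an atom is only chosen once we know it behaves well with respect to (SC1), so the argument will pick it carefully rather than arbitrarily. We may assume $\card{A}>1$, so that $\Con\ab{A}$ has atoms.

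Conditions (1) and (2) of Definition~\ref{def:homogeneous} are automatic for an atom $\mu$. Indeed $\mu>\bottom{A}$, and the only prime quotient below $\mu$ is $\interval{\bottom{A}}{\mu}$ itself. So the whole task is condition (3): there must be no prime quotient $\interval{\gamma}{\delta}$ with $\mu\le\gamma\prec\delta$ that is projective to $\interval{\bottom{A}}{\mu}$.

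I would split into two cases according to whether $[\mu,\mu]=\mu$ or $[\mu,\mu]=\bottom{A}$.

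\textbf{Nonabelian atom.} In the nonabelian case I would use Lemma~\ref{lemma:exAic18Lemma3.4}\eqref{item:centralizerinprojectiveintervals}: projective intervals have the same centralizer. Suppose $\interval{\bottom{A}}{\mu}\leftrightsquigarrow\interval{\gamma}{\delta}$ with $\gamma\ge\mu$. Then $(\bottom{A}:\mu)=(\gamma:\delta)\ge\gamma\ge\mu$, so $[\mu,\mu]=\bottom{A}$, a contradiction. Hence every nonabelian atom is homogeneous, and in this case (SC1) is not even needed.

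\textbf{All atoms abelian.} This is the main obstacle. Now every atom $\mu$ satisfies $[\mu,\mu]=\bottom{A}$, so $\mu\le(\bottom{A}:\mu)$, and the centralizer argument alone gives nothing. The plan is to use (SC1) through strictly meet irreducible congruences. Suppose an atom $\mu$ is projective to some $\interval{\gamma}{\delta}$ with $\gamma\ge\mu$.

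First I would reduce to the case where $\gamma$ is strictly meet irreducible with $\delta=\gamma^+$. In a finite modular lattice, every prime quotient is projective to one of the form $\interval{\nu}{\nu^+}$ with $\nu$ strictly meet irreducible. Concretely, take $\nu$ maximal with $\nu\ge\gamma$ and $\nu\not\ge\delta$; then $\interval{\gamma}{\delta}\nearrow\interval{\nu}{\nu\vee\delta}$. This transposition preserves $\nu\ge\mu$.

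Next, by Lemma~\ref{lemma:exAic18Lemma3.4} we get $(\nu:\nu^+)=(\bottom{A}:\mu)$. Condition (SC1) gives $(\nu:\nu^+)\le\nu^+$, hence $(\bottom{A}:\mu)\le\nu^+$. On the other hand, $(\bottom{A}:\mu)\ge\mu$, and it also contains every congruence centralizing $\mu$.

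The contradiction then has to come from a good choice of atom. Among all atoms, I would pick $\mu$ so that its centralizer $(\bottom{A}:\mu)$ is maximal. I would then show that the lower cover $\nu$ produced above meets $(\bottom{A}:\mu)$ in a way forcing $\mu\le[\nu^+,\nu^+]$-type relations to fail, using the failure characterization in Lemma~\ref{lemma:prop3.3AicIdz04implicazione_da_due_a_uno}. More precisely, I expect projectivity of $\interval{\bottom{A}}{\mu}$ into $\interval{\nu}{\nu^+}$ with $\mu\le\nu$ to produce join irreducibles $\alpha\le\beta^-\prec\beta$ with $\alpha$ an atom and $[\alpha,\beta]=\bottom{A}$. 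That is exactly a failure of (SC1) in the sense of \eqref{eq:failure_of_SC1}.

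Building that pair, with $\beta$ a join irreducible below $\nu^+$ and not below $\nu$ whose lower cover lies above $\mu$, is the delicate step. It is essentially the content of \cite[Propositions~6.7--6.9]{Ros24}, which I would follow.
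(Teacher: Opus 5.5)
\textbf{Where the proposal fails.} Your nonabelian case is correct; it is essentially Proposition~\ref{prop:typesandhomogenuity}\eqref{item:typethreecasehomogeneous}. Your passage to a strictly meet irreducible $\nu\ge\mu$ with $(\bottom{A}:\mu)=(\nu:\nu^+)\le\nu^+$ is also fine.

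The gap is in the abelian case. There you try to prove that a suitably chosen abelian atom is homogeneous, and this is false, so the ``delicate step'' cannot be carried out.

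\emph{First counterexample.} Take $(\Z_2^2,+)$. Its congruences are $\bottom{A}$, three atoms $L_1,L_2,L_3$, and $\uno{A}$. The strictly meet irreducible ones are the $L_i$, with $L_i^+=\uno{A}$, so (SC1) holds trivially. For pairwise distinct atoms $L,L',L''$ we have
\[
\interval{\bottom{A}}{L}\nearrow\interval{L'}{\uno{A}}\searrow\interval{\bottom{A}}{L''}\nearrow\interval{L}{\uno{A}},
\]
so no atom is homogeneous. The homogeneous element is $\uno{A}$, of height $2$. This is why Definition~\ref{def:ex_main_theorem} allows $m\ge 1$, and why Proposition~\ref{prop:homogenuityandcolpementation}\eqref{item:simplecomplmodlatticebelowhom} only asserts that $\interval{\bottom{A}}{\mu}$ is a simple complemented modular lattice. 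In this example the only join irreducible congruences are the atoms, so the pair $\alpha\le\beta^-\prec\beta$ you expect to build does not exist.

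\emph{Second counterexample.} Your selection rule (maximal centralizer) also fails even when some atom does work. Take the group $\Z_2\times S_3$. Its normal subgroups are
\begin{itemize}
\item $\{0\}$, $Z=\Z_2\times\{e\}$, $N=\{0\}\times A_3$;
\item $ZN$, $S=\{0\}\times S_3$, and the graph $D$ of the sign homomorphism $S_3\to\Z_2$;
\item $G$ itself.
\end{itemize}
The strictly meet irreducible ones are $Z$, with $Z^+=ZN=(Z:ZN)$, and the three maximal ones; hence (SC1) holds. The atom with the largest centralizer is $Z$, since $(\bottom{A}:Z)=G$. Yet
\[
\interval{\bottom{A}}{Z}\nearrow\interval{N}{ZN}\nearrow\interval{D}{G}\searrow\interval{N}{S}\nearrow\interval{ZN}{G}
\]
with $Z\le ZN$, so $Z$ is not homogeneous. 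The homogeneous element is $N$, whose centralizer $ZN$ is smaller.

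\textbf{What is missing.} The missing idea is the right candidate. Suppose $\mu$ is homogeneous and $\alpha\le\mu$ is an atom. Then $\mu$ is necessarily the \emph{largest} congruence all of whose prime quotients below it are projective to $\interval{\bottom{A}}{\alpha}$. The reason: the set of such congruences is closed under joins (by transposition and the Dedekind--Birkhoff theorem), and condition~(3) forces maximality.

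So the proof must do two things:
\begin{enumerate}
\item work with this possibly higher element, which makes condition~(2) a genuine condition;
\item choose the projectivity class of $\alpha$ so that it does not recur above that element.
\end{enumerate}
(SC1) enters at the second point. One usable consequence is your own observation, sharpened by abelianness: $\nu^+=(\bottom{A}:\alpha)$ for every strictly meet irreducible $\nu$ with $\interval{\nu}{\nu^+}\leftrightsquigarrow\interval{\bottom{A}}{\alpha}$.

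The paper gives no argument of its own here; it only cites \cite[Propositions~6.7--6.9]{Ros24}. Deferring to that reference would be acceptable, but your description of what it provides is exactly the statement the examples above refute.
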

\begin{proposition}\label{prop:homogenuityandcolpementation}
Let $\ab{A}$ be a finite Mal'cev algebra with (SC1)
and let $\mu$ be an homogeneous element of $\Con\ab{A}$.
Then we have:
\begin{enumerate}
\item $\interval{\bottom{A}}{\mu}$ is 
a simple complemented modular lattice;\label{item:simplecomplmodlatticebelowhom}
\item $\Phi(\mu)=\bottom{A}$;\label{item:phiofhomogenous}
\item for all $\alpha\in \Con\ab{A}$ we have $\alpha\geq \mu$ or $\alpha\leq \mu\vee \mu^*$; \label{item:splithomogeous}
\item $(\Phi(\mu):\mu)\leq\mu\vee\mu^*$;\label{item_Prop_6.8_Ros24}
\item if $[\mu,\mu]\neq \mu$ then $(\Phi(\mu):\mu)=
(\bottom{A}:\mu)=\mu\vee\mu^*$ and
$[\mu,\mu]=\bottom{A}$. \label{item:ex_if_not_neutral_then_abelian_for_hom}
\end{enumerate}
\end{proposition}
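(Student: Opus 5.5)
The plan is to prove the five items of Proposition~\ref{prop:homogenuityandcolpementation} in the order \eqref{item:phiofhomogenous}, \eqref{item:simplecomplmodlatticebelowhom}, \eqref{item:splithomogeous}, \eqref{item_Prop_6.8_Ros24}, \eqref{item:ex_if_not_neutral_then_abelian_for_hom}, since each builds on the earlier ones. The main tool throughout is the failure characterization in Lemma~\ref{lemma:prop3.3AicIdz04implicazione_da_due_a_uno}. Under (SC1) there is no pair of join irreducibles $\alpha,\beta$ with $[\alpha,\beta]\le\alpha^-\prec\alpha\le\beta^-\prec\beta$. Combined with Lemma~\ref{lemma:exAic18Lemma3.4} (centralizers are invariant under projectivity), this says that whenever a prime quotient below $\mu$ is projective to $\interval{\alpha^-}{\alpha}$ and $\beta>\alpha$ is join irreducible, $\beta$ cannot centralize it, i.e.\ $\beta\not\le(\alpha^-:\alpha)$.

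For \eqref{item:phiofhomogenous} I would argue by contradiction. Suppose $\Phi(\mu)>\bottom{A}$ and pick an atom $\alpha\le\Phi(\mu)$. Take a join irreducible $\beta\le\mu$ with $\beta\not\le\Phi(\mu)$; then $\beta^-\ge\alpha$ can be arranged, because $\alpha$ lies below every coatom of $\interval{\bottom{A}}{\mu}$. By clause (2) of Definition~\ref{def:homogeneous}, $\interval{\bottom{A}}{\alpha}$ is projective to $\interval{\beta^-}{\beta}$.

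If the quotients are abelian, Proposition~\ref{prop:typesandhomogenuity}\eqref{item:typetwocasehomogeneous} gives $(\bottom{A}:\alpha)\ge\mu\ge\beta$, hence $[\alpha,\beta]=\bottom{A}$. This is a failure of (SC1), a contradiction. If some quotient below $\mu$ is nonabelian, Proposition~\ref{prop:typesandhomogenuity}\eqref{item:typethreecasehomogeneous} forces $\interval{\bottom{A}}{\mu}$ to be a single prime quotient, so $\Phi(\mu)=\bottom{A}$ trivially.

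Item \eqref{item:simplecomplmodlatticebelowhom} then follows from standard lattice facts. $\Phi(\mu)=\bottom{A}$ in a finite modular lattice means $\mu$ is a join of atoms, so $\interval{\bottom{A}}{\mu}$ is complemented. Simplicity follows because all prime quotients are projective by clause (2), and a complemented modular lattice of finite height in which all prime quotients are projective is directly indecomposable, hence simple.

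For \eqref{item:splithomogeous}, let $\alpha\not\ge\mu$. I would show $\alpha\le\mu\vee\mu^*$ by induction on a maximal chain from $\bottom{A}$ to $\alpha$. The inductive step is the key case: a prime quotient $\interval{\gamma}{\delta}$ with $\gamma\le\mu\vee\mu^*$ and $\delta\not\le\mu\vee\mu^*$. Transposing it up through $\mu\vee\mu^*$ gives a prime quotient above $\mu$. I need to show it is projective either to a quotient below $\mu$, which clause (3) forbids, or to a quotient that forces $\delta\ge\mu$. This uses Lemma~\ref{lemma:whymumeetmustariszero} ($\mu\wedge\mu^*=\bottom{A}$) and modularity, together with the fact that $\mu^*$ is the largest element meeting $\mu$ trivially. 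I expect this step to be the main obstacle, since it is a purely lattice-theoretic argument with delicate case analysis. Clause (3) of homogeneity alone may not suffice, in which case (SC1) has to be invoked again through Proposition~\ref{prop:typesandhomogenuity}.

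Items \eqref{item_Prop_6.8_Ros24} and \eqref{item:ex_if_not_neutral_then_abelian_for_hom} come last. Set $\kappa:=(\Phi(\mu):\mu)$. If $\kappa\not\le\mu\vee\mu^*$, then by \eqref{item:splithomogeous} $\kappa\ge\mu$ and $\kappa$ lies above some join irreducible $\beta\not\le\mu\vee\mu^*$. Such a $\beta$ centralizes an atom $\alpha$ of $\interval{\bottom{A}}{\mu}$. Choosing $\beta$ minimal, so that $\beta^-\ge\alpha$, yields an (SC1) failure $(\alpha,\beta)$, a contradiction.

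For \eqref{item:ex_if_not_neutral_then_abelian_for_hom}: if $[\mu,\mu]\ne\mu$, some prime quotient below $\mu$ is abelian. Proposition~\ref{prop:typesandhomogenuity} then makes all quotients below $\mu$ abelian and gives $\kappa\ge\mu\vee\mu^*$. With \eqref{item_Prop_6.8_Ros24} this yields equality. Since $\Phi(\mu)=\bottom{A}$, we get $(\bottom{A}:\mu)=\kappa\ge\mu$, hence $[\mu,\mu]=\bottom{A}$.
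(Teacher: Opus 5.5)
The paper does not prove this proposition; it quotes Propositions~6.7--6.9 of \cite{Ros24}. Your proposal therefore has to stand on its own, and it has a genuine gap at item~(3). You only sketch an induction and leave its key step open.

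That step cannot be closed the way you intend. Modularity, $\mu\wedge\mu^*=\bottom{A}$, the maximality of $\mu^*$, clause~(3) of homogeneity, and Proposition~\ref{prop:typesandhomogenuity} (which does not assume (SC1)) all hold in the following example, yet item~(3) fails there:
\begin{itemize}
\item Let $M=U\oplus S$ be a finite module, where $U$ is a non-split extension of a simple $S$ by a simple $T\not\cong S$. For instance, over the ring of upper triangular $2\times 2$ matrices over $\GF{2}$, take $U$ its second column and $S$ its first column.
\item Then $\mu=\mathrm{soc}(M)\cong S^2$ is homogeneous, because projective prime quotients have isomorphic factors and $M/\mu\cong T$.
\item Moreover $\interval{0}{\mu}$ is simple complemented and $\mu^*=0$.
\item Yet $U\not\geq\mu$ and $U\not\leq\mu\vee\mu^*$.
\end{itemize}
So (SC1) must enter, through Lemma~\ref{lemma:prop3.3AicIdz04implicazione_da_due_a_uno}. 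Since your proof of~(4) invokes~(3), it is unsupported as written.

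The fix is to reverse the order and prove~(4) first. If some prime quotient below $\mu$ is nonabelian, then $\mu$ is an atom and $(\Phi(\mu):\mu)=(\bottom{A}:\mu)=\mu^*$, so~(4) is trivial. Otherwise $(\Phi(\mu):\mu)=(\bottom{A}:\mu)$ by~(2). Suppose a join irreducible $\beta\leq(\bottom{A}:\mu)$ had $\beta\not\leq\mu\vee\mu^*$. Then $\beta\not\leq\mu^*$ forces $\beta\wedge\mu\neq\bottom{A}$; this, not minimality of $\beta$, is why an atom lies below $\beta$. Any atom $\alpha\leq\beta\wedge\mu$ satisfies $\alpha\leq\beta^-$ and $[\alpha,\beta]\leq[\mu,\beta]=\bottom{A}$, which is a failure of (SC1).

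Item~(3) then follows; let $\alpha\not\geq\mu$.
\begin{itemize}
\item If $\mu$ is an atom, then $\alpha\wedge\mu=\bottom{A}$, so $\alpha\leq\mu^*$.
\item Otherwise, take a complement $\nu\neq\bottom{A}$ of $\alpha\wedge\mu$ in $\interval{\bottom{A}}{\mu}$ and an atom $\gamma\leq\nu$. Then $[\alpha,\gamma]\leq\alpha\wedge\nu=\bottom{A}$. Hence $\alpha\leq(\bottom{A}:\gamma)=(\Phi(\mu):\mu)\leq\mu\vee\mu^*$, by Proposition~\ref{prop:typesandhomogenuity}\eqref{item:typetwocasehomogeneous} and~(4).
\end{itemize}

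There are two smaller points. In~(2), ``$\beta^-\geq\alpha$ can be arranged because $\alpha$ lies below every coatom'' is not a valid inference. In the submodule lattice of $R^2$ with $R=k[s,t]/(s,t)^2$, the atom generated by $(s,t)$ lies in the radical, but no cyclic (that is, join irreducible) submodule properly contains it. Choose $\beta$ first instead: take a join irreducible $\beta\leq\mu$ that is not an atom. Such a $\beta$ exists when $\Phi(\mu)\neq\bottom{A}$, since otherwise $\mu$ would be a join of atoms. Any atom $\alpha<\beta$ then gives the failure, via $(\bottom{A}:\alpha)\geq\mu$ in the abelian case.

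In~(1), clause~(2) of homogeneity gives projectivity in $\Con\ab{A}$, not inside $\interval{\bottom{A}}{\mu}$. The lattice fact you quote needs projectivity inside the interval, so simplicity still requires an argument.
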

\begin{lemma}\label{lemma:projecting_belowo_mu_star}
	Let $\ab{A}$ be a finite Mal'cev algebra with (SC1), let $\mu$ be a homogeneous
	abelian congruence of $\ab{A}$, and let $\quotienttct{\alpha}{\beta}$ be 
	a type $\tp{2}$ prime quotient below $\mu$. 
	Then $\alpha\vee \mu^*\prec \beta\vee \mu^*$ and 
	$\quotienttct{\alpha\vee\mu^*}{\beta\vee\mu^*}$
	has type $\tp{2}$.
\end{lemma}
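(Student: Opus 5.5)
The plan is to show that $\interval{\alpha}{\beta}$ transposes up to $\interval{\alpha\vee\mu^*}{\beta\vee\mu^*}$. Then primality follows from modularity, since transposes of prime quotients are prime. The type follows from Lemma~\ref{lemma:exAic18Lemma3.4}\eqref{item:typesinprojactiveintervals}: type $\tp{2}$ for a prime quotient in a Mal'cev algebra means exactly $[\beta,\beta]\le\alpha$.

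For the transposition we need two facts.
\begin{enumerate}
\item $\beta\vee(\alpha\vee\mu^*)=\beta\vee\mu^*$. This is immediate from $\alpha\le\beta$.
\item $\beta\wedge(\alpha\vee\mu^*)=\alpha$. Since $\alpha\le\beta\le\mu$, modularity gives
\[
\beta\wedge(\alpha\vee\mu^*)=\alpha\vee(\beta\wedge\mu^*).
\]
Lemma~\ref{lemma:whymumeetmustariszero} gives $\mu\wedge\mu^*=\bottom{A}$. Hence $\beta\wedge\mu^*\le\mu\wedge\mu^*=\bottom{A}$, and the right-hand side equals $\alpha$.
\end{enumerate}
Thus $\interval{\alpha}{\beta}\nearrow\interval{\alpha\vee\mu^*}{\beta\vee\mu^*}$.

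Primality is the standard transposition isomorphism in a modular lattice. The maps $\gamma\mapsto\gamma\vee\alpha\vee\mu^*$ and $\delta\mapsto\delta\wedge\beta$ are mutually inverse lattice isomorphisms between $\interval{\alpha}{\beta}$ and $\interval{\alpha\vee\mu^*}{\beta\vee\mu^*}$ (cf.~\cite[Chapter~2]{MMT:ALVV}). Because $\alpha\prec\beta$, the upper interval has exactly two elements, and since $\alpha\neq\beta$, so does its image. Hence $\alpha\vee\mu^*\prec\beta\vee\mu^*$.

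For the type, $\quotienttct{\alpha}{\beta}$ has type $\tp{2}$, so $[\beta,\beta]\le\alpha$. Projectivity and Lemma~\ref{lemma:exAic18Lemma3.4}\eqref{item:typesinprojactiveintervals} give $[\beta\vee\mu^*,\beta\vee\mu^*]\le\alpha\vee\mu^*$. For a prime quotient in a Mal'cev algebra, being abelian means type $\tp{2}$, as recalled in the introduction. Alternatively, one can invoke Lemma~\ref{lemma:projectivity_andTCT}: projective prime quotients share $\Mtct{\ab{A}}{\cdot}{\cdot}$ and hence type.

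I do not expect a real obstacle. The only substantive input is $\mu\wedge\mu^*=\bottom{A}$ from homogeneity. The hypotheses that $\mu$ is abelian and that (SC1) holds are not even needed beyond ensuring the setting.
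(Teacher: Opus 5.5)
Your proposal is correct and follows the paper's proof essentially step for step: you establish $\interval{\alpha}{\beta}\nearrow\interval{\alpha\vee\mu^*}{\beta\vee\mu^*}$ via modularity and $\mu\wedge\mu^*=\bottom{A}$ (Lemma~\ref{lemma:whymumeetmustariszero}), then get primality from the isomorphism of transposed intervals and the type from Lemma~\ref{lemma:exAic18Lemma3.4}. One wording slip in the primality step: it is the lower interval $\interval{\alpha}{\beta}$ that has exactly two elements, and the isomorphism carries this to the upper interval $\interval{\alpha\vee\mu^*}{\beta\vee\mu^*}$.
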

\begin{proof}
	Since $\alpha\leq \beta$ and $\Con\ab{A}$ is modular,
	we have that $\beta\wedge (\alpha \vee \mu^*)=\alpha\vee(\mu^*\wedge \beta)\leq \alpha\vee (\mu^* \wedge \mu)$.
	Hence, Lemma~\ref{lemma:whymumeetmustariszero}
	yields that $\beta\wedge (\alpha \vee \mu^*)\leq \alpha$.
	Since $\alpha\leq \beta$, we have that 
	$\beta\wedge (\alpha \vee \mu^*)=\alpha$.
	Moreover, it is clear that $\beta\vee(\alpha \vee \mu^*)=\beta\vee \mu^*$.
	Thus 
	$\interval{\alpha}{\beta}\nearrow\interval{\alpha\vee\mu^*}{\beta\vee\mu^*}$.
	Hence, the statement follows from Lemma~\ref{lemma:exAic18Lemma3.4}
	and from the fact that in a modular lattice projective intervals are isomorphic. 
\end{proof}
\begin{theorem}\label{teor:main_result_on_homogeneous_sequences_on_quotients}
Let $\ab{A}$ be a finite Mal'cev algebra with (SC1),
let $n\in\N$,
let $(\mu_0,\dots, \mu_{n})$ be a homogeneous series 
in $\Con\ab{A}$, let $k\in\{1,2,3\}$,
and let us assume
that either $\ab{A}$ is congruence neutral
or the homogeneous series 
$(\mu_0,\dots, \mu_{n})$ is (CT$k$).
Let $\alpha\in\Con\ab{A}$. Then 
$\ab{A}/\alpha$ is (SC1). Moreover, 
either $\ab{A}/\alpha$ is congruence neutral 
or each homogeneous series $(\nu_0, \dots, \nu_{l})$ 
in $\Con(\ab{A}/\alpha)$
is (CT$k$).
\end{theorem}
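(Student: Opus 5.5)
We argue by induction on the height of $\Con\ab{A}$, reducing everything to a single quotient $\ab{A}/\alpha$ and comparing homogeneous series of $\Con(\ab{A}/\alpha)\cong\Interval{\alpha}{\uno{A}}$ with the given series $(\mu_0,\dots,\mu_n)$ via projectivity of prime quotients.

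\emph{Step 1: (SC1) passes to quotients.} Strictly meet irreducible congruences of $\ab{A}/\alpha$ correspond to strictly meet irreducible $\mu\geq\alpha$ of $\ab{A}$, with the same upper cover $\mu^+$. Commutators and centralizers computed in $\ab{A}/\alpha$ correspond to those in $\ab{A}$ modulo $\alpha$ for congruences above $\alpha$; in particular $(\mu:\mu^+)$ is the same in both. Hence $(\mu:\mu^+)\leq\mu^+$ is inherited, and $\ab{A}/\alpha$ satisfies (SC1).

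\emph{Step 2: types and projectivity classes.} By Lemma~\ref{lemma:exAic18Lemma3.4}, projective prime quotients have the same TCT-type. By Corollary~\ref{cor:projectivity_preserves_ext_types_and_dimension}, abelian prime quotients lying in projective simple complemented intervals have the same subtype and the same local dimension over $\GF{q}$. If $\ab{A}$ is neutral, every prime quotient of $\Con(\ab{A}/\alpha)$ is a prime quotient of $\Con\ab{A}$, so it has type $\tp{3}$ and $\ab{A}/\alpha$ is neutral.

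Otherwise, take a homogeneous series $(\nu_0,\dots,\nu_l)$ of $\Interval{\alpha}{\uno{A}}$. The key claim is that each $\quotienttct{\nu_{j-1}}{\nu_j}$ lies in one projectivity class of prime quotients of $\Con\ab{A}$, and that this class coincides with the class of the prime quotients below some $\mu_i$ over $\mu_{i-1}$. To see this, use Proposition~\ref{prop:homogenuityandcolpementation}\eqref{item:splithomogeous}: every congruence $\beta$ satisfies $\beta\geq\mu_1$ or $\beta\leq\mu_1\vee\mu_1^*$. Apply this recursively inside $\Interval{\mu_{i-1}}{\uno{A}}$. Together with Lemma~\ref{lemma:projecting_belowo_mu_star}, this shows that every prime quotient of $\Con\ab{A}$ is projective to a prime quotient inside some $\interval{\mu_{i-1}}{\mu_i}$. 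A prime quotient $\quotienttct{\alpha'}{\beta'}$ below $\nu_j$ over $\nu_{j-1}$ is then projective to one inside some $\interval{\mu_{i-1}}{\mu_i}$, so it inherits the type and subtype from there.

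\emph{Step 3: height and the (AB$p$) condition.} For the height condition $m=1$ in (CT2)/(CT3) and for (AB$p$), we need: if $\interval{\nu_{j-1}}{\nu_j}$ is abelian of height $m'$, then $m'\leq m$ where $m$ is the height of the matching $\interval{\mu_{i-1}}{\mu_i}$. The natural route is to transpose $\interval{\nu_{j-1}}{\nu_j}$ injectively into $\interval{\mu_{i-1}\vee\mu_{i-1}^*}{\mu_i\vee\mu_{i-1}^*}$ or a similar interval, using the meet $\wedge\mu_i$ and modularity. The class sizes $\card{(a/\nu_{j-1})/(\nu_j/\nu_{j-1})}$ from Definition~\ref{def:definition_ABp} are then controlled as follows. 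The isomorphisms of Lemma~\ref{lemma:isomorphisms_polynomial_restrictions_projective_intervals} show that block counts agree along projective prime quotients. By Lemma~\ref{lemma:on_cardinality_of-classes_intervals_simple_com_mod_latt}, block counts in a simple complemented interval are the $h$-th power of the atom counts. So each count is $1$ or $p^{\text{dim}}$, with dimension bounded by that in $\ab{A}$. This yields the conditions $\{1,2,4,8\}$, $\{1,3,9\}$ and (AB$p$) in the quotient.

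\emph{Main obstacle.} The delicate step is Step 3, showing that the height $m$ does not increase when passing to the quotient. Homogeneous elements of $\Interval{\alpha}{\uno{A}}$ need not be images of the $\mu_i$, and an interval below a new homogeneous element might a priori be built from atoms coming from several $\mu_i$. Condition (3) of Definition~\ref{def:homogeneous} rules out projectivity across a homogeneous element. So all atoms below $\nu_j$ over $\nu_{j-1}$ come from a single class, namely one $\interval{\mu_{i-1}}{\mu_i}$. I would then bound the height by a complement-counting argument in the simple complemented lattices, using that $\mu_i\wedge\mu_i^*=\mu_{i-1}$ (Lemma~\ref{lemma:whymumeetmustariszero}) relativized to $\Interval{\mu_{i-1}}{\uno{A}}$.
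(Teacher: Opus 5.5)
Steps~1 and~2 are sound. The fact that every prime quotient of $\Con\ab{A}$ is projective to one inside some $\interval{\mu_{i-1}}{\mu_i}$ follows directly from the Dedekind--Birkhoff theorem, so you do not need the recursion through Proposition~\ref{prop:homogenuityandcolpementation}. Your transfer of (AB$p$) and of the $\{1,2,4,8\}$/$\{1,3,9\}$ block counts along projectivity, via Lemma~\ref{lemma:isomorphisms_polynomial_restrictions_projective_intervals} and Corollary~\ref{cor:projectivity_preserves_ext_types_and_dimension}, is also fine.

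\textbf{The gap is Step~3.} You never prove $h_j\le h_i$, and every clause demanding $m=1$ needs it: (CT1)(b),(c), (CT2)(b) and (CT3)(b). Knowing that all prime quotients of $\interval{\nu_{j-1}}{\nu_j}$ lie in the projectivity class of those of $\interval{\mu_{i-1}}{\mu_i}$ says nothing about how many there are. The suggested injective transposition of $\interval{\nu_{j-1}}{\nu_j}$ into something like $\interval{\mu_{i-1}\vee\mu_{i-1}^*}{\mu_i\vee\mu_{i-1}^*}$ is not available in general. The $\nu$'s lie above $\alpha$ and stand in no prescribed lattice relation to the $\mu$'s, so no such transposition need exist, and ``complement counting'' as stated is not an argument.

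\textbf{Missing idea: count how often a projectivity class occurs along a maximal chain.}
\begin{enumerate}
\item Take a maximal chain $(\beta_r)$ through $\alpha,\nu_1,\dots,\nu_l$ and a maximal chain $(\delta_s)$ through $\mu_0,\dots,\mu_n$. The Dedekind--Birkhoff theorem gives a bijection $\sigma$ between their prime quotients with matched quotients projective. Hence a fixed projectivity class contains the same number of prime quotients from each chain.
\item In the $\delta$-chain, the class of $\interval{\mu_{i-1}}{\mu_i}$ occurs only inside $\interval{\mu_{i-1}}{\mu_i}$, so exactly $h_i$ times. Indeed, suppose a quotient in $\interval{\mu_{f-1}}{\mu_f}$ with $f\neq i$ were projective to one in $\interval{\mu_{i-1}}{\mu_i}$. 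With $g=\min(i,f)$ you would get projective prime quotients below and above $\mu_g$, both lying in $\Interval{\mu_{g-1}}{\uno{A}}$. This contradicts condition~(3) of the homogeneity of $\mu_g$ there.
\item In the $\beta$-chain, $\interval{\nu_{j-1}}{\nu_j}$ contributes $h_j$ quotients of this class. Therefore $h_j\le h_i$.
\end{enumerate}

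There is a further point hidden in your phrase ``condition~(3) rules out projectivity across a homogeneous element.'' Condition~(3) for $\mu_g$ only concerns projectivity inside the lattice $\Interval{\mu_{g-1}}{\uno{A}}$. The projectivities you actually produce pass through $\interval{\nu_{j-1}}{\nu_j}$, which need not lie above $\mu_{g-1}$, so they hold only in $\Con\ab{A}$. You therefore need the modular-lattice fact that prime quotients above $\theta$ which are projective in the whole lattice are already projective in $\Interval{\theta}{\uno{A}}$. The paper takes this from Proposition~7.6 of \cite{AM:TOPC}.
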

\begin{proof}
If $\ab{A}$ is (SC1), then so is $\ab{A}/\alpha$, and
if $\ab{A}$ is congruence neutral, then 
so is $\ab{A}/\alpha$. 
Next, let $k\in\{1,2,3\}$,
and let us assume that $(\mu_0, \dots, \mu_{n})$
is (CT$k$), and
that $\ab{A}/\alpha$ is not congruence neutral, and 
let us fix a homogeneous series $(\nu_0', \dots, \nu_{l}')$
for $\Con(\ab{A}/\alpha)$. By the Correspondence Theorem
(cf.~\cite[Theorem~4.12]{MMT:ALVV})
there exist $\nu_1, \dots, \nu_l\in\Con\ab{A}$
such that for each $j\leq l$ we have 
$\nu_j'=\nu_j/\alpha$. Let $N$ be the height of $\Con\ab{A}$,
and let $\beta_0\prec\dots\prec\beta_{N}$ be a maximal chain 
that contains $\alpha, \nu_1, \dots, \nu_l$, and
let $\delta_0\prec\dots\prec\delta_{N}$ 
be a maximal chain that contains $\mu_0,\dots, \mu_{n}$. 
By the Dedekind Birkhoff Theorem 
(cf.~\cite[Theorem~2.37]{MMT:ALVV})
there exists $\sigma\in \Sym{N}$ such that
for each $i\in\finset{N}$ 
\[
\interval{\beta_{i-1}}{\beta_i}\leftrightsquigarrow
\interval{\delta_{\sigma(i)-1}}{\delta_{\sigma(i)}}.
\]
Thus, for all
$j\in\finset{l}$ and for all
$r\in\finset{N}$ with $\nu_{j-1}\prec\beta_{r}\leq\nu_j$,
there exists $i\in\finset{n}$ such that
$\mu_{i-1}\leq \delta_{\sigma(r)-1}\prec
\delta_{\sigma(r)}\leq \mu_i$
and $\interval{\nu_{j-1}}{\beta_r}\leftrightsquigarrow
\interval{\delta_{\sigma(r)-1}}{\delta_{\sigma(r)}}$.
Furthermore, since $\mu_i$ is homogeneous 
in $\interval{\mu_{i-1}}{\uno{A}}$,
there exists $s\in\finset{N}$ such that
$\delta_{s-1}=\mu_{i-1}$ and 
$\interval{\delta_{\sigma(r)-1}}{\delta_{\sigma(r)}}
\leftrightsquigarrow
\interval{\mu_{i-1}}{\delta_s}$. 
Thus, for all
$j\in\finset{l}$ and for all
$r\in\finset{N}$ with $\nu_{j-1}\prec\beta_{r}\leq\nu_j$,
there exists $s\in\finset{N}$ and $i\in\finset{n}$
such that
\[\mu_{i-1}=\delta_{s-1} \text{ and }
\interval{\nu_{j-1}}{\beta_r}\leftrightsquigarrow
\interval{\mu_{i-1}}{\delta_s}.
\]
Thus, Corollary~\ref{cor:projectivity_preserves_ext_types_and_dimension}
implies that $\quotienttct{\nu_{j-1}}{\nu_j}$
and $\quotienttct{\mu_{i-1}}{\mu_i}$ have the same extended
type. Moreover, if $[\mu_i,\mu_i]\leq \mu_{i-1}$ and the 
extended type of $\quotienttct{\mu_{i-1}}{\mu_i}$ is 
$(\tp{2}, q)$, then for each $a\in A$ we have that 
$((a/\mu_{i-1})/(\delta_s/\mu_{i-1}); +_{a/\mu_{i-1}})$
and 
$((a/\nu_{j-1})/(\beta_r/\nu_{j-1}); +_{a/\nu_{j-1}})$
are both 
vector spaces over $\GF{q}$ of the same dimension.
Next, we show that 
if $h_j$ is the height of $\interval{\nu_{j-1}}{\nu_j}$
and $h_i$ is the height of $\interval{\mu_{i-1}}{\mu_i}$,
then $h_j\leq h_i$. Seeking a contradiction, 
let us assume that there exists $t\in\finset{N}$ such that
$\nu_{j-1}\leq \beta_{t-1}\prec\beta_t\leq \nu_j$ and
$\quotienttct{\delta_{\sigma(t)-1}}{\delta_{\sigma(t)}}$ 
is not contained in $\interval{\mu_{i-1}}{\mu_i}$.
Then there exists $f\in\finset{n}$ such that
$\quotienttct{\delta_{\sigma(t)-1}}{\delta_{\sigma(t)}}$
is contained in $\interval{\mu_{f-1}}{\mu_f}$.
Let $\gamma$ be an atom of $\interval{\nu_{j-1}}{\nu_j}$.
Then there exists $s\in\finset{N}$ such that
\[
\interval{\delta_{\sigma(t)-1}}{\delta_{\sigma(t)}}\leftrightsquigarrow
\interval{\beta_{t-1}}{\beta_t}\leftrightsquigarrow
\interval{\nu_j}{\gamma}\leftrightsquigarrow
\interval{\mu_{i-1}}{\delta_s}.\]
Let $g$ be the minimum between $i$ and $f$. 
Then by \cite[Proposition~7.6]{AM:TOPC},
we have that
$\interval{\delta_{\sigma(t)-1}}{\delta_{\sigma(t)}}
\leftrightsquigarrow
\interval{\mu_{i-1}}{\delta_s}$ in 
$\interval{\mu_{g-1}}{\uno{A}}$.
This contradicts the definition of homogeneity. 
Hence we can conclude that $h_j\leq h_i$.

Thus, we can conclude 
that for all $j\in\finset{l}$ there exists $i\in\finset{n}$
such that if the quotient 
$\quotienttct{\mu_{i-1}}{\mu_i}$ satisfies one 
of the conditions 
\eqref{item:main_theorem_poly_rich_item_neutral},
\eqref{item:main_theorem_poly_rich_item_subtype2},
\eqref{item:main_theorem_poly_rich_item_subtype3},
\eqref{item:main_theorem_poly_rich_item_ABp},
\eqref{item:main_theorem_poly_rich_item_neutral_2},
\eqref{item:main_theorem_poly_rich_item_ABp_2},
\eqref{item:main_theorem_poly_rich_item_neutral_3},
or
\eqref{item:main_theorem_poly_rich_item_ABp_3},
then so does 
$\quotienttct{\nu_{j-1}}{\nu_j}$.
Therefore,
since the homogeneous series 
$(\mu_0, \dots, \mu_{n})$ is (CT$k$),
we infer that 
$(\nu_0', \dots, \nu_{l}')$ is (CT$k$). 
\end{proof}

\section{Remarks on a well known strategy for interpolating partial functions with polynomials}\label{sec:interpolation}
Let $\ab{A}$ be a Mal'cev algebra, 
let $k\in\N$, let $D\subseteq A^k$
let $T\subseteq D$, let $f\colon D\to A$,
and let $\mu\in \Con\ab{A}$.
We say that a polynomial function $p\in\POL\ari{k}\ab{A}$
\emph{interpolates $f$ on $T$ modulo $\mu$} if for all 
$\vec{t}\in T$ we have $p(\vec{t})\mathrel{\mu}f(\vec{t})$.
Moreover, we say that $p$ \emph{interpolates $f$ on $T$}
if $p$ interpolates $f$ on $T$ modulo $\bottom{A}$.
Finally, we say that \emph{$p$ interpolates $f$} if $p$
interpolates $f$ on $D$. 
The following lemma tells us how to combine  
``interpolation of functions that are constant modulo $\mu$''
and
``interpolation modulo $\mu$''
to interpolate a compatible function. 
This has been done in~\cite[Lemma~6]{KaaMay10} for total functions 
on finite set and in~\cite[Lemma~7.1]{Ros24} for partial functions of
finite domain on possibly infinite sets. 
\begin{lemma}[{\cite[Lemma~7.1]{Ros24}}]\label{lemma:fromcosetstotheinfinirtyandbeyond}
Let $\ab{A}$ be a Mal'cev algebra,
let $\mu\in \Con\ab{A}$, and
let $\strset{R}$ be a set of relations on $A$ such that
$\Con\ab{A}\subseteq \strset{R}\subseteq \Inv\POL\ab{A}$.
We assume that
every partial function with finite domain
that preserves $\strset{R}$ and 
has its image contained in one $\mu$-equivalence class can be interpolated by a polynomial 
function of $\ab{A}$.  
Let $k\in \N$, let $D\subseteq A^k$, let $T$ be a finite
subset of $D$, and let $f\colon D\to A$ be a
partial function that preserves $\strset{R}$. We assume 
that there exists $p_0\in \POL\ari{k}\ab{A}$ 
that interpolates $f$ on $T$ modulo $\mu$. 
Then there exists $p\in \POL\ari{k}\ab{A}$ that interpolates $f$ on $T$. 
\end{lemma}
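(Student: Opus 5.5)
We argue by induction on $\card{T}$, the number of points of $T$ at which $p_0$ fails to agree with $f$ exactly. If $p_0$ already interpolates $f$ on $T$, we are done. Otherwise we modify $p_0$ one correction at a time. Each correction is made by a polynomial whose values lie in a single $\mu$-class, and we combine the corrections using the Mal'cev polynomial $d$.

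Fix an element $o\in A$ and consider the partial function $g\colon T\to A$ given by
\[
g(\vec{t}):=d\bigl(f(\vec{t}),p_0(\vec{t}),o\bigr).
\]
The key observation is that $g$ preserves $\strset{R}$. The function $f$ preserves $\strset{R}$ by hypothesis. The polynomial $p_0$ and the constant $o$ preserve every relation in $\strset{R}\subseteq\Inv\POL\ab{A}$, and so does $d$. Since $f$, $p_0$ and the constant $o$ are all evaluated at the same tuple $\vec{t}$, their coordinatewise composition with $d$ again preserves each relation. We also need $g$ to have finite domain; this holds because $T$ is finite, which is why we interpolate only on $T$.

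Next we check that the image of $g$ lies in one $\mu$-class. For every $\vec{t}\in T$ we have $f(\vec{t})\mathrel{\mu}p_0(\vec{t})$, hence
\[
g(\vec{t})=d\bigl(f(\vec{t}),p_0(\vec{t}),o\bigr)\mathrel{\mu}d\bigl(p_0(\vec{t}),p_0(\vec{t}),o\bigr)=o.
\]
So $g(T)\subseteq o/\mu$. By the hypothesis of the lemma there is $q\in\POL\ari{k}\ab{A}$ with $q\restrict{T}=g$.

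We then set $p(\vec{x}):=d\bigl(q(\vec{x}),o,p_0(\vec{x})\bigr)$ and want $p(\vec{t})=f(\vec{t})$ on $T$, that is,
\[
d\bigl(d(f(\vec{t}),p_0(\vec{t}),o),o,p_0(\vec{t})\bigr)=f(\vec{t}).
\]
This identity is the main obstacle. It holds in abelian settings, as in \cite[Lemma~3.1(3)]{Aic18}, but for an arbitrary Mal'cev algebra and a non-abelian $\mu$ it need not hold.

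To get around this, we instead interpolate directly the ``inverse-correcting'' map. The function $T\ni\vec{t}\mapsto f(\vec{t})$ is compared against the polynomial $x\mapsto d(x,o,p_0(\vec{t}))$. For each fixed $\vec{t}$, the unary map $u_{\vec{t}}(x):=d(x,o,p_0(\vec{t}))$ is a permutation-like polynomial on $o/\mu$ only when $\mu$ is abelian. So the general argument should avoid inverting $d$ altogether. The plan is the following. First choose $o:=p_0(\vec{t}_0)$ and reduce to $f(\vec{t}_0)=p_0(\vec{t}_0)$. Then correct the remaining points one at a time, using polynomials $q$ that are constant $o$ on the points already fixed. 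This makes $p:=d(q,o,p_0)$ agree with $p_0$, hence with $f$, on those points, because $d(o,o,y)=y$.

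The corrections $q$ are obtained by applying the hypothesis to partial functions defined on the already-correct points together with one new point $\vec{s}$. Each such function must take the value $o$ on the fixed points and a value $c$ at $\vec{s}$, where $c$ satisfies $d(c,o,p_0(\vec{s}))=f(\vec{s})$. The existence of such a $c$ in $o/\mu$ follows from congruence permutability: $d(\,\cdot\,,o,p_0(\vec{s}))$ maps $o/\mu$ onto $p_0(\vec{s})/\mu$, and this surjectivity is clear on a finite algebra. In general, surjectivity would instead be shown through the Mal'cev identities.

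The hardest remaining step is to check that these partial functions preserve $\strset{R}$. We plan to do this by writing the function as a composition of $f$, $p_0$ and constants, extended to a relational argument over $T$. The choice of $c$ must be made uniformly so that it is expressible in this way. This is where we expect to spend the most effort, and if needed we would follow the argument of \cite[Lemma~6]{KaaMay10}.
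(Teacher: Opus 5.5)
\textbf{There is a genuine gap, and it sits in the workaround step that you yourself flagged as hardest.}

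Two things fail there.

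First, the claim that $x\mapsto d(x,o,b)$ maps $o/\mu$ onto $b/\mu$ is false, even for finite algebras; congruence permutability does not give it. Take $\{0,1\}$ with the discriminator $d(x,y,z)=z$ if $x=y$ and $d(x,y,z)=x$ otherwise. This is a Mal'cev operation. Let $\mu$ be the total congruence, $o=0$ and $b=1$. Then $d(0,0,1)=d(1,0,1)=1$, so no $c$ satisfies $d(c,0,1)=0$.

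Second, even when such a $c$ exists, it is obtained by solving an equation. It is not a polynomial combination of $f$ and $p_0$, so there is no reason for the partial function ``$o$ on the corrected points, $c$ at $\vec{s}$'' to preserve $\strset{R}$. Take a congruence $\theta$ with $\vec{s}\mathrel{\theta}\vec{t}$ for a corrected point $\vec{t}$. You would need $c\mathrel{\theta}o$, but you only know $d(c,o,p_0(\vec{s}))\mathrel{\theta}d(o,o,p_0(\vec{s}))$, and this cannot be cancelled without abelianness. Your reduction to $f(\vec{t}_0)=p_0(\vec{t}_0)$ is also left unjustified.

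\textbf{The missing idea.} Make the correction function an explicit polynomial combination of $f$ and the current polynomial. Use the moving base point $p(\vec{s})$ instead of a fixed $o$.

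Let $p\in\POL\ari{k}\ab{A}$ satisfy $p(\vec{t})\mathrel{\mu}f(\vec{t})$ for all $\vec{t}\in T$, and let $\vec{s}\in T$ with $p(\vec{s})\neq f(\vec{s})$. Define $h\colon T\to A$ by $h(\vec{t}):=d(f(\vec{t}),p(\vec{t}),p(\vec{s}))$.
\begin{itemize}
\item $h$ preserves $\strset{R}$, because $f$ does and $p$, the constant $p(\vec{s})$ and $d$ are polynomials.
\item $h$ has finite domain.
\item Its image lies in $p(\vec{s})/\mu$, since $h(\vec{t})\mathrel{\mu}d(p(\vec{t}),p(\vec{t}),p(\vec{s}))=p(\vec{s})$.
\item $h(\vec{t})=p(\vec{s})$ wherever $p(\vec{t})=f(\vec{t})$, and $h(\vec{s})=f(\vec{s})$.
\end{itemize}
By the hypothesis there is $q\in\POL\ari{k}\ab{A}$ with $q\restrict{T}=h$. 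Put $p'(\vec{x}):=d(p(\vec{x}),p(\vec{s}),q(\vec{x}))$. Then:
\begin{itemize}
\item at every point already correct, $p'(\vec{t})=d(p(\vec{t}),p(\vec{s}),p(\vec{s}))=f(\vec{t})$;
\item $p'(\vec{s})=d(p(\vec{s}),p(\vec{s}),f(\vec{s}))=f(\vec{s})$;
\item on all of $T$, $p'(\vec{t})\mathrel{\mu}p(\vec{t})\mathrel{\mu}f(\vec{t})$, because $q(T)\subseteq p(\vec{s})/\mu$.
\end{itemize}
So the number of points of $T$ where the polynomial disagrees with $f$ strictly drops, and induction on that number (your own induction parameter) finishes the proof. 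No inversion of $d$ is ever needed, only $d(a,b,b)=a=d(b,b,a)$.
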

Lemma~\ref{lemma:fromcosetstotheinfinirtyandbeyond} motivates the study of those 
partial functions whose image is contained inside one equivalence class of a congruence.
In this section we focus on 
interpolating functions that are constant modulo a 
homogeneous congruence. 
The following proposition from 
\cite{Ros24} is a generalization
to Mal'cev algebras of the results from
\cite[Section~7]{AI:PIIE} on homogeneous non-abelian ideals of
expanded groups, and a generalization 
to the case of homogeneous non-abelian atoms of \cite[Proposition~4.4]{AicBehRos24} on 
subdirectly irreducible Mal'cev algebras with a non-abelian monolith. 
\begin{proposition}[{\cite[Proposition~7.3]{Ros24}}]\label{prop:interpolation_in_a_class_nonAbelia_case}
Let $\ab{A}$ be a Mal'cev algebra whose congruence lattice has finite
height,
let $\mu$ be a 
homogeneous non-abelian element of $\Con\ab{A}$,
let $o\in A$, and let $k\in\N$. Then for every finite subset $T$
of $A^k$,
and for all $l\colon T\to o/\mu$ the following are equivalent:
\begin{enumerate}
\item $l$ is compatible;\label{item:compatibel:in:prop:interpolation_in_a_class_nonAbelia_case}
\item there exists $p_T\in\POL\ari{k}\ab{A}$ such that for all $\vec{t}\in T$
we have $p_T(\vec{t})=l(\vec{t})$ and $p_T(A^k)\subseteq o/\mu$. \label{item:interpolable:in:prop:interpolation_in_a_class_nonAbelia_case}
\end{enumerate}
\end{proposition}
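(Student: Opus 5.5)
The implication \eqref{item:interpolable:in:prop:interpolation_in_a_class_nonAbelia_case}$\Rightarrow$\eqref{item:compatibel:in:prop:interpolation_in_a_class_nonAbelia_case} is immediate, because every polynomial function preserves all congruences. For the converse I will first pin down the structure of $\mu$. Since $\mu$ is non-abelian, some prime quotient $\alpha\prec\beta\leq\mu$ satisfies $[\beta,\beta]\nleq\alpha$. Proposition~\ref{prop:typesandhomogenuity}\eqref{item:typethreecasehomogeneous} then gives $\alpha=\bottom{A}$ and $\beta=\mu$. So $\mu$ is an atom of $\Con\ab{A}$ with $[\mu,\mu]=\mu$, and for every $\theta\in\Con\ab{A}$ either $\mu\leq\theta$ or $\mu\wedge\theta=\bottom{A}$.

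Now let $l\colon T\to o/\mu$ be compatible. For $\vec{s},\vec{t}\in T$ set $\theta(\vec{s},\vec{t}):=\bigvee_{i}\Cg{\{(s_i,t_i)\}}$. Compatibility gives $(l(\vec{s}),l(\vec{t}))\in\theta(\vec{s},\vec{t})\wedge\mu$. Hence $l(\vec{s})=l(\vec{t})$ whenever $\mu\nleq\theta(\vec{s},\vec{t})$. The plan is to write $l$ as a $+_o$-sum of ``local'' polynomials. For each $\vec{t}\in T$ I will build $p_{\vec t}\in\POL\ari{k}\ab{A}$ with $p_{\vec t}(A^k)\subseteq o/\mu$ such that:
\begin{itemize}
\item $p_{\vec t}(\vec t)$ is a prescribed element $a_{\vec t}\in o/\mu$;
\item $p_{\vec t}(\vec s)=o$ for every $\vec s\in T$ with $\mu\leq\theta(\vec s,\vec t)$.
\end{itemize}
The relation ``$\mu\nleq\theta(\vec s,\vec t)$'' need not be transitive, so I will organise the sum by an induction on $|T|$. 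I extend an interpolating polynomial $q$ for $l\restrict{T\setminus\{\vec t\}}$ to one for $l$, correcting $q$ by a summand vanishing on $T\setminus\{\vec t\}$. If some $\vec s\in T\setminus\{\vec t\}$ has $\mu\nleq\theta(\vec s,\vec t)$, then compatibility of both $l$ and $q$ (after relativising to $\mu$) forces $q(\vec t)=q(\vec s)=l(\vec s)=l(\vec t)$. Here one must check that $(q(\vec t),l(\vec t))\in\mu\wedge\theta(\vec s,\vec t)=\bottom{A}$. So only the case where every other $\vec s$ satisfies $\mu\leq\theta(\vec s,\vec t)$ needs a correction term. The corrected polynomial is $d(q,p_{\vec t},o)$-type combinations, that is $q+_o(\text{correction})$ computed in the class $o/\mu$. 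This is a legitimate group operation only if $\mu$-classes behave well; since $\mu$ is not abelian I will instead use directly a polynomial $r$ with $r(\vec s,y)=q(\vec s)$ for $\vec s\neq\vec t$ and $r(\vec t,\cdot)$ hitting $l(\vec t)$, built via $d(q(\vec x),o,p_{\vec t}(\vec x))$ and the absorbing property below.

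The key construction, and the main obstacle, is the absorbing polynomial. Given finitely many congruences $\theta_1,\dots,\theta_r$ with $\mu\leq\theta_j$, I need a polynomial that is constant $o$ whenever one of the ``separating'' arguments collapses, but takes any prescribed value of $o/\mu$ at the point $\vec t$. I intend to use the neutrality $[\mu,\mu]=\mu$ iterated: $\mu=[\mu,[\mu,\dots]]\leq[\theta_1,[\theta_2,\dots[\theta_r,\mu]]]$. I will then unwind the generating description of the commutator in Mal'cev algebras, as in \cite[Section~2]{Aic06} and the nested-commutator arguments of \cite[Section~7]{AI:PIIE} and \cite{AicBehRos24}. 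This yields polynomials $c(x_1,\dots,x_r,y)$ which are absorbing, i.e.\ $c=o$ whenever some $x_j$ equals its base value, and whose values at the separating point generate $\mu$ as a congruence. Since $\mu$ is an atom, every pair in $o/\mu$ is then reachable by unary polynomial translates (\cite[Theorem~4.70(iii)]{MMT:ALVV}) while preserving absorption. Substituting $x_j:=$ unary polynomials of $\vec x$ that separate $\vec t$ from $\vec s_j$ modulo $\theta_j$ gives $p_{\vec t}$ with the required vanishing. Absorption also forces $p_{\vec t}(A^k)\subseteq o/\mu$, because each absorbing polynomial is $\mu$-related to its value with one argument collapsed, namely $o$.
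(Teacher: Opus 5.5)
\paragraph{The gap: the correction step.}
Most of your outline holds up. This includes (2)$\Rightarrow$(1) and the observation that $q(\vec t)=q(\vec s)=l(\vec s)=l(\vec t)$ as soon as $\mu\nleq\theta(\vec s,\vec t)$ for a single $\vec s\in T\setminus\{\vec t\}$. It also includes the key construction: a polynomial $P$ with $P(\vec s)=o$ for all $\vec s\in T\setminus\{\vec t\}$, $P(A^k)\subseteq o/\mu$, and $P(\vec t)$ equal to an \emph{arbitrary} prescribed element of $o/\mu$. That construction uses absorbing polynomials coming from $\mu=[\theta_1,[\theta_2,\dots[\theta_r,\mu]]]$, the $k$-ary Mal'cev lemma for the substitutions $x_j:=g_j(\vec x)$, a constant $\mu$-slot to keep the image in one $\mu$-class, and a unary polynomial sending $(c(\vec b),c(\vec a))$ to $(o,z)$.

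The problem is the correction step. A polynomial of the form $d(q(\vec x),o,P(\vec x))$ or $d(q(\vec x),P(\vec x),o)$ can only take the values $d(q(\vec t),o,z)$, respectively $d(q(\vec t),z,o)$, with $z\in o/\mu$, at $\vec t$. When $\mu$ is not abelian these translations need not map $o/\mu$ onto $o/\mu$. So ``$r(\vec t,\cdot)$ hitting $l(\vec t)$'' is exactly the unproved point, and for this form of $r$ it is false in general.

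Here is a counterexample. The group $(A_5,\cdot)$ is polynomially complete, so setting $d(x,x,z)=z$, $d(x,z,z)=x$ and $d(x,y,z)=e$ otherwise defines a Mal'cev polynomial. Then $\mu=\uno{A}$ is a homogeneous neutral atom, and we take $o=e$. For $a\neq e$, both $z\mapsto d(a,e,z)$ and $z\mapsto d(a,z,e)$ have image $\{a,e\}$. Now let $T=\{e,a\}$, $l(e)=a$, $l(a)=b\notin\{a,e\}$, $q$ the constant $a$, and $\vec t=a$. No correction of your form reaches $b$.

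The repair is cheap, because your key construction lets you prescribe any value at $\vec t$. Take $P_1,P_2$ with $P_i(\vec s)=o$ on $T\setminus\{\vec t\}$, $P_1(\vec t)=q(\vec t)$ and $P_2(\vec t)=l(\vec t)$. Put $r(\vec x):=d\bigl(q(\vec x),P_1(\vec x),P_2(\vec x)\bigr)$. Then:
\begin{itemize}
\item $r(\vec s)=d(q(\vec s),o,o)=q(\vec s)$ for $\vec s\in T\setminus\{\vec t\}$;
\item $r(\vec t)=d(q(\vec t),q(\vec t),l(\vec t))=l(\vec t)$;
\item $r(A^k)\subseteq d(o,o,o)/\mu=o/\mu$.
\end{itemize}
This uses only the Mal'cev identities.

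\paragraph{A smaller point: your first step.}
From $[\mu,\mu]\neq\bottom{A}$ alone you cannot infer that some prime quotient below $\mu$ is non-abelian when only finite height is assumed. Take $\ab{A}=\Z_p\times H$ with $H$ extraspecial of order $p^3$. All prime quotients of $\Con\ab{A}$ are abelian (the group is nilpotent) and projective to each other, so $\uno{A}$ is homogeneous with $[\uno{A},\uno{A}]\neq\bottom{A}$.

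You need $[\mu,\mu]=\mu$, which is how the proposition is used in Proposition~\ref{prop:interpolation_on_quotient_yields_interpolation_case_neutral}. Alternatively, for finite $\ab{A}$ with (SC1), Proposition~\ref{prop:homogenuityandcolpementation}\eqref{item:ex_if_not_neutral_then_abelian_for_hom} gives it. Once you have $[\mu,\mu]=\mu$, apply Proposition~\ref{prop:typesandhomogenuity}\eqref{item:typethreecasehomogeneous} to a lower cover $\gamma\prec\mu$; since $[\mu,\mu]=\mu\nleq\gamma$, this yields $\gamma=\bottom{A}$. Hence $\mu$ is a neutral atom, as your argument requires.
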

For a congruence $\alpha$ of a (Mal'cev) algebra $\ab{A}$ and for 
$k\in\N$ we define
\[
\alpha^k:=\Biggl\{\begin{pmatrix}
\vec{a}\\
\vec{b}
\end{pmatrix}\in (A^k)^2\mid 
\forall i\leq k\colon (\vec{a}(i),\vec{b}(i))\in\alpha
\Biggr\}.
\]
The following proposition from 
\cite{Ros24} is a generalization
to Mal'cev algebras of the results from
\cite[Section~7]{AI:PIIE} on abelian homogeneous ideals of expanded groups.
\begin{proposition}[{\cite[Proposition~7.4]{Ros24}}]\label{prop:interpolating_on_cosets_abelian_case}
Let $\ab{A}$ be a finite Mal'cev algebra with (SC1), let $\mu$ be a
homogeneous abelian element of $\Con\ab{A}$ 
let $o\in A$, let $k\in\N$, let $T$ be a
finite subset of $A^k$, 
and let $f\colon T\to o/\mu$. Then the following are equivalent:
\begin{enumerate}
\item There exists $p_T\in\POL\ari{k}\ab{A}$ with $p_T(\vec{t})=f(\vec{t})$ for all 
$\vec{t}\in T$, and $p_T(A^k)\subseteq o/\mu$;\label{item:interpolating_polynomial_prop:interpolating_on_cosets_abelian_case}
\item for each $(\Phi(\mu):\mu)^k$-equivalence class of the form 
$\vec{v}/(\Phi(\mu):\mu)^k$ there exists $p_{\vec{v}}\in\POL\ari{k}\ab{A}$
such that $p_{\vec{v}}(\vec{t})=f(\vec{t})$ for all 
$\vec{t}\in T\cap (\vec{v}/(\Phi(\mu):\mu)^k)$. \label{item:interpolation-on_cosets_prop:interpolating_on_cosets_abelian_case}
\end{enumerate}   
\end{proposition}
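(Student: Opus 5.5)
The implication \eqref{item:interpolating_polynomial_prop:interpolating_on_cosets_abelian_case}$\Rightarrow$\eqref{item:interpolation-on_cosets_prop:interpolating_on_cosets_abelian_case} is trivial: take $p_{\vec{v}}:=p_T$ for every class. For the converse, my plan is to glue the local interpolants $p_{\vec{v}}$ by polynomial ``switches'' that select one $(\Phi(\mu):\mu)^k$-class at a time. Set $\gamma:=(\Phi(\mu):\mu)$. By Proposition~\ref{prop:homogenuityandcolpementation}\eqref{item:phiofhomogenous} we have $\Phi(\mu)=\bottom{A}$, so $\gamma=(\bottom{A}:\mu)$. Since $\mu$ is abelian, Proposition~\ref{prop:homogenuityandcolpementation}\eqref{item:ex_if_not_neutral_then_abelian_for_hom} gives $\gamma=\mu\vee\mu^*$. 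By Lemma~\ref{lemma:piu_meno_fanno_modulo_su-anello_polinomi_ristretto_nella_classe_congruenza} and Proposition~\ref{prop:identification_with_matrices}, the class $o/\mu$ with $+_o$ is an $\ab{M}_n(\ab{D})$-module $\ab{D}^{(n\times h)}$ on which the unary polynomials fixing $o$ act as the whole matrix ring.

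\textbf{Step 1 (normalising the local interpolants).} Only finitely many classes $C_1,\dots,C_r$ of $\gamma^k$ meet $T$. For each $j$ I want to replace $p_{C_j}$ by a polynomial $q_j$ that agrees with $f$ on $T\cap C_j$ and maps $C_j$ into $o/\mu$. Since $f(T\cap C_j)\subseteq o/\mu$, I first try
\[
q_j(\vec{x}):=d\bigl(p_{C_j}(\vec{x}),\,p_{C_j}(\vec{t}_j),\,f(\vec{t}_j)\bigr)
\]
for a fixed $\vec{t}_j\in T\cap C_j$. This is correct on $T\cap C_j$, but it need not stay inside $o/\mu$ off $T$. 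That does not matter, provided the switch of Step~2 only reads its second argument through its class modulo $\mu$ relative to $o$, or is built so that it first projects onto $o/\mu$.

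\textbf{Step 2 (switch polynomials).} For each $j$ I aim to construct $e_j\in\POL\ari{k+1}\ab{A}$ such that:
\begin{itemize}
\item $e_j(\vec{x},y)=y$ for $\vec{x}\in T\cap C_j$ and $y\in o/\mu$;
\item $e_j(\vec{x},y)=o$ for $\vec{x}\in T\setminus C_j$ and all $y\in o/\mu$;
\item $e_j(A^k\times A)\subseteq o/\mu$.
\end{itemize}
The source of separation is that if $\vec{a}\not\mathrel{\gamma^k}\vec{b}$, then some coordinate pair fails to centralise $\mu$ modulo $\bottom{A}$. The term condition then gives a polynomial $s(\vec{x},y)$ and $u,v\in o/\mu$ with $s(\vec{a},u)=s(\vec{a},v)$ but $s(\vec{b},u)\neq s(\vec{b},v)$. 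Set $s'(\vec{x},y):=d(s(\vec{x},y),s(\vec{x},o),o)$. Because $\mu$ is abelian, each $y\mapsto s'(\vec{x},y)$ restricted to $o/\mu$ is an element of $R_o$, hence a matrix $M_{\vec{x}}$. Here $M_{\vec{b}}\neq 0$ while $M_{\vec{a}}$ is whatever it is.

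Since $\ab{M}_n(\ab{D})$ is simple, composing with unary polynomials on both sides and adding via $+_o$ lets me manufacture the identity matrix at $\vec{b}$ while keeping control at $\vec{a}$. The standard Jacobson-density / Chinese-remainder style induction on the finitely many points of $T$, using products and sums in $R_o$, then separates one class $C_j$ from all others. I would follow the pattern of \cite[Section~7]{AI:PIIE}, replacing ideals by the class $o/\mu$ and using (SC1) through Proposition~\ref{prop:homogenuityandcolpementation}\eqref{item:splithomogeous}.

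\textbf{Step 3 (gluing).} Finally I set
\[
p_T(\vec{x}):=e_1(\vec{x},q_1(\vec{x}))+_o\cdots+_o e_r(\vec{x},q_r(\vec{x})),
\]
whose image lies in $o/\mu$ and which equals $f$ on $T$.

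I expect Step~2 to be the main obstacle. The difficulty is making $e_j$ behave correctly when the second argument $q_j(\vec{x})$ lies outside $o/\mu$, together with the non-commutativity of the matrix ring in the density argument. If this fails, the first fallback is to pre-compose $y$ with a polynomial retraction-like map $y\mapsto d(y,\pi(y),o)$, where $\pi$ collapses $\mu$-classes. Such a map can be obtained from $\gamma=\mu\vee\mu^*$ and $\mu\wedge\mu^*=\bottom{A}$ (Lemma~\ref{lemma:whymumeetmustariszero}).
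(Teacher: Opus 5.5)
Your gluing scheme is reasonable, but there is a genuine gap at the point you flag, and it is exactly where (SC1) is indispensable. Consider the third requirement of Step~2, $e_j(A^k\times A)\subseteq o/\mu$, together with $e_j(\vec{x},y)=y$ for $\vec{x}\in T\cap C_j$ and $y\in o/\mu$. For a fixed $\vec{x}\in T\cap C_j$ these say that $y\mapsto e_j(\vec{x},y)$ is a unary polynomial retraction of $A$ onto $o/\mu$. Your density argument cannot produce this. The matrices $M_{\vec{x}}$ only record the restrictions of $y\mapsto s'(\vec{x},y)$ to $o/\mu$ and say nothing about arguments outside $o/\mu$. Yet $q_j(\vec{x})$ may lie outside $o/\mu$ for $\vec{x}\in T\setminus C_j$ (not only off $T$), and control outside $o/\mu$ is also what $p_T(A^k)\subseteq o/\mu$ requires.

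This is not a technicality. Take $\ab{A}=(\Z_4,+)$ and let $\mu$ be the congruence with classes $\{0,2\},\{1,3\}$. Then $\mu$ is a homogeneous abelian atom with $(\bottom{A}:\mu)=\uno{A}$. For $o=0$, $T=\{0,2\}$ and $f=\mathrm{id}_T$, condition (2) holds via $p(x)=x$. However, every polynomial $x\mapsto ax+b$ with image in $\{0,2\}$ has $a$ even and is therefore constant on $T$, so (1) fails. In this example everything in your Steps~2 and~3 works except the retraction, and (SC1) fails since $(\bottom{A}:\mu)\not\le\mu$. Your fallback does not close the gap. A polynomial $\pi$ that is constant on $\mu$-classes with $\pi(y)\mathrel{\mu}y$ is a stronger object than a retraction, and it does not exist in this example either. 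The identities $\gamma=\mu\vee\mu^*$ and $\mu\wedge\mu^*=\bottom{A}$ do not by themselves yield any polynomial.

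The missing ingredient is the idempotent $e^o_\mu\in\POL\ari{1}\ab{A}$ with $e^o_\mu(A)=o/\mu$ from \cite[Theorem~6.12]{Ros24}. Its existence uses (SC1), and the paper invokes it repeatedly. With it, set $q_j:=e^o_\mu\circ p_{C_j}$. This has image in $o/\mu$ and agrees with $f$ on $T\cap C_j$. The switches then only need to be controlled on $o/\mu$, which your matrix argument does provide, and your Step~3 formula works.

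Two smaller points in Step~2 also need repair. First, a failure of the term condition for $\Cg{\{(a_i,b_i)\}}$ against $\mu$ gives witnesses in some $\mu$-class, not necessarily in $o/\mu$. You must transport them into $o/\mu$, for example via the polynomial bijections between nontrivial $\mu$-classes from \cite[Theorem~6.13]{Ros24}. Second, what you obtain is only $M_{\vec{a}}\neq M_{\vec{b}}$. You must first subtract the constant matrix $M_{\vec{a}}$, realised by a unary polynomial in $y$, before using simplicity of $\ab{M}_n(\ab{D})$ to get $0$ at $\vec{a}$ and the identity at $\vec{b}$.
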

\begin{proposition}\label{prop:laproposizione_dell_f_uno}
	Let $\ab{A}$ be a finite Mal'cev algebra with (SC1),
	let $\mu$ be a homogeneous abelian congruence of $\ab{A}$,
	let $o\in A$, let $k\in\N$, let $\vec{v}\in A^k$,
	let $T\subseteq \vec{v}/(\Phi(\mu):\mu)^k$, 
	let $f\colon T\to o/\mu$ be a congruence preserving (type-preserving)
	function. 
	Let $f_1\subseteq \vec{v}/\mu \times o/\mu$ be
	defined by 
		\[
	\begin{split}
		f_1:=\{&(\vec{u}, f(d(u_1,v_1, u_1^*), \dots, d(u_k, v_k, u_k^*)))
		\mid \\
		&\vec{u}\in \vec{v}/\mu^k \text{ and } \exists 
		\vec{u}^*\in \vec{v}/(\mu^*)^k\colon (d(u_1,v_1, u_1^*), \dots, d(u_k, v_k, u_k^*))\in T\}.
	\end{split}
	\]
	Then $f_1$ is functional and congruence preserving (type-preserving). 
	Furthermore, if there exists $p\in\POL\ari{k}\ab{A}$ that
	interpolates $f_1$ on its domain and has its image contained in $o/\mu$, then 
	for all $\vec{t}\in T$ we have $p(\vec{t})=f(\vec{t})$. 
\end{proposition}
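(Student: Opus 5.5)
The plan has three parts: express each point of $T$ through a $\mu$-component and a $\mu^*$-component, use $\mu\wedge\mu^*=\bottom{A}$ to settle the questions about $f_1$, and use modularity to transfer congruences from $\interval{\bottom{A}}{\mu}$ to the join with $\mu^*$.

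\textbf{Structure of $T$.} Since $\mu$ is abelian and homogeneous, $[\mu,\mu]\neq\mu$. Hence Proposition~\ref{prop:homogenuityandcolpementation}\eqref{item:phiofhomogenous} and~\eqref{item:ex_if_not_neutral_then_abelian_for_hom} give $\Phi(\mu)=\bottom{A}$ and $(\Phi(\mu):\mu)=\mu\vee\mu^*$. So $T\subseteq \vec{v}/(\mu\vee\mu^*)^k$. By congruence permutability, every $t_i\in v_i/(\mu\vee\mu^*)$ can be written as $d(u_i,v_i,u_i^*)$ with $u_i\mathrel{\mu}v_i$ and $u_i^*\mathrel{\mu^*}v_i$. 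Thus every $\vec{t}\in T$ arises in the definition of $f_1$. The basic tool throughout is Lemma~\ref{lemma:whymumeetmustariszero}: $\mu\wedge\mu^*=\bottom{A}$. Note that
\[
d(u_i,v_i,u_i^*)\mathrel{\mu^*}d(u_i,v_i,v_i)=u_i .
\]

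\textbf{Functionality and the final claim.} Suppose $\vec{t}=d(\vec{u},\vec{v},\vec{u}^*)$ and $\vec{t}'=d(\vec{u},\vec{v},\vec{w}^*)$ both lie in $T$. Then $\vec{t}\mathrel{(\mu^*)^k}\vec{u}\mathrel{(\mu^*)^k}\vec{t}'$. Since $f$ preserves $\mu^*$, we get $f(\vec{t})\mathrel{\mu^*}f(\vec{t}')$. Both values lie in $o/\mu$, so they are equal, and $f_1$ is functional.

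For the last assertion, let $p$ have image in $o/\mu$. Then $p(\vec{t})\mathrel{\mu^*}p(\vec{u})=f_1(\vec{u})=f(\vec{t})$, and both sides lie in $o/\mu$, so $p(\vec{t})=f(\vec{t})$.

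\textbf{Congruence preservation.} Let $\gamma\in\Con\ab{A}$ and let $\vec{u},\vec{u}'$ be in the domain of $f_1$ with $\vec{u}\mathrel{\gamma^k}\vec{u}'$. Both tuples lie in $\vec{v}/\mu^k$, so in fact $\vec{u}\mathrel{\delta^k}\vec{u}'$ with $\delta:=\gamma\wedge\mu$. The corresponding points $\vec{t},\vec{t}'\in T$ then satisfy $\vec{t}\mathrel{(\delta\vee\mu^*)^k}\vec{t}'$. Hence $f(\vec{t})\mathrel{\delta\vee\mu^*}f(\vec{t}')$. Both values lie in $o/\mu$, so the pair lies in $(\delta\vee\mu^*)\wedge\mu$. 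By modularity and $\delta\le\mu$,
\[
(\delta\vee\mu^*)\wedge\mu=\delta\vee(\mu^*\wedge\mu)=\delta\le\gamma .
\]

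\textbf{Type preservation (main obstacle).} Let $\quotienttct{\alpha}{\beta}$ be a prime quotient of type $\tp{2}$, and let $\vec{u}^{1},\dots,\vec{u}^{4}$ be in the domain of $f_1$ with coordinatewise $(u^1_i,\dots,u^4_i)\in\rho(\alpha,\beta)$. Choose corresponding points $\vec{t}^{\,j}=d(\vec{u}^{j},\vec{v},\vec{u}^{j*})\in T$.

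If $\alpha\ge\mu$, the claim is trivial because all values lie in $o/\mu$. Otherwise I intend to pass to the quotient $\quotienttct{\alpha'}{\beta'}$, where $\alpha'=\alpha\wedge\mu$ and $\beta'=\beta\wedge\mu$ in the relevant case. I expect Proposition~\ref{prop:homogenuityandcolpementation}\eqref{item:splithomogeous} to restrict which configurations can occur. Lemma~\ref{lemma:projecting_belowo_mu_star} then gives that $\quotienttct{\alpha'\vee\mu^*}{\beta'\vee\mu^*}$ is again prime of type $\tp{2}$.

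The key computation is to show that the $\vec{t}^{\,j}$ satisfy coordinatewise $\rho(\alpha'\vee\mu^*,\beta'\vee\mu^*)$. For this I would use abelianness of $\mu\vee\mu^*$ modulo $\alpha'\vee\mu^*$ and the interchange law from \cite[Proposition~2.6]{Aic06}, namely $d(d(\cdot),d(\cdot),d(\cdot))$ versus $d$ applied componentwise. This should give
\[
d(t^1_i,t^2_i,t^3_i)\equiv d\bigl(d(u^1_i,u^2_i,u^3_i),\,v_i,\,*\bigr)\quad\text{modulo } \alpha'\vee\mu^*,
\]
where $*$ stands for an element $\mu^*$-related to $v_i$.

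Applying $f$ then yields $d(f(\vec{t}^{\,1}),f(\vec{t}^{\,2}),f(\vec{t}^{\,3}))\mathrel{\alpha'\vee\mu^*}f(\vec{t}^{\,4})$. All four values lie in $o/\mu$, so intersecting with $\mu$ and using modularity as in the congruence case leaves $\alpha'\le\alpha$. The delicate points are the case analysis for $\quotienttct{\alpha}{\beta}$ not below $\mu$ and checking that the commutator calculations stay inside $\mu\vee\mu^*$.
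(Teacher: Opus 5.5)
Your outline is essentially the paper's proof. Both get $(\Phi(\mu):\mu)=\mu\vee\mu^*$ from Proposition~\ref{prop:homogenuityandcolpementation}\eqref{item:ex_if_not_neutral_then_abelian_for_hom}, and both prove functionality and the final claim from $\mu^*$-compatibility plus $\mu\wedge\mu^*=\bottom{A}$. Compatibility uses the modular law in both; your $\delta=\gamma\wedge\mu$ plays the role of the paper's $\tilde{\theta}$. Type preservation goes through Lemma~\ref{lemma:projecting_belowo_mu_star} in both. One small correction: the exact decomposition $t_i=d(u_i,v_i,u_i^*)$ is Lemma~\ref{lemma:conseguenze_permutabilita_intervalli_proiettivi_esistenza,d(b,o,c)} for $\interval{\bottom{A}}{\mu}\nearrow\interval{\mu^*}{\mu\vee\mu^*}$. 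Permutability alone only yields an intermediate element, and exact equality needs $\mu\wedge\mu^*=\bottom{A}$ once more.

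The step you leave open, type preservation, is easier than you expect, but it does not go through the tools you name.

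\textbf{The case split.} It is not governed by Proposition~\ref{prop:homogenuityandcolpementation}\eqref{item:splithomogeous}, but by whether $\beta\wedge\mu\le\alpha$.
\begin{itemize}
\item If $\beta\wedge\mu\le\alpha$ (this includes $\alpha\ge\mu$): all $u^j_i$ lie in $v_i/\mu$, so consecutive ones are $(\beta\wedge\mu)$-related, hence $\alpha$-related. Also $u^4_i\mathrel{\alpha}d(u^1_i,u^2_i,u^3_i)\mathrel{\alpha}u^1_i$, so compatibility of $f_1$ finishes.
\item If $\beta\wedge\mu\nleq\alpha$: then $\alpha\vee(\beta\wedge\mu)=\beta$ and $\alpha\wedge(\beta\wedge\mu)=\alpha\wedge\mu$, so $\interval{\alpha\wedge\mu}{\beta\wedge\mu}\nearrow\interval{\alpha}{\beta}$. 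Hence $\quotienttct{\alpha'}{\beta'}$ is prime, since transposed intervals are isomorphic, and abelian, since $\beta'\le\mu$. The $u$-tuples lie in $\rho(\alpha',\beta')$ because all their entries are $\mu$-related.
\end{itemize}

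\textbf{The key computation.} No abelianness of $\mu\vee\mu^*$ or interchange law is needed for the $t$-tuples. Since $t^j_i\mathrel{\mu^*}u^j_i$ and $d$ is compatible, you get $d(t^1_i,t^2_i,t^3_i)\mathrel{\mu^*}d(u^1_i,u^2_i,u^3_i)\mathrel{\alpha'}u^4_i\mathrel{\mu^*}t^4_i$. Likewise $t^1_i\mathrel{\mu^*}u^1_i\mathrel{\beta'}u^2_i\mathrel{\mu^*}t^2_i$, and the same for $t^3_i$. This is the unstated reason behind the paper's assertion that these tuples lie in $\rho(\alpha\vee\mu^*,\beta\vee\mu^*)$.

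Your concluding step then completes the argument: $(\alpha'\vee\mu^*)\wedge\mu=\alpha'$, and the $\beta$-part comes from compatibility of $f_1$. Incidentally, your two-case split is tidier than the paper's. The paper's dichotomy ``$\mu\le\alpha$ or $\alpha\wedge\mu\prec\beta\wedge\mu$'' tacitly skips the case $\beta\wedge\mu\le\alpha$, $\mu\nleq\alpha$, which the compatibility argument above covers.
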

\begin{proof}
First, we remark that
Proposition~\ref{prop:homogenuityandcolpementation}\eqref{item:ex_if_not_neutral_then_abelian_for_hom}
implies that
$(\Phi(\mu):\mu)=\mu\vee \mu^*$.
Next, we prove that $f_1$ is functional. To this end, let
	$\vec{u}\in \vec{v}/\mu^k$ and let $\vec{a}^*, \vec{b}^*\in \vec{v}/(\mu^*)^k$ be such that 
	\[
	\begin{pmatrix}
		d(u_1, v_1, a_1^*)\\
		\vdots \\
		d(u_k, v_k, a_k^*)
	\end{pmatrix},\,
	\begin{pmatrix}
		d(u_1, v_1, b_1^*)\\
		\vdots \\
		d(u_k, v_k, b_k^*)
	\end{pmatrix}\in T.\]
	Then by compatibility, we have that 
	\[
	f(d(u_1, v_1, a_1^*), \dots d(u_k,v_k, a_k^*))\mathrel{\mu^*}
	f(d(u_1, v_1, b_1^*), \dots d(u_k,v_k, b_k^*)).\]
	Since $f(T)\subseteq o/\mu$, we have that 
	\[
	f(d(u_1, v_1, a_1^*), \dots d(u_k,v_k, a_k^*))\mathrel{\mu\wedge \mu^*}
	f(d(u_1, v_1, b_1^*), \dots d(u_k,v_k, b_k^*)), \]
	and therefore, Lemma~\ref{lemma:whymumeetmustariszero} yields
	\[
	f(d(u_1, v_1, a_1^*), \dots d(u_k,v_k, a_k^*))=
	f(d(u_1, v_1, b_1^*), \dots d(u_k,v_k, b_k^*)). \]
	
	Next, we show that $f_1$ is compatible. 
	To this end, let $\theta\in\Con\ab{A}$, and 
	let $\vec{u}\equiv_\theta \vec{w}$ with $\vec{u}, \vec{w}$ in
	the domain of $f_1$. 
	We show that $f_1(\vec{u})\mathrel{\theta}f_1(\vec{w})$.
	Let us define \[
	\tilde{\theta}:=\Cg{\{(u_1, w_1), \dots, (u_k, w_k)\}}.
	\]
	Since $\vec{u}$ and $\vec{w}$ are in the domain of $f_1$,
	there exist $\vec{u}^*, \vec{w}^*\in \vec{v}/(\mu^*)^k$ such that 
	\[
	\begin{pmatrix}
		d(u_1, v_1, u_1^*)\\
		\vdots\\
		d(u_k, v_k, u_k^*)
	\end{pmatrix},\, 
	\begin{pmatrix}
		d(w_1, v_1, w_1^*)\\
		\vdots\\
		d(w_k, v_k, w_k^*)
	\end{pmatrix}\in T.
	\]
	Furthermore, we have 
	\[
	\begin{pmatrix}
		d(u_1, v_1, u_1^*)\\
		\vdots\\
		d(u_k, v_k, u_k^*)
	\end{pmatrix} \equiv_{\tilde{\theta}} 
	\begin{pmatrix}
		d(w_1, v_1, u_1^*)\\
		\vdots\\
		d(w_k, v_k, u_k^*)
	\end{pmatrix} \equiv_{\mu^*}
	\begin{pmatrix}
		d(w_1, v_1, w_1^*)\\
		\vdots\\
		d(w_k, v_k, w_k^*)
	\end{pmatrix}
	\]
	Thus, since $f$ is compatible and $f(T)\subseteq o/\mu$
	\[
	\begin{split}
		f_1(\vec{u})=&f(d(u_1, v_1, u_1^*), \dots, d(u_k, v_k, u_k^*))
		\equiv_{(\tilde{\theta}\vee \mu^*)\wedge \mu}\\
		&f(d(w_1, v_1, w_1^*), \dots, d(w_k, v_k, w_k^*))=f_1(\vec{w}).
	\end{split}
	\]
	Since $\tilde{\theta}\leq\mu$ and $\Con\ab{A}$ is modular,
	Lemma~\ref{lemma:whymumeetmustariszero} yields
	\[(\tilde{\theta}\vee\mu^*)\wedge\mu=\tilde{\theta}\vee(\mu^*\wedge \mu)=
	\tilde{\theta}\vee\bottom{A}=\tilde{\theta}\subseteq \theta.\]
	Thus, $f_1(\vec{u})\mathrel{\theta}f_1(\vec{w})$. 
	
	Next, we show that $f_1$ is type-preserving (under the assumption that
	$f$ is type-preserving). 
	To this end, let us fix $\alpha, \beta\in \Con\ab{A}$ with
	$\alpha\prec\beta\leq \mu$ and $[\beta, \beta]\leq \alpha$,
	and let us fix $\vec{x}, \vec{y}, \vec{z}, \vec{w}$ in the 
	domain of $f_1$ such that 
	$\vec{x}\equiv_\beta \vec{y}\equiv_\beta \vec{z}$
	and 
	\[
	\begin{pmatrix}
		d(x_1,y_1, z_1)\\
		\vdots \\
		d(x_k, y_k, z_k)
	\end{pmatrix}\equiv_\alpha \vec{w}.
	\]
	The fact that $\card{\{f_1(\vec{x}), f_1(\vec{y}), f_1(\vec{z})\}/\beta}=1$
	follows from the already proved compatibility of 
	$f_1$. 
	We show that $d(f_1(\vec{x}), f_1(\vec{y}), f_1(\vec{z}))\mathrel{\alpha} f_1(\vec{w})$. 
	Since 
	$(x_i, y_i, z_i, w_i)\in \rho(\alpha, \beta)$ for all $i\in\finset{k}$ and 
	since $\vec{x}, \vec{y}, \vec{z}, \vec{w}\in \dom f_1$,
	there exist $\vec{x}^*,\vec{y}^*,\vec{z}^*,\vec{w}^* \in \vec{v}/(\mu^*)^k$
	such that for all $i\in\finset{k}$ we have 
	\[
	\begin{pmatrix}
		d(x_i, v_i, x_i^*)\\d(y_i, v_i, y_i^*)\\d(z_i, v_i, z_i^*)\\
		d(w_i, v_i, w_i^*)
	\end{pmatrix}\in \rho(\alpha\vee \mu^*, \beta\vee\mu^*)
	\]
	and 
	\[
	\begin{pmatrix}
		d(x_1, v_1, x_1^*)\\
		\vdots \\
		d(x_k, v_k, x_k^*) 
	\end{pmatrix},\,
	\begin{pmatrix}
		d(y_1, v_1, y_1^*)\\
		\vdots \\
		d(y_k, v_k, y_k^*) 
	\end{pmatrix},\,
	\begin{pmatrix}
		d(z_1, v_1, z_1^*)\\
		\vdots \\
		d(z_k, v_k, z_k^*) 
	\end{pmatrix},\,
	\begin{pmatrix}
		d(w_1, v_1, w_1^*)\\
		\vdots \\
		d(w_k, v_k, w_k^*) 
	\end{pmatrix}\in \dom f
	\]
Therefore, since $f$ is type-preserving,
Lemma~\ref{lemma:projecting_belowo_mu_star} 
implies that
	\[
	\begin{pmatrix}
		f(d(x_1, v_1, x_1^*), \dots, d(x_k, v_k, x_k^*))\\
		f(d(y_1, v_1, y_1^*), \dots, d(y_k, v_k, y_k^*))\\
		f(d(z_1, v_1, z_1^*), \dots, d(z_k, v_k, z_k^*))\\
		f(d(w_1, v_1, w_1^*), \dots, d(w_k, v_k, w_k^*))
	\end{pmatrix}\in \rho(\alpha\vee \mu^*, \beta\vee\mu^*).
	\]
	Thus, we have that 
	\[\begin{split}
		d\bigl( &f (d(x_1, v_1, x_1^*), \dots, d(x_k, v_k,x_k^*)), \\
		&f (d(y_1, v_1, y_1^*), \dots, d(y_k, v_k, y_k^*)),\\
		&f (d(z_1, v_1, z_1^*), \dots, d(z_k, v_k, z_k^*))
		\bigr)\equiv_{\alpha\vee \mu^*}\\
		f&(d(w_1, v_1, w_1^*), \dots, d(w_k, v_k, w_k^*)).
	\end{split}
	\]
Since $\alpha\leq \mu$ and $\Con\ab{A}$ is modular, Lemma~\ref{lemma:whymumeetmustariszero} implies that
	\[(\alpha \vee \mu^*)\wedge \mu=\alpha.
	\]
Since $f$ is constant modulo $\mu$ we have that
	\[
	\begin{split}
		d(&f_1(\vec{x}), f_1(\vec{y}), f_1(\vec{z})) =\\
		d\bigl( &f (d(x_1, v_1, x_1^*), \dots, d(x_k, v_k,x_k^*)), \\
		&f (d(y_1, v_1, y_1^*), \dots, d(y_k, v_k, y_k^*)),\\
		&f (d(z_1, v_1, z_1^*), \dots, d(z_k, v_k, z_k^*))
		\bigr)\equiv_{\alpha}\\
		f&(d(w_1, v_1, w_1^*), \dots, d(w_k, v_k, w_k^*))=
		f_1(\vec{w}).
	\end{split}
	\]
Thus, $f_1$ preserves $\rho(\alpha, \beta)$.
In order to prove that $f_1$ is type-preserving, 
we need to show that $f_1$ preserves the relations 
of the form $\rho(\alpha, \beta)$ when 
$\quotienttct{\alpha}{\beta}$ is a type $\tp{2}$
quotient that is not contained in 
$\interval{\bottom{A}}{\mu}$. 
To this end, let us fix $\alpha, \beta\in \Con\ab{A}$ 
with the above discussed properties, and let 
$\vec{x}, \vec{y}, \vec{z}, \vec{w}\in \dom f_1$ 
such that for all $i\in \finset{k}$ we have
$(x_i, y_i, z_i, w_i)\in \rho(\alpha, \beta)$. 
Then since $\dom f_1\subseteq (o/\mu)^k$, 
for each $i\in \finset{k}$, we have 
$x_i\mathrel{\mu} y_i\mathrel{\mu} z_i \mathrel{\mu} w_i$.
Thus, 
either $\mu\leq \alpha\prec \beta$ or 
$\alpha\wedge \mu\prec \beta\wedge \mu$ and
$(x_i, y_i, z_i, w_i)\in \rho(\alpha\wedge \mu, \beta\wedge \mu)$
for each $i\in\finset{k}$. 
If $\mu\leq \alpha\prec \beta$, then 
$\mu\times \mu\subseteq \rho(\alpha, \beta)$ 
and since for each $i\in \finset{k}$ we have 
$x_i\mathrel{\mu} y_i\mathrel{\mu} z_i \mathrel{\mu} w_i$,
the fact that
\[(f_1(\vec{x}), f(\vec{y}), 
f(\vec{z}), f(\vec{w}))\in \rho(\alpha, \beta)\]
follows from the already proved compatibility of $f_1$.
If $\alpha\wedge \mu\prec \beta\wedge \mu$ and
$(x_i, y_i, z_i, w_i)\in \rho(\alpha\wedge \mu, \beta\wedge \mu)$
for each $i\in\finset{k}$,
then since $f_1$ preserves $\rho(\gamma, \delta)$ 
for each prime quotient $\quotienttct{\gamma}{\delta}$
contained in $\interval{\bottom{A}}{\mu}$, we have
\[(f_1(\vec{x}), f(\vec{y}), 
f(\vec{z}), f(\vec{w}))\in \rho(\alpha\wedge \mu, \beta\wedge \mu)
\subseteq \rho(\alpha, \beta).
\]
This concludes the proof that $f_1$ is type-preserving. 
	
Next, we assume that
there exists $p\in\POL\ari{k}\ab{A}$
that interpolates $f_1$ on its domain and 
has its image contained in $o/\mu$, and 
we show that for all $\vec{t}\in T$ 
we have $f(\vec{t})=p(\vec{t})$. 
To this end, let us fix $\vec{t}\in T$. 
Since $\interval{\bottom{A}}{\mu}\nearrow\interval{\mu^*}{\mu\vee\mu^*}$,
Lemma~\ref{lemma:conseguenze_permutabilita_intervalli_proiettivi_esistenza,d(b,o,c)} implies that there exist
$\vec{u}\in \vec{v}/\mu^k$ and $\vec{u}^*\in \vec{v}/(\mu^*)^k$
such that 
\[
\vec{t}=\begin{pmatrix}
	d(u_1, v_1, u_1^*)\\
	\vdots \\
	d(u_k, v_k, u_k^*)
\end{pmatrix}.
\]
Thus, $f(\vec{t})=f_1(\vec{u})=p(\vec{u})$.
Since for each $i\in\finset{k}$ we have $u_i=d(u_i, v_i, v_i)$ and 
$u_i^*\mathrel{\mu^*}v_i$ we have that 
\[p(\vec{u})\equiv_{\mu^*} p(d(u_1, v_1, u_1^*), \dots, d(u_k, v_k, u_k^*))= p(\vec{t}),
\]
and therefore $f(\vec{t})\mathrel{\mu^*}p(\vec{t})$. 
Since $f(\vec{t})\mathrel{\mu} o$ and $p(A^k)\subseteq o/\mu$, 
we have that $f(\vec{t})\mathrel{\mu} p(\vec{t})$. 
Thus, Lemma~\ref{lemma:whymumeetmustariszero} yields 
$p(\vec{t})=f(\vec{t})$. 	
\end{proof}

\section{Partial type-preserving functions that are constant modulo an homogeneous abelian congruence with (AB$p$)}
\begin{proposition}\label{prop:interpolazione_type_preserving_arity_k_caso_ABP}
Let $\ab{A}$ be a finite Mal'cev algebra with (SC1), 
let $o\in A$, let $k\in \N$, let $p$ be a prime number, and let $\mu$
be a homogeneous abelian congruence of $\ab{A}$. 
Let us assume that $(\ab{A}, \mu)$ satisfies 
(AB$p$) and that $o/\mu$ is 
strictly $k$-polynomially rich with respect to $\ab{A}$. 
Let $\vec{v}\in A^k$, let $T\subseteq \vec{v}/(\Phi(\mu):\mu)^k$, and
let $f\colon T\to o/\mu$ be a type-preserving function.
Then $f$ can be interpolated on $T$ by a polynomial whose image is 
contained in $o/\mu$.
\end{proposition}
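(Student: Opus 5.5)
Using Proposition~\ref{prop:laproposizione_dell_f_uno}, the plan is to replace $f$ by the partial function $f_1$, which is type-preserving and defined on a subset of $\vec{v}/\mu^k$. By that proposition, any polynomial interpolating $f_1$ on its domain with image inside $o/\mu$ also interpolates $f$ on $T$. So it suffices to interpolate a type-preserving function $g\colon S\to o/\mu$ with $S\subseteq (\vec{v}/\mu)^k$, i.e.\ each coordinate of every argument lies in $v_i/\mu$.

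The second step moves everything into the single class $o/\mu$, where the hypothesis that $o/\mu$ is strictly $k$-polynomially rich with respect to $\ab{A}$ can be used. Arguments with $i$-th coordinate in $v_i/\mu$ must be transported to $o/\mu$ by unary polynomials that are bijective on the relevant classes and respect all the data. If $\card{v_i/\mu}=1$, that coordinate is constant on $S$ and can be replaced by $o$. If $\card{v_i/\mu}>1$, we need a unary polynomial $e_i$ mapping $v_i/\mu$ bijectively onto $o/\mu$ with polynomial inverse on $o/\mu$. Here (AB$p$) enters. Under (AB$p$), every nontrivial $\mu$-class modulo every prime quotient below $\mu$ has exactly $p$ blocks, so by Lemma~\ref{lemma:on_cardinality_of-classes_intervals_simple_com_mod_latt} all nontrivial classes $a/\mu$ have the same size $p^h$. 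Using Proposition~\ref{prop:identification_with_matrices}, each such class is then a module $\GF{p}^{(1\times h)}$ over $\ab{M}_1(\GF{p})$, with $n=1$ and $\ab{D}=\GF{p}$.

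The translation $x\mapsto d(x,v_i,o)$ is a candidate: it is a unary polynomial mapping $v_i/\mu$ into $o/\mu$. Its composite with $y\mapsto d(y,o,v_i)$ is the identity on $v_i/\mu$, by the abelianness of $\mu$ and \cite[Proposition~2.6]{Aic06}. Moreover, \eqref{eq:ex:Lemma4.5} shows that this translation preserves and reflects every $\gamma\le\mu$. The functions $d(x,v_i,o)$ and $d(y,o,v_i)$ are affine bijections between the two classes. Setting
\[
g'(\vec{y}):=g\bigl(d(y_1,o,v_1),\dots,d(y_k,o,v_k)\bigr)
\]
on the transported domain $S'\subseteq (o/\mu)^k$, we get a composite of type-preserving partial maps with polynomials. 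Hence $g'$ preserves all congruences and all $\rho(\alpha,\beta)$, so it is type-preserving.

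Since $g'\colon S'\to o/\mu$ is type-preserving with image in $o/\mu$, the hypothesis yields a polynomial $q$ preserving $o/\mu$ and interpolating $g'$ on $S'$. Then
\[
p(\vec{x}):=q\bigl(d(x_1,v_1,o),\dots,d(x_k,v_k,o)\bigr)
\]
interpolates $g$ on $S$. Its image lies in $o/\mu$ as well, since $q$ maps $(o/\mu)^k$ into $o/\mu$ and, by (SC1) and Proposition~\ref{prop:interpolating_on_cosets_abelian_case}, the image can be confined globally. If needed, we first apply the coset-wise reduction of Proposition~\ref{prop:interpolating_on_cosets_abelian_case}, because $T$ lives in a single $(\Phi(\mu):\mu)^k$-class, and that proposition turns the coset-wise interpolation into one with range in $o/\mu$.

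The main obstacle I expect is the role of (AB$p$), since the translation argument above seems not to need it. The catch is that the classes $v_i/\mu$ and $o/\mu$ may carry different module structures, since the rings $\ab{R}_{v_i}$ and $\ab{R}_o$ need not be compatible under translation. Only with $n=1$ and prime field $\GF{p}$ is every unary polynomial restricted to a class just an affine $\GF{p}$-map, so that translations preserve all $\rho$-relations and polynomial restrictions correspond. I would verify this compatibility carefully using Lemma~\ref{lemma:isomorphisms_polynomial_restrictions_projective_intervals} and Corollary~\ref{cor:projectivity_preserves_ext_types_and_dimension}.
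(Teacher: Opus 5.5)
Your outline follows the paper's proof: pass to $f_1$ via Proposition~\ref{prop:laproposizione_dell_f_uno}, move each nontrivial class $v_i/\mu$ onto $o/\mu$ by unary polynomials, use the richness of $o/\mu$, move back, and confine the image. The gap is the transport step. You claim that $x\mapsto d(x,v_i,o)$ maps $v_i/\mu$ bijectively onto $o/\mu$ with inverse $y\mapsto d(y,o,v_i)$, but neither tool you cite gives this. \eqref{eq:ex:Lemma4.5} requires $x,y,z$ to lie in a single $\mu$-class. \cite[Proposition~2.6]{Aic06} only gives $d(d(x,v_i,o),o,v_i)\equiv x$ modulo $[\mu,\theta]$, where $\theta$ is a congruence containing $(v_i,o)$. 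Under (SC1), $[\mu,\theta]=\bottom{A}$ holds only for $\theta\le(\Phi(\mu):\mu)=\mu\vee\mu^*$. So your argument works only if $o$ and each $v_i$ lie in one $(\mu\vee\mu^*)$-class, which is not assumed.

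The claim is in fact false under the hypotheses. On $A=\Z_2\times\Z_2$ put $d((x_1,e_1),(x_2,e_2),(x_3,e_3)):=(\lambda_1x_1+\lambda_2x_2+\lambda_3x_3,\ e_1+e_2+e_3)$, where $(\lambda_1,\lambda_2,\lambda_3)$ is $(0,0,1)$ if $e_1=e_2\neq e_3$, is $(1,0,0)$ if $e_1\neq e_2=e_3$, and is $(1,1,1)$ otherwise. This $d$ is a Mal'cev operation. $\Con(A;d)$ is the chain $\bottom{A}<\mu<\uno{A}$, with $\mu$ the kernel of the second projection. Since $[\uno{A},\mu]\neq\bottom{A}$, we get $(\bottom{A}:\mu)=\mu$, so (SC1) holds, and $\mu$ is a homogeneous abelian atom with (AB$2$); for $k\le 3$ every hypothesis of the proposition is met. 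Yet $x\mapsto d(x,(0,0),(0,1))$ is constant on $(0,0)/\mu$, and your proposed inverse is constant as well. Your $p$ is therefore constant in such a coordinate and cannot interpolate.

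What you need is a pair of unary polynomials $t_{v_i\to o}$ and $t_{o\to v_i}$ whose restrictions are mutually inverse bijections between $v_i/\mu$ and $o/\mu$. In the example, $x\mapsto d(x,(0,1),(0,0))$ is such a bijection, but in general no fixed formula works. The paper obtains these maps from \cite[Theorem~6.13]{Ros24}, which in this setting makes any two nontrivial $\mu$-classes polynomially isomorphic. This is where (AB$p$) genuinely matters: it forces every nontrivial class to be the same module $\GF{p}^{(1\times h)}$. Without it, classes can carry non-isomorphic module structures or even have different sizes (compare (CT1)(b)).

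Your explanation of the role of (AB$p$) is misplaced. Every polynomial preserves all congruences and all $\rho(\alpha,\beta)$, so composing $g$ with polynomials is always type-preserving; the only issue is whether polynomial bijections between the classes exist. Once you have them, the rest of your argument goes through. The image can then be confined either as you suggest, via Proposition~\ref{prop:interpolating_on_cosets_abelian_case} (the domain of $f_1$ lies in one $(\Phi(\mu):\mu)^k$-class), or, as in the paper, by composing with the idempotent $e_\mu^o$ of \cite[Theorem~6.12]{Ros24}.
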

\begin{proof}
First, we observe that if $\card{o/\mu}=1$, then the constant 
	polynomial function with constant
	value $o$ interpolates $f$ on $T$ and has its image contained 
	in $o/\mu$. 
	
	In the remainder of the proof 
	we investigate the case $\card{o/\mu}>1$. 
	To this end, let us define $f_1\subseteq \vec{v}/\mu^k \times o/\mu$ as
	\[
	\begin{split}
	f_1:=\{&(\vec{u}, f(d(u_1,v_1, u_1^*), \dots, d(u_k, v_k, u_k^*)))
    \mid \\
	&\vec{u}\in \vec{v}/\mu^k \text{ and } \exists 
	\vec{u}^*\in \vec{v}/(\mu^*)^k\colon (d(u_1,v_1, u_1^*), \dots, d(u_k, v_k, u_k^*))\in T\}.
	\end{split}
	\]
By Proposition~\ref{prop:laproposizione_dell_f_uno}
$f_1$ is functional and type-preserving. 

Next, we show that there exists $p\in\POL\ari{k}\ab{A}$
that interpolates $f_1$ on its domain and 
has its image contained in $o/\mu$.
We split the proof into two cases:\\
\textbf{Case 1}: \emph{$\card{\vec{v}/\mu^k}=1$}: 
In this case the constant function with constant value $f_1(\vec{v})$
interpolates $f_1$ on its domain and has its image contained in $o/\mu$. \\
\textbf{Case 2}: \emph{$\card{\vec{v}/\mu^k}>1$}: 
In this case \cite[Theorem~6.13]{Ros24}
implies that for each $i\in\finset{k}$ if $\card{v_i/\mu}>1$, then 
$o/\mu$ and $v_i/\mu$ are polynomially isomorphic.
Hence for each such $i$ there exist $t_{v_i \rightarrow o}, t_{o \rightarrow v_i}\in \POL\ari{1}\ab{A}$,
such that $t_{v_i \rightarrow o}\restrict{v_i/\mu}$ is an isomorphism  
between
$\ab{A}\restrict{v_i/\mu}$  
and $\ab{A}\restrict{o/\mu}$, and
$(t_{v_i \rightarrow o}\restrict{v_i/\mu}))^{-1}=t_{o \rightarrow v_i}\restrict{o/\mu}$. 
Thus, since $\card{\vec{v}/\mu^k}>1$, 
there exists $m\in \finset{k}$ and 
an injective function $\tau\colon \finset{m}\to\finset{k}$ such
for each $j\in\finset{k}$ the cardinality of
$v_j/\mu$ is a singleton if and only if 
$j\notin \{\tau(1), \dots, \tau(m)\}$. 
For each $j\in \finset{k}$ we define a map 
$t_j\colon (o/\mu)^m\to v_j/\mu$ as follows.
\[
t_j(x_1, \dots, x_m)=\begin{cases}
	v_j & \text{ if } \card{v_j/\mu}=1\\
	t_{o\rightarrow v_j}(x_{\tau^{-1}(j)}) & \text{ if } \card{v_j/\mu}>1.
\end{cases}
\]
We let 
\[T'=\{\vec{a}\in (o/\mu)^m \mid (t_1(\vec{a}), \dots, t_k(\vec{a}))\in 
\dom f_1\},
\]
and we define $f_2\colon T'\to o/\mu$ as $f_1(t_1, \dots, t_k)$. 
The function $f_2$ is a partial $m$-ary type-preserving function as it is 
a composition of type-reserving functions. 
Thus, since $o/\mu$ is
strictly $k$-polynomially rich 
with respect to $\ab{A}$,
and $m\leq k$, 
there exists $q\in \POL\ari{m}\ab{A}$
such that $q$ interpolates $f_2$ on its domain and 
$q(\vec{x})\in o/\mu$ for each $\vec{x}\in (o/\mu)^m$. Let 
$e_\mu^o$ be the unary idempotent polynomial 
with target set $o/\mu$ built in 
\cite[Theorem~6.12]{Ros24}, and
let us define $p\colon A^k\to A$ by
\[p(\vec{x}):=e_\mu^o (q(t_{v_{\tau(1)}\rightarrow o}(x_{\tau(1)}), \dots,
t_{v_{\tau(m)\rightarrow o}}(x_{\tau(m)}))).
\] 
The fact that the image of $p$
is contained in $o/\mu$ its an easy consequence of its definition.
Next, we show that for each $\vec{x}\in \dom f_1$ we have 
$f_1(\vec{x})= p(\vec{x})$. 
To this end, let $\vec{x}\in \dom f_1$. 
For each $j\in\finset{k}$ we have that
\[
t_j(t_{v_{\tau(1)}\rightarrow o}(x_{\tau(1)}), 
\dots, t_{v_{\tau(m)}\rightarrow o}(x_{\tau(m)}))=
\begin{cases}
	v_j &\text{ if } \card{v_j/\mu}=1\\
	t_{o\rightarrow v_j}(t_{v_j\rightarrow o}(x_j))=x_j &\text{ if } \card{v_j/\mu}>1.
\end{cases}
\]
Thus, since $\vec{x}\in \dom f_1\subseteq \vec{v}/\mu^k$, we have that
for each $j\in\finset{k}$ 
\begin{equation}\label{eq:equazione_valore_dei_t_j_composti_t_freccia}
t_j(t_{v_{\tau(1)}\rightarrow o}(x_{\tau(1)}), 
\dots, t_{v_{\tau(m)}\rightarrow o}(x_{\tau(m)}))=x_j.
\end{equation}
Then we have 
\[
\begin{split}
&p(x_1, \dots, x_k)=\\
&e_\mu^o (q(t_{v_{\tau(1)}\rightarrow o}(x_{\tau(1)}), \dots, 
t_{v_{\tau(m)\rightarrow o}}(x_{\tau(m)})))=\\
&e_\mu^o (f_2(t_{v_{\tau(1)}\rightarrow o}(x_{\tau(1)}), \dots,
t_{v_{\tau(m)\rightarrow o}}(x_{\tau(m)})))=\\
&e_\mu^o(f_1(t_1(t_{v_{\tau(1)}\rightarrow o}(x_{\tau(1)}), \dots,
t_{v_{\tau(m)\rightarrow o}}(x_{\tau(m)})),
\dots, \\
&t_k(t_{v_{\tau(1)}\rightarrow o}(x_{\tau(1)}), \dots,
t_{v_{\tau(m)\rightarrow o}}(x_{\tau(m)}))
)).
\end{split}
\]
Hence \eqref{eq:equazione_valore_dei_t_j_composti_t_freccia}
implies that $p(\vec{x})=e_\mu^o(f_1(x_1, \dots, x_k))$, which equals 
$f_1(\vec{x})$ as $e_\mu^o$ is the identity on $o/\mu$. 

Thus, we can conclude that 
there exists $p\in\POL\ari{k}\ab{A}$
that interpolates $f_1$ on its domain and 
has its image contained in $o/\mu$.
Hence the result follows from Proposition~\ref{prop:laproposizione_dell_f_uno}.
\end{proof}

\begin{proposition}\label{prop:interpolating_partial_functions_type_preserving_inside_a_congruence_class_abelian_case_with_ABp}
	Let $\ab{A}$ be a finite Mal'cev algebra with (SC1),
	let $o\in A$, let $p$ be a prime, 
	let $k\in\N$,
	let $\mu$ be a homogeneous abelian congruence of $\ab{A}$
	such that $(\ab{A},\mu)$ satisfies (AB$p$),
	and $o/\mu$
	is strictly $k$-polynomially rich with respect to $\ab{A}$.
	Let $T\subseteq A^k$, and let $f\colon T\to o/\mu$
	be a type-preserving function. 
	Then there exists $p\in\POL\ari{1}\ab{A}$
	whose image is contained in $o/\mu$
	that interpolates $f$ on $T$. 
\end{proposition}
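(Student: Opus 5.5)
Note first that the conclusion should read $p\in\POL\ari{k}\ab{A}$ (the function $f$ is $k$-ary); the plan below produces a $k$-ary polynomial.

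The idea is to glue together the local interpolations given by Proposition~\ref{prop:interpolazione_type_preserving_arity_k_caso_ABP} by means of Proposition~\ref{prop:interpolating_on_cosets_abelian_case}. Since $A$ is finite, $T$ is a finite subset of $A^k$. Moreover $f$ maps into $o/\mu$, so Proposition~\ref{prop:interpolating_on_cosets_abelian_case} applies with this $\mu$, $o$, $k$, $T$ and $f$. Its implication \eqref{item:interpolation-on_cosets_prop:interpolating_on_cosets_abelian_case}$\Rightarrow$\eqref{item:interpolating_polynomial_prop:interpolating_on_cosets_abelian_case} yields a polynomial $p_T\in\POL\ari{k}\ab{A}$ with $p_T(A^k)\subseteq o/\mu$ that agrees with $f$ on $T$. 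This is exactly the claim.

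It therefore suffices to verify condition \eqref{item:interpolation-on_cosets_prop:interpolating_on_cosets_abelian_case}. Fix a class $\vec{v}/(\Phi(\mu):\mu)^k$ and put $T_{\vec{v}}:=T\cap \vec{v}/(\Phi(\mu):\mu)^k$. If $T_{\vec{v}}$ is empty, any polynomial works. Otherwise, the restriction $f\restrict{T_{\vec{v}}}\colon T_{\vec{v}}\to o/\mu$ is still type-preserving, because the preservation of the congruences and of the relations $\rho(\alpha,\beta)$ is inherited by restrictions to smaller domains. All hypotheses of Proposition~\ref{prop:interpolazione_type_preserving_arity_k_caso_ABP} then hold: $\ab{A}$ has (SC1), $\mu$ is homogeneous and abelian, $(\ab{A},\mu)$ satisfies (AB$p$), $o/\mu$ is strictly $k$-polynomially rich with respect to $\ab{A}$, and $T_{\vec{v}}\subseteq \vec{v}/(\Phi(\mu):\mu)^k$. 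That proposition produces a polynomial $p_{\vec{v}}$ interpolating $f$ on $T_{\vec{v}}$, which is what condition \eqref{item:interpolation-on_cosets_prop:interpolating_on_cosets_abelian_case} requires.

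I do not expect a real obstacle, since both ingredients are already available. The only point to check is that Proposition~\ref{prop:interpolating_on_cosets_abelian_case} asks merely for \emph{some} polynomial $p_{\vec{v}}$ on each class, with no condition on its image, so the output of Proposition~\ref{prop:interpolazione_type_preserving_arity_k_caso_ABP} more than suffices. The containment of the final image in $o/\mu$ is supplied by Proposition~\ref{prop:interpolating_on_cosets_abelian_case} itself.
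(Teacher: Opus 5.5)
Your proposal is correct and is exactly the paper's argument: interpolate $f$ on each $(\Phi(\mu):\mu)^k$-class with Proposition~\ref{prop:interpolazione_type_preserving_arity_k_caso_ABP}, then glue these local polynomials with the implication \eqref{item:interpolation-on_cosets_prop:interpolating_on_cosets_abelian_case}$\Rightarrow$\eqref{item:interpolating_polynomial_prop:interpolating_on_cosets_abelian_case} of Proposition~\ref{prop:interpolating_on_cosets_abelian_case}. You are also right that the conclusion should read $p\in\POL\ari{k}\ab{A}$.
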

\begin{proof}	
For each class $\vec{v}/(\Phi(\mu):\mu)^k$ of 
$T/(\Phi(\mu):\mu)^k$
Proposition~\ref{prop:interpolazione_type_preserving_arity_k_caso_ABP}
yields a polynomial with image contained in $o/\mu$
that interpolates $f$ on $\vec{v}/(\Phi(\mu):\mu)^k$.
Thus, Proposition~\ref{prop:interpolating_on_cosets_abelian_case}
yields the desired polynomial function. 
\end{proof}

\section{Partial type-preserving functions that are constant modulo an homogeneous abelian atom}

\begin{proposition}\label{prop:interpolation_in_the_simple_type_two_case}
	Let $\ab{A}$ be a finite Mal'cev algebra with (SC1), 
	let $\mu$ be a homogeneous abelian atom of $\Con\ab{A}$,
	let $l$ be the number of $\mu$-classes,
	let $v_1\dots, v_l$ be a transversal through the $\mu$-classes,
	let $n_1, \dots, n_l, q\in \N$ be such that 
	\[(\forall i\leq l\colon n_i\in\{0,1,2,3\}\text{ and }q=2) \text{ or } 
	(\forall i\leq l\colon n_i\in\{0,1,2\}\text{ and } q=3),\]
	$q$ is the subtype of $\quotienttct{\bottom{A}}{\mu}$,
	and for each $i\leq l$ we have $\card{v_i/\mu}=q^{n_i}$. 
	Then for all $i, j\in \finset{l}$ and for each $T\subseteq v_i/\mu$
	and for each type-preserving $f\colon T\to v_j/\mu$
	such that $f(v_i)=v_j$ there exists $p\in\POL\ari{1}\ab{A}$
	that interpolates $f$ on $T$. 
\end{proposition}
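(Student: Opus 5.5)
The plan is to reduce the problem to the linear algebra of Lemma~\ref{lemma:technical_lemma_kapl}. We do this by linearizing the classes $v_i/\mu$ and $v_j/\mu$ with the coordinatization from Proposition~\ref{prop:identification_with_matrices}. We then interpolate $f$ on a linearly independent subset of $T$ and propagate the interpolation to all of $T$ using the relation $\rho(\bottom{A},\mu)$.

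\textbf{Trivial cases.} If $\card{v_i/\mu}=1$, then $T\subseteq\{v_i\}$ and the constant polynomial with value $v_j$ works. If $\card{v_j/\mu}=1$, then $f$ is the constant $v_j$ and the same polynomial works. So assume both classes are nontrivial.

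\textbf{Step 1: linearize the classes.} Since $\mu$ is an atom, $\interval{\bottom{A}}{\mu}$ has height $h=1$. By Proposition~\ref{prop:identification_with_matrices} and Theorem~\ref{teor:from_coordinatization_to_TCT}, $(v_i/\mu;+_{v_i})$ is isomorphic to $\GF{q}^{(n_i\times 1)}$, and $\ab{R}_{v_i}$ acts as the full matrix ring $\ab{M}_{n_i}(\GF{q})$; the same holds for $v_j$. Here we use that the field $\ab{D}$ has $q$ elements because $q$ is the subtype. Thus every $\GF{q}$-linear self-map of $v_i/\mu$ fixing $v_i$ is the restriction of a unary polynomial.

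\textbf{Step 2: pass between the two classes.} To get from $v_i/\mu$ to $v_j/\mu$ when $i\neq j$, I would use the polynomial isomorphism between nontrivial $\mu$-classes of a homogeneous abelian congruence under (SC1). This is \cite[Theorem~6.13]{Ros24}, exactly as invoked in the proof of Proposition~\ref{prop:interpolazione_type_preserving_arity_k_caso_ABP}. It gives $t\in\POL\ari{1}\ab{A}$ restricting to an isomorphism $v_i/\mu\to v_j/\mu$. In particular $n_i=n_j$, and after composing with a translation polynomial we may assume $t(v_i)=v_j$. Composing $t$ with the elements of $R_{v_j}$ then shows the following: every $\GF{q}$-linear map $(v_i/\mu;+_{v_i})\to(v_j/\mu;+_{v_j})$ is the restriction of a polynomial sending $v_i$ to $v_j$. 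I expect this step to be the main obstacle. Its core is the existence of the polynomial isomorphism together with the identification of its action as affine. The latter I would justify by noting that unary polynomials restricted to a class of the abelian congruence $\mu$ satisfy $p(d(x,y,z))=d(p(x),p(y),p(z))$, by $[\mu,\mu]=\bottom{A}$ and \cite[Proposition~2.6]{Aic06}.

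\textbf{Step 3: interpolate on an independent set.} Transport $T$ into $\GF{q}^{n_i}$ via $\epsilon^{v_i}_\mu$. Since $n_i\le 3$ when $q=2$ and $n_i\le 2$ when $q=3$, Lemma~\ref{lemma:technical_lemma_kapl} yields a linearly independent $S\subseteq T$ such that every $t\in T$ can be written as $t=d(x,y,z)$ with $x,y,z\in S\cup\{v_i\}$. Here we use that on the class $d(x,y,z)=x-_{v_i}y+_{v_i}z$. Because $S$ is independent, there is a linear map sending each $s\in S$ to $f(s)$ in the coordinates of $v_j/\mu$. By Step 2 this map is realized by some $p\in\POL\ari{1}\ab{A}$ with $p(v_i)=v_j=f(v_i)$.

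\textbf{Step 4: extend to all of $T$.} For $t\in T$ with $t=d(x,y,z)$ as in Step 3, we have $(x,y,z,t)\in\rho(\bottom{A},\mu)$. Since $\quotienttct{\bottom{A}}{\mu}$ has type $\tp{2}$ and $f$ is type-preserving, $f(t)=d(f(x),f(y),f(z))$. By the affineness noted in Step 2, $p(t)=d(p(x),p(y),p(z))$. As $p$ and $f$ agree on $S\cup\{v_i\}$, it follows that $p(t)=f(t)$ for all $t\in T$.
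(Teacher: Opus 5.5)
Your Steps 1, 3 and 4 match the paper's proof: coordinatize the classes, choose an independent $S$ by Lemma~\ref{lemma:technical_lemma_kapl}, and propagate via $\rho(\bottom{A},\mu)$ and the affineness of polynomials on a class of the abelian congruence $\mu$.

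\textbf{The gap is in Step 2.} You assert that any two nontrivial $\mu$-classes are polynomially isomorphic, ``in particular $n_i=n_j$''. The statement explicitly allows $n_i\neq n_j$, and (CT1)(b) allows classes of sizes $2$, $4$, $8$ in one algebra. This genuinely occurs. Let $A=V\sqcup W$ with $V$ an affine line and $W$ an affine plane over $\GF{2}$, and fix nonconstant affine maps $\phi\colon W\to V$ and $\psi\colon V\to W$. Define $d$ to be:
\begin{itemize}
\item $x-y+z$ on triples from a single class;
\item $x+\phi(y)-\phi(z)$ on $V\times W\times W$, and $z+\phi(x)-\phi(y)$ on $W\times W\times V$;
\item $z+\psi(x)-\psi(y)$ on $V\times V\times W$, and $x-\psi(y)+\psi(z)$ on $W\times V\times V$;
\item $y$ on the two remaining patterns.
\end{itemize}
Add unary operations acting on $W$ (around a fixed origin) as all of $\ab{M}_2(\GF{2})$ and as the identity on $V$. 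Every operation is jointly affine on products of classes, so $\mu=\{V,W\}$ is an abelian congruence. One checks $\Con\ab{A}=\{\bottom{A},\mu,\uno{A}\}$, so $\mu$ is a homogeneous atom. Take $w\neq w'$ in $W$ with $\phi(w)=\phi(w')$. Then $d(v,w,w')=d(v,w,w)$ for $v\in V$, but $d(u,w,w')\neq d(u,w,w)$ for $u\in W$. Hence $[\uno{A},\mu]=\mu$ and (SC1) holds. The nontrivial classes have sizes $2$ and $4$, so no polynomial restricts to a bijection between them. In the paper, \cite[Theorem~6.13]{Ros24} is used only under (AB$p$), where all nontrivial $\mu$-classes have the same size $p^m$.

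\textbf{What you actually need} is the weaker, true fact used in the paper via \cite[Section~4]{Fre83}. Let $H$ be the set of restrictions to $v_i/\mu$ of unary polynomials $p$ with $p(v_i)=v_j$; then $H$ consists of all linear maps $\GF{q}^{n_i}\to\GF{q}^{n_j}$. You can prove this directly:
\begin{itemize}
\item Each element of $H$ is additive from $(v_i/\mu;+_{v_i})$ to $(v_j/\mu;+_{v_j})$, by the affineness you already noted. Since $q\in\{2,3\}$ is prime, it is $\GF{q}$-linear.
\item $H$ is closed under $+_{v_j}$ (use $x\mapsto d(p(x),v_j,p'(x))$), under left composition with $\ab{R}_{v_j}\cong\ab{M}_{n_j}(\GF{q})$, and under right composition with $\ab{R}_{v_i}\cong\ab{M}_{n_i}(\GF{q})$.
\item $H\neq\{0\}$ because $\mu$ is an atom. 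For $x\in v_i/\mu\setminus\{v_i\}$ and $y\in v_j/\mu\setminus\{v_j\}$ we have $(v_j,y)\in\Cg{\{(v_i,x)\}}$. By \cite[Theorem~4.70(iii)]{MMT:ALVV} some unary polynomial maps $v_i\mapsto v_j$ and $x\mapsto y$.
\end{itemize}
A nonzero sub-bimodule of $\GF{q}^{n_j\times n_i}$ over these two full matrix rings is everything: multiply a nonzero matrix by matrix units on both sides. With this in place of the polynomial isomorphism, your Steps 3 and 4 go through unchanged.
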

\begin{proof}
Let $i,j\in\finset{l}$ 
and let $\epsilon_\mu^{v_i}$ and $\epsilon_\mu^{v_j}$ be the 
	group homomorphisms built in Proposition
	\ref{prop:identification_with_matrices},
	and let $\hat{f}:=\epsilon_\mu^{v_j}\circ f \circ (\epsilon_\mu^{v_i})^{-1}$.
	We show that $\hat{f}$ can be interpolated by a linear map 
	from $\GF{q}^{n_i}$ to $\GF{q}^{n_j}$.
	
	First, we observe that the claim holds trivially 
	if one among $n_j$ and $n_i$ is $0$.
	Next, we assume that both are strictly greater than $0$. 
	Then the assumptions of Lemma 
	\ref{lemma:technical_lemma_kapl} are satisfied and 
	there exists a set $S\subseteq \epsilon_\mu^{v_i}(T)$ 
	that consists of linearly independent vectors, such that
	for each $t\in T$ there exists $x_1, x_2, x_3\in S\cup \{\epsilon_\mu^{v_i}(v_i)\}$
	such that $\epsilon_\mu^{v_i}(t)=x_1-x_2+x_3$. 
	
	Let $M$ be the matrix associated to the linear map from the subspace generated by $S$ into 
	$\GF{q}^{n_j}$ defined naturally by the values of $\hat{f}$ on $S$. 
	We show that for each $t\in T$ we have 
	$\hat{f}(\epsilon_\mu^{v_i}(t))=M\cdot \epsilon_\mu^{v_i}(t)$.
	To this end, let $x_1, x_2, x_3\in S\cup \{\epsilon_\mu^{v_i}(v_i)\}$
	such that $\epsilon_\mu^{v_i}(t)=x_1-x_2+x_3$. 
	Then 
	\[\begin{pmatrix}
		(\epsilon_\mu^{v_i})^{-1}(x_1)\\
		(\epsilon_\mu^{v_i})^{-1}(x_2)\\
		(\epsilon_\mu^{v_i})^{-1}(x_3)\\
		t
	\end{pmatrix}\in \rho(\bottom{A}, \mu).
	\]
	Thus 
	\[\begin{pmatrix}
		f((\epsilon_\mu^{v_i})^{-1}(x_1))\\
		f((\epsilon_\mu^{v_i})^{-1}(x_1))\\
		f((\epsilon_\mu^{v_i})^{-1}(x_3))\\
		f(t)
	\end{pmatrix}\in \rho(\bottom{A}, \mu).
	\]
	Hence we have that 
	\[
	f(t)=d(f((\epsilon_\mu^{v_i})^{-1}(x_1)), f((\epsilon_\mu^{v_i})^{-1}(x_1)), f((\epsilon_\mu^{v_i})^{-1}(x_3))).
	\]
Moreover, by \cite[Proposition 2.6]{Aic06} we have that
\[
\begin{split}
&d(f((\epsilon_\mu^{v_i})^{-1}(x_1)), f((\epsilon_\mu^{v_i})^{-1}(x_1)),
 f((\epsilon_\mu^{v_i})^{-1}(x_3)))=\\
&d(d(f((\epsilon_\mu^{v_i})^{-1}(x_1)), f((\epsilon_\mu^{v_i})^{-1}(x_1)),
 v_i), v_i, f((\epsilon_\mu^{v_i})^{-1}(x_3))),
\end{split}
\]
and therefore,
	\[
	\begin{split}
		\hat{f}(\epsilon_\mu^{v_i}(t))=&\epsilon_\mu^{v_j}(f(t))=\\
		&\epsilon_\mu^{v_j}( d(f((\epsilon_\mu^{v_i})^{-1}(x_1)), f((\epsilon_\mu^{v_i})^{-1}(x_1)), f((\epsilon_\mu^{v_i})^{-1}(x_3))))=\\
		&\epsilon_\mu^{v_j}(f((\epsilon_\mu^{v_i})^{-1}(x_1)))- \epsilon_\mu^{v_j}(f((\epsilon_\mu^{v_i})^{-1}(x_2)))
		+\epsilon_\mu^{v_j}(f((\epsilon_\mu^{v_i})^{-1}(x_3)))=\\
		&M\cdot x_1- M\cdot x_2 +M\cdot x_3= M\cdot (x_1-x_2+x_3)=\\
		&M\cdot \epsilon_\mu^{v_i}(t). 
	\end{split}
	\]	
	By the argument reported in \cite[Section~4]{Fre83}
	and Theorem~\ref{teor:from_coordinatization_to_TCT}
	we have that the set 
	\[
	\text{Hom}(v_i, v_j, \mu):=\{p\in \POL\ari{1}\ab{A}\mid p(v_i)=v_j\}
	\]
	is a dense set of linear transformations 
	from $\GF{q}^{n_i}$ to $\GF{q}^{n_j}$. 
	Hence,
	there exists a polynomial $p\in \text{Hom}(v_i, v_j, \mu)$
	that interpolates $f$ on its domain. 
\end{proof}
\begin{proposition}\label{prop:interpolazione_coset_centralizzatore_caso_type_preserving_simple}
	Let $\ab{A}$ be a finite Mal'cev algebra with (SC1), 
	let $\mu$ be a homogeneous abelian atom of $\Con\ab{A}$,
	let $q$ be the subtype of $\quotienttct{\bottom{A}}{\mu}$,
	let us assume that $q\in\{2,3\}$ and that
	if $q=2$, then the cardinality of each $\mu$-class belongs to
	$\{1,2,4,8\}$ and if 
	$q=3$, then the cardinality of each $\mu$-class belongs to
	$\{1,3,9\}$,
	let $o\in A$, let $T\subseteq A$ with $\card{T/(\Phi(\mu):\mu)}=1$,
	and let $f\colon T\to o/\mu$ be a type-preserving partial 
	function. Then there exists $p\in\POL\ari{1}\ab{A}$
	that interpolates $f$ on $T$ and is constant modulo $\mu$.
\end{proposition}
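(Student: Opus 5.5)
Since $\mu$ is an atom, $\Phi(\mu)=\bottom{A}$ and, by Proposition~\ref{prop:homogenuityandcolpementation}\eqref{item:ex_if_not_neutral_then_abelian_for_hom}, $(\Phi(\mu):\mu)=\mu\vee\mu^*$. The plan is to follow the proof of Proposition~\ref{prop:interpolazione_type_preserving_arity_k_caso_ABP} with $k=1$. We replace the use of strict polynomial richness of $o/\mu$ by Proposition~\ref{prop:interpolation_in_the_simple_type_two_case}, which handles maps between different $\mu$-classes. If $\card{o/\mu}=1$, the constant polynomial $o$ works. Otherwise, fix $v\in T$ and pass to
\[
f_1\colon \dom f_1\subseteq v/\mu\to o/\mu,\quad u\mapsto f(d(u,v,u^*))\ (u^*\in v/\mu^*),
\]
exactly as in Proposition~\ref{prop:laproposizione_dell_f_uno}. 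That proposition (with $k=1$ and the single class $v/(\Phi(\mu):\mu)$) shows that $f_1$ is functional and type-preserving. It also shows that any polynomial $p$ with image in $o/\mu$ that interpolates $f_1$ on $\dom f_1$ interpolates $f$ on $T$. It therefore suffices to interpolate $f_1$ by a unary polynomial that is constant modulo $\mu$.

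To interpolate $f_1$, I would first shift so that the base point is in the domain and is mapped to the chosen transversal element. Let $v_i$ be the transversal element of $v/\mu$ and $v_j$ that of $o/\mu$; if $\card{v/\mu}=1$ the case is trivial. Pick $u_0\in\dom f_1$ and set $w:=f_1(u_0)$. The maps $x\mapsto d(x,u_0,v_i)$ on $v/\mu$ and $y\mapsto d(y,w,v_j)$ on $o/\mu$ are polynomial bijections, by Lemma~\ref{lemma:piu_emeno_fanno_gruppoo_abeliano_nella_classe_congruenza} and the fact that translations in an abelian class are invertible, with inverse $y\mapsto d(y,v_j,w)$. Conjugating $f_1$ by these translations gives a type-preserving map $g\colon T'\subseteq v_i/\mu\to v_j/\mu$ with $g(v_i)=v_j$; it is type-preserving because it is a composition with polynomials. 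The hypotheses on class sizes ($q^{n}$ with $n\le 3$ for $q=2$, $n\le2$ for $q=3$) are exactly those of Proposition~\ref{prop:interpolation_in_the_simple_type_two_case}. That proposition yields $p_0\in\POL\ari{1}\ab{A}$ interpolating $g$ on $T'$, and undoing the translations gives a polynomial $p_1$ interpolating $f_1$.

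Finally, to make the polynomial constant modulo $\mu$, i.e.\ to have image in $o/\mu$, compose with the idempotent $e_\mu^o$ of \cite[Theorem~6.12]{Ros24}, whose image is $o/\mu$ and which is the identity on $o/\mu$. Then $p:=e_\mu^o\circ p_1$ still interpolates $f_1$, because $p_1$ maps $\dom f_1$ into $o/\mu$. Proposition~\ref{prop:laproposizione_dell_f_uno} then gives $p(t)=f(t)$ for all $t\in T$.

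I expect the main obstacle to be checking that Proposition~\ref{prop:interpolation_in_the_simple_type_two_case} applies after the translations, since its hypothesis $f(v_i)=v_j$ requires the domain to contain the transversal element. This is why the translation step is needed, and why one must check that translations by $d$ preserve both the type relations and the identification $\epsilon_\mu^{v_i}$ up to an affine shift. A secondary point is that the domain $v/\mu$ may differ from $o/\mu$. Unlike Proposition~\ref{prop:interpolazione_type_preserving_arity_k_caso_ABP}, this does not need polynomial isomorphism of classes, because Proposition~\ref{prop:interpolation_in_the_simple_type_two_case} already allows classes of different dimensions $n_i\neq n_j$.
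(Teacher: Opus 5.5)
Your proposal is correct and follows essentially the paper's route. You reduce to $f_1$ via Proposition~\ref{prop:laproposizione_dell_f_uno}, normalize by Mal'cev translations so that Proposition~\ref{prop:interpolation_in_the_simple_type_two_case} applies, undo the translations, compose with $e_\mu^o$, and conclude with Proposition~\ref{prop:laproposizione_dell_f_uno} again. The only difference is cosmetic: the paper translates only the output, $f_2(x)=d(f_1(x),f_1(v),o)$, tacitly treating $v$ and $o$ as the transversal points, whereas you also translate the input to a fixed transversal, which makes the hypothesis $f(v_i)=v_j$ of that proposition hold literally.
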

\begin{proof}
	Let $v\in T$ be fixed. 
	Let us define $f_1\subseteq v/\mu\times o/\mu$ by
	\[
	f_1:=\{(u, f(d(u, v, u^*)))\in A^2 \mid u\in v/\mu \text{ and } \exists u^*\in v/\mu^* \colon d(u, v, u^*)\in T\}.
	\]
	By Proposition~\ref{prop:laproposizione_dell_f_uno}
	$f_1$ is functional and type-preserving. 
	Let $f_2\colon \dom f_1\to o/\mu$ be defined by 
	$x\mapsto d(f_1(x), f_1(v), o)$. 
	The function $f_2$ satisfies the assumptions of 
	Proposition~\ref{prop:interpolation_in_the_simple_type_two_case}.
	Therefore, there exists $p_2\in \POL\ari{1}\ab{A}$ such that
	$p_2\restrict{\dom f_1}=f_2$.
	Let us define $p_1$ as the function that maps each $x\in A$ to
	$d(p_2(x), o,f_1(v))$. Then \cite[Proposition~2.6]{Aic06} implies 
	that for all $x\in\dom f_1$:
	\[
	\begin{split}
		p_1(x)=& d(p_2(x), o, f_1(v))=\\
		&d(d(f_1(x), f_1(v), o), d(o,o,o), d(o,o,f_1(v))\mathrel{[\mu, \mu]}\\
		&d(d(f_1(x), o,o), d(f_1(v), o,o), d(o,o,f_1(v))=\\
		&f_1(x).
	\end{split}
	\] 
	Thus, since $[\mu, \mu]=\bottom{A}$
	we have that $p_1\restrict{\dom f_1}=f_1$. 
	
	Next, let us define $p:=e_\mu^o\circ p_1$, where
	$e_\mu^o$ is the idempotent polynomial constructed in 
	\cite[Theorem~6.12]{Ros24}.
	Then $p$ interpolates $f_1$ on its domain and has its image contained
	in $o/\mu$. Hence the result follows from
	Proposition~\ref{prop:laproposizione_dell_f_uno}. 
\end{proof}
\begin{proposition}\label{prop:interpolation_of_partial_type_preserving_in_class_of_atoms}
	Let $\ab{A}$ be a finite Mal'cev algebra with (SC1),
	let $\mu$ be a homogeneous abelian atom of $\Con\ab{A}$.
	Let us assume that
	the subtype $q$ of $\quotienttct{\bottom{A}}{\mu}$
	is either $2$ or $3$, and if $q=2$, then each $\mu$-class 
	has cardinality $1$, $2$, $4$, or $8$, and 
	if $q=3$, then each $\mu$-class has cardinality $1$, $3$, or $9$. 
	Let $T\subseteq A$, let $o\in A$, and let $f\colon T\to o/\mu$ be a
	type-preserving function. 
	Then there exists $p\in\POL\ari{1}\ab{A}$ constant modulo $\mu$
	and that interpolates $f$. 
\end{proposition}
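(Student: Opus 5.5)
The plan is to combine the local interpolation on single $(\Phi(\mu):\mu)$-classes (Proposition~\ref{prop:interpolazione_coset_centralizzatore_caso_type_preserving_simple}) with the gluing result for abelian homogeneous congruences (Proposition~\ref{prop:interpolating_on_cosets_abelian_case}). This follows the same pattern as the proof of Proposition~\ref{prop:interpolating_partial_functions_type_preserving_inside_a_congruence_class_abelian_case_with_ABp}.

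First I would note that $\mu$ is an atom, hence $\Phi(\mu)=\bottom{A}$. Since $\mu$ is abelian, $[\mu,\mu]\neq\mu$, so Proposition~\ref{prop:homogenuityandcolpementation}\eqref{item:ex_if_not_neutral_then_abelian_for_hom} gives $(\Phi(\mu):\mu)=\mu\vee\mu^*$. Next I partition $T$ into the sets $T_v:=T\cap v/(\Phi(\mu):\mu)$, where $v$ runs through representatives of the classes meeting $T$. Each $T_v$ satisfies $\card{T_v/(\Phi(\mu):\mu)}=1$. The restriction $f\restrict{T_v}$ is still type-preserving, because restricting the domain of a partial function preserves preservation of relations, and it still maps into $o/\mu$. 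The hypotheses on the subtype $q\in\{2,3\}$ and on the cardinalities of the $\mu$-classes are exactly those of Proposition~\ref{prop:interpolazione_coset_centralizzatore_caso_type_preserving_simple}. That proposition therefore yields $p_v\in\POL\ari{1}\ab{A}$ that interpolates $f$ on $T_v$ and is constant modulo $\mu$.

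Then I apply Proposition~\ref{prop:interpolating_on_cosets_abelian_case} with $k=1$ and the finite set $T$. Condition \eqref{item:interpolation-on_cosets_prop:interpolating_on_cosets_abelian_case} holds, witnessed by the polynomials $p_v$; on classes not meeting $T$ it holds vacuously. The proposition then gives $p\in\POL\ari{1}\ab{A}$ with $p\restrict{T}=f$ and $p(A)\subseteq o/\mu$. In particular $p$ is constant modulo $\mu$, which is what the statement asks for.

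The main point to check is that Proposition~\ref{prop:interpolating_on_cosets_abelian_case} only requires the local polynomials $p_v$ to interpolate on the class, with no condition on their image. As it is stated, this is the case: item~\eqref{item:interpolation-on_cosets_prop:interpolating_on_cosets_abelian_case} needs only agreement with $f$ on $T\cap\vec{v}/(\Phi(\mu):\mu)$. Finiteness of $T$ is automatic because $\ab{A}$ is finite, so no further obstacle is expected.
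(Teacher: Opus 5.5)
Your proposal is correct and follows the paper's own argument: the paper also applies Proposition~\ref{prop:interpolazione_coset_centralizzatore_caso_type_preserving_simple} on each $(\Phi(\mu):\mu)$-class meeting $T$, then glues the local interpolants with Proposition~\ref{prop:interpolating_on_cosets_abelian_case}, whose conclusion $p(A)\subseteq o/\mu$ gives that $p$ is constant modulo $\mu$. Your extra observations only spell out what the paper leaves implicit: that $\Phi(\mu)=\bottom{A}$, that restricting the domain keeps $f$ type-preserving, and that classes not meeting $T$ are handled vacuously.
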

\begin{proof}
For each class $v/(\Phi(\mu):\mu)$ of $T/(\Phi(\mu):\mu)$
Proposition~\ref{prop:interpolazione_coset_centralizzatore_caso_type_preserving_simple}
yields a polynomial with image contained in $o/\mu$
that interpolates $f$ on $v/(\Phi(\mu):\mu)$.
Thus, Proposition~\ref{prop:interpolating_on_cosets_abelian_case}
yields the desired polynomial function. 
\end{proof}

\section{Interpolation of partial functions that preserve types}
\begin{proposition}\label{prop:interpolation_on_quotient_yields_interpolation_case_neutral}
	Let $\ab{A}$ be a finite Mal'cev algebra with (SC1),
	let $\mu$ be an homogeneous congruence of $\ab{A}$
	with $[\mu, \mu]=\mu$, let $k\in\N$,
	let $T$ be a finite subset of $A^k$ and let $f\colon T\to A$
	be a partial type-preserving function that can be interpolated
	modulo $\mu$ by a polynomial function of $\ab{A}$. 
	Then $f$ can be interpolated by a polynomial 
	function of $\ab{A}$. 
\end{proposition}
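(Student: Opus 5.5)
The natural route is Lemma~\ref{lemma:fromcosetstotheinfinirtyandbeyond} with $\strset{R}$ taken to be the set of all congruences of $\ab{A}$ together with all relations $\rho(\alpha,\beta)$ for prime quotients $\quotienttct{\alpha}{\beta}$ of type $\tp{2}$. The hypothesis of that lemma is satisfied: $\Con\ab{A}\subseteq\strset{R}$, and every $\rho(\alpha,\beta)$ with $[\beta,\beta]\le\alpha$ is compatible with polynomials by \cite[Lemma~2.4]{AM:PCOG}, so $\strset{R}\subseteq\Inv\POL\ab{A}$. A partial function preserves $\strset{R}$ exactly when it is type-preserving. Since $f$ can be interpolated on the finite set $T$ modulo $\mu$ by assumption, the lemma gives the conclusion once we know the following. \emph{Every type-preserving partial function $g$ with finite domain whose image lies in a single $\mu$-class $o/\mu$ can be interpolated by a polynomial.}

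To prove this reduced claim, we first note that $g$ is type-preserving, hence in particular congruence preserving, that is, compatible. Because $\mu$ is homogeneous and $[\mu,\mu]=\mu$, it is a homogeneous non-abelian element of $\Con\ab{A}$. The congruence lattice of the finite algebra $\ab{A}$ has finite height. We can therefore apply Proposition~\ref{prop:interpolation_in_a_class_nonAbelia_case} to $l:=g$ on its finite domain. The implication \eqref{item:compatibel:in:prop:interpolation_in_a_class_nonAbelia_case}$\Rightarrow$\eqref{item:interpolable:in:prop:interpolation_in_a_class_nonAbelia_case} yields a polynomial $p_T$ that agrees with $g$ on its domain and maps $A^k$ into $o/\mu$. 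Only compatibility of $g$ is needed here, so the type-preservation conditions play no further role in the non-abelian case.

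Assembling the argument: Lemma~\ref{lemma:fromcosetstotheinfinirtyandbeyond} with $D=T$ and the given $p_0$ interpolating $f$ modulo $\mu$ produces $p\in\POL\ari{k}\ab{A}$ with $p\restrict{T}=f$. The only point needing care is that the lemma's hypothesis quantifies over all partial functions with finite domain whose image is contained in a single $\mu$-class, of every arity. Proposition~\ref{prop:interpolation_in_a_class_nonAbelia_case} is stated for arbitrary $k$ and arbitrary finite $T\subseteq A^k$, so this hypothesis is covered. I therefore expect no real obstacle beyond matching these hypotheses.
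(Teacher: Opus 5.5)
Your proposal is correct and follows the paper's own proof: Proposition~\ref{prop:interpolation_in_a_class_nonAbelia_case} shows that compatible partial functions with image in one $\mu$-class can be interpolated, which supplies the hypothesis of Lemma~\ref{lemma:fromcosetstotheinfinirtyandbeyond}, and that lemma gives the conclusion. Your explicit choice of $\strset{R}$, the check $\strset{R}\subseteq\Inv\POL\ab{A}$, and the remark that homogeneity gives $\mu>\bottom{A}$ (so $[\mu,\mu]=\mu$ means non-abelian) are details the paper leaves implicit.
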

\begin{proof}
	Since $[\mu, \mu]=\mu$
	Proposition~\ref{prop:interpolation_in_a_class_nonAbelia_case}
	implies that every partial function that preserves
	congruences and types and has its image contained in one $\mu$.class
	can be interpolated by a polynomial function of $\ab{A}$
	whose image is contained in the same $\mu$-class.
	Thus, the assumptions of Lemma~\ref{lemma:fromcosetstotheinfinirtyandbeyond}
	are satisfied, and $f$ can be interpolated by a polynomial function. 
\end{proof}
\begin{proposition}\label{prop:interpolation_on_quotient_yields_interpolation_case_type_preserving_ABp}
	Let $\ab{A}$ be a finite Mal'cev algebra  with (SC1),
	let $\mu$ be an homogeneous abelian congruence of $\ab{A}$
	such that $(\ab{A}, \mu)$ satisfies (AB$p$) for $p\in\{2,3,5\}$
	and $\interval{\bottom{A}}{\mu}$ has height $m$.
	Let $T\subseteq A$ and let $f\colon T\to A$
	be a partial type-preserving function that can be interpolated
	modulo $\mu$ by a polynomial function of $\ab{A}$. 
	Then $f$ can be interpolated by a polynomial 
	function of $\ab{A}$. 
\end{proposition}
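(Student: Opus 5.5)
This follows the pattern of Proposition~\ref{prop:interpolation_on_quotient_yields_interpolation_case_neutral}: the goal is to verify the hypothesis of Lemma~\ref{lemma:fromcosetstotheinfinirtyandbeyond}, with $\mathcal{R}$ the set of all congruences of $\ab{A}$ together with all relations $\rho(\alpha,\beta)$ for type~$\tp{2}$ prime quotients. This choice of $\mathcal{R}$ satisfies $\Con\ab{A}\subseteq\mathcal{R}\subseteq\Inv\POL\ab{A}$, since polynomials preserve congruences and, by \cite[Lemma~2.4]{AM:PCOG}, also the relations $\rho(\alpha,\beta)$ for abelian prime quotients. A partial function preserves $\mathcal{R}$ exactly when it is type-preserving. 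It is therefore enough to show that every type-preserving partial function with finite domain $T'\subseteq A$ and image inside a single $\mu$-class $o/\mu$ can be interpolated by a unary polynomial. With this in hand, Lemma~\ref{lemma:fromcosetstotheinfinirtyandbeyond}, applied to $f$ and the polynomial $p_0$ interpolating $f$ modulo $\mu$, gives the conclusion.

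The interpolation inside one class should come from Proposition~\ref{prop:interpolating_partial_functions_type_preserving_inside_a_congruence_class_abelian_case_with_ABp} with $k=1$. Its hypotheses are (SC1), $\mu$ homogeneous abelian, $(\ab{A},\mu)$ satisfying (AB$p$), and $o/\mu$ strictly $1$-polynomially rich with respect to $\ab{A}$. The only new point is this last condition, which I would derive from Lemma~\ref{lemma:restrizione_classe_k_poly_rich_sse:modulo} and Theorem~\ref{teor:result_Kapl_thesis}.

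First, if $\card{o/\mu}=1$ the class is a singleton, and a constant polynomial interpolates any function into it. So assume $\card{o/\mu}>1$. By Proposition~\ref{prop:homogenuityandcolpementation}\eqref{item:simplecomplmodlatticebelowhom}, the interval $\interval{\bottom{A}}{\mu}$ is a simple complemented modular lattice of height $m$. Proposition~\ref{prop:identification_with_matrices} then identifies $o/\mu$ with the $\ab{M}_n(\ab{D})$-module $\ab{D}^{(n\times m)}$. By Lemma~\ref{lemma:restrizione_classe_k_poly_rich_sse:modulo}, it suffices to show that this module is strictly $1$-polynomially rich, and for this I would check case~\eqref{item:kapl_m23511} of Theorem~\ref{teor:result_Kapl_thesis}: $n=1$ and $\card{D}\in\{2,3,5\}$, with $m$ arbitrary.

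To get these values, let $\alpha$ be an atom of $\interval{\bottom{A}}{\mu}$. Theorem~\ref{teor:from_coordinatization_to_TCT} gives that $\card{\ab{D}}$ is the subtype $q$ of $\quotienttct{\bottom{A}}{\mu}$. The prime quotient $\quotienttct{\bottom{A}}{\alpha}$ is abelian, and $\card{o/\alpha}>1$ by Lemma~\ref{lemma:on_cardinality_of-classes_intervals_simple_com_mod_latt}. So (AB$p$) gives $\card{o/\alpha}=p$. On the other hand, $\card{o/\alpha}=\card{D}^n$ because $o/\alpha$ is an $n$-dimensional $\ab{D}$-space, hence $q^n=p$. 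Since $p$ is prime, this forces $n=1$ and $q=p\in\{2,3,5\}$.

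The main obstacle I expect is making this last step rigorous. One has to check that the atom-class $o/\alpha$, coordinatized as in Proposition~\ref{prop:identification_with_matrices}, really has cardinality $\card{D}^n$. I would argue this from $\epsilon_\mu^o$: it maps $o/\alpha$ onto a minimal submodule, which is isomorphic to a single column $\ab{D}^n$ (compare Lemma~\ref{lemmasm}). A second point to watch is that (AB$p$) constrains only prime quotients inside $\interval{\bottom{A}}{\mu}$, which is exactly where $\quotienttct{\bottom{A}}{\alpha}$ lies. Once these are settled, Proposition~\ref{prop:interpolating_partial_functions_type_preserving_inside_a_congruence_class_abelian_case_with_ABp} supplies the in-class interpolation, and Lemma~\ref{lemma:fromcosetstotheinfinirtyandbeyond} finishes the proof.
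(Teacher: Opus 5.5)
Your proposal is correct and follows the paper's proof step for step. You show, via Lemma~\ref{lemma:restrizione_classe_k_poly_rich_sse:modulo} and case~\eqref{item:kapl_m23511} of Theorem~\ref{teor:result_Kapl_thesis}, that $o/\mu$ is strictly $1$-polynomially rich with respect to $\ab{A}$, then interpolate inside each $\mu$-class with Proposition~\ref{prop:interpolating_partial_functions_type_preserving_inside_a_congruence_class_abelian_case_with_ABp}, and conclude with Lemma~\ref{lemma:fromcosetstotheinfinirtyandbeyond}. Your derivation of $n=1$ and $\card{\ab{D}}=p$ from (AB$p$) spells out what the paper only asserts, and the step you flag as the main obstacle is immediate: Proposition~\ref{prop:identification_with_matrices} defines $n$ as the $\ab{D}$-dimension of $(o/\alpha;+_o)$, so $\card{o/\alpha}=\card{\ab{D}}^n$ holds by definition.
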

\begin{proof}
	We first prove that every partial function that preserves
	congruences and types and has its image contained in one $\mu$-class
	can be interpolated by a polynomial function of $\ab{A}$
	whose image is contained in the same $\mu$-class.
	Since  $[\mu,\mu]\neq\mu$ 
	Proposition~\ref{prop:homogenuityandcolpementation}\eqref{item:ex_if_not_neutral_then_abelian_for_hom}
	yields that $\mu$ is abelian. 
By Lemma~\ref{lemma:piu_meno_fanno_modulo_su-anello_polinomi_ristretto_nella_classe_congruenza},
$\ab{A}\restrict{o/\mu}$ is polynomially equivalent to 
the $\ab{R}_o$-module $(o/\mu; +)$. 
By Proposition~\ref{prop:identification_with_matrices}, 
the $\ab{R}_o$-module $(o/\mu; +)$ is isomorphic 
to the $\GF{p}$-module $\Z_p^m$.
Hence it satisfies condition \eqref{item:kapl_m23511}
from Theorem~\ref{teor:result_Kapl_thesis}, and therefore, it
is strictly 1-polynomially rich. Thus,
Lemma~\ref{lemma:restrizione_classe_k_poly_rich_sse:modulo}
yields that
$o/\mu$ is strictly 1-polynomially rich with respect to $\ab{A}$.
Hence
Proposition~\ref{prop:interpolating_partial_functions_type_preserving_inside_a_congruence_class_abelian_case_with_ABp}
yields that any unary type-preserving 
partial function whose domain is 
contained in one $\mu$-class
can be interpolated by a polynomial function
whose image is contained in the
same $\mu$-class. 
Thus, the assumptions of
Lemma~\ref{lemma:fromcosetstotheinfinirtyandbeyond}
are satisfied, and $f$ can be interpolated by a polynomial function. 
\end{proof}
\begin{proposition}\label{prop:interpolation_on_quotient_yields_interpolation_case_type_preserving_AB2,3_binary}
	Let $\ab{A}$ be a finite Mal'cev algebra with (SC1),
	let $\mu$ be an homogeneous abelian atom of $\Con\ab{A}$
	such that $(\ab{A}, \mu)$ satisfies (AB$p$) for $p\in\{2,3\}$,
	let $T$ be a finite subset of $A^2$ and let $f\colon T\to A$
	be a partial type-preserving function that can be interpolated
	modulo $\mu$ by a polynomial function of $\ab{A}$. 
	Then $f$ can be interpolated by a polynomial 
	function of $\ab{A}$. 
\end{proposition}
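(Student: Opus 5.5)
The plan is to follow the pattern of Proposition~\ref{prop:interpolation_on_quotient_yields_interpolation_case_type_preserving_ABp}. The core is to show that every binary partial type-preserving function with finite domain and image in a single $\mu$-class $o/\mu$ can be interpolated by a polynomial whose image lies in $o/\mu$. Once this is available, Lemma~\ref{lemma:fromcosetstotheinfinirtyandbeyond} applies. We take $\mathcal{R}$ to be the set of congruences together with the relations $\rho(\alpha,\beta)$ for type $\tp{2}$ quotients; these relations lie in $\Inv\POL\ab{A}$. We are given $p_0$ interpolating $f$ modulo $\mu$, so the lemma then yields a polynomial interpolating $f$ on $T$.

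To handle functions into one $\mu$-class we use Proposition~\ref{prop:interpolating_partial_functions_type_preserving_inside_a_congruence_class_abelian_case_with_ABp} with $k=2$. We read its conclusion as giving a $k$-ary polynomial; the ``$\POL\ari{1}$'' there is evidently a typo, since the proof goes through Proposition~\ref{prop:interpolazione_type_preserving_arity_k_caso_ABP}, which produces $k$-ary polynomials. Its hypotheses are checked as follows.
\begin{itemize*}
\item $\ab{A}$ satisfies (SC1) by assumption.
\item $\mu$ is homogeneous and abelian. It is abelian because, under (AB$p$), $\langle\bottom{A},\mu\rangle$ is abelian (otherwise (AB$p$) would be vacuous and $[\mu,\mu]=\mu$). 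Equivalently, Proposition~\ref{prop:homogenuityandcolpementation}\eqref{item:ex_if_not_neutral_then_abelian_for_hom} gives $[\mu,\mu]=\bottom{A}$ once $[\mu,\mu]\neq\mu$.
\item $(\ab{A},\mu)$ satisfies (AB$p$) by assumption.
\item For each $o$, the class $o/\mu$ is strictly $2$-polynomially rich with respect to $\ab{A}$. This last point is the substantive one.
\end{itemize*}

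For that last point, first suppose $\card{o/\mu}>1$. Since $\mu$ is an atom, $\interval{\bottom{A}}{\mu}$ is a (trivially simple complemented) lattice of height $h=1$. Proposition~\ref{prop:identification_with_matrices} then identifies $\ab{A}\restrict{o/\mu}$ with the $\ab{M}_n(\ab{D})$-module $\ab{D}^{(n\times 1)}$. Here $\card{D}$ is the subtype $q$ by Theorem~\ref{teor:from_coordinatization_to_TCT}. The (AB$p$) condition applied to the prime quotient $\langle\bottom{A},\mu\rangle$ says $\card{o/\mu}\in\{1,p\}$, so $q^n=p$. Hence $n=1$ and $q=p\in\{2,3\}$, and the module is $\ringmod{\Z_p}{\Z_p}$ with $m=1$, $n=1$. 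Theorem~\ref{teor:result_Kapl_thesis}, cases \eqref{item:kapl_12123} and \eqref{item:kapl_1312}, shows this module is strictly $2$-polynomially rich for $p=2$ and for $p=3$. Lemma~\ref{lemma:restrizione_classe_k_poly_rich_sse:modulo} transfers this to $o/\mu$ being strictly $2$-polynomially rich with respect to $\ab{A}$.

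If $\card{o/\mu}=1$, the constant polynomial does the job. That case is handled directly inside Proposition~\ref{prop:interpolazione_type_preserving_arity_k_caso_ABP}, and the richness hypothesis is only used there when $\card{o/\mu}>1$. A separate point is that the relevant class for richness in the composition step of that proposition is $o/\mu$, and each $v_i/\mu$ of size greater than $1$ is polynomially isomorphic to it, so nothing more is needed.

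The main obstacle is the bookkeeping in the identification $q^n=p\Rightarrow n=1$, together with making sure that the module arising really has $m=1$. This holds because $\mu$ is an atom, so $h=1$. Beyond that, the argument only assembles earlier results.
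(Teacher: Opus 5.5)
Your proposal is correct and is essentially the paper's proof: you show $o/\mu$ is strictly $2$-polynomially rich with respect to $\ab{A}$ via Proposition~\ref{prop:identification_with_matrices}, Theorem~\ref{teor:result_Kapl_thesis} (cases \eqref{item:kapl_12123}, \eqref{item:kapl_1312}) and Lemma~\ref{lemma:restrizione_classe_k_poly_rich_sse:modulo}, then apply Proposition~\ref{prop:interpolating_partial_functions_type_preserving_inside_a_congruence_class_abelian_case_with_ABp} with $k=2$, and conclude with Lemma~\ref{lemma:fromcosetstotheinfinirtyandbeyond}. You usefully spell out what the paper only asserts, namely that (AB$p$) on the atom gives $\card{o/\mu}=q^n\in\{1,p\}$, which forces $n=h=1$ and $q=p$; your parenthetical argument for abelianness is not valid as stated, but it is also unnecessary, since abelianness of $\mu$ is a hypothesis.
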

\begin{proof}
	We first prove that every partial binary function that preserves
	congruences and types and has its image contained in one $\mu$-class
	can be interpolated by a polynomial function of $\ab{A}$
	whose image is contained in the same $\mu$-class.
	Since  $[\mu,\mu]\neq\mu$ 
	Proposition~\ref{prop:homogenuityandcolpementation}\eqref{item:ex_if_not_neutral_then_abelian_for_hom}
	yields that $\mu$ is abelian. 
By Lemma~\ref{lemma:piu_meno_fanno_modulo_su-anello_polinomi_ristretto_nella_classe_congruenza},
$\ab{A}\restrict{o/\mu}$ 
is polynomially equivalent to 
the $\ab{R}_o$-module $(o/\mu; +)$. By Proposition~\ref{prop:identification_with_matrices},
the $\ab{R}_o$-module $(o/\mu; +)$ is isomorphic 
to the $\GF{p}$-module $\Z_p$, hence it satisfies
either condition~\eqref{item:kapl_12123}
or condition~\eqref{item:kapl_1312}
from Theorem~\ref{teor:result_Kapl_thesis}, and therefore,
it is strictly 2-polynomially rich. Thus, Lemma~\ref{lemma:restrizione_classe_k_poly_rich_sse:modulo}
	yields that
	$o/\mu$ is strictly 2-polynomially rich with respect to $\ab{A}$. 
	Therefore,
Proposition~\ref{prop:interpolating_partial_functions_type_preserving_inside_a_congruence_class_abelian_case_with_ABp}
	yields that any binary type-preserving 
	partial function whose domain is 
	contained in one $\mu$-class
	can be interpolated by a polynomial function
	whose image is contained in the
	same $\mu$-class. 
	Thus, the assumptions of Lemma~\ref{lemma:fromcosetstotheinfinirtyandbeyond}
	are satisfied, and $f$ can be interpolated by a polynomial function. 
\end{proof}
\begin{proposition}\label{prop:interpolation_on_quotient_yields_interpolation_case_type_preserving_AB2_atom_arity_3}
	Let $\ab{A}$ be a finite Mal'cev algebra with (SC1),
	let $\mu$ be an homogeneous abelian atom of $\Con\ab{A}$
	such that $(\ab{A}, \mu)$ satisfies (AB$2$),
	let $T$ be a finite subset of $A^3$ and let $f\colon T\to A$
	be a partial type-preserving function that can be interpolated
	modulo $\mu$ by a polynomial function of $\ab{A}$. 
	Then $f$ can be interpolated by a polynomial 
	function of $\ab{A}$. 
\end{proposition}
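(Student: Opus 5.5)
The argument copies the proof of Proposition~\ref{prop:interpolation_on_quotient_yields_interpolation_case_type_preserving_AB2,3_binary}, replacing arity $2$ by arity $3$ and $\Z_2$ by strict $3$-polynomial richness. The plan is to verify the hypothesis of Lemma~\ref{lemma:fromcosetstotheinfinirtyandbeyond} with $\mathcal{R}$ the set of congruences together with the relations $\rho(\alpha,\beta)$ for type $\tp{2}$ prime quotients. By \cite[Lemma~2.4]{AM:PCOG} these relations lie in $\Inv\POL\ab{A}$, so $\Con\ab{A}\subseteq\mathcal{R}\subseteq\Inv\POL\ab{A}$. Concretely, we show that every partial ternary type-preserving function with finite domain and image inside a single $\mu$-class $o/\mu$ can be interpolated by a polynomial whose image stays in $o/\mu$. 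Lemma~\ref{lemma:fromcosetstotheinfinirtyandbeyond} then turns the given interpolation modulo $\mu$ into an exact interpolation of $f$ on $T$.

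First we need that $\mu$ is abelian. Here $\mu$ is assumed abelian, but to be consistent with the earlier propositions we also note that Proposition~\ref{prop:homogenuityandcolpementation}\eqref{item:ex_if_not_neutral_then_abelian_for_hom} gives $[\mu,\mu]=\bottom{A}$ once $[\mu,\mu]\neq\mu$. Fix $o\in A$. If $\card{o/\mu}=1$, the constant polynomial with value $o$ interpolates any function into $o/\mu$, so assume $\card{o/\mu}>1$. Lemma~\ref{lemma:piu_meno_fanno_modulo_su-anello_polinomi_ristretto_nella_classe_congruenza} makes $\ab{A}\restrict{o/\mu}$ polynomially equivalent to the $\ab{R}_o$-module $(o/\mu;+_o)$. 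Since $\mu$ is an atom, $\interval{\bottom{A}}{\mu}$ has height $1$. Proposition~\ref{prop:identification_with_matrices} and Theorem~\ref{teor:from_coordinatization_to_TCT} identify this module with the $\ab{M}_n(\GF{q})$-module $\GF{q}^{(n\times 1)}$, where $q$ is the subtype of $\quotienttct{\bottom{A}}{\mu}$ and $q^n=\card{o/\mu}$. Condition (AB$2$) forces $\card{o/\mu}=2$, so $q=2$ and $n=1$, and the module is $\ringmod{\Z_2}{\Z_2}$.

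This is the one point where the case distinction in the statement actually matters. The key observation is that $(\ab{A},\mu)$ satisfying (AB$2$) applies to the prime quotient $\quotienttct{\bottom{A}}{\mu}$ itself, since it is abelian and lies in $\interval{\bottom{A}}{\mu}$. Hence every $\mu$-class has $1$ or $2$ elements, and the subtype must be $2$.

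Theorem~\ref{teor:result_Kapl_thesis}\eqref{item:kapl_12123} now shows that $\ringmod{\Z_2}{\Z_2}$ is strictly $3$-polynomially rich. Lemma~\ref{lemma:restrizione_classe_k_poly_rich_sse:modulo} therefore makes $o/\mu$ strictly $3$-polynomially rich with respect to $\ab{A}$. Applying Proposition~\ref{prop:interpolating_partial_functions_type_preserving_inside_a_congruence_class_abelian_case_with_ABp} with $k=3$ and $p=2$, any ternary type-preserving partial function with image in $o/\mu$ is interpolated by a polynomial with image in $o/\mu$; the polynomial produced there is really ternary, despite the typo $\POL\ari{1}$ in that statement. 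Since $o$ was arbitrary, this is exactly the hypothesis of Lemma~\ref{lemma:fromcosetstotheinfinirtyandbeyond}, which then yields the required polynomial interpolating $f$ on $T$.

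I expect no real obstacle. The only care needed is to confirm that the restriction to one $\mu$-class is type-preserving in the sense used by Lemma~\ref{lemma:fromcosetstotheinfinirtyandbeyond}, and that the reduction inside Proposition~\ref{prop:interpolazione_type_preserving_arity_k_caso_ABP} to arity $m\le 3$ is covered by strict $3$-polynomial richness. The latter holds because Theorem~\ref{teor:result_Kapl_thesis}\eqref{item:kapl_12123} covers $k\in\{1,2,3\}$.
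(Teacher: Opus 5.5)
Your proof is correct and takes essentially the same route as the paper. Both identify $\ab{A}\restrict{o/\mu}$ with $\ringmod{\Z_2}{\Z_2}$, then chain Theorem~\ref{teor:result_Kapl_thesis}\eqref{item:kapl_12123}, Lemma~\ref{lemma:restrizione_classe_k_poly_rich_sse:modulo}, Proposition~\ref{prop:interpolating_partial_functions_type_preserving_inside_a_congruence_class_abelian_case_with_ABp} and Lemma~\ref{lemma:fromcosetstotheinfinirtyandbeyond}; you additionally make explicit what the paper leaves implicit, namely that (AB$2$) applied to $\quotienttct{\bottom{A}}{\mu}$ forces $\card{o/\mu}\in\{1,2\}$ (so $q=2$, $n=1$) and that singleton classes are handled by constants.
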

\begin{proof}
	We first prove that every partial ternary function that preserves
	congruences and types and has its image contained in one $\mu$-class
	can be interpolated by a polynomial function of $\ab{A}$
	whose image is contained in the same $\mu$-class.
	Since  $[\mu,\mu]\neq\mu$ 
	Proposition~\ref{prop:homogenuityandcolpementation}\eqref{item:ex_if_not_neutral_then_abelian_for_hom}
	yields that $\mu$ is abelian. 
	By Lemma~\ref{lemma:piu_meno_fanno_modulo_su-anello_polinomi_ristretto_nella_classe_congruenza},
	 $\ab{A}\restrict{o/\mu}$ 
	is polynomially equivalent to 
	the $\ab{R}_o$-module $(o/\mu; +)$. By Proposition~\ref{prop:identification_with_matrices}, 
	the $\ab{R}_o$-module $(o/\mu; +)$ is isomorphic 
	to the $\GF{2}$-module $\Z_2$. 
	Hence it satisfies condition~\eqref{item:kapl_12123}
	from Theorem~\ref{teor:result_Kapl_thesis}, and therefore, it 
	is strictly 3-polynomially rich.
	Hence Lemma~\ref{lemma:restrizione_classe_k_poly_rich_sse:modulo}
	yields that
	$o/\mu$ is strictly 3-polynomially rich with respect to $\ab{A}$. 
	Thus,	Proposition~\ref{prop:interpolating_partial_functions_type_preserving_inside_a_congruence_class_abelian_case_with_ABp}
	yields that any ternary type-preserving 
	partial function whose domain is 
	contained in one $\mu$-class
	can be interpolated by a polynomial function
	whose image is contained in the
	same $\mu$-class. 
	Thus, the assumptions of Lemma~\ref{lemma:fromcosetstotheinfinirtyandbeyond}
	are satisfied, and $f$ can be interpolated by a polynomial function. 
\end{proof}
\begin{proposition}\label{prop:interpolation_on_quotients_yields_interpolation_case_height_1}
	Let $\ab{A}$ be a finite Mal'cev algebra with (SC1),
	let $\mu$ be a homogeneous abelian atom of $\Con\ab{A}$.
	Let us assume that
	the subtype $q$ of $\quotienttct{\bottom{A}}{\mu}$
	is either $2$ or $3$, and if $q=2$, then each $\mu$-class 
	has cardinality $1$, $2$, $4$, or $8$, and 
	if $q=3$, then each $\mu$-class has cardinality $1$, $3$, or $9$. 
	Let $T\subseteq A$, let $f\colon T\to A$ be a type-preserving
	function that can be interpolated by a polynomial modulo $\mu$.
	Then $f$ can be interpolated by a polynomial on $T$. 
\end{proposition}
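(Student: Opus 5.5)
The plan is to follow the same scheme as the proofs of Propositions~\ref{prop:interpolation_on_quotient_yields_interpolation_case_neutral}--\ref{prop:interpolation_on_quotient_yields_interpolation_case_type_preserving_AB2_atom_arity_3}. The argument reduces to Lemma~\ref{lemma:fromcosetstotheinfinirtyandbeyond}, applied with $\strset{R}$ equal to the set of all congruences together with all relations $\rho(\alpha,\beta)$ for type $\tp{2}$ prime quotients $\quotienttct{\alpha}{\beta}$. This choice is admissible because every such $\rho(\alpha,\beta)$ is a subalgebra of $\ab{A}^4$ containing the diagonal, hence is preserved by all polynomials. So $\Con\ab{A}\subseteq\strset{R}\subseteq\Inv\POL\ab{A}$, and preserving $\strset{R}$ is exactly being type-preserving.

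The central step is to verify the hypothesis of Lemma~\ref{lemma:fromcosetstotheinfinirtyandbeyond}. We must show that every unary type-preserving partial function $g$ with finite domain, whose image lies in a single $\mu$-class $o/\mu$, can be interpolated by a polynomial. This is precisely the content of Proposition~\ref{prop:interpolation_of_partial_type_preserving_in_class_of_atoms}. Its hypotheses coincide with ours: $\mu$ is a homogeneous abelian atom with (SC1), the subtype is $q\in\{2,3\}$, and the $\mu$-classes have cardinality in $\{1,2,4,8\}$ or $\{1,3,9\}$. For $g\colon T'\to o/\mu$ with $T'\subseteq A$, that proposition yields a $p\in\POL\ari{1}\ab{A}$ which is constant modulo $\mu$ and interpolates $g$. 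For the arity bookkeeping, note that the statement and the lemma are used with $k=1$, so only unary functions are needed.

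Given this, we apply Lemma~\ref{lemma:fromcosetstotheinfinirtyandbeyond} with $k=1$, $D=T$ and finite $T$ (the algebra is finite). The assumed polynomial $p_0$ interpolates $f$ modulo $\mu$, and the lemma then yields a polynomial interpolating $f$ on $T$.

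I expect essentially no serious obstacle, since the heavy lifting is done in Proposition~\ref{prop:interpolation_of_partial_type_preserving_in_class_of_atoms}, which in turn rests on Lemma~\ref{lemma:technical_lemma_kapl} and Proposition~\ref{prop:interpolating_on_cosets_abelian_case}. The only point to check with care is that the class of functions required by Lemma~\ref{lemma:fromcosetstotheinfinirtyandbeyond} ("preserves $\strset{R}$, image in one $\mu$-class, finite domain") matches the type-preserving functions handled in Proposition~\ref{prop:interpolation_of_partial_type_preserving_in_class_of_atoms}. It does, because the choice of $\strset{R}$ makes "preserves $\strset{R}$" identical to "type-preserving" as in Definition~\ref{def:partial_type_preserving}.
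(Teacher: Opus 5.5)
Your proposal is correct and follows the paper's own proof. The paper likewise uses Proposition~\ref{prop:interpolation_of_partial_type_preserving_in_class_of_atoms} to show that unary type-preserving partial functions with image in one $\mu$-class can be interpolated, and then concludes by applying Lemma~\ref{lemma:fromcosetstotheinfinirtyandbeyond}, with the same implicit restriction to arity $1$ that you point out.
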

\begin{proof}	Proposition~\ref{prop:interpolation_of_partial_type_preserving_in_class_of_atoms}
	implies that
	every partial function that preserves
	congruences and types and has its image contained in one $\mu$-class
	can be interpolated by a polynomial function of $\ab{A}$
	whose image is contained in the same $\mu$-class.
	Thus, the assumptions of Lemma~\ref{lemma:fromcosetstotheinfinirtyandbeyond}
	are satisfied, and $f$ can be interpolated by a polynomial function. 
\end{proof}
\begin{lemma}\label{lemma:if_first_interval_not_complianta_then_no_bueno}
Let $\ab{A}$ be a finite Mal'cev algebra with (SC1),
let $k\in\{1,2,3\}$, let $l\in \N\setminus\{1,2,3\}$,
let $\mu$ be a homogeneous congruence 
of $\ab{A}$. If $\quotienttct{\bottom{A}}{\mu}$ 
is not (CT$k$),
then $\ab{A}$ is not strictly $k$-polynomially rich. 
Moreover, if $\mu$ is abelian, then 
$\ab{A}$ is not strictly $l$-polynomially rich. 
\end{lemma}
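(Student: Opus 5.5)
The plan is to reduce everything to a single class $o/\mu$ and then apply Theorem~\ref{teor:result_Kapl_thesis}.

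First, $\quotienttct{\bottom{A}}{\mu}$ is not (CT$k$), so its type is not $\tp{3}$. Hence $[\mu,\mu]\neq\mu$, and Proposition~\ref{prop:homogenuityandcolpementation}\eqref{item:ex_if_not_neutral_then_abelian_for_hom} gives $[\mu,\mu]=\bottom{A}$. So $\mu$ is abelian in every case, and the second claim only needs the same argument with $l$ in place of $k$. By Proposition~\ref{prop:homogenuityandcolpementation}\eqref{item:simplecomplmodlatticebelowhom}, $\interval{\bottom{A}}{\mu}$ is a simple complemented modular lattice of some height $h$. By Theorem~\ref{teor:from_coordinatization_to_TCT}, for every $o$ with $\card{o/\mu}>1$, Proposition~\ref{prop:identification_with_matrices} identifies $\ab{A}\restrict{o/\mu}$ with the $\ab{M}_{n_o}(\ab{D}_o)$-module $\ab{D}_o^{(n_o\times h)}$, where $\card{D_o}=q$ is the subtype.

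Next I translate the failure of (CT$k$) into a statement about one class. By Lemma~\ref{lemma:on_cardinality_of-classes_intervals_simple_com_mod_latt}, for an atom $\alpha$ we have $\card{o/\alpha}=q^{n_o}$ and $\card{o/\mu}=q^{n_o h}$. Condition (AB$p$) for $(\ab{A},\mu)$ then says that $q=p$ and $n_o\le 1$ for all $o$. Comparing Definition~\ref{def:ex_main_theorem} with the list in Theorem~\ref{teor:result_Kapl_thesis}, failure of (CT$k$) gives some $o$ with $\card{o/\mu}>1$ for which the triple $(q,n_o,h)$ is not in the list for arity $k$. For example, with $k=1$: either $q\notin\{2,3,5\}$; or $n_o\ge2$ and $h\ge2$; or $h=1$ with $q^{n_o}\notin\{2,4,8\}$ (resp.\ $\{3,9\}$); or $q=5$ with $n_o\ge2$. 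These are exactly the non-rich cases. The cases $k=2,3$ are checked the same way. For arbitrary $l\ge4$, Theorem~\ref{teor:result_Kapl_thesis} says no nontrivial module is strictly $l$-polynomially rich, and such an $o$ exists because $\mu>\bottom{A}$. Lemma~\ref{lemma:restrizione_classe_k_poly_rich_sse:modulo} then shows that $o/\mu$ is not strictly $k$-polynomially rich with respect to $\ab{A}$.

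The main obstacle is lifting this to $\ab{A}$. There is a partial type-preserving $f\colon T\to o/\mu$ with $T\subseteq(o/\mu)^k$, coming from the module counterexample transported via Lemma~\ref{lemma:basic_fact}, such that no polynomial preserving $o/\mu$ interpolates it. Suppose some $p\in\POL\ari{k}\ab{A}$ interpolated $f$ on $T$. I would compose with the idempotent $e_\mu^o$ of \cite[Theorem~6.12]{Ros24}: $e_\mu^o\circ p$ still interpolates $f$, since $f$ has values in $o/\mu$ and $e_\mu^o$ fixes $o/\mu$, and it maps into $o/\mu$, hence preserves it. This contradicts the choice of $f$.

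It remains to check that $f$, type-preserving for $\ab{A}\restrict{o/\mu}$, is type-preserving for $\ab{A}$. The relevant step is in the proof of Lemma~\ref{lemma:restrizione_classe_k_poly_rich_sse:modulo}. For a prime quotient $\quotienttct{\gamma}{\delta}$ not below $\mu$, tuples lying inside $o/\mu$ satisfy $\rho(\gamma,\delta)\cap(o/\mu)^4=\rho(\gamma\wedge\mu,\delta\wedge\mu)\cap(o/\mu)^4$ or $\supseteq(o/\mu)^4$. This is the same argument as at the end of the proof of Proposition~\ref{prop:laproposizione_dell_f_uno}, and I expect it to carry over without change.
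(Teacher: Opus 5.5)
Your proof is correct and is essentially the paper's argument run in contrapositive form: you get abelianness of $\mu$ from Proposition~\ref{prop:homogenuityandcolpementation}\eqref{item:ex_if_not_neutral_then_abelian_for_hom}, reduce to the module $\ab{D}^{(n\times h)}$ via Lemma~\ref{lemma:restrizione_classe_k_poly_rich_sse:modulo} and Theorem~\ref{teor:result_Kapl_thesis}, and compose with $e_\mu^o$ to force any interpolating polynomial into $o/\mu$. Your explicit matching of (CT$k$) against Kapl's list is the check the paper leaves as ``one immediately verifies''. The type-preservation check in your last paragraph is already part of Lemma~\ref{lemma:restrizione_classe_k_poly_rich_sse:modulo}, because strict richness ``with respect to $\ab{A}$'' is defined through type preservation in $\ab{A}$.
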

\begin{proof}
First, we observe that if $\mu$ is not abelian,
then by Proposition~\ref{prop:typesandhomogenuity}
$\mu$ is a neutral atom, and therefore,
$\quotienttct{\bottom{A}}{\mu}$
is (CT$k$).  
Next, we assume that $\mu$ is not neutral.
Then by
Proposition~\ref{prop:homogenuityandcolpementation}\eqref{item:ex_if_not_neutral_then_abelian_for_hom},
$\mu$ is abelian. 
If $\ab{A}$ is strictly $k$ polynomially rich, then 
for each choice of $o\in A$ we have that every $k$-ary 
partial type-preserving function from $(o/\mu)^k$ to $o/\mu$
can be interpolated by a basic operation of $\ab{A}\restrict{o/\mu}$.
In fact let $f$ be a partial $k$-ary function such that
$\dom f\subseteq (o/\mu)^k$ and let $p\in \POL\ari{k}\ab{A}$
be a polynomial that interpolates $f$. 
Let $e_\mu^o$ be the idempotent polynomial 
constructed in 
\cite[Theorem~6.12]{Ros24}.
Then $e_\mu^o\circ p$ interpolates $f$ on its domain and it
is a basic operation of $\ab{A}\restrict{o/\mu}$. 
Thus, Lemma~\ref{lemma:restrizione_classe_k_poly_rich_sse:modulo}
implies that the $\ab{M}_n(\ab{D})$-module $\ab{D}^{(n\times m)}$ 
constructed in Proposition~\ref{prop:identification_with_matrices}
is strictly $k$-polynomially rich. Thus, it satisfies 
one of the conditions from Theorem~\ref{teor:result_Kapl_thesis}. 
Thus, one immediately verifies that $\quotienttct{\bottom{A}}{\mu}$
is (CT$k$).
Finally, let us consider the case that $\mu$ is 
abelian and $\ab{A}$ is strictly $l$-polynomially rich.
By the above argument, the $\ab{M}_n(\ab{D})$-module $\ab{D}^{(n\times m)}$ 
constructed in Proposition~\ref{prop:identification_with_matrices}
is strictly $l$-polynomially rich. This contradicts Theorem~\ref{teor:result_Kapl_thesis}. 
\end{proof}
\section{Proofs of the results from Section~\ref{sec:content}}\label{sec:proof_main_results}
\begin{proof}[Proof of Theorem \ref{teor:main_result_partial_type_preserving}]
	We proceed by induction on $\card{A}$. If $\card{A}=1$, then 
	$\ab{A}$ is strictly k-polynomially rich for all $k\in\N$,
	hence there is nothing to prove. 
	Let $l\in\{1,2,3\}$.
	For the induction step, we assume that  $\card{A}>1$ and that
	the statement is true for 
	each algebra of cardinality strictly less than $\card{A}$,
	and prove it for $\ab{A}$.
	If $l=1$
	we assume that for each $i\in \finset{n}$
	one of the conditions 
	\eqref{item:main_theorem_poly_rich_item_neutral}-\eqref{item:main_theorem_poly_rich_item_ABp} from 
	Definition~\ref{def:ex_main_theorem}
	is satisfied, and we prove that
	$\ab{A}$ is strictly 1-polynomially rich;
	if $l=2$ we assume that for each $i\in \finset{n}$
	one of the conditions 
	\eqref{item:main_theorem_poly_rich_item_neutral_2},
	\eqref{item:main_theorem_poly_rich_item_ABp_2}
	from 
	Definition~\ref{def:ex_main_theorem}
	is satisfied, and we prove that
	$\ab{A}$ is strictly 2-polynomially rich;
	if $l=3$
	we assume that for each $i\in \finset{n}$
	one of the conditions 
	\eqref{item:main_theorem_poly_rich_item_neutral_3},
	\eqref{item:main_theorem_poly_rich_item_ABp_3}
	from 
	Definition~\ref{def:ex_main_theorem}
	is satisfied, and we prove that
	$\ab{A}$ is strictly 3-polynomially rich.
	Let us consider the algebra $\ab{A}/\mu_1$. 
	By the induction hypothesis $\ab{A}/\mu_1$ 
	is strictly $l$-polynomially rich.
	Let $f$ be a partial type-preserving function with
	domain $T\subseteq A^l$. 
	Since $\ab{A}/\mu_1$ is strictly $l$-polynomially rich,
	$f$ can be interpolated modulo $\mu_1$ by a polynomial of $\ab{A}$.
	
	Next we assume that $l=1$ and 
	split the proof into three cases according to 
	which among 
	\eqref{item:main_theorem_poly_rich_item_neutral}-\eqref{item:main_theorem_poly_rich_item_ABp}
	is satisfied by $i=1$. 
	If \eqref{item:main_theorem_poly_rich_item_neutral}
	holds for $i=1$, then the result follows from 
	Proposition~\ref{prop:interpolation_on_quotient_yields_interpolation_case_neutral};
	if \eqref{item:main_theorem_poly_rich_item_subtype2} or
	\eqref{item:main_theorem_poly_rich_item_subtype3}
	hold for $i=1$, then the result follows from
	Proposition~\ref{prop:interpolation_on_quotients_yields_interpolation_case_height_1};
	if \eqref{item:main_theorem_poly_rich_item_ABp}
	holds for $i=1$, then the result follows from 
	Proposition~\ref{prop:interpolation_on_quotient_yields_interpolation_case_type_preserving_ABp}.
	
	Next, we assume $l=2$ and split the proof into two cases according to
	which among \eqref{item:main_theorem_poly_rich_item_neutral_2},
	\eqref{item:main_theorem_poly_rich_item_ABp_2}
	is satisfied by $i=1$. 
	If \eqref{item:main_theorem_poly_rich_item_neutral_2} holds for $i=1$,
	then the result follows from Proposition~\ref{prop:interpolation_on_quotient_yields_interpolation_case_neutral}.
	If \eqref{item:main_theorem_poly_rich_item_ABp_2} holds for $i=1$,
	the the result follows from 
	Proposition~\ref{prop:interpolation_on_quotient_yields_interpolation_case_type_preserving_AB2,3_binary}.
	
	Next, we assume $l=3$ and split the proof into two cases according to
	which among \eqref{item:main_theorem_poly_rich_item_neutral_3},
	\eqref{item:main_theorem_poly_rich_item_ABp_3}
	is satisfied by $i=1$.
	If \eqref{item:main_theorem_poly_rich_item_neutral_3} holds for $i=1$,
	then the result follows from Proposition~\ref{prop:interpolation_on_quotient_yields_interpolation_case_neutral}.
	If \eqref{item:main_theorem_poly_rich_item_ABp_3} holds for $i=1$,
	the the result follows from 
Proposition~\ref{prop:interpolation_on_quotient_yields_interpolation_case_type_preserving_AB2_atom_arity_3}.
\end{proof}
\begin{proof}[Proof of 
Theorem~\ref{teor:char_hered_scr_1_p_r}]
We first assume that each algebra in $\Hh\ab{A}$
is strictly 1-polynomially rich
and show that $\ab{A}$ satisfies (SC1) and each homogeneous 
series in $\Con\ab{A}$ is (CT1). 
Since $\ab{A}\in\Hh\ab{A}$,
$\ab{A}$ is strictly 1-polynomially rich. 
Hence Theorem~\ref{teor:sci_necessary}\eqref{item:regular_in_SC1_nce}
implies that $\ab{A}$ is (SC1). 
Let $(\mu_0, \dots, \mu_{n})$ be a 
homogeneous series for $\Con\ab{A}$, and let 
$i\in\finset{n}$. 
By definition of homogeneous series,
$\mu_i/\mu_{i-1}$ is a homogeneous congruence 
of $\ab{A}/\mu_{i-1}$. Since $\ab{A}/\mu_{i-1}$ is 
strictly 1-polynomially rich, 
Lemma~\ref{lemma:if_first_interval_not_complianta_then_no_bueno}
implies that $\quotienttct{\mu_{i-1}}{\mu_i}$ 
is (CT1). 

Next, we assume that $\ab{A}$ satisfies (SC1) and each homogeneous 
series in $\Con\ab{A}$ is (CT1)
and prove that each algebra in $\Hh\ab{A}$
is strictly 1-polynomially rich.
To this end, let $\alpha\in \Con\ab{A}$, we show
that $\ab{A}/\alpha$ is strictly 1-polynomially rich. 
Theorem~\ref{teor:main_result_on_homogeneous_sequences_on_quotients}
implies that $\ab{A}/\alpha$ is (SC1) and
either $\ab{A}$ is 
congruence neutral, or each 
homogeneous series in $\ab{A}/\alpha$ 
is (CT1). 
If $\ab{A}/\alpha$ is congruence neutral, 
then by \cite[Proposition~5.2]{Ai:OHAH},
$\ab{A}/\alpha$ is strictly 2-affine complete 
and hence strictly 1-polynomially rich.
If $\ab{A}/\alpha$ is not congruence neutral, 
then each homogeneous series for $\Con(\ab{A}/\alpha)$
is (CT1),
and $\ab{A}/\alpha$ is strictly 1-polynomially
rich by Theorem~\ref{teor:main_result_partial_type_preserving}.
\end{proof}

\begin{proof}[Proof of Theorem~\ref{teor:char_hered_scr_23_p_r}]
The proof follows the same pattern as the proof of 
Theorem~\ref{teor:char_hered_scr_1_p_r} with $k$ in
place of 1, with the only notable 
difference that in order to prove that 
\eqref{item:HA_in_char_hered_scr_23_p_r}
implies (SC1) we use 
Theorem~\ref{teor:sci_necessary}\eqref{item:k_in_sc1_necessary}.
\end{proof}

\begin{proof}[Proof of Theorem~\ref{teor:char_hered_scr_4_p_r}]
The equivalence of \eqref{item:2-affine_in_char_hered_scr_4_p_r},
\eqref{item:l-affine_in_char_hered_scr_4_p_r}
and \eqref{item:neutral_in_char_hered_scr_4_p_r} 
follows from \cite[Proposition~5.2]{Ai:OHAH}. 
Moreover, it is a straightforward consequence 
of the definitions that
\eqref{item:l-affine_in_char_hered_scr_4_p_r}
implies \eqref{item:HAk_in_char_hered_scr_4_p_r}
and that \eqref{item:HAk_in_char_hered_scr_4_p_r}
implies \eqref{item:HA_in_char_hered_scr_4_p_r}.
Next, we show that \eqref{item:HA_in_char_hered_scr_4_p_r}
implies \eqref{item:neutral_in_char_hered_scr_4_p_r}. 
Theorem~\ref{teor:sci_necessary}\eqref{item:k_in_sc1_necessary}
implies that $\ab{A}$ is (SC1), hence
Proposition~\ref{prop:existence_of_hom_congruences_under_SC1}
implies that there exists a homogeneous series
$(\mu_0, \dots, \mu_{n})$ for $\Con\ab{A}$. 
Lemma~\ref{lemma:if_first_interval_not_complianta_then_no_bueno}
implies that $\mu_1$ is not abelian.
Hence Proposition~\ref{prop:typesandhomogenuity}
implies that $\mu_1$ is an atom. 
By the definition of homogeneous series 
$\mu_i/\mu_{i-1}$ is homogeneous in $\ab{A}/\mu_{i-1} $
for each $i\in\finset{n}$. Hence, since 
$\ab{A}/\mu_{i-1}$ is strictly 4-polynomially rich,
Lemma~\ref{lemma:if_first_interval_not_complianta_then_no_bueno}
implies that $\mu_{i}/\mu_{i-1}$ is not abelian 
and Proposition~\ref{prop:typesandhomogenuity}
implies that $\mu_i/\mu_{i-1}$ is an atom. 
Thus, $(\mu_0, \dots, \mu_{n})$ is a
maximal chain in $\Con\ab{A}$ and each interval is not abelian. 
Let $\quotienttct{\alpha}{\beta}$ be a prime quotient in 
$\Con\ab{A}$, and let $\gamma_0, \dots, \gamma_{n}$
be a maximal chain of $\Con\ab{A}$ such that
there exists $i\in\finset{n}$ with 
$\gamma_{i-1}=\alpha$ and $\gamma_i=\beta$. 
By the Dedekind-Birkhoff Theorem 
(cf.~\cite[Theorem~2.37]{MMT:ALVV})
there exists $j\in\finset{n}$ such that
$\interval{\gamma_{i-1}}{\gamma_i}
\leftrightsquigarrow\interval{\mu_{j-1}}{\mu_j}$,
and therefore, by Lemma~\ref{lemma:exAic18Lemma3.4},
$[\beta, \beta]=\beta$. 
Thus, we can conclude that each prime quotient 
in $\Con\ab{A}$ is not abelian, and  
\eqref{item:neutral_in_char_hered_scr_4_p_r} follows.
\end{proof}
\bibliographystyle{amsalpha}
\bibliography{pint-partial1a}

\end{document}